\documentclass[11pt]{article}

\usepackage[square,numbers,sort&compress]{natbib}

\usepackage{tablefootnote}
\usepackage[utf8]{inputenc} 
\usepackage[T1]{fontenc}    
\usepackage{hyperref}       
\usepackage{url}            
\usepackage{booktabs}       
\usepackage{amsfonts}       
\usepackage{nicefrac}       
\usepackage{microtype}      
\usepackage{tikz}
\usetikzlibrary{decorations.pathreplacing,calc}

\usepackage[algo2e,ruled]{algorithm2e}
\LinesNumbered

\usepackage{verbatim}
\usepackage{amsthm}
\usepackage{amsfonts}
\usepackage[nonamelimits]{amsmath}
\usepackage{amssymb}
\usepackage{sansmath}
\usepackage{color}
\usepackage{xcolor}
\usepackage{bm}
\usepackage{bbm}
\usepackage{fullpage}
\usepackage[bottom]{footmisc}

\theoremstyle{plain}
\newtheorem{theorem}{Theorem}[section]
\newtheorem{assumption}[theorem]{Assumption}
\newtheorem{lemma}[theorem]{Lemma}
\newtheorem{fact}[theorem]{Fact}
\newtheorem{proposition}[theorem]{Proposition}
\newtheorem{remark}[theorem]{Remark}
\newtheorem{corollary}[theorem]{Corollary}

\newtheorem{problem}[theorem]{Problem}
\theoremstyle{definition}
\newtheorem{definition}[theorem]{Definition}
\newtheorem{question}{Question}
\usepackage{subfig}
\usepackage{graphicx}
\usepackage{listings}
\usepackage{enumitem}
\usepackage{tabularx}
\newtheoremstyle{named}{}{}{\itshape}{}{\bfseries}{.}{.5em}{\thmnote{#3}}
\theoremstyle{named}

\usepackage{tikz}
\usetikzlibrary{fit,calc}

\usepackage{tikz}
\usetikzlibrary{cd}

\DeclareSymbolFont{extraup}{U}{zavm}{m}{n}
\DeclareMathSymbol{\varheart}{\mathalpha}{extraup}{86}
\DeclareMathSymbol{\vardiamond}{\mathalpha}{extraup}{87}

\usepackage{mathtools}

\makeatletter
\newcommand*{\inlineequation}[2][]{%
  \begingroup
    \refstepcounter{equation}%
    \ifx\\#1\\%
    \else
      \label{#1}%
    \fi
    \relpenalty=10000 %
    \binoppenalty=10000 %
    \ensuremath{%
      #2%
    }%
    ~\@eqnnum
  \endgroup
}
\makeatother
\usepackage[table]{xcolor}

\usepackage{threeparttable}

\usepackage{todonotes}

\usepackage{xspace}

\xspaceaddexceptions{[]\{\}}

\usepackage{hyperref}

\usepackage{thm-restate}

\usepackage{bm}
\makeatletter
\newcommand*{\rom}[1]{\expandafter\@slowromancap\romannumeral #1@}

\usepackage{multirow}

\usepackage{tablefootnote}
\usepackage{float}
\usepackage{makecell}
\usepackage{authblk}

\allowdisplaybreaks

\title{Finding Stationary Points under Higher-Order Smoothness: Trapping Algorithms and First-Order Lower Bounds}
\date{\vspace{-5ex}}
\begin{document}
\author[1]{Huanjian Zhou\thanks{zhou-huanjian185@g.ecc.u-tokyo.ac.jp}}
\author[2,1]{Masashi Sugiyama\thanks{
sugi@k.u-tokyo.ac.jp}} 
\author[1,2]{Taiji Suzuki\thanks{taiji@mist.i.u-tokyo.ac.jp }}
\affil[1]{The University of Tokyo}
\affil[2]{RIKEN Center for Advanced Intelligence Project}

\maketitle

\begin{abstract}
Smooth nonconvex optimization arises throughout machine learning,
statistics, and scientific computing, where finding an approximate
stationary point is a fundamental algorithmic task. We study the
query complexity of this task under higher-order smoothness.
Deterministic gradient descent has the classical
\(\mathcal{O}(\varepsilon^{-2})\) complexity, whereas the faster deterministic 
\(\mathcal{O}(\varepsilon^{-\frac{p+1}{p}})\) rate of \(p\)-th-order regularization
is typically obtained using higher-order derivatives. We ask whether
the trapping framework can exploit higher-order smoothness using only
function values or only gradients, and study this question across
dimensions, oracle models, randomization and constraints. Our contributions are threefold.

For constant dimension \(d\) with objectives in
\(C^{p,\beta_p}\), we
give deterministic function-value trapping algorithms using
\(\widetilde{\mathcal{O}}(\varepsilon^{-(d-1)})\) queries in the
unconstrained setting and
\(\widetilde{\mathcal{O}}(\varepsilon^{-\frac{d-1}{p+\beta_p}})\) queries over
\([0,1]^d\). We
then implement trapping randomly using gradients alone through
high-order line integration. 
In particular, for two-dimensional \(C^{1,1}\)
objectives, this gives
\(\widetilde{\mathcal{O}}(\varepsilon^{-\frac{4}{3}})\) gradient queries in the
unconstrained setting.
This breaks the barrier of
$\Theta(\varepsilon^{-2})$ deterministic complexity by gradient descent.

For general dimension, we introduce a trapping-or-descent framework.
A boundary approximation either produces a point of certified
objective decrease or certifies that the enclosed region contains
an approximate stationary point. By lazily reusing boundary models
across successive moves, we obtain, for \(C^{p,\beta_p}\) objectives
with \(p\ge2\) and \(\nu:=p+\beta_p\), a deterministic function-value
complexity of
\(\mathcal O(d^p+d^{p-1+\frac{\beta_p}{\nu-1}}
\varepsilon^{-\frac{\nu}{\nu-1}})\)
and a deterministic gradient-only complexity of
\(\mathcal O(d^{p-1}+d^{p-2+\frac{\beta_p}{\nu-1}}
\varepsilon^{-\frac{\nu}{\nu-1}})\).
Both bounds apply to finding approximate stationary points on
\(\mathbb R^d\) and approximate KKT points on \([0,1]^d\).
Their accuracy exponent is independent of dimension and reduces
to the classical AR\(p\) exponent \(\frac{p+1}{p}\) when \(\beta_p=1\),
highlighting the potential of trapping as a general framework
for nonconvex optimization using either function values alone
or gradients alone.

For sufficiently
high dimension, we prove randomized first-order lower bounds. For \(C^{1,1}\cap C^{p,1}\) objectives, even an
exact value-and-gradient oracle requires
\(\Omega_p(\varepsilon^{-(\frac{3}{2}+\frac{1}{2p})})\) queries in the
high-accuracy regime. This matches known upper bounds for \(p=2\)
and, up to logarithmic factors, for \(p=3\). Without a prescribed
gradient-Lipschitz bound, we further obtain an
\(\Omega_p(\varepsilon^{-\frac{5}{3}})\) lower bound for every
\(p\geq3\).
\end{abstract}

\newpage
\setcounter{tocdepth}{2}
\tableofcontents

\newpage

\section{Introduction}

Smooth nonconvex optimization arises throughout machine learning,
statistics, and scientific computing
\cite{BCN18,LW15,NW06}. In the absence of additional
global structure, finding a global minimizer is generally
intractable, and approximate first-order stationarity is therefore
the standard algorithmic objective. Given a differentiable function
\(f\), the goal is to find a stationary point \(\boldsymbol{x}\) satisfying
\(\|\nabla f(\boldsymbol{x})\|_2\leq\varepsilon\), or the
corresponding approximate Karush--Kuhn--Tucker (KKT) point in the constrained setting.
The query complexity of these tasks captures how smoothness,
oracle type, constraints and randomness jointly govern the
difficulty of nonconvex optimization.

Randomization can reduce query complexity even when the
oracle and objective class remain unchanged.
For one-dimensional $C^{1,1}$ objectives under the standard
normalization, Chewi, Bubeck, and Salim
\cite{chewi2022complexity} established a polynomial separation:
deterministic gradient-only algorithms require
$\Theta(\varepsilon^{-2})$ queries, whereas randomized
algorithms achieve the optimal complexity
$\Theta(\varepsilon^{-1})$.
However they conjectured that randomized gradient-only
algorithms require $\Omega(\varepsilon^{-2})$ queries 
in dimension two, suggesting the advantage of randomization disappears in dimensions two and higher.

Other settings also admit lower bounds that limit the benefit
of randomization.
For finding approximate KKT points on the two-dimensional
cube, the randomized lower bound of Bubeck and
Mikulincer~\cite{bubeck2020trap} matches, up to logarithmic
factors, the deterministic value-query upper bound of
Hollender and Zampetakis~\cite{hollender2023computational}.
In sufficiently high dimensions, Carmon et
al.~\cite{carmon2020lowerI} proved an
$\Omega(\varepsilon^{-2})$ lower bound under
Lipschitz-gradient assumptions even for randomized algorithms
with access to derivatives of arbitrary order,
matching the deterministic gradient-descent upper bound.

Under higher-order smoothness, sharper deterministic
first-order barriers are known.
For algorithms accessing function values and gradients,
In sufficiently high dimensions, Zhou~\cite{zhou2026sharp}
proved deterministic first-order lower bounds of
$\Omega(\varepsilon^{-\frac{7}{4}})$ for $p=2$ and
$\Omega(\varepsilon^{-\frac{5}{3}})$ for any fixed $p\ge3$,
under Lipschitz bounds on derivatives of orders $1,\ldots,p$.
These match deterministic upper bounds up to logarithmic
factors~\cite{carmon2017convex}, but leave open whether
randomization can improve the corresponding rates.
Together, these results motivate the following question:
\begin{question}[\bf Randomness]
\label{question:randomness}
When does randomization reduce query complexity across dimensions?
\end{question}

Higher-order smoothness can improve the query complexity
of finding first-order stationary points.
Starting with cubic regularization
\cite{NP06}, higher-order regularization methods attain
$\mathcal O(\varepsilon^{-\frac{p+1}{p}})$ evaluation
complexity under Lipschitz continuity of the $p$th derivative
\cite{BGM+17}.
For $C^{p,\beta_p}$ objectives, universal regularization
extends this bound to
$\mathcal O(\varepsilon^{-\frac{p+\beta_p}{p+\beta_p-1}})$,
including projected-gradient guarantees under convex
constraints \cite{cartis2019universal}.
These methods access derivatives through order $p$,
so their guarantees combine higher-order smoothness
with higher-order oracle information.

Lower-order methods can also benefit from additional
smoothness.
Accelerated first-order methods improve the accuracy
exponent under Lipschitz Hessians and third derivatives
\cite{carmon2017convex}, while finite-difference and lazy
regularization methods obtain further guarantees with
explicit dimension dependence
\cite{doikov2025first,doikov2026lowerorder}.
Nevertheless, deterministic first-order complexity has
an $\Omega(\varepsilon^{-\frac{5}{3}})$ barrier under
Lipschitz bounds through any fixed order $p\ge3$ in
sufficiently high dimensions \cite{zhou2026sharp}.
The benefit of smoothness therefore depends on both
oracle access and dimension.
Existing fixed-dimensional trapping guarantees use
Lipschitz-gradient assumptions
\cite{bubeck2020trap,hollender2023computational,
huanjian2025adaptive}, leaving their dependence on
higher-order regularity to be understood.
This motivates the following question:
\begin{question}[\bf Smoothness]
\label{question:smoothness}
How much can higher-order smoothness reduce query
complexity under low-order access?
\end{question}

Access to full local information can substantially reduce query complexity.
For one-dimensional $C^{1,1}$ objectives under the standard
normalization, Chewi, Bubeck, and Salim
\cite{chewi2022complexity} proved that gradient-only
algorithms require
$\Theta(\varepsilon^{-2})$ deterministic queries or
$\Theta(\varepsilon^{-1})$ randomized queries.
Allowing function-value queries in addition to gradients
reduces the deterministic complexity to
$\Theta(\log(\frac{1}{\varepsilon}))$.

Conversely, gradient-only methods can preserve the guarantees
of methods using full first-order information in some other regimes.
Under Lipschitz gradients and Hessians, Li and Lin
\cite{li2023restarted} obtained a deterministic gradient-only
complexity of $\mathcal O(\varepsilon^{-\frac{7}{4}})$,
matching the accuracy dependence of the deterministic
joint-oracle lower bound in sufficiently high dimensions
\cite{zhou2026sharp}.
Allowing explicit dimension dependence, gradient-only methods
also achieve fast rates as
$\mathcal O(d^{\frac{1}{4}}\varepsilon^{-\frac{13}{8}})$
\cite{jiang2025improved} and
$\widetilde{\mathcal O}(d+d^{\frac{1}{3}}
\varepsilon^{-\frac{3}{2}})$
\cite{adil2025balancing}.
Together, these results motivate understanding when gradients
alone suffice to recover function-value guarantees.
\begin{question}[\bf Oracle]
\label{question:oracle-information}
How closely can gradient-only algorithms match
function-value guarantees?
\end{question}

Constraints change both the search domain and the stationarity
condition.
They also affect the computational classification.
In the standard white-box models, finding an approximate
stationary point of a smooth bounded function on $\mathbb R^2$
is $\mathsf{PLS}$-complete
\cite{hollender2023computational},
whereas finding an approximate KKT point on $[0,1]^2$
is complete for
$\mathsf{CLS}=\mathsf{PPAD}\cap\mathsf{PLS}$
\cite{fearnley2021complexity}.
Here $\mathsf{PLS}$ and $\mathsf{CLS}$ denote Polynomial
Local Search and Continuous Local Search, respectively.
Computing a KKT point remains $\mathsf{CLS}$-complete
even for explicitly given quadratic objectives on $[0,1]^d$
when the dimension is part of the input
\cite{fearnley2024complexity}.

A quantitative distinction also appears in query complexity.
For two-dimensional $C^{1,1}$ objectives under the standard
normalization, the optimal deterministic function-value
complexity is $\Theta(\varepsilon^{-1})$ for finding an
$\varepsilon$-stationary point on $\mathbb R^2$, compared
with $\Theta(\varepsilon^{-\frac{1}{2}})$ for finding an
$\varepsilon$-KKT point over $[0,1]^2$
\cite{hollender2023computational}.
These bounds show that restricting the domain and allowing
boundary stationarity can reduce query complexity.
They motivate understanding how this comparison changes
under higher-order smoothness and gradient-only access.
\begin{question}[\bf Constraints]
\label{question:constraints}
How do constraints change the complexity of finding
stationary points?
\end{question}

We make progress on these questions by developing trapping
algorithms and establishing randomized first-order lower
bounds in high dimensions.
We first develop higher-order trapping algorithms
in fixed dimension in Section~\ref{sec:3} and show that randomized gradient-only implementation
breaks the classical gradient-descent barrier already in dimension
two in Section~\ref{sec:4}. 
We then introduce a trapping-or-descent framework for arbitrary
dimension, obtaining function-value and gradient-only analogues of
the classical AR\(p\) rate in Secton~\ref{sec:trapping-or-descent}. 
Finally, we establish randomized
first-order lower bounds in high dimension, identifying regimes in
which the known upper bounds are optimal in Section~\ref{sec:lb}. Our results are stated
precisely below.

\subsection{Our Results}

\begin{table}[ht]
\caption{
Comparison of existing and our \emph{zeroth-order} query-complexity bounds for
finding an $\varepsilon$-stationary point over $\mathbb{R}^d$ or an
$\varepsilon$-KKT point over $[0,1]^d$, using exact function values only.
Here $\nu:=p+\beta_p$, and $d\ge2$ is fixed independently of
$\varepsilon$ unless its dependence is displayed.
When $p=1$, $C^{p,\beta_p}=C^{1,\beta_1}$; for $p\ge2$, our results
require no additional first-order smoothness.
Deterministic upper bounds also apply to randomized algorithms.
}
\centering
\renewcommand{\arraystretch}{1.9}
\small
\resizebox{\textwidth}{!}{
\begin{tabular}{|l|l|l|c|c|}
\hline
Problem setup
& Smoothness
& Algorithm
& Existing results
& Our results \\
\hline


\multirow{5}{*}{
\makecell{
Unconstrained\\
($\Omega=\mathbb{R}^d$)\\
function-value oracle
}}
& \multirow{2}{*}{$C^{1,1}$}
& deterministic
& \makecell[l]{
$\Theta(\varepsilon^{-1})$, $d=2$
~\cite{hollender2023computational}\\
$\mathcal{O}_d(\varepsilon^{-(d-1)})$, fixed $d\ge2$
~\cite{hollender2023computational,huanjian2025adaptive}\\
$\mathcal{O}(d\varepsilon^{-2})$
~\cite{grapiglia2023quadratic,chaudhry2026complexity}
}
& \makecell[l]{$\mathcal{O}_d(\varepsilon^{-(d-1)})$, fixed $d\ge2$\\
\hfill (Theorem~\ref{the:main1})
}
\\
\cline{3-5}

&
& randomized
& \makecell[l]{
$\Theta(\varepsilon^{-1})$, $d=2$
~\cite{hollender2023computational}\\
$\widetilde{\mathcal{O}}(d\varepsilon^{-2})$
~\cite{gratton2015direct,roberts2023direct,ghadimi2013stochastic,nesterov2017random,
bergou2020stochastic}
}
& --
\\
\cline{2-5}

& \multirow{2}{*}{
\makecell{
$C^{p,\beta_p}$\\
($p\ge1$)
}}
& deterministic
& \makecell[l]{
$\mathcal{O}(d^{\frac{3}{2}}\varepsilon^{-\frac{3}{2}})$
for $p=2$, $\beta_p=1$
~\cite{doikov2025first}\\
$\Omega(\varepsilon^{-1})$ for $C^{2,1}$~\cite{kontogiannis2024computational}
}
& \makecell[l]{
$\mathcal{O}_{d,p}\!\left(
\varepsilon^{-(d-1)}
\right)$, fixed $d\ge2$\\
\hfill (Theorem~\ref{the:main1})\\
$\widetilde{\mathcal{O}}_p\!\left(
d^p+d^{p-1+\frac{\beta_p}{\nu-1}}
\varepsilon^{-\frac{\nu}{\nu-1}}
\right)$\\
\hfill (Theorem~\ref{the:lazy-trapping-or-descent})\\
}
\\
\cline{3-5}

&
& randomized
& \makecell[l]{
$\mathcal{O}\!\big(
d^{\,2+\frac{1-\beta_p}{2\beta_p}}
\varepsilon^{-\frac{2}{\beta_p}}
\big)$ for $p=1$,
~\cite{shibaev2022zeroth}\\
$\mathcal{O}(d\varepsilon^{-2})$ for
$p=1$, $\beta_p=1$
~\cite{ghadimi2013stochastic,nesterov2017random}
}
& --
\\
\hline


\multirow{4}{*}{
\makecell{
Constrained\\
($\Omega=[0,1]^d$)\\
function-value oracle
}}
& \multirow{2}{*}{$C^{1,1}$}
& deterministic
& \makecell[l]{
$\mathcal{O}_d(\varepsilon^{-\frac{2d}{d+2}})$, fixed $d\ge2$
~\cite{vavasis1993black}\\
$d^{\mathcal{O}(d)}
\widetilde{\mathcal{O}}(\varepsilon^{-\frac{d-1}{2}})$
~\cite{bubeck2020trap}\\
$\mathcal{O}_d(\varepsilon^{-\frac{d-1}{2}})$, fixed $d\ge2$
~\cite{hollender2023computational,huanjian2025adaptive}\\
$\mathcal{O}(d\varepsilon^{-2})$
~\cite{liu2026unified}
}
& \makecell[l]{
$\widetilde{\mathcal{O}}_d
(\varepsilon^{-\frac{d-1}{2}})$ \\
\hfill (Theorem~\ref{the:main1})
}
\\
\cline{3-5}

&
& randomized
& \makecell[l]{
$\widetilde{\Theta}(\varepsilon^{-\frac{1}{2}})$, $d=2$
~\cite{bubeck2020trap}
}
& --
\\
\cline{2-5}

& \multirow{2}{*}{
\makecell{
$C^{p,\beta_p}$\\
($p\ge1$)
}}
& deterministic
& \makecell[l]{
$\mathcal{O}(d^{\frac{3}{2}}\varepsilon^{-\frac{3}{2}})$
for $p=2$, $\beta_p=1$
~\cite{doikov2025first}\\
}
& \makecell[l]{
$\widetilde{\mathcal{O}}_{d,p}\!\left(
\varepsilon^{-\frac{d-1}{p+\beta_p}}
\right)$,
fixed $d\ge2$\\
\hfill (Theorem~\ref{the:main1})\\
$\widetilde{\mathcal{O}}_p\!\left(
d^p+d^{p-1+\frac{\beta_p}{\nu-1}}
\varepsilon^{-\frac{\nu}{\nu-1}}
\right)$,\\
\hfill (Theorem~\ref{the:lazy-trapping-or-descent})\\
}
\\
\cline{3-5}

&
& randomized
& --
& --
\\
\hline

\end{tabular}
}
\label{tab:function-value-comparison}
\end{table}

\paragraph{Trapping with higher-order smoothness (results in Section~\ref{sec:3}).}
Following \cite{hollender2023computational}, we study the
value-query complexity of trapping-based methods. For two-dimensional
objectives in $C^{1,1}$, they developed deterministic zeroth-order
trapping algorithms requiring
$\mathcal{O}(\varepsilon^{-1})$ queries in the unconstrained setting and
$\mathcal{O}(\varepsilon^{-1/2})$ queries in the constrained setting.
Matching lower bounds show that both rates are optimal in two-dimensional case~\cite{bubeck2020trap,hollender2023computational}. They further
posed the problem of characterizing the query complexity under
higher-order smoothness in both settings. 
We make progress on this question by
extending the trapping argument to objectives in
$C^{p,\beta_p}$ and exploiting higher-order smoothness.
Writing $\nu:=p+\beta_p$, our first result is summarized below.

\begin{theorem}[\bf informal, see Theorem~\ref{the:main1}]
\label{thm:informal-low-dimensional-trapping}
For objective in $C^{p,\beta_p}$, there exist $0$-th order
deterministic trapping algorithms that find an
$\varepsilon$-stationary point in the unconstrained setting using $\mathcal{O}_{d,p}
    \bigl(\varepsilon^{-(d-1)}\bigr)$
queries and an $\varepsilon$-KKT point over $[0,1]^d$ using $    \mathcal{O}_{d,p}
    \bigl(\varepsilon^{-(d-1)/\nu}\bigr)$
queries.
\end{theorem}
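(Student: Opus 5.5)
We extend the trapping scheme of \cite{hollender2023computational}: keep a nested sequence of boxes (or, in the unconstrained case, a bounded sublevel region). Each box $B$ is certified to contain an $\varepsilon$-stationary (resp.\ $\varepsilon$-KKT) point whenever the minimum of $f$ over $\partial B$ is not attained in the interior of $B$ by a sufficient margin. The basic certificate is this. Let $\boldsymbol{y}$ be a point with $f(\boldsymbol{y}) < \min_{\partial B} f$. Then the minimizer of $f$ over $B$ is interior and stationary. In the constrained case, $\partial B\cap\partial[0,1]^d$ does not need to be excluded, since a minimizer there is a KKT point.

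\emph{Boundary certification.} We only evaluate $f$ on a grid on $\partial B$, so we must certify that $\min_{\partial B} f \ge \min_{\text{grid}} f - \delta$. Under $C^{p,\beta_p}$, piecewise-polynomial interpolation of degree $p$ on each $(d-1)$-dimensional boundary face, with mesh $h$, gives uniform error $O(h^{\nu})$. If the gradient is small on the face, a Taylor expansion of order $p$ with small first derivative gives the same bound. Hence reaching additive accuracy $\delta$ needs $O((\mathrm{side}/\delta^{1/\nu})^{d-1})$ values per boundary. Higher smoothness enters exactly here.

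\emph{Recursion and counting.} We use the standard bisection. Split the current box through its middle hyperplane and evaluate on the new interface with accuracy $\delta_k$. Keep the half whose sampled boundary minimum is lowest, the ``trap''. The box side halves each round. We stop at side $r$ where $C^{p,\beta_p}$ plus ``interior point below the boundary by $\delta$'' forces $\|\nabla f\|\le\varepsilon$. Concretely, if $\|\nabla f\|>\varepsilon$ everywhere in a box of side $r$, the descent from the interior minimizer to the boundary is at least about $\varepsilon r$ minus the curvature terms, so $\delta\approx \varepsilon r$.

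In the constrained case the initial box is $[0,1]^d$, so no unbounded search is needed. The geometric sum of face costs is dominated by the first levels. With $\delta\approx\varepsilon$ on the unit scale we get $\delta^{-(d-1)/\nu}=\varepsilon^{-(d-1)/\nu}$, up to a $\log(1/\varepsilon)$ factor from the number of levels. For the unconstrained case we cannot exploit $\nu$ in the same way. The lower-bound-type constructions with sublevel trapping force $\delta$ relative to the scale $\varepsilon r$ with $r$ unbounded, where $r\sim\Delta/\varepsilon$ is the size of the sublevel set needed. Summing gives $O(\varepsilon^{-(d-1)})$ at all smoothness levels. Equivalently, scaling shows the mesh ratio $r/h$ stays constant per unit of descent, so the cost matches the $C^{1,1}$ count of \cite{huanjian2025adaptive}, which we reuse.

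\emph{Main obstacle.} The delicate step is making the trap invariant robust to approximation error accumulating across levels. The interpolated boundary minimum is only accurate to $\delta_k$, and the certified gap must survive all $\log(1/\varepsilon)$ levels. I would choose $\delta_k$ decreasing geometrically with the side length so that $\sum_k\delta_k$ stays below the final gap $\varepsilon r$. I would then verify that the per-level costs $(\mathrm{side}_k/\delta_k^{1/\nu})^{d-1}$ still sum to the claimed bound, giving the log factor.

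A second difficulty is controlling the interpolation error without a first-order Lipschitz bound when $p\ge2$. Here I would use the local polynomial model itself, whose coefficients are estimated from the same samples, to bound derivatives on each cell.
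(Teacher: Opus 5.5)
Your argument has a genuine gap at its foundation. The additive certificate $f(y)<\min_{\partial B}f$ can neither be initialized in the unconstrained case nor maintained under approximate separator minimization. For a merely lower-bounded $f$ there may be no box around $x^0$ whose boundary values all exceed $f(x^0)$ (e.g.\ $f(y)=1-\tanh(y_1)$), so ``a bounded sublevel region'' is not available. Suppose the sampled interface minimum falls below the pivot value and you move to the sampled minimizer $z^\star$. Then the true values on the new facet may lie up to $\delta_k$ \emph{below} $f(z^\star)$, and with bisection $z^\star$ even sits on the new boundary, so the strict interior certificate is lost.

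The paper avoids both problems by using the distance-penalized trapping function $\phi_{x,\eta}(y)=f(y)-f(x)+\eta\|y-x\|_2$ together with trisection. With $f\ge0$, the boundary of the cube of side $16f(x^0)/\varepsilon$ centred at $x^0$ is $\frac{\varepsilon}{4}$-unreachable from $x^0$. At each step the cutting rule keeps the new pivot at distance at least $r^t/6$ from the new facet. The triangle inequality transfers the old certificate because $\phi_{x^t,\varepsilon_t}(x^{t+1})\le0$. The minimization error $\varepsilon_0^t=r^t\Delta_t/12$ is absorbed by raising the slope to $\varepsilon_{t+1}=\varepsilon_t+\Delta_t$, since $\Delta_t r^t/6>\varepsilon_0^t$, and $\sum_t\Delta_t\le\varepsilon/4$. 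This makes the tolerance at level $t$ proportional to $\varepsilon r^t$ at the \emph{current} scale, so the costs form a convergent geometric series. Your accounting instead keeps $\sum_k\delta_k$ below the final gap $\varepsilon r_{\mathrm{final}}$, forcing $\delta_0\lesssim\varepsilon r_{\mathrm{final}}$. The first level on $[0,1]^d$ then already costs about $(L_p/(\varepsilon r_{\mathrm{final}}))^{(d-1)/\nu}$, e.g.\ $\varepsilon^{-(d-1)/(\nu-1)}$ for $r_{\mathrm{final}}\asymp\varepsilon^{1/(\nu-1)}$. That contradicts your own count with $\delta\approx\varepsilon$ at unit scale.

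The second gap is the output step, which is where $p\ge2$ genuinely needs a new idea. Knowing that a small box contains a stationary point does not certify any particular point. Without a $C^{1,1}$ bound there is no control of $\nabla f$ across the box, so the ``curvature terms'' in your stopping rule are not bounded by the given parameters, and you never say which point is returned. The paper stops once $r^T\lesssim d^{-1/2}(\varepsilon/L_p)^{1/(\nu-1)}$ and uses $\binom{d+p}{p}$ values to build a degree-$p$ polynomial $P_T$ with $\|\nabla P_T-\nabla f\|_{\infty,R_T}\le\varepsilon/8$. It returns a minimizer $z$ of $\Psi_T(y)=P_T(y)-P_T(x^T)+\frac{3\varepsilon}{4}\|y-x^T\|_2$ over $R_T$. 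Integrating the gradient error gives $\Psi_T\ge\phi_{x^T,\varepsilon/2}+\frac{\varepsilon}{8}\|\cdot-x^T\|_2>0$ on the certified boundary, because $\varepsilon_T\le\varepsilon/2$. Hence $z$ is interior (relative to $[0,1]^d$ in the constrained case). The first-order condition then yields $\|\nabla f(z)+n\|_2\le\frac{7\varepsilon}{8}$, with $n=0$ in the unconstrained case and $n\in N_{[0,1]^d}(z)$ in the constrained case. This gives $\varepsilon$-stationarity or $\|G_\gamma(z)\|_2\le\varepsilon$ for every $\gamma>0$. Your ``second difficulty'' is misplaced: the H\"older Taylor remainder bounds the separator interpolation error using $L_p$ alone, so no first-order Lipschitz constant is needed there. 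Only the terminal certification requires this model-based step.
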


These results  make progress on Question~\ref{question:constraints} by strengthening the known contrast between constrained and
unconstrained stationarity. In dimension two, our constrained
guarantee improves from $\mathcal{O}(\varepsilon^{-\frac{1}{2}})$ under
$C^{1,1}$ smoothness to $\mathcal{O}(\varepsilon^{-\frac{1}{3}})$ under
$C^{2,1}$ smoothness. By comparison, finding an unconstrained
$\varepsilon$-stationary point still requires
$\Omega(\varepsilon^{-1})$ deterministic queries under
$C^{2,1}$ smoothness, even with access to Hessian evaluations
\cite[Appendix~F]{kontogiannis2024computational}.
Thus, our result establishes a stronger separation between the two
settings under Hessian Lipschitzness, while the unconstrained
accuracy exponent remains optimal in dimension two.
Regarding Question~\ref{question:smoothness}, these results
show that higher-order smoothness improves the cube-constrained
accuracy exponent to $\frac{d-1}{p+\beta_p}$ using only
function-value queries.

\paragraph{Fast trapping with a gradient-only oracle~(results in Section~\ref{sec:4}).}
Following~\cite{chewi2022complexity}, we study the power of
randomization in the gradient-only oracle model.
For one-dimensional $C^{1,1}$ objectives,
\cite{chewi2022complexity} established a separation between
deterministic and randomized algorithms, showing that randomization
is necessary to improve upon gradient descent when function values
are unavailable.
They further conjectured that randomized gradient-only algorithms
require $\Omega(\varepsilon^{-2})$ queries already in dimension two.
We refute this conjecture: our randomized trapping algorithm finds
an $\varepsilon$-stationary point of a $C^{1,1}$ objective on
$\mathbb R^2$ using
$\widetilde{\mathcal O}(\varepsilon^{-\frac{4}{3}})$ gradient queries.
Together with their deterministic
$\Omega(\varepsilon^{-2})$ lower bound, this establishes a
polynomial separation in dimension two. We also develop a deterministic gradient-only variant and extend both algorithms to $C^{p,\beta_p}$ objectives.

\begin{theorem}[\bf Informal, see Theorem~\ref{the:main_gradient}]
Let $\nu = p+\beta_p$. For $C^{p,\beta_p}(L_p)$ objectives, there exist trapping algorithms
using only exact gradient queries with the following guarantees.
\begin{enumerate}
\item \textbf{Deterministic algorithms.}
An $\varepsilon$-stationary point in the unconstrained setting
can be found using
$\widetilde{\mathcal O}\!\left(
\varepsilon^{-\max\{d-1,\,\nu/(\nu-1)\}}\right)$ queries,
and an $\varepsilon$-KKT point over $[0,1]^d$ can be found using
$\widetilde{\mathcal O}\!\left(
\varepsilon^{-\max\{(d-1)/\nu,\,1/(\nu-1)\}}\right)$ queries.

\item \textbf{Randomized algorithms.}
With high probability, an $\varepsilon$-stationary point in the
unconstrained setting can be found using
$\widetilde{\mathcal O}\!\left(
\varepsilon^{-\max\{d-1,\,2\nu/(2\nu-1)\}}\right)$ queries,
and an $\varepsilon$-KKT point over $[0,1]^d$ can be found using
$\widetilde{\mathcal O}\!\left(
\varepsilon^{-\max\{(d-1)/\nu,\,2/(2\nu-1)\}}\right)$ queries.
\end{enumerate}
\end{theorem}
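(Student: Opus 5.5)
We reduce the gradient-only statement to the function-value trapping algorithms of Theorem~\ref{thm:informal-low-dimensional-trapping} (Theorem~\ref{the:main1}). The trapping procedure never needs absolute function values. It only needs \emph{differences} $f(\boldsymbol{y})-f(\boldsymbol{x})$ between points it compares, for instance boundary points of the current trap against its interior or best point, accurate to an additive tolerance of order $\varepsilon\cdot r$, where $r$ is the current trap scale. We therefore simulate each value comparison by a line integral
\[
f(\boldsymbol{y})-f(\boldsymbol{x})=\int_0^1\langle\nabla f(\boldsymbol{x}+t(\boldsymbol{y}-\boldsymbol{x})),\boldsymbol{y}-\boldsymbol{x}\rangle\,dt .
\]
We also organize queries along the boundary polylines of the traps, so that successive differences telescope and no error is paid per pair of points. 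The total cost is then the query count of Theorem~\ref{the:main1}, namely $\varepsilon^{-(d-1)}$ unconstrained or $\varepsilon^{-(d-1)/\nu}$ on the cube, plus the cost of the line integrations. This explains the $\max\{\cdot,\cdot\}$ form of both bounds.

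\textbf{Deterministic quadrature.} Along a segment of length $\ell$, the map $g(t)=\langle\nabla f(\cdot),\boldsymbol{y}-\boldsymbol{x}\rangle$ lies in $C^{p-1,\beta_p}$. A piecewise degree-$(p-1)$ interpolatory rule with mesh $h$ therefore has error $\mathcal O(L_p\,\ell\,h^{\nu-1})$. Requiring the accumulated error to be at most $\varepsilon\ell$ on each segment gives $h\asymp\varepsilon^{1/(\nu-1)}$. Since the total path length that must be integrated is $\mathcal O(1)$, this costs $\varepsilon^{-1/(\nu-1)}$ gradient queries on $[0,1]^d$. In the unconstrained case the trap must be found within a region of diameter $\mathcal O(\Delta/\varepsilon)$ by the standard descent-or-trap argument, and the path length becomes $\mathcal O(\varepsilon^{-1})$. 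This yields $\varepsilon^{-1-1/(\nu-1)}=\varepsilon^{-\nu/(\nu-1)}$.

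\textbf{Randomized quadrature.} We replace the deterministic rule by stratified Monte Carlo with a degree-$(p-1)$ control variate on each cell. On each of $N$ cells of length $h$, we fit the local Taylor-type interpolant and sample one uniformly random point to estimate the residual. The estimator is unbiased, with per-cell residual $\mathcal O(h^{\nu-1})$, so the standard deviation over $N=1/h$ cells is about $h^{\nu-1}\sqrt{N}=h^{\nu-1/2}$. Because the errors are bounded and independent, Bernstein or Hoeffding concentration gives high-probability accuracy $\varepsilon$, up to logarithmic factors, when $h\asymp\varepsilon^{2/(2\nu-1)}$. This gives $\varepsilon^{-2/(2\nu-1)}$ queries on the cube and $\varepsilon^{-2\nu/(2\nu-1)}$ unconstrained. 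A union bound over the polynomially many comparisons contributes the logarithmic factors. As a check, $\nu=2$ and $d=2$ give $\varepsilon^{-4/3}$.

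\textbf{Main obstacle.} The hard step is making the error accounting global rather than per comparison. If each of the $\varepsilon^{-(d-1)}$ comparisons paid its own quadrature cost at full accuracy, the bounds would multiply instead of taking a maximum. We plan to anchor all boundary values to a single reference point and integrate once along a spanning path of the trap's query grid. Each trapping step then reuses previously integrated segments, so that only new boundary length is integrated. We must also verify that the trapping argument of Theorem~\ref{the:main1} tolerates additive value errors of size $c\,\varepsilon r$ at scale $r$. To do so we will redo its shrinking-lemma argument with slack. Specifically, an approximate comparison that concludes $f(\text{boundary})>f(\text{center})-c\varepsilon r$ still forces a point of gradient norm $\mathcal O(\varepsilon)$ inside, because the trap certificate degrades only by a constant factor.
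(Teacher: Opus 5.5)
The blind proof proposal is correct for $d=2$ but has a genuine gap for $d\ge3$: the spanning-path integration step does not produce the claimed maximum there. For $d=2$ your scheme is sound, because each separator is a segment and one polyline through it has length $O(r)$; this recovers, for example, the $\widetilde{\mathcal O}(\varepsilon^{-4/3})$ bound.

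\textbf{Where it fails for $d\ge3$.} Fix one cut at scale $r$ with nodal tolerance $\delta_0\asymp\varepsilon r$, and set $B:=L_pr^{\nu}/\delta_0$.
\begin{itemize}
\item The separators are $(d-1)$-dimensional, so the interpolation grid needs spacing $s\asymp rB^{-1/\nu}$ and $N\asymp B^{(d-1)/\nu}$ nodes.
\item Any path or tree through these nodes has total length at least of order $Ns=rB^{(d-2)/\nu}$. Your claim that the integrated length is $O(1)$ on the cube, and $O(\varepsilon^{-1})$ unconstrained, holds only for $d=2$.
\item Suppose every segment is integrated at one uniform accuracy per unit length. A root path of length $\gtrsim r$ then forces the deterministic mesh $h\lesssim rB^{-1/(\nu-1)}$.
\item The cost is therefore at least $Ns/h\asymp B^{(d-2)/\nu+1/(\nu-1)}$. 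This exceeds $B^{\max\{(d-1)/\nu,\,1/(\nu-1)\}}$ polynomially, because $1/(\nu-1)>1/\nu$.
\end{itemize}
For $d=3$ and $\nu=2$ this gives $\varepsilon^{-3/2}$ on the cube instead of $\varepsilon^{-1}$, and $\varepsilon^{-3}$ unconstrained instead of $\varepsilon^{-2}$. The randomized rule fails the same way, with $2/(2\nu-1)$ in place of $1/(\nu-1)$. A snake-like spanning path is worse, since its root paths have length of order $Ns$. Reuse across cuts cannot help, because the very first cut already incurs this cost.

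\textbf{The missing idea.} You need a multiscale integration tree with levelwise tolerances. The paper does this as follows.
\begin{itemize}
\item It places nested dyadic grids on each separator.
\item Each node first appearing at level $\ell$ is attached to a parent at level $\ell-1$ by an edge of length $\lesssim\sqrt d\,r2^{-\ell}$. Level-$0$ nodes are attached to the pivot.
\item Every root path then contains at most one edge per level. Hence level-$\ell$ edges can receive tolerances $\xi_\ell$ with $\sum_\ell\xi_\ell\lesssim\delta_0$.
\end{itemize}
Take equal tolerances $\xi_\ell=\delta_0/(J+1)$ with $2^{J}\asymp B^{1/\nu}$, and $\kappa=1/(\nu-1)$ (deterministic) or $\kappa=2/(2\nu-1)$ (randomized). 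The cost $\sum_{\ell\le J}2^{(d-1)\ell}\bigl[1+\bigl(L_p(r2^{-\ell})^{\nu}/\xi_\ell\bigr)^{\kappa}\bigr]$ is then, up to logarithmic factors, $2^{(d-1)J}+B^{\kappa}\sum_{\ell\le J}2^{(d-1-\nu\kappa)\ell}$.

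This is a geometric sum. It is dominated either by level $0$, giving $B^{\kappa}$, or by level $J$, giving $B^{(d-1)/\nu}$; that is where the maximum comes from. The paper's level weights only sharpen the logarithms. The mechanism is $\nu\kappa>1$: an edge's cost grows superlinearly in its length, so many short, individually cheap fine-level edges beat long segments integrated at uniform accuracy.

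\textbf{A smaller repair.} A fixed slack $c\,\varepsilon r$ per cut accumulates over $\Theta(d\log(1/\varepsilon))$ cuts. The certificate therefore does not merely ``degrade by a constant factor''. The increments of the trapping parameter must be summable, for example $c\asymp1/\log(1/\varepsilon)$, or the paper's schedule $\Delta_t\propto\alpha^{\lfloor t/d\rfloor}$ with $\alpha>(2/3)^{\nu-1}$.
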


For Question~\ref{question:randomness}, our theorem suggests the power of randomness in low dimension. Specifically, for two-dimensional $C^{1,1}$ objectives, our deterministic
unconstrained bound is $\mathcal O(\varepsilon^{-2})$, matching
the gradient-only lower bound of~\cite{chewi2022complexity}.
Our randomized bound
$\widetilde{\mathcal O}(\varepsilon^{-\frac{4}{3}})$ therefore establishes
a polynomial separation between the two algorithm classes.
This extends the one-dimensional separation
of~\cite{chewi2022complexity} to dimension two and shows that,
even there, randomization permits gradient-only algorithms to
improve upon gradient descent.

For Question~\ref{question:smoothness}, our theorem also implies that higher-order regularity further improves both two-dimensional
guarantees.
For $C^{2,1}$ objectives, our deterministic bounds become
$\mathcal O(\varepsilon^{-\frac{3}{2}})$ in the unconstrained setting
and $\mathcal O(\varepsilon^{-\frac{1}{2}})$ over $[0,1]^2$ while
randomization improves these bounds to
$\widetilde{\mathcal O}(\varepsilon^{-\frac{6}{5}})$ and
$\widetilde{\mathcal O}(\varepsilon^{-\frac{2}{5}})$, respectively.
Existing gradient-only methods for unconstrained optimization
under Lipschitz gradients and Hessians achieve
$\mathcal O(\varepsilon^{-\frac{7}{4}})$
\cite{li2023restarted},
$\mathcal O(d^{\frac{1}{4}}\varepsilon^{-\frac{13}{8}})$
\cite{jiang2025improved}, and
$\widetilde{\mathcal O}(d^{\frac{1}{3}}\varepsilon^{-\frac{3}{2}}+d)$
\cite{adil2025balancing}.
Our deterministic two-dimensional guarantee matches the
$\varepsilon^{-\frac{3}{2}}$ accuracy exponent of the latter bound,
while our randomized guarantee improves it to
$\varepsilon^{-\frac{6}{5}}$.
More generally, for each fixed $\nu=p+\beta_p>1$, our
two-dimensional deterministic exponents are $\frac{\nu}{\nu-1}$
and $\frac{1}{\nu-1}$, and the randomized exponents are
$\frac{2\nu}{2\nu-1}$ and $\frac{2}{2\nu-1}$.
All four exponents decrease with $\nu$.

For Question~\ref{question:oracle-information}, our theorem also shows that function values also provide a useful benchmark for this separation.
For two-dimensional $C^{1,1}$ objectives, deterministic
value-query trapping uses $ \Theta(\varepsilon^{-1})$ queries
in the unconstrained setting and
$\Theta(\varepsilon^{-\frac{1}{2}})$ queries over $[0,1]^2$
\cite{hollender2023computational}.
Our randomized gradient-only bounds,
$\widetilde{\mathcal O}(\varepsilon^{-\frac{4}{3}})$ and
$\widetilde{\mathcal O}(\varepsilon^{-\frac{2}{3}})$, move closer to
these benchmarks.
For every fixed $d\ge3$, however, our randomized algorithms already
match the accuracy exponents of the preceding value-query trapping
bounds under $C^{p,\beta_p}$ smoothness.
The deterministic variants also match these exponents whenever
$d-1\ge\frac{\nu}{\nu-1}$, which holds for every $d\ge3$ if $p\ge2$
or if the objective is $C^{1,1}$.


For Question~\ref{question:constraints}, the theorem also sharpens the contrast between constrained and unconstrained stationarity in
the gradient-only model.
For two-dimensional $C^{1,1}$ objectives, our deterministic
algorithm finds an $\varepsilon$-KKT point over $[0,1]^2$
using $\mathcal O(\varepsilon^{-1})$ gradient queries,
whereas deterministic gradient-only algorithms require
$\Omega(\varepsilon^{-2})$ queries in the unconstrained
setting~\cite{chewi2022complexity}.
The randomized guarantee sharpens this contrast:
$\widetilde{\mathcal O}(\varepsilon^{-\frac{2}{3}})$ queries suffice
in the constrained setting, polynomially fewer than the
$\Omega(\varepsilon^{-1})$ queries required by randomized
gradient-only algorithms in the unconstrained
setting~\cite{chewi2022complexity}.

\paragraph{Trapping-or-descent framework~(results in Section~\ref{sec:trapping-or-descent}).}
The preceding methods are effective in fixed low dimension, but their
accuracy exponents deteriorate as the dimension grows. To avoid this
deterioration, we introduce a trapping-or-descent framework and its lazy version.

\begin{theorem}[\bf Informal; see
Theorem~\ref{the:lazy-trapping-or-descent}]
\label{thm:informal-lazy-trapping}
There are deterministic trapping-or-descent algorithms that return an
$\varepsilon$-stationary point when $\mathcal X=\mathbb R^d$, and an
$\varepsilon$-KKT point when $\mathcal X=[0,1]^d$, using respectively
$$
\begin{aligned}
N_{\mathrm{fval}}
&=
\mathcal O_{p,\beta_p}\!\left(
d^p+d^{p-1+\frac{\beta_p}{\nu-1}}
\varepsilon^{-\frac{\nu}{\nu-1}}
\right),\qquad
N_{\mathrm{grad}}
=
\mathcal O_{p,\beta_p}\!\left(
d^{p-1}+d^{p-2+\frac{\beta_p}{\nu-1}}
\varepsilon^{-\frac{\nu}{\nu-1}}
\right)
\end{aligned}
$$
function-value queries and strict gradient-only queries.
\end{theorem}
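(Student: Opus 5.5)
The plan is to analyze a single \emph{round} of the trapping-or-descent framework, then amortize rounds with a descent-counting argument, and finally charge the lazy reuse of boundary models. Each round fixes a current point \(\boldsymbol{x}\) and a radius \(r\), to be tuned later as \(r\asymp (\varepsilon/L_p)^{1/(\nu-1)}\). On the ball (or cube face region) \(B(\boldsymbol{x},r)\cap\mathcal X\), we build a degree-\(p\) polynomial model \(m\) of \(f\). With function values, this comes from interpolation on a unisolvent set of \(\binom{d+p}{p}=\Theta(d^p)\) points. With gradients, we interpolate \(\nabla f\) by a degree-\((p-1)\) vector field, which costs \(\Theta(d^{p-1})\) gradient queries, since each query returns \(d\) numbers. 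In both cases the \(C^{p,\beta_p}\) Hölder bound gives a uniform error \(|f-m|\lesssim L_p r^{\nu}\) and \(\|\nabla f-\nabla m\|\lesssim L_p r^{\nu-1}\) on the region, with constants depending only on \(p,\beta_p\).

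\textbf{Dichotomy within a round.} We then (approximately) minimize the model over the region; no queries are needed for this. There are two cases.

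\emph{Descent.} If some boundary point \(\boldsymbol{y}\) has \(m(\boldsymbol{y})\le m(\boldsymbol{x})-cL_p r^{\nu}\), then by the model error \(f(\boldsymbol{y})\le f(\boldsymbol{x})-\Omega(L_pr^\nu)=f(\boldsymbol{x})-\Omega(\varepsilon^{\nu/(\nu-1)}L_p^{-1/(\nu-1)})\). This is certified decrease, and we move to \(\boldsymbol{y}\).

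\emph{Trapping.} Otherwise the boundary values all exceed the center value minus a small margin. Because \(f\) is continuous on a compact set, \(f\) has a minimizer of the region in its interior, up to the model error. By a perturbation argument (adding a small quadratic barrier, as in the fixed-dimensional trapping of Theorem~\ref{the:main1}), that minimizer is a point whose gradient, or whose projected-gradient/KKT residual on \([0,1]^d\), is at most \(C L_p r^{\nu-1}\le\varepsilon\). We locate this point by minimizing the model and output it after one verification query. For \(\mathcal X=[0,1]^d\), boundary faces of the cube are treated as part of the feasible region rather than as trap boundary, so minimizers on them are KKT points.

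\textbf{Counting.} Let \(\Delta=f(\boldsymbol{x}_0)-\inf f\). The number of descent rounds is at most \(\Delta L_p^{1/(\nu-1)}\varepsilon^{-\nu/(\nu-1)}\). A naive count therefore gives \(d^p\varepsilon^{-\nu/(\nu-1)}\) value queries. The lazy step improves this by reusing boundary models. Since \(\nabla^p f\) changes by at most \(L_p\|\cdot\|^{\beta_p}\), the top-order coefficients of the model stay valid for a block of \(k\) successive moves, spanning distance \(kr\), provided the accumulated error \(L_p (kr)^{\beta_p} r^{p}\) remains below the descent margin. The paper's exponent suggests instead balancing so that only \(O(d^{p-1})\) fresh queries (lower-order corrections along the new direction) are needed per move. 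A full \(d^p\) refresh is then paid once every \(d^{\beta_p/(\nu-1)}\)-type block. Concretely, I would set the refresh period so that refresh cost plus per-move cost totals
\[
d^p+d^{p-1+\frac{\beta_p}{\nu-1}}\varepsilon^{-\frac{\nu}{\nu-1}}.
\]
The gradient version has the same structure with every \(d\)-power lowered by one, because each gradient query carries \(d\) scalars.

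\textbf{Main obstacle.} I expect the lazy-reuse accounting to be the hardest step. It requires showing that a stale high-order model, updated only in its lower-order part, still yields valid descent certificates and a valid trapping certificate with the same error budget. It also requires choosing the refresh schedule so that the exponent \(\beta_p/(\nu-1)\) emerges. My first attempt would be to carry Taylor expansions around the last refresh point with the Hölder remainder, and to optimize the block length \(k\) against the factor \(d\).
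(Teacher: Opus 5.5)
Your outer framework matches the paper's: balls of radius $r\asymp(\varepsilon/L_p)^{1/(\nu-1)}$, and a threshold test on the sphere. That test is equivalent to the paper's $\tfrac{\varepsilon}{4}\|\cdot-\boldsymbol x\|_2$ penalty, which is constant on the sphere. You also have a decrease of order $\varepsilon r$ per move, and a barrier-based terminal step that certifies only the relative boundary on the cube. A quadratic barrier of weight $\asymp\varepsilon/r$ works as well as the paper's linear one, after adjusting constants.

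The gap is the lazy step. It is the only thing separating this statement from the fresh bound $d^p[1+L_p^{1/(\nu-1)}\Delta\varepsilon^{-\nu/(\nu-1)}]$, and part of what you write about it is wrong. The per-move update cannot be a correction ``along the new direction''. It must re-interpolate the entire degree-$(p-1)$ part on the translated stencil $\boldsymbol x+r\mathcal U_{p-1}$ ($N_{d,p-1}$ values, or $N_{d,p-2}$ gradients), while keeping the degree-$p$ part from the refresh center $\boldsymbol a$. With fixed nested normalized nodes, this is exactly the paper's telescoped model $\widehat A_{\boldsymbol x}=I_pA_{\boldsymbol a}+I_{p-1}(A_{\boldsymbol x}-A_{\boldsymbol a})$. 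Its error is $\tfrac32L_pr^\nu+\tfrac32L_pr^p\|\boldsymbol x-\boldsymbol a\|_2^{\beta_p}\le3m^{\beta_p}L_pr^\nu$, which depends only on the net displacement. That is what justifies your $(kr)^{\beta_p}$. If you instead bound the $m-1$ successive corrections separately and sum, you get order $mL_pr^\nu$, which loses the exponent when $\beta_p<1$.

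Your accounting also misplaces where $d^{\beta_p/(\nu-1)}$ comes from, and choosing the period ``so that'' the total equals the target is circular. The margin must be measured as $\varepsilon r$, not as $cL_pr^\nu$ with fixed $c$, since the lazy error $m^{\beta_p}L_pr^\nu$ would swamp the latter and cap the block length at $O(1)$. So the radius must shrink to $r_m=(\varepsilon/(96m^{\beta_p}L_p))^{1/(\nu-1)}$. This inflates the number of model calls to $K\le1+Cm^{\beta_p/(\nu-1)}L_p^{1/(\nu-1)}\Delta\varepsilon^{-\nu/(\nu-1)}$. With $q=p$ (values) or $q=p-1$ (gradients), and using $\frac{\beta_p}{\nu-1}-1=-\frac{p-1}{\nu-1}$,
\[
Q\le N_{d,q}+K\Bigl(\frac{N_{d,q}}{m}+N_{d,q-1}\Bigr)
\lesssim d^q+\Bigl(d^q m^{-\frac{p-1}{\nu-1}}+d^{q-1}m^{\frac{\beta_p}{\nu-1}}\Bigr)L_p^{\frac{1}{\nu-1}}\Delta\,\varepsilon^{-\frac{\nu}{\nu-1}}.
\]
The two terms balance at $m=d$. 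The factor $d^{\beta_p/(\nu-1)}$ is the move-count inflation at that period, not a block length. Refreshing every $d^{\beta_p/(\nu-1)}$ moves would give, for example, $d^{7/4}$ instead of $d^{3/2}$ for $p=2$, $\beta_2=1$ with function values.

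Two smaller points. First, in gradient mode the interpolated field $V_{\boldsymbol x}$ is not a gradient. The scalar model must come from radial integration, $P_{\boldsymbol x}(\boldsymbol x+r\boldsymbol u)=\int_0^1\langle V_{\boldsymbol x}(t\boldsymbol u),\boldsymbol u\rangle\,dt$, and bounding $\nabla P_{\boldsymbol x}-\nabla f$ also requires the derivative error of $V_{\boldsymbol x}$. Second, no verification query is needed, and one value query could not certify a gradient bound anyway. The output is certified by the model's gradient accuracy together with approximate first-order optimality of model plus barrier, at a point the transferred barrier keeps off the relative boundary.
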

\begin{table}[t]
\caption{
Comparison of representative existing and our oracle-complexity bounds.
The main model is the \emph{strict gradient-only oracle} and deterministic upper bounds also apply to randomized
algorithms.
Here $\nu:=p+\beta_p$.
``Fixed $d$'' means that $d\ge2$ is independent of $\varepsilon$.
Our bounds that improve upon or match the best previously known
bounds in the corresponding settings are highlighted in
\textcolor{red}{red} and \textcolor{blue}{blue}, respectively.
}
\centering
\renewcommand{\arraystretch}{1.55}
\scriptsize
\resizebox{\textwidth}{!}{
\begin{tabular}{|l|l|l|c|c|}
\hline
Problem setup
& Smoothness
& Algorithm
& Existing results
& Our results \\
\hline


\multirow{20}{*}{
\makecell{
Unconstrained\\
($\Omega=\mathbb{R}^d$, $f\geq0$)\\
gradient-only
}}
& \multirow{2}{*}{$C^{1,1}$}
& deterministic
& \makecell[l]{
$\Theta(\varepsilon^{-2})$,  every $d\ge1$
~\cite{chewi2022complexity}
}
& \makecell[l]{
$\widetilde{\mathcal O}_d(\varepsilon^{-(d-1)})$,
fixed $d\ge2$ (Theorem~\ref{the:main_gradient})
}
\\
\cline{3-5}

&
& randomized
& \makecell[l]{
$\Omega(\varepsilon^{-1})$
~\cite{chewi2022complexity};\\
$\Omega(\varepsilon^{-2})$,
$d\gtrsim\varepsilon^{-4}$,
even with $f$ and all derivatives
~\cite{carmon2020lowerI}
}
& \makecell[l]{
$\textcolor{red}{\widetilde{\mathcal O}(\varepsilon^{-\frac{4}{3}})}$, $d=2$
(Theorem~\ref{the:main_gradient});\\
$\widetilde{\mathcal O}_d(\varepsilon^{-(d-1)})$,
fixed $d\ge3$ (Theorem~\ref{the:main_gradient})
}
\\
\cline{2-5}

& \multirow{4}{*}{$C^{2,1}$}
& deterministic
& \makecell[l]{
$\mathcal O(\varepsilon^{-\frac{7}{4}})$,
~\cite{li2023restarted,patitucci2026improving};\\
$\widetilde{\mathcal O}(d+d^{\frac{1}{3}}\varepsilon^{-\frac{3}{2}})$,
\cite{adil2025balancing,chen2026faster};\\
$\mathcal O(\varepsilon^{-\frac{3}{2}})$, given
$f,\nabla f,\nabla^2f$~\cite{cartis2020sharp,gratton2023convergence};\\
$\Omega(\varepsilon^{-\frac{7}{4}})$,
$d\gtrsim\varepsilon^{-\frac{7}{4}}$, even with $(f,\nabla f)$ and additional $C^{1,1}$
~\cite{zhou2026sharp}
}
& \makecell[l]{
\textcolor{blue}{$\widetilde{\mathcal O}(\varepsilon^{-\frac{3}{2}})$}, $d=2$
(Theorem~\ref{the:main_gradient});\\
$\mathcal O\!\left(
d+d^{\frac{1}{2}}\varepsilon^{-\frac{3}{2}}
\right)$~(Theorem~\ref{the:lazy-trapping-or-descent})\\
}
\\
\cline{3-5}

&
& randomized
& \makecell[l]{
$\Omega(\varepsilon^{-\frac{3}{2}})$,
$d\gtrsim\varepsilon^{-3}$
even with $f$ and all derivatives
~\cite{carmon2020lowerI}
}
& \makecell[l]{
$\textcolor{red}{\widetilde{\mathcal O}(\varepsilon^{-\frac{6}{5}})}$, $d=2$~(Theorem~\ref{the:main_gradient});\\
$\widetilde{\mathcal O}_d(\varepsilon^{-(d-1)})$,
fixed $d\ge3$ (Theorem~\ref{the:main_gradient});\\
$\textcolor{red}{\Omega(\varepsilon^{-\frac{7}{4}})}$,
$d\gtrsim\varepsilon^{-5}$, even with $(f,\nabla f)$ and additional $C^{1,1}$\\
~(Theorem~\ref{thm:rand-fo-lower-bound} and Theorem~\ref{thm:drhs-without-l1})
}
\\
\cline{2-5}

& \multirow{4}{*}{$C^{3,1}$}
& deterministic
& \makecell[l]{
$\widetilde{\mathcal O}(\varepsilon^{-\frac{5}{3}})$,
using $(f,\nabla f)$, assuming additionally $C^{1,1}$
~\cite{carmon2017convex,xiong2026parameterfree};\\
$\mathcal O(\varepsilon^{-\frac{4}{3}})$, given
$f,\nabla f,\nabla^2f,\nabla^3f$
~\cite{cartis2020sharp,gratton2023convergence};\\
$\Omega(\varepsilon^{-\frac{5}{3}})$,
$d\gtrsim\varepsilon^{-\frac{5}{3}}$, even with $(f,\nabla f)$ and additional $C^{1,1}$
~\cite{zhou2026sharp}
}
& \makecell[l]{
$\textcolor{red}{\widetilde{\mathcal O}(\varepsilon^{-\frac{4}{3}})}$, $d=2$
(Theorem~\ref{the:main_gradient});\\
$\textcolor{blue}{\mathcal O\!\big(
d^2+d^{\frac{4}{3}}\varepsilon^{-\frac{4}{3}}
\big)}$,~(Theorem~\ref{the:lazy-trapping-or-descent})
}
\\
\cline{3-5}

&
& randomized
& \makecell[l]{
$\widetilde{\mathcal O}(d\varepsilon^{-\frac{4}{3}})$,
using $(f,\nabla f)$, assuming additionally $C^{1,1}$
~\cite{hinder2018cutting};\\
$\Omega(\varepsilon^{-\frac{4}{3}})$,
$d\gtrsim\varepsilon^{-\frac{8}{3}}$,
even with $f$ and all derivatives
~\cite{carmon2020lowerI}
}
& \makecell[l]{
$\textcolor{red}{\widetilde{\mathcal O}(\varepsilon^{-\frac{8}{7}})}$, $d=2$~(Theorem~\ref{the:main_gradient});\\
$\widetilde{\mathcal O}_d(\varepsilon^{-(d-1)})$,
fixed $d\ge3$~(Theorem~\ref{the:main_gradient});\\
$\textcolor{red}{\Omega(\varepsilon^{-\frac{5}{3}})}$,
$d\gtrsim\varepsilon^{-\frac{14}{3}}$, even with $(f,\nabla f)$ and additional $C^{1,1}$\\
~(Theorem~\ref{thm:rand-fo-lower-bound} and Theorem~\ref{thm:drhs-without-l1})
}
\\
\cline{2-5}

& \multirow{5}{*}{$C^{p,\beta_p}$}
& deterministic
& \makecell[l]{
$\mathcal O\!\big(
\varepsilon^{-\frac{1+\beta_p}{\beta_p}}
\big)$ for $p=1$, gradients only, under coercivity
~\cite{yashtini2016global};\\
$\mathcal O\!\big(
\varepsilon^{-\frac{\nu}{\nu-1}}
\big)$, given $f,\nabla f,\ldots,\nabla^pf$
~\cite{cartis2020sharp};\\
$\mathcal O\!\big(
\varepsilon^{-\frac{\nu}{\nu-1}}
\big)$, given $\nabla f,\ldots,\nabla^pf$ \cite{gratton2023convergence};\\
$\mathcal O\!\big(
d^{\frac{1}{p}}\varepsilon^{-\frac{p+1}{p}}+d
\big)$, given $f,\nabla f,\ldots,\nabla^{p-1}f$, $\beta_p=1$, $p\ge2$
~\cite{doikov2026lowerorder};\\
$\Omega(\varepsilon^{-\frac{5}{3}})$ for $p\ge3$, $\beta_p=1$, $d\gtrsim\varepsilon^{-\frac{5}{3}}$, even with $(f,\nabla f)$ and $C^{1,1}\cap\cdots\cap C^{p,1}$
~\cite{zhou2026sharp}
}
& \makecell[l]{
$\textcolor{blue}{\widetilde{\mathcal O}(\varepsilon^{-\frac{\nu}{\nu-1}})}$, $d=2$
(Theorem~\ref{the:main_gradient});\\
\textcolor{red}{$\mathcal O_p\!\big(
d^{p-1}
+d^{p-2+\frac{\beta_p}{\nu-1}}
\varepsilon^{-\frac{\nu}{\nu-1}}
\big)$}, $\beta_p=1$, ~(Theorem~\ref{the:lazy-trapping-or-descent})
}
\\
\cline{3-5}

&
& randomized
& \makecell[l]{
$\Omega\!\big(\varepsilon^{-\frac{p+1}{p}}\big)$,
$\beta_p=1$,
$d\gtrsim\varepsilon^{-\frac{2(p+1)}{p}}$,
even with $f$ and all derivatives
~\cite{carmon2020lowerI}
}
& \makecell[l]{
\textcolor{red}{$\widetilde{\mathcal O}_{p}(\varepsilon^{-\frac{2\nu}{2\nu-1}})$},
$d=2$, $p\ge1$~(Theorem~\ref{the:main_gradient});\\
$\widetilde{\mathcal O}_{d,p}(\varepsilon^{-(d-1)})$,
fixed $d\ge3$, $p\ge1$~(Theorem~\ref{the:main_gradient});\\
\textcolor{red}{$\Omega_p(\varepsilon^{-\frac{5}{3}})$},
$d\gtrsim\varepsilon^{-\big(\frac{13}{3}+\frac{1}{p}\big)}$,\\
$\beta_p=1$, $p\ge3$, under $C^{p,1}$ alone,even with $(f,\nabla f)$\\
~(Theorem~\ref{thm:drhs-without-l1});\\
\textcolor{red}{$\Omega_p\!\big(
\varepsilon^{-(\frac32+\frac{1}{2p})}
\big)$},
$d\gtrsim\varepsilon^{-(4+2/p)}$,\\
$\beta_p=1$, $p\ge2$, even under additional $C^{1,1}$,
even with $(f,\nabla f)$\\
~(Theorem~\ref{thm:rand-fo-lower-bound})
}
\\
\hline


\multirow{10}{*}{
\makecell{
Constrained\\
($\Omega=[0,1]^d$)\\
gradient-only
}}
& \multirow{2}{*}{$C^{1,1}$}
& deterministic
& \makecell[l]{
$\mathcal O(\varepsilon^{-2})$,
projected gradient, gradients only
~\cite{ghadimi2016minibatch}
}
& \makecell[l]{
\textcolor{red}{$\widetilde{\mathcal O}_d(\varepsilon^{-\max\{\frac{d-1}{2},1\}})$},
(Theorem~\ref{the:main_gradient});\\
}
\\
\cline{3-5}

&
& randomized
& --
& \makecell[l]{
\textcolor{red}{$\widetilde{\mathcal O}_d\varepsilon^{-\max\{\frac{d-1}{2},\frac{2}{3}\}})$},
(Theorem~\ref{the:main_gradient});\\
}
\\
\cline{2-5}

& \multirow{2}{*}{$C^{2,1}$}
& deterministic
& \makecell[l]{
$\mathcal O(d+d^{\frac{1}{2}}\varepsilon^{-\frac{3}{2}})$,
using $(f,\nabla f)$,~\cite{doikov2025first};\\
$\mathcal O(\varepsilon^{-\frac{3}{2}})$,
given $f,\nabla f,\nabla^2f$
~\cite{cartis2020sharp}
}
& \makecell[l]{
\textcolor{red}{$\widetilde{\mathcal O}_d(\varepsilon^{-\max\{\frac{1}{2},\frac{d-1}{3}\}})$}, 
(Theorem~\ref{the:main_gradient})\\
\textcolor{blue}{$\mathcal O\!\big(d+d^{\frac{1}{2}}\varepsilon^{-\frac{3}{2}}\big)$},
~(Theorem~\ref{the:lazy-trapping-or-descent})
}
\\
\cline{3-5}

&
& randomized
& --
& \makecell[l]{
\textcolor{red}{$\widetilde{\mathcal O}_d(\varepsilon^{-\max\{\frac{2}{5},\frac{d-1}{3}\}})$}, ~(Theorem~\ref{the:main_gradient});\\
}
\\
\cline{2-5}

& \multirow{2}{*}{$C^{3,1}$}
& deterministic
& \makecell[l]{
$\mathcal O_d(\varepsilon^{-\frac{4}{3}})$,
given $f,\nabla f,\nabla^2f,\nabla^3f$~\cite{cartis2020sharp}
}
& \makecell[l]{
\textcolor{red}{$\widetilde{\mathcal O}_d(\varepsilon^{-\max\{\frac{1}{3},\frac{d-1}{4}\}})$}, 
(Theorem~\ref{the:main_gradient})\\
\textcolor{blue}{$\mathcal O\!\big(d^2+d^{\frac{4}{3}}\varepsilon^{-\frac{4}{3}}\big)$}~(Theorem~\ref{the:lazy-trapping-or-descent}),
}
\\
\cline{3-5}

&
& randomized
& --
& \makecell[l]{
\textcolor{red}{$\widetilde{\mathcal O}_d(\varepsilon^{-\max\{\frac{2}{7},\frac{d-1}{4}\}})$, ~(Theorem~\ref{the:main_gradient})};\\
}
\\
\cline{2-5}

& \multirow{2}{*}{$C^{p,\beta_p}$}
& deterministic
& \makecell[l]{
$\mathcal O\!\big(
\varepsilon^{-\frac{\nu}{\nu-1}}
\big)$,
given $f,\nabla f,\ldots,\nabla^pf$~\cite{cartis2020sharp}
}
& \makecell[l]{
\textcolor{red}{$\widetilde{\mathcal O}_{d,p}(\varepsilon^{-\max\{\frac{d-1}{\nu},\,\frac{1}{\nu-1}\}})$},
(Theorem~\ref{the:main_gradient})\\
\textcolor{blue}{$\mathcal O_p\!\big(
d^{p-1}
+d^{p-2+\frac{\beta_p}{\nu-1}}
\varepsilon^{-\frac{\nu}{\nu-1}}
\big)$}, ~(Theorem~\ref{the:lazy-trapping-or-descent})
}
\\
\cline{3-5}

&
& randomized
& --
& \makecell[l]{
\textcolor{red}{$\widetilde{\mathcal O}_{d,p}
(\varepsilon^{-\max\{\frac{2}{2\nu-1},\frac{d-1}{\nu}}\})$}, $p\ge1$~(Theorem~\ref{the:main_gradient});\\
}
\\
\hline

\end{tabular}
}
\label{tab:gradient-only-comparison}
\end{table}
Together, these two guarantees suggest that trapping admits efficient
implementations in high dimensions using
either function values alone or gradients alone.
Both implementations exploit arbitrary-order H\"older smoothness
without querying higher derivatives, retaining the
$\varepsilon^{-\frac{\nu}{\nu-1}}$ accuracy dependence of higher-order
regularization methods~\citep{cartis2019universal}, with explicit
polynomial dependence on dimension. When $\beta_p=1$, this
specializes to the classical AR$p$ exponent $\frac{p+1}{p}$.

For $p=2$ and $\beta_2=1$, the leading function-value term is
$\mathcal O(d^{\frac{3}{2}}
\varepsilon^{-\frac{3}{2}})$,
matching the zeroth-order complexity of regularized Newton methods
with lazy finite-difference Hessian
updates~\citep{doikov2025first}.
The strict gradient-only implementation has leading complexity
$\mathcal O(d^{\frac{1}{2}}
\varepsilon^{-\frac{3}{2}})$.
In the unconstrained gradient-only setting, \cite{adil2025balancing} obtain a better dependence on dimension, with complexity
$\widetilde{\mathcal O}
(d^{\frac{1}{3}}
\varepsilon^{-\frac{3}{2}}+d)$. For $p=3$, our framework achieves an accuracy exponent of
$\frac{3+\beta_3}{2+\beta_3}$ using either function values
alone or gradients alone.
In particular, under Lipschitz third derivatives, it attains
a new query complexity of $\mathcal O(d^{\frac{4}{3}}\varepsilon^{-\frac{4}{3}}+d^2)$ without Hessian or third-derivative queries.
Both implementations apply to unconstrained and
box-constrained optimization.

Our contribution is a unified trapping framework that combines arbitrary-order H\"older smoothness and low-order oracle implementations.
The framework applies to both constrained and unconstrained
optimization and yields new query complexity upper bounds
for objectives with H\"older-continuous third derivatives. 
These results further make progress on
Question~\ref{question:smoothness} by achieving an accuracy
exponent of $\frac{\nu}{\nu-1}$ that improves with smoothness,
and on Question~\ref{question:oracle-information} by preserving
the function-value accuracy exponent under strict
gradient-only access.

\paragraph{High-dimensional lower bounds for randomized gradient-only
algorithms~(results in Section~\ref{sec:lb}).}
The preceding fixed-dimensional upper bounds show that randomization
can substantially improve gradient-only optimization in low dimension.
We next ask whether this advantage persists in the high-dimensional
regime, where the target query rate is independent of \(d\). We answer
this question completely for second-order smoothness and establish a
general lower bound for every finite smoothness order.

\begin{theorem}[\bf Informal; see
Theorems~\ref{thm:rand-fo-lower-bound} and \ref{thm:drhs-without-l1}]
\label{thm:informal-drhs}
In sufficiently high dimension, every randomized algorithm using only
exact gradient queries requires
$$
\begin{cases}
\Omega(\varepsilon^{-\frac{7}{4}}),
    & f\in C^{2,1},\\[2mm]
\Omega_p(\varepsilon^{-\frac{5}{3}}),
    & f\in C^{p,1},\quad p\geq3,\\[2mm]
\Omega_p\!\left(
    \varepsilon^{-(\frac{3}{2}+\frac{1}{2p})}
\right),
    & f\in C^{1,1}\cap C^{p,1},\quad p\geq2,
\end{cases}
$$
queries to return an \(\varepsilon\)-stationary point with constant
success probability. The hard instances can be chosen \(C^\infty\)
and coercive.
\end{theorem}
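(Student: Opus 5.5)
Our plan is to prove the three lower bounds by a randomized version of the deterministic construction of \cite{zhou2026sharp}, combined with the random-rotation technique of \cite{carmon2020lowerI}. The base object is a scaled chain function
\[
\bar f_T(\boldsymbol{x})=-\Psi(1)\Phi(x_1)+\sum_{i=2}^{T}\bigl[\Psi(-x_{i-1})\Phi(-x_i)-\Psi(x_{i-1})\Phi(x_i)\bigr],
\]
modified in the manner of \cite{zhou2026sharp}. Each link is replaced by a narrow ``tube'' along which the function decreases only slightly, and the tube is made very thin in the directions transverse to the chain. The effect is that first-order information, meaning values and gradients, reveals only the next coordinate. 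Moreover, an algorithm that tries to extrapolate using higher-order smoothness sees a gradient of size at least $\varepsilon$ everywhere off the tube.

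We set $f(\boldsymbol{x})=\lambda\,\bar f_T(\rho(U^\top \boldsymbol{x})/\sigma)+\eta\|\boldsymbol{x}\|^2$, where $U\in\mathbb{R}^{d\times T}$ is a uniformly random orthonormal frame and $\rho$ is the soft projection onto a ball, which gives coercivity and keeps the function $C^\infty$. The parameters $\lambda,\sigma,T$ are then tuned separately for each of the three smoothness regimes.

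The proof proceeds in the following order.

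\textbf{Step 1: smoothness bookkeeping.} For each derivative order $q$ we have $\|\nabla^{q}f\|\lesssim\lambda\sigma^{-q}$ times a constant depending on the construction. We impose $\lambda\sigma^{-(p+1)}\le L_p$. In the third regime we additionally impose $\lambda\sigma^{-2}\le L_1$.

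\textbf{Step 2: gradient lower bound off the solved prefix.} We show that $\|\nabla f(\boldsymbol{x})\|>\varepsilon$ unless $|\langle \boldsymbol{u}_T,\boldsymbol{x}\rangle|$ is large. Since $\|\nabla f\|\gtrsim\lambda/\sigma$ along the chain, this forces $\lambda/\sigma\asymp\varepsilon$.

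\textbf{Step 3: cost per link.} Following \cite{zhou2026sharp}, we show that crossing one tube requires about $K$ queries of a first-order oracle. The reason is that a first-order method can only move within the tube at a speed dictated by the width ratio. In the deterministic paper this gives $K\asymp\varepsilon^{-1/4}$ for $p=2$ and $\varepsilon^{-1/3}$ for $p\ge3$.

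\textbf{Step 4: combining.} The total count is $T\cdot K$, with $T\asymp\Delta/\lambda$. Optimizing the parameters yields $\varepsilon^{-7/4}$, $\varepsilon^{-5/3}$, and, when the extra $C^{1,1}$ constraint caps $\sigma$, the interpolated exponent $\frac32+\frac1{2p}$.

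The randomization step replaces the adversarial adaptive rotation of \cite{zhou2026sharp} by a fixed random $U$. We argue in the style of \cite{carmon2020lowerI}: conditioned on the query history, the still-undiscovered columns of $U$ are uniform on the orthogonal complement of what has been revealed. Hence any new query $\boldsymbol{x}$ with $\|\boldsymbol{x}\|\le R$ has $|\langle \boldsymbol{u}_j,\boldsymbol{x}\rangle|\lesssim R\sqrt{\log(T K)/d}$ with high probability. For the oracle answers to be informative only about the next link, this inner product must sit below the tube width $\sigma\cdot w$, where $w$ is the transverse width. A union bound over all $TK$ queries and $T$ coordinates then gives the dimension requirement $d\gtrsim (R/(\sigma w))^2\log(TK)$. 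Plugging in the parameters gives the stated thresholds, for example $d\gtrsim\varepsilon^{-(4+2/p)}$.

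The main obstacle is Step 3 in the randomized setting. In \cite{zhou2026sharp} the within-tube slowdown is enforced by an adversary that reacts to queries. We must instead show that a fixed random instance still forces $\Omega(K)$ queries per link: the tube's internal progress coordinate must itself be hidden, through a second random rotation of the inner fine-scale structure, so that a randomized algorithm cannot guess ahead. Our first attempt will be to implement each tube as a sub-chain of $K$ micro-links at the finer scale, each hidden by its own random direction. Progress by at most one micro-link per query then follows from the same orthogonal-complement argument, and the remaining work is the smoothness calculation for this two-scale construction.
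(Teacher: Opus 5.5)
Your proposal has a genuine gap in Step 3, and the fix you outline would fail. A sub-chain of $K$ micro-links, each hidden by its own random direction, is controlled by the orthogonal-complement (robust zero-chain) argument of \cite{carmon2020lowerI}. That argument limits progress to one new direction per query for oracles returning derivatives of \emph{every} order. So any $TK$ lower bound it produced would also hold against $\mathrm{AR}p$, which finds an $\varepsilon$-stationary point of any $C^{p,1}(L_p)$ function with gap $\Delta$ in $O(\varepsilon^{-(p+1)/p})$ evaluations, independently of dimension. Each of $\frac74$, $\frac53$ and $\frac32+\frac1{2p}$ exceeds $1+\frac1p$ for $p\ge2$. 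Hence your two-scale chain must break the smoothness or gap budget, and at best it reproduces the Carmon et al.\ exponent.

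Your account of the $C^{1,1}$ regime also does not work. With $\lambda\asymp\varepsilon\sigma$, the condition $\lambda\sigma^{-2}\le L_1$ reads $\sigma\ge\varepsilon/L_1$. For small $\varepsilon$ this is already implied by $\sigma\gtrsim(\varepsilon/L_p)^{1/p}$, so it never caps anything.

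The missing idea is a hardness source specific to first-order access. In the paper each phase is an ill-conditioned quadratic block $\frac12 z_s^\top A_s z_s$ with $A_s=U_s^\top D U_s$, $U_s$ Haar, and spectrum in $[\mu,L_1/2]$. A Hessian query would reveal $A_s$ at once, but a gradient oracle yields only products $v\mapsto A_s v$. Getting within $3\rho$ of $x_s^\star=A_s^{-1}b_s$ then needs about $t\asymp\sqrt{\kappa}$, $\kappa=L_1/(2\mu)$, such products. This is a polynomial-approximation lower bound for $\lambda\mapsto1/\lambda$, made weighted-$L_2$ on a replicated Chebyshev-alternation spectrum (Lemma~\ref{lem:drhs-spectrum}). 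This is where $L_1$ enters: it caps $\kappa$, giving $T=Nt\asymp\Delta\sqrt{L_1\gamma_p(\varepsilon)}\,\varepsilon^{-2}$. Without $L_1$, $\kappa$ becomes a free parameter.

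In the randomized setting every block's quadratic is visible from the start and can be probed early. The paper therefore hides only the random right-hand side $b_s$, behind a smooth gate that vanishes unless block $s-1$ is within $3\rho$ of its target. It then shows that arbitrary pre-disclosure products do not help. In a delayed-input Krylov representation, the fresh component of $b_s$ enters only at the disclosure slot. After fewer than $t$ further products, the output's component along it is therefore a polynomial of degree at most $t$ in $A_s$ (Lemmas~\ref{lem:drhs-delayed-krylov}, \ref{lem:drhs-fresh-mass}, \ref{lem:drhs-one-phase}). A simulation argument charges $t$ queries per phase (Lemma~\ref{lem:drhs-sequential}), and the gate and forcing geometry forces every $\varepsilon$-stationary point to complete all phases (Lemma~\ref{lem:drhs-certification}).

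Your argument needs an analogue of this matrix--vector-product lower bound that survives preprocessing. Random rotations and zero-chain bookkeeping alone cannot supply it.
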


Without gradient Lipschitzness, our lower bound is
\(\Omega(\varepsilon^{-\frac{7}{4}})\) for \(p=2\) and
\(\Omega_p(\varepsilon^{-\frac{5}{3}})\) for \(p\geq3\). In comparison, the
optimal randomized complexity for algorithms accessing derivatives
through order \(p\) is
\(\Theta_p(\varepsilon^{-\frac{p+1}{p}})\)
\cite{BGM+17,cartis2020sharp,carmon2020lowerI}. Thus our result separates
gradient-only algorithms from higher-order methods. Previous
deterministic value-and-gradient lower bounds gave the exponents
\(\frac{12}{7}\) for \(p=2\) and \(\frac{8}{5}\) for \(p\geq3\)
\cite{carmon2021lowerII} and these were recently improved to \(\frac{7}{4}\) and
\(\frac{5}{3}\), respectively, by \cite{zhou2026sharp}. Our theorem establishes
the same exponents for randomized strict gradient-only algorithms. 

With gradient Lipschitzness, our lower bound interpolates between the
classical \(\Omega(\varepsilon^{-2})\) rate and
\(\Omega_p(\varepsilon^{-(\frac{3}{2}+\frac{1}{2p})})\).
For \(p=2\), the resulting \(\Omega(\varepsilon^{-\frac{7}{4}})\) bound matches
the dimension-free deterministic strict gradient-only upper bound of
\cite{li2023restarted}. Hence randomization does not improve the
high-dimensional strict gradient-only complexity for \(p=2\).
For \(p=3\), our exponent \(\frac{5}{3}\) matches, up to logarithmic factors,
the deterministic value-and-gradient upper bound of
\cite{carmon2017convex}. 
A recent parameter-free method attains the
same exponent but still uses function values
\cite{xiong2026parameterfree} but the strict gradient-only complexity therefore
remains open between the exponents \(\frac{5}{3}\) and \(\frac{7}{4}\). For \(p\geq4\),
the deterministic value-and-gradient complexity saturates at
\(\widetilde\Theta_p(\varepsilon^{-\frac{5}{3}})\)
\cite{carmon2017convex,zhou2026sharp}, whereas our randomized
Closing this gap remains open and, because
the comparison also differs in access to function values, it cannot
be attributed solely to randomization. Finally, dimension-dependent
gradient-only bounds such as
\(O(d^{\frac{1}{4}}\varepsilon^{-\frac{13}{8}})\)
\cite{jiang2025improved} apply in smaller dimensions and do not
conflict with the high-dimensional regime considered here.

These results make progress on
Question~\ref{question:randomness} by establishing lower bounds
against randomized gradient-only algorithms,
and on Question~\ref{question:smoothness} by showing that
higher-order smoothness $C^{p,1}$  alone cannot reduce the accuracy
exponent of randomized algorithm below $\frac{5}{3}$ for  any fixed $p\ge3$
in sufficiently high dimensions.

Tables~\ref{tab:function-value-comparison}
and~\ref{tab:gradient-only-comparison} summarize the function-value and
strict gradient-only comparisons separately.

\section{Preliminaries}
\label{sec:problem}

In Sections~\ref{sec:3} and~\ref{sec:4}, the dimension \(d\ge 2\)
and the integer smoothness order \(p\ge 1\) are fixed independently
of the target accuracy \(\varepsilon\).
In Sections~\ref{sec:trapping-or-descent} and~\ref{sec:lb}, the dimension \(d\) is allowed to be arbitrarily large relative to any polynomial in \(\varepsilon^{-1}\).

We identify \(\nabla^p f(x)\) with the corresponding symmetric
\(p\)-linear form. For such a form \(T\), define its operator norm by
\[
    \|T\|_{\mathrm{op}}
    :=
    \sup_{\|v_1\|_2,\ldots,\|v_p\|_2\le 1}
    \bigl|T[v_1,\ldots,v_p]\bigr|.
\]

\begin{definition}[\bf \((p,\beta)\)-H\"older smoothness]
\label{def:holder}
Let \(p\ge 1\) be an integer, \(\beta\in(0,1]\), and \(L_p\ge 0\).
A function \(f:\mathbb R^d\to\mathbb R\) belongs to
\(C^{p,\beta}(L_p)\) if \(f\in C^p(\mathbb R^d)\) and
\[
    \bigl\|\nabla^p f(x)-\nabla^p f(y)\bigr\|_{\mathrm{op}}
    \le
    L_p\|x-y\|_2^\beta
    \qquad
    \text{for all }x,y\in\mathbb R^d.
\]
We call \(p+\beta\) the corresponding fractional smoothness order and
write \(C^{p,\beta}\) when \(L_p\) is clear from context.
\end{definition}

\begin{assumption}[\bf Lower boundedness]
\label{ass:lower-bounded}
The objective has a finite infimum:
\[
    f_\star
    :=
    \inf_{x\in\mathbb R^d} f(x)
    >
    -\infty.
\]
\end{assumption}

Adding a constant to \(f\) changes neither its derivatives nor its
stationary points. Throughout the unconstrained analysis, we assume
that \(f\) is nonnegative. Equivalently, the algorithm is given the
known global lower bound \(0\). The actual value of \(f_\star\) need
not be known.

\paragraph{Unconstrained problem and stationary points.}
The algorithm is given an initial point \(x^0\in\mathbb R^d\) and a
known number \(\Delta>0\) satisfying
\(f(x^0)-f_\star\le \Delta\).
Its goal is to return a point \(\widehat x\in\mathbb R^d\) such that
\[
    \|\nabla f(\widehat x)\|_2\le \varepsilon.
\]
This is the standard first-order stationarity objective considered in
fixed-dimensional oracle-complexity work; see, for example,
\cite{chewi2022complexity,hollender2023computational}.

\paragraph{Constrained problem and KKT points.}
For the constrained problem on \([0,1]^d\), fix a stepsize
\(\gamma>0\), independently of \(\varepsilon\), and define the
projected-gradient mapping
\[
    G_\gamma(x)
    :=
    \frac{1}{\gamma}
    \left(
        x-
        \operatorname{proj}_{[0,1]^d}
        \bigl(x-\gamma\nabla f(x)\bigr)
    \right),
    \qquad x\in[0,1]^d,
\]
where \(\operatorname{proj}_{[0,1]^d}\) denotes Euclidean projection
onto \([0,1]^d\). We call \(x\in[0,1]^d\) an
\(\varepsilon\)-KKT point, with respect to \(\gamma\), if
\(\|G_\gamma(x)\|_2\le \varepsilon\).

\paragraph{Oracle models.}
A value-oracle query at \(x\) returns the exact value \(f(x)\).
A gradient-only query at \(x\) returns the exact vector
\(\nabla f(x)\), but does not return \(f(x)\).
When both quantities are returned by one query, we refer to a
value-and-gradient oracle.
In both the unconstrained and constrained problems, queries may be
made at arbitrary points of \(\mathbb R^d\).

A randomized algorithm has access to an internal random seed \(U\)
that is independent of the objective \(f\).
For a prescribed failure probability \(\eta\in(0,1)\), the algorithm
must satisfy
\[
    \mathbb P_U
    \left(
        \text{the returned point is an \(\varepsilon\)-solution}
    \right)
    \ge 1-\eta
\]
for every admissible objective \(f\).
Here, \(\varepsilon\)-solution means an
\(\varepsilon\)-stationary point in the unconstrained problem and an
\(\varepsilon\)-KKT point in the constrained problem.

Our query bounds are worst-case bounds over both the objective and
the internal randomness. More precisely, a query-complexity bound
\(Q\) means that
\[
    \sup_f\sup_u N(f,u)\le Q,
\]
where \(N(f,u)\) denotes the number of oracle calls made on objective
\(f\) when the random seed equals \(u\).
Thus \(Q\) is a deterministic upper bound for every realization of
the internal randomness, rather than an expected or
high-probability bound.

\section{$p$-th Order Gradient Flow Trapping}
\label{sec:3}

Gradient-flow trapping provides a way to find approximate stationary
points using only function evaluations while requiring few adaptive
rounds. Existing implementations, however, have focused primarily on
objectives with first-order smoothness
\cite{bubeck2020trap,hollender2023computational,huanjian2025adaptive}.
It remains open whether higher-order smoothness can reduce the
number of function evaluations required. In this section, we answer this question by developing a deterministic
\(p\)-th order gradient-flow trapping method for functions in
\(C^{p,\beta_p}(L_p)\). For fixed dimension, the
result retains logarithmic adaptive depth and gives different
function-value complexities for unconstrained optimization and
optimization over a fixed bounded domain. In particular, higher-order
smoothness leads to an improved dependence on the target accuracy in
the constrained setting. The following theorem states the main result
of this section.

\begin{theorem}[\bf{$p$-th order trapping via function value oracle}]
\label{the:main1}
Assume target function $f$ lies in $C^{p,\beta_p}(L_p)$.
For $\varepsilon>0$, $d = \Theta(1)$, there is a deterministic algorithm finding $\varepsilon$-stationary points, running within $k = \mathcal{O}(\log (\frac{1}{\varepsilon}))$-round, with
\begin{itemize}[leftmargin =14pt]
    \item $\mathcal{O}(\varepsilon^{-(d-1)})$ function value queries per iteration when $f$ is unconstrained but lower bounded,
    \item $\mathcal{O}(\varepsilon^{ - \frac{d-1}{p+\beta_p}})$ function value queries per iteration when $f$ is constrained on $[0,1]^d$.
\end{itemize}   
\end{theorem}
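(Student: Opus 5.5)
We follow the gradient-flow trapping scheme of \cite{hollender2023computational,huanjian2025adaptive}. The one new ingredient is a higher-order way to certify values on the boundary of a cell. The algorithm maintains a nested sequence of boxes (or simplices) $B_0\supset B_1\supset\cdots$. Each box comes with a boundary point $x_k\in\partial B_k$ that is \emph{trapping}: $f(x_k)\le \min_{\partial B_k} f - c\,\varepsilon\,\mathrm{diam}(B_k)$, up to approximation slack. Such a box must contain a point where the gradient is small. Otherwise a descent path from $x_k$ of length $\lesssim\mathrm{diam}(B_k)$ with slope $\ge\varepsilon$ would exit $B_k$ at a value below $\min_{\partial B_k} f$, which is impossible. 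In the constrained case the path may stop at a face of $[0,1]^d$, and that stopping point is an $\varepsilon$-KKT point.

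Each round halves the box by a hyperplane cut through its center. We query $f$ on a grid of the cut, which is a $(d-1)$-dimensional face. The trapping property then passes to one of the two halves: either the minimum over the cut lies below $f(x_k)$ by the required margin, or it does not. This is the standard dichotomy, and it gives $k=\mathcal O(\log(1/\varepsilon))$ rounds. The queries per round equal the grid size on the cut, which is set by how accurately the grid determines $\min_{\text{cut}} f$.

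The key step is grid resolution. For $C^{1,1}$ objectives one uses mesh $h$ with error $L_1h^2$, or $\|\nabla f\|h$. For $C^{p,\beta_p}$ we instead use tensor-product polynomial interpolation of degree $p$ on each mesh cell of the cut. Taylor's theorem with the Hölder remainder gives uniform error $\mathcal O(L_p h^{\nu})$ with $\nu=p+\beta_p$, at a cost of $(p+1)^{d-1}=\mathcal O_{d,p}(1)$ queries per cell. So approximating $\min_{\text{cut}}f$ to accuracy $\delta$ costs $\mathcal O_{d,p}\bigl((D/h)^{d-1}\bigr)$ queries with $h=(\delta/L_p)^{1/\nu}$.

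In the constrained case the domain is $[0,1]^d$ and every box has diameter $D\le\sqrt d$. The required margin is $\delta\asymp\varepsilon\,D$ at the final scale, and only bounded levels matter once $D$ is a constant. This gives $h\asymp\varepsilon^{1/\nu}$ and $\mathcal O(\varepsilon^{-(d-1)/\nu})$ queries per round. The $p=1$ case recovers the $\varepsilon^{-(d-1)/2}$ rate.

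In the unconstrained case the initial box must have diameter $D\asymp\Delta/\varepsilon$ so that the descent path from $x^0$ cannot leave it. We need $f(x^0)-f_\star\le\Delta$ to force a trapping configuration on a box of that size, and then the cut grid needs $(D/h)^{d-1}$ points. We do not try to exploit smoothness here. We simply take $h$ of constant order relative to the required slack per unit length, $\delta\asymp\varepsilon h$ locally, which gives $\mathcal O(\varepsilon^{-(d-1)})$ and matches the stated bound. No higher-order gain is claimed there.

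The main obstacle is that the trapping margin must degrade controllably across $\log(1/\varepsilon)$ rounds. Each round's interpolation error $\delta_j$ accumulates, and the margin $c\,\varepsilon\,\mathrm{diam}(B_j)$ shrinks geometrically. We handle this by choosing $\delta_j$ proportional to $\varepsilon\,\mathrm{diam}(B_j)$ at level $j$. Because the cut area also shrinks as $\mathrm{diam}(B_j)^{d-1}$, the per-round cost $(\mathrm{diam}(B_j)/h_j)^{d-1}$ remains bounded by the first-round cost. A final box of diameter $\asymp\varepsilon^{1/(\nu-1)}$ or smaller then certifies an $\varepsilon$-stationary (KKT) point at its trapping point or center, via the Hölder bound on $\nabla f$.
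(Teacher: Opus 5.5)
\textbf{There is a genuine gap in the terminal step for $p\ge2$.} You output the pivot or the center of a box of diameter $\asymp\varepsilon^{1/(\nu-1)}$ and certify it ``via the H\"older bound on $\nabla f$.'' The hypothesis, however, controls only $\nabla^p f$; neither $\nabla f$ nor $\nabla^2 f$ is bounded.

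For example, $f(x)=\frac{K}{2}\|x\|_2^2$ lies in $C^{2,1}(0)$ for every $K$. A box of diameter $r$ containing the minimizer can satisfy the trapping certificate with a pivot, and a center, whose gradient is of order $Kr$, which is arbitrarily large. So knowing that the trapped box contains \emph{some} unknown near-stationary point certifies no particular point. The transfer $\|\nabla f(x)\|_2\le\|\nabla f(z)\|_2+L_1\operatorname{diam}(R)$ is exactly the $C^{1,1}$-dependent step the theorem is designed to avoid. Your argument is valid only for $p=1$.

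The missing idea is a model-based output.
\begin{itemize}
\item At the scale $r_\varepsilon\asymp\varepsilon^{1/(\nu-1)}$ you identified, degree-$p$ interpolation reproduces the Taylor polynomial, so its error depends only on $L_p$. It yields $P_T\in\Pi_p^d$ with $\|\nabla P_T-\nabla f\|_{\infty,R_T}\le\varepsilon/8$.
\item Return a minimizer $z$ of $P_T(y)-P_T(x^T)+\frac{3\varepsilon}{4}\|y-x^T\|_2$ over $R_T$.
\item Because the trapping parameter stays at most $\varepsilon/2$, this function is positive on the certified boundary, so $z$ is (relatively) interior.
\item The first-order condition plus model accuracy then give $\|\nabla f(z)\|_2\le\varepsilon$, or $\|G_\gamma(z)\|_2\le\varepsilon$ via $N_{R_T}(z)=N_{[0,1]^d}(z)$.
\end{itemize}

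\textbf{The loop itself also needs repair.} Your invariant is self-contradictory: $x_k\in\partial B_k$ cannot satisfy $f(x_k)\le\min_{\partial B_k}f-c\varepsilon\operatorname{diam}(B_k)$. A uniform boundary margin also cannot be obtained from $f(x^0)-f_\star\le\Delta$. What is initializable and preserved is distance-weighted unreachability: $f(y)-f(x)+\eta\|y-x\|_2>0$ on the (artificial) boundary, with $x\in B$.

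Even with that invariant, a single cut through the center fails. When the approximate minimizer on the cut is reachable, the pivot moves onto the cut, which becomes the new facet. The minimization error there then cannot be absorbed by the increment $\Delta_t\|y-x^{t+1}\|_2$. The fix is two parallel separators at the trisection points, keeping the two thirds that contain the new pivot. This gives $\operatorname{dist}(x^{t+1},F_t)\ge r^t/6$, so a tolerance $\varepsilon_0^t=r^t\Delta_t/12$ leaves a positive margin $r^t\Delta_t/12$ on $F_t$.

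The error budget also drifts. Setting $\delta_j=c\,\varepsilon\operatorname{diam}(B_j)$ with fixed $c$ forces increments $\Delta_j\gtrsim c\varepsilon$ per round, for a total drift of $\Theta(\varepsilon\log(1/\varepsilon))$. Taking $\Delta_t\propto\alpha^{\lfloor t/d\rfloor}$ with $(2/3)^{\nu-1}<\alpha<1$ keeps the drift below $\varepsilon/4$, while the per-round cost still decays geometrically.

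Finally, in the unconstrained case you cannot ``not exploit smoothness.'' No lower-order derivative is controlled, so the same degree-$p$ reconstruction is needed there. The $\varepsilon$-independent mesh comes from $\delta_0\asymp\varepsilon r^0\asymp f(x^0)$, i.e.\ $h_0\asymp(f(x^0)/L_p)^{1/\nu}$, which gives $(r^0/h_0)^{d-1}\asymp\varepsilon^{-(d-1)}$.
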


The theorem extends deterministic trapping to objectives~\citep{hollender2023computational,huanjian2025adaptive} with
higher-order H\"older smoothness using only function evaluations.
Since the number of rounds is logarithmic in $1/\varepsilon$,
the total query complexity is
$\widetilde{\mathcal O}(\varepsilon^{-(d-1)})$
in the unconstrained setting and
$\widetilde{\mathcal O}(\varepsilon^{-\frac{d-1}{p+\beta_p}})$
on $[0,1]^d$.
Higher-order smoothness therefore improves the accuracy dependence
in the constrained setting, whereas the stated unconstrained bound
has an exponent independent of the smoothness order.

\begin{remark}[\textbf{No additional $C^{1,1}$ assumption}]
For $p\ge2$, Theorem~\ref{the:main1} requires only
$f\in C^{p,\beta_p}(L_p)$, without an additional
$C^{1,1}(L_1)$ assumption, and its query bounds do not depend on $L_1$.
This distinction matters because the highest-order H\"older
condition does not control the gradient Lipschitz constant.
\end{remark}

The rest of this section is organized as follows.
Section~\ref{sec:trapping-to-stationarity} develops the trapping
certificates and shows how to compute approximate stationary points
without an additional $C^{1,1}$ assumption.
Section~\ref{subsec:trapping-algorithm} presents the resulting
algorithm for both unconstrained and cube-constrained optimization.
Finally, Section~\ref{sec:main1-analysis} outlines the proof of
Theorem~\ref{the:main1}, including correctness, query complexity,
and adaptive depth.



\subsection{From Trapping to Stationarity}
\label{sec:trapping-to-stationarity}

In this section, we remove the additional $C^{1,1}$ assumption from
the terminal step in standard gradient-flow trapping~\cite{hollender2023computational,huanjian2025adaptive}. A standard termination
argument first shows that a small trapped region $R$ contains an
approximate stationary point $\boldsymbol z$, and then uses gradient
Lipschitzness to certify the current pivot $\boldsymbol x$:
$\|\nabla f(\boldsymbol x)\|_2
\le\|\nabla f(\boldsymbol z)\|_2+L_1\operatorname{diam}(R)$.
The constrained argument similarly uses continuity of $G_\gamma$.


In the following, Section~\ref{subsec:trapping-certificates} recalls the boundary
certificates and their implications for the existence of stationary
and KKT points to a prescribed accuracy.
Section~\ref{subsec:stationarity-from-models} shows how to compute
stationary points or KKT points by minimizing a gradient-accurate scalar
model, without an additional $C^{1,1}$ assumption.

\subsubsection{Unreachability Certificates}
\label{subsec:trapping-certificates}

\begin{definition}[\bf Hyperrectangles]
\label{def:full-dimensional-hyperrectangle}
A hyperrectangle in $\mathbb R^d$ is a set
$R=\prod_{i=1}^d[a_i,b_i]$ with $a_i\le b_i$.
Its dimension is the number $k$ of indices satisfying $a_i<b_i$.
It is \emph{full-dimensional} if $k=d$.
\end{definition}

\begin{definition}[\bf Unreachability and trapping function]
\label{def:trapping-certificate}
For $\boldsymbol x,\boldsymbol y\in\mathbb R^d$ and $\eta>0$, define
\begin{equation}
\label{eq:trapping-potential}
    \phi_{\boldsymbol x,\eta}(\boldsymbol y)
    :=f(\boldsymbol y)-f(\boldsymbol x)
      +\eta\|\boldsymbol y-\boldsymbol x\|_2.
\end{equation}
We say that $\boldsymbol y$ is \emph{$\eta$-unreachable from
$\boldsymbol x$} if $\phi_{\boldsymbol x,\eta}(\boldsymbol y)>0$,
and \emph{$\eta$-reachable} otherwise.
\end{definition}

\noindent\textbf{Physical and artificial boundaries.}
For a compact set $R\subseteq [0,1]^d$, write $\operatorname{int}_{[0,1]^d}(R)$
and $\partial_{[0,1]^d} R$ for its interior and boundary in the subspace
topology of the cube. Thus
$\partial_{[0,1]^d} R=R\setminus\operatorname{int}_{[0,1]^d}(R)$.
For a full-dimensional hyperrectangle $R=\prod_i[a_i,b_i]\subseteq [0,1]^d$,
let $F_i^-(R):=\{\boldsymbol y\in R:y_i=a_i\}$ and
$F_i^+(R):=\{\boldsymbol y\in R:y_i=b_i\}$.
The lower facet is physical if $a_i=0$, and the upper facet is
physical if $b_i=1$; the remaining facets are artificial.
Its artificial boundary is
\begin{equation}
\label{eq:trapping-box-boundary}
    \partial_{\mathrm{art}}R
    :=\bigcup_{i:a_i>0}F_i^-(R)
       \;\cup\;
       \bigcup_{i:b_i<1}F_i^+(R)
    =\partial_{[0,1]^d} R.
\end{equation}
Each artificial facet includes its intersections with physical facets.
Indeed, a point on an artificial facet admits arbitrarily close points
of the cube outside $R$, whereas a point on no artificial facet has a
neighborhood in which $R$ and the cube coincide.
In particular, $\partial_{\mathrm{art}}([0,1]^d)=\varnothing$.
For the ball $Q_{\boldsymbol x}(r):=\boldsymbol x+rB_2^d$, where
$B_2^d$ is the closed Euclidean unit ball and $r>0$, the feasible cap
satisfies
$\partial_{[0,1]^d}(Q_{\boldsymbol x}(r)\cap [0,1]^d)
\subseteq\Gamma_{\boldsymbol x}(r)$, where
$\Gamma_{\boldsymbol x}(r):=
\{\boldsymbol y\in [0,1]^d:\|\boldsymbol y-\boldsymbol x\|_2=r\}$.

For a closed convex set $C$ and $\boldsymbol z\in C$, let
$N_C(\boldsymbol z):=
\{\boldsymbol n:\langle\boldsymbol n,\boldsymbol y-\boldsymbol z\rangle
\le0\text{ for every }\boldsymbol y\in C\}$.
We will use the following consequence of boundary unreachability as an analog of Lemma 3 in \cite{hollender2023computational}.

\begin{lemma}[\bf Trapping implies existence of stationarity]
\label{lem:trapping-implies-stationarity}
Let $R\subseteq\mathbb R^d$ be nonempty, compact, and convex,
$\boldsymbol x\in R$, and $\eta>0$.
If every point of $\partial R$ is $\eta$-unreachable from
$\boldsymbol x$, then $R$ contains a point $\boldsymbol z$ with
$\|\nabla f(\boldsymbol z)\|_2\le\eta$.
In the constrained setting, if $R\subseteq [0,1]^d$ and every point of
$\partial_{[0,1]^d} R$ is $\eta$-unreachable from $\boldsymbol x$, then $R$
contains a point $\boldsymbol z$ with
$\|G_\gamma(\boldsymbol z)\|_2\le\eta$ for every $\gamma>0$.
\end{lemma}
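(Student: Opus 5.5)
The plan is to minimize the trapping function $\phi:=\phi_{\boldsymbol x,\eta}$ over $R$ and read off stationarity at the minimizer from first-order optimality. Since $R$ is compact and $\phi$ is continuous, a minimizer $\boldsymbol z\in R$ of $\phi$ over $R$ exists. We have $\phi(\boldsymbol x)=0$, so $\phi(\boldsymbol z)\le 0$. Every point of $\partial R$ (respectively $\partial_{[0,1]^d}R$) has $\phi>0$ by hypothesis, so $\boldsymbol z$ lies off that boundary.

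In the unconstrained case, $\boldsymbol z$ is then in the interior of $R$, hence a local minimizer of $\phi$ on $\mathbb R^d$. The map $\boldsymbol y\mapsto f(\boldsymbol y)+\eta\|\boldsymbol y-\boldsymbol x\|_2$ is $f$ plus a convex Lipschitz function. Its one-sided directional derivative at $\boldsymbol z$ in direction $\boldsymbol v$ is $\langle\nabla f(\boldsymbol z),\boldsymbol v\rangle+\eta\|\,\cdot\,\|$-term, where the norm term is bounded by $\eta\|\boldsymbol v\|_2$. This holds both when $\boldsymbol z=\boldsymbol x$ and when $\boldsymbol z\neq\boldsymbol x$. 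Local minimality forces this directional derivative to be $\ge0$ for all $\boldsymbol v$. Taking $\boldsymbol v=-\nabla f(\boldsymbol z)$ gives $-\|\nabla f(\boldsymbol z)\|_2^2+\eta\|\nabla f(\boldsymbol z)\|_2\ge0$, i.e.\ $\|\nabla f(\boldsymbol z)\|_2\le\eta$.

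In the constrained case, $\boldsymbol z\in\operatorname{int}_{[0,1]^d}(R)$, so there is a neighborhood $U$ of $\boldsymbol z$ with $U\cap R=U\cap[0,1]^d$. Hence $\boldsymbol z$ is a local minimizer of $\phi$ over the convex set $C=[0,1]^d$. For any $\boldsymbol y\in C$, convexity gives $\boldsymbol z+t(\boldsymbol y-\boldsymbol z)\in C\cap U$ for small $t>0$. The same directional-derivative bound then yields
\[
\langle\nabla f(\boldsymbol z),\boldsymbol y-\boldsymbol z\rangle+\eta\|\boldsymbol y-\boldsymbol z\|_2\ge0\qquad\forall\,\boldsymbol y\in C .
\]

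The step I expect to be the main obstacle is converting this variational inequality into $\|G_\gamma(\boldsymbol z)\|_2\le\eta$ for every $\gamma$. Set $\boldsymbol y=\operatorname{proj}_C(\boldsymbol z-\gamma\nabla f(\boldsymbol z))$, so that $\boldsymbol y=\boldsymbol z-\gamma G_\gamma(\boldsymbol z)$. The projection characterization gives $\langle \boldsymbol z-\gamma\nabla f(\boldsymbol z)-\boldsymbol y,\boldsymbol z-\boldsymbol y\rangle\le0$. Substituting $\boldsymbol z-\boldsymbol y=\gamma G$ yields $\gamma^2\|G\|^2\le\gamma^2\langle\nabla f(\boldsymbol z),G\rangle$, i.e.\ $\|G\|_2^2\le\langle\nabla f(\boldsymbol z),G\rangle$. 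Plugging the same $\boldsymbol y$ into the variational inequality gives $-\gamma\langle\nabla f(\boldsymbol z),G\rangle+\eta\gamma\|G\|_2\ge0$. Combining the two, $\|G\|_2^2\le\eta\|G\|_2$, hence $\|G_\gamma(\boldsymbol z)\|_2\le\eta$. This holds for every $\gamma>0$, since $\boldsymbol z$ does not depend on $\gamma$.

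The only remaining care point is the nondifferentiability of the norm at $\boldsymbol z=\boldsymbol x$. The one-sided bound $\|\boldsymbol z+t\boldsymbol v-\boldsymbol x\|_2\le\|\boldsymbol z-\boldsymbol x\|_2+t\|\boldsymbol v\|_2$ (triangle inequality) suffices in all cases and avoids any differentiability issue.
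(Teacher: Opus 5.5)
Your proof is correct. It shares the paper's first step: minimize $\phi_{\boldsymbol x,\eta}$ over $R$, and use $\phi_{\boldsymbol x,\eta}(\boldsymbol z)\le\phi_{\boldsymbol x,\eta}(\boldsymbol x)=0$ to push the minimizer off the certified boundary. The two arguments then finish differently.

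The paper invokes the nonsmooth first-order condition $\nabla f(\boldsymbol z)+\eta\boldsymbol v+\boldsymbol n=\boldsymbol 0$, with $\boldsymbol v\in\partial\|\cdot-\boldsymbol x\|_2(\boldsymbol z)$ and $\boldsymbol n\in N_R(\boldsymbol z)$. It shows $N_R(\boldsymbol z)=N_{[0,1]^d}(\boldsymbol z)$ from the local equality of $R$ and the cube. It then uses $\operatorname{proj}_{[0,1]^d}(\boldsymbol z+\gamma\boldsymbol n)=\boldsymbol z$ and nonexpansiveness to get $\|G_\gamma(\boldsymbol z)\|_2\le\|\nabla f(\boldsymbol z)+\boldsymbol n\|_2\le\eta$.

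You instead derive the variational inequality $\langle\nabla f(\boldsymbol z),\boldsymbol y-\boldsymbol z\rangle+\eta\|\boldsymbol y-\boldsymbol z\|_2\ge0$ on the cube directly, from difference quotients and the triangle inequality. You close by testing it at the single point $\operatorname{proj}_{[0,1]^d}(\boldsymbol z-\gamma\nabla f(\boldsymbol z))$ against the obtuse-angle characterization of the projection.

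\textbf{What your route buys.} It is more elementary. It needs neither the subdifferential sum rule nor a normal-cone optimality condition for a nonconvex objective, both of which the paper takes as standard. It also never uses convexity of $R$, only of the cube, so it works for any compact $R$.

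\textbf{What the paper's route buys.} It gives a reusable residual form: $\|G_\gamma(\boldsymbol z)\|_2\le\|\nabla f(\boldsymbol z)+\boldsymbol n\|_2$ holds for every $\boldsymbol n\in N_{[0,1]^d}(\boldsymbol z)$, regardless of how $\boldsymbol z$ was obtained. This is exactly the form the later model-based and terminal-localization lemmas rely on. There one only has an approximate stationarity residual for a surrogate, not an exact minimizer of $\phi_{\boldsymbol x,\eta}$.
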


\begin{proof}
Let $\boldsymbol z$ minimize $\phi_{\boldsymbol x,\eta}$ over $R$ and existence follows from compactness and continuity.
Its value is at most $\phi_{\boldsymbol x,\eta}(\boldsymbol x)=0$.
The boundary certificate therefore gives
$\boldsymbol z\in\operatorname{int}(R)$ in the unconstrained case,
and $\boldsymbol z\in\operatorname{int}_{[0,1]^d}(R)$ in the constrained case.
First-order optimality gives
$\nabla f(\boldsymbol z)+\eta\boldsymbol v+\boldsymbol n=0$
for some $\boldsymbol v\in
\partial\|\cdot-\boldsymbol x\|_2(\boldsymbol z)$ and
$\boldsymbol n\in N_R(\boldsymbol z)$.
The norm is continuous and convex, so the subgradient sum rule
applies, and $\|\boldsymbol v\|_2\le1$.

In the unconstrained case, $N_R(\boldsymbol z)=\{\boldsymbol 0\}$,
which gives $\|\nabla f(\boldsymbol z)\|_2\le\eta$.
For the constrained case, we verify that
$N_R(\boldsymbol z)=N_{[0,1]^d}(\boldsymbol z)$ whenever
$\boldsymbol z\in\operatorname{int}_{[0,1]^d}(R)$.
The inclusion $N_{[0,1]^d}(\boldsymbol z)\subseteq N_R(\boldsymbol z)$ follows
from $R\subseteq [0,1]^d$.
Conversely, if $\boldsymbol n\in N_R(\boldsymbol z)$ and
$\boldsymbol y\in [0,1]^d$, then for sufficiently small $t>0$,
$\boldsymbol z+t(\boldsymbol y-\boldsymbol z)\in R$ by convexity
of the cube and local equality of $R$ and the cube.
Hence $t\langle\boldsymbol n,\boldsymbol y-\boldsymbol z\rangle\le0$,
which proves the reverse inclusion.

Consequently, $\boldsymbol n\in N_{[0,1]^d}(\boldsymbol z)$ and
$\operatorname{proj}_{[0,1]^d}(\boldsymbol z+\gamma\boldsymbol n)=\boldsymbol z$.
For this $\boldsymbol n$, nonexpansiveness of projection gives
\begin{equation}
\label{eq:gradient-mapping-normal-residual}
\begin{split}
    \|G_\gamma(\boldsymbol z)\|_2
    &=\gamma^{-1}
      \|\operatorname{proj}_{[0,1]^d}(\boldsymbol z+\gamma\boldsymbol n)
      -\operatorname{proj}_{[0,1]^d}(\boldsymbol z-\gamma\nabla f(\boldsymbol z))\|_2\\
    &\le\|\nabla f(\boldsymbol z)+\boldsymbol n\|_2
     \le\eta.
\end{split}
\end{equation}
The first inequality holds for any $\boldsymbol z\in [0,1]^d$ and
$\boldsymbol n\in N_{[0,1]^d}(\boldsymbol z)$, independently of the trapping
certificate.
\end{proof}

\paragraph{Unreachability Certification via Global Minimization.}
Since the boundary contains infinitely many points,
we instead use a \emph{global} approximate minimizer of the
trapping function to certify unreachability, as the following
lemma shows.
\begin{lemma}
\label{lmm:to_local_minima}
Let $\boldsymbol{y}\in E$ be an $\varepsilon_0$-approximate minimizer of $\phi_{\boldsymbol{x},\varepsilon}$ over $E$, \emph{i.e.},
\[
\phi_{\boldsymbol{x},\varepsilon}(\boldsymbol{y}) \le \min\limits_{\boldsymbol{z}\in E}\phi_{\boldsymbol{x}, \varepsilon}(\boldsymbol{z})+\varepsilon_0.
\]
If $\boldsymbol{y}$ is $\varepsilon$-unreachable from $\boldsymbol{x}$, then every point $\boldsymbol{z}\in E$ is $\varepsilon'$-unreachable from $\boldsymbol{x}$, where
\[
\varepsilon'=\varepsilon+\frac{\varepsilon_0}{\mathrm{dist}(\boldsymbol{x},E)}.
\]
\end{lemma}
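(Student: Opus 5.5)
The argument is a direct chain of inequalities, using the identity $\phi_{\boldsymbol x,\varepsilon'}(\boldsymbol z)=\phi_{\boldsymbol x,\varepsilon}(\boldsymbol z)+(\varepsilon'-\varepsilon)\|\boldsymbol z-\boldsymbol x\|_2$, which is immediate from \eqref{eq:trapping-potential}. The plan is to first show that $\phi_{\boldsymbol x,\varepsilon}$ is not too negative on $E$, and then to show that the extra penalty $(\varepsilon'-\varepsilon)\|\boldsymbol z-\boldsymbol x\|_2$ compensates for the remaining deficit. Implicitly, $\operatorname{dist}(\boldsymbol x,E)>0$, so that $\varepsilon'$ is finite. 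If $\operatorname{dist}(\boldsymbol x,E)=0$, the convention $\varepsilon'=\infty$ makes the statement vacuous, and I would note this in one line.

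\textbf{Step 1.} Fix any $\boldsymbol z\in E$. By the approximate-minimizer hypothesis and the assumption that $\boldsymbol y$ is $\varepsilon$-unreachable,
\[
\phi_{\boldsymbol x,\varepsilon}(\boldsymbol z)\;\ge\;\min_{\boldsymbol w\in E}\phi_{\boldsymbol x,\varepsilon}(\boldsymbol w)\;\ge\;\phi_{\boldsymbol x,\varepsilon}(\boldsymbol y)-\varepsilon_0\;>\;-\varepsilon_0 .
\]
If the minimum is not attained (for example, if $E$ is not closed), I would replace it by the infimum; the same inequality holds.

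\textbf{Step 2.} Since $\boldsymbol z\in E$, we have $\|\boldsymbol z-\boldsymbol x\|_2\ge\operatorname{dist}(\boldsymbol x,E)$. With $\varepsilon'-\varepsilon=\varepsilon_0/\operatorname{dist}(\boldsymbol x,E)$, this gives
\[
\phi_{\boldsymbol x,\varepsilon'}(\boldsymbol z)=\phi_{\boldsymbol x,\varepsilon}(\boldsymbol z)+\frac{\varepsilon_0}{\operatorname{dist}(\boldsymbol x,E)}\|\boldsymbol z-\boldsymbol x\|_2\;>\;-\varepsilon_0+\varepsilon_0=0 .
\]
Hence every $\boldsymbol z\in E$ is $\varepsilon'$-unreachable from $\boldsymbol x$.

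There is no real obstacle here. The only points needing care are preserving the strict inequality, which comes from the strict unreachability of $\boldsymbol y$ in Step 1, and the nondegeneracy condition $\operatorname{dist}(\boldsymbol x,E)>0$. In the intended applications, $E$ is a boundary such as $\partial R$ or $\Gamma_{\boldsymbol x}(r)$ that stays away from the pivot $\boldsymbol x$, so this condition holds.
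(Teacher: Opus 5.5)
Your proof is correct and follows the same route as the paper: approximate minimality together with the unreachability of $\boldsymbol y$ gives $\phi_{\boldsymbol x,\varepsilon}(\boldsymbol z)>-\varepsilon_0$ on $E$, and then $\|\boldsymbol z-\boldsymbol x\|_2\ge\operatorname{dist}(\boldsymbol x,E)$ turns $-\varepsilon_0$ into $-\frac{\varepsilon_0}{\operatorname{dist}(\boldsymbol x,E)}\|\boldsymbol z-\boldsymbol x\|_2$. Your version is slightly more careful because it keeps the strict inequality coming from $\phi_{\boldsymbol x,\varepsilon}(\boldsymbol y)>0$; the paper's displayed chain weakens this to $\ge$, so as written it only yields $\phi_{\boldsymbol x,\varepsilon'}(\boldsymbol z)\ge 0$.
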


\begin{proof}
For any point $\boldsymbol{z} \in E$, since $\phi_{\boldsymbol{x},\varepsilon}(\boldsymbol{y}) \le \min\limits_{\boldsymbol{z}\in E}\phi_{\boldsymbol{x},\varepsilon}(\boldsymbol{z})+\varepsilon_0$, we have 
\[f(\boldsymbol{z})  + \varepsilon\|\boldsymbol{z} - \boldsymbol{x}\| + \varepsilon_0 \geq f(\boldsymbol{y}) + \varepsilon\|\boldsymbol{y} - \boldsymbol{x}\|.  \]
Combining the fact that $\boldsymbol{y}$ is $\varepsilon$-unreachable from $\boldsymbol{x}$, we have
\begin{align*}
f(\boldsymbol{z}) ~\geq~& f(\boldsymbol{y})  + \varepsilon\|\boldsymbol{y} - \boldsymbol{x}\| - \varepsilon\|\boldsymbol{z} - \boldsymbol{x}\| - \varepsilon_0   \\
~\geq~& f(\boldsymbol{x})  - \varepsilon\|\boldsymbol{z} - \boldsymbol{x}\| - \varepsilon_0   \\
~\geq~& f(\boldsymbol{x})  - \varepsilon\|\boldsymbol{z} - \boldsymbol{x}\| -\frac{\varepsilon_0}{\mathrm{dist}(\boldsymbol{x},E)}  \|\boldsymbol{z} - \boldsymbol{x}\|.
\end{align*}
\end{proof}

\begin{remark}[\bf{Comparison with Lemma~5 in
\cite{hollender2023computational}}]
Lemma~5 in \cite{hollender2023computational} is a special case of
Lemma~\ref{lmm:to_local_minima}. In their setting, a nice
\(\delta\)-net and \(L\)-smoothness yield an
\(\varepsilon_0\)-approximate minimizer of
\(\phi_{\boldsymbol{x},\varepsilon}\) over \(E\), where
\[
\varepsilon_0
=
\frac{\delta^2}{2}
\left(
L+\frac{2\varepsilon}{\mathrm{dist}(\boldsymbol{x},E)}
\right).
\]
Lemma~\ref{lmm:to_local_minima} then recovers their unreachability
parameter exactly. More generally, our lemma reduces the geometric
task of certifying that every point of \(E\) is unreachable to the
global minimization of the trapping function over \(E\), after which
the smoothness of \(f\) can be exploited to construct an accurate
approximate minimizer.
\end{remark}

By Lemma~\ref{lmm:to_local_minima}, the verification of unreachability for all boundary points reduces to find an approximate global minimizer of $\phi_{\boldsymbol{x}, \varepsilon}$. 
In the gradient flow trapping literature~\cite{bubeck2020trap,hollender2023computational,huanjian2025adaptive}, the authors focused on the first-order smooth setting with access only to a function-value oracle.
To obtain an approximate global minimum over a bounded domain, they searched over a $\delta$-net and returned the minimizer among the $\delta$-net points.

A standard approach to approximate global minimization from
function values is to reconstruct the objective and minimize
the resulting approximation
\cite{ivanov1971optimum,novak2012tractability,rudi2020finding}.
Theorem~\ref{thm:global-minimization-known-term} in Appendix~\ref{app:approximation} formalizes this
approach while allowing an additional known continuous term.
Specifically, let $\mathcal Y$ be a $k$-dimensional
hyperrectangle with $\operatorname{diam}(\mathcal Y)\le D$,
let $f\in C^{p,\beta}(L_p)$ on a neighborhood of $\mathcal Y$,
and let $q\in C(\mathcal Y)$ be known.
For any $\delta>0$, one nonadaptive batch of
\[
    \mathcal O_{k,p,\beta}\!\left(
        1+D^k
        \left(\frac{L_p}{\delta}\right)^{k/(p+\beta)}
    \right)
\]
exact function-value queries returns a point
$\hat{\boldsymbol y}\in\mathcal Y$ satisfying
$(f+q)(\hat{\boldsymbol y})
\le\min_{\boldsymbol y\in\mathcal Y}(f+q)(\boldsymbol y)+\delta$.
For the trapping subproblem, we take
$q(\boldsymbol y)
=-f(\boldsymbol x)+\eta\|\boldsymbol y-\boldsymbol x\|_2$.
Each rectangular separator has intrinsic dimension $k=d-1$,
and an isometric parameterization preserves the
$C^{p,\beta_p}(L_p)$ smoothness bound.

\begin{remark}[\bf Query versus computational complexity]
In this work, we only concerns oracle complexity. No claim is made about the computational
cost of polynomial reconstruction or the subsequent global search.
This follows the standard evaluation-complexity convention for
\(\mathrm{AR}p\) methods
\cite{BGM+17,cartis2020sharp}, which counts evaluations of
\(f\) and its derivatives but excludes the internal cost of
approximately minimizing each regularized Taylor model.
\end{remark}

\subsubsection{Stationarity from Approximate Models}
\label{subsec:stationarity-from-models}

The preceding lemma guarantees the existence of an approximate
stationary or KKT point in $R$. We now compute such a point from
an explicit continuously differentiable scalar model
$P:\mathbb R^d\to\mathbb R$ whose gradient approximates $\nabla f$
uniformly on $R$. The required model constructions are given in
Appendix~\ref{app:approximation}.

For a target accuracy $\varepsilon>0$ and a pivot
$\boldsymbol x\in R$, define the penalized model
\begin{equation}
\label{eq:penalized-scalar-model}
    \Psi(\boldsymbol y)
    :=P(\boldsymbol y)-P(\boldsymbol x)
      +\frac{3\varepsilon}{4}\|\boldsymbol y-\boldsymbol x\|_2.
\end{equation}
The distance penalty converts the trapping certificate into a
strict boundary barrier. Minimizing $P$ alone may return a point
on an artificial boundary, where constrained optimality over $R$
need not imply the required stationarity or KKT guarantee.

\begin{lemma}[\bf Stationarity from an approximate model]
\label{lem:stationarity-from-model}
Let $R\subseteq\mathbb R^d$ be nonempty, compact, and convex,
let $\boldsymbol x\in R$, and let $\varepsilon>0$.
Suppose that $f$ and $P$ are continuously differentiable on a
neighborhood of $R$ and satisfy
$\|\nabla P-\nabla f\|_{\infty,R}\le\frac{\varepsilon}{8}$.
Every minimizer
$\boldsymbol z\in
\operatorname*{argmin}\limits_{\boldsymbol y\in R}\Psi(\boldsymbol y)$
has the following properties:
\begin{enumerate}[leftmargin =* ]
\item If every point of $\partial R$ is $(\frac{\varepsilon}{2})$-unreachable
from $\boldsymbol x$, then $\boldsymbol z\in\operatorname{int}(R)$
and $\|\nabla f(\boldsymbol z)\|_2\le\varepsilon$.

\item If $R\subseteq[0,1]^d$ and every point of
$\partial_{[0,1]^d}R$ is $(\frac{\varepsilon}{2})$-unreachable from
$\boldsymbol x$, then
$\boldsymbol z\in\operatorname{int}_{[0,1]^d}(R)$ and
$\|G_\gamma(\boldsymbol z)\|_2\le\varepsilon$ for every $\gamma>0$.
\end{enumerate}
\end{lemma}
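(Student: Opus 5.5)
The plan has two steps. First, the unreachability certificate, transferred to the model, forces every minimizer of \(\Psi\) off the relevant boundary. Second, I reuse the first-order optimality argument from the proof of Lemma~\ref{lem:trapping-implies-stationarity}, now applied to \(\Psi\) instead of \(\phi_{\boldsymbol x,\eta}\), with the model error absorbed into the slack between \(\frac{3\varepsilon}{4}\) and \(\varepsilon\).

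\textbf{Step 1 (boundary barrier).} For \(\boldsymbol y\in R\), I integrate \(\nabla(P-f)\) along the segment from \(\boldsymbol x\) to \(\boldsymbol y\). This segment stays in \(R\) by convexity, so
\[
\bigl|(P(\boldsymbol y)-P(\boldsymbol x))-(f(\boldsymbol y)-f(\boldsymbol x))\bigr|\le\tfrac{\varepsilon}{8}\|\boldsymbol y-\boldsymbol x\|_2 .
\]
Consequently
\[
\Psi(\boldsymbol y)\ge f(\boldsymbol y)-f(\boldsymbol x)+\tfrac{5\varepsilon}{8}\|\boldsymbol y-\boldsymbol x\|_2
=\phi_{\boldsymbol x,\varepsilon/2}(\boldsymbol y)+\tfrac{\varepsilon}{8}\|\boldsymbol y-\boldsymbol x\|_2 .
\]
If \(\boldsymbol y\) lies on \(\partial R\) (respectively \(\partial_{[0,1]^d}R\)), the hypothesis gives \(\phi_{\boldsymbol x,\varepsilon/2}(\boldsymbol y)>0\), hence \(\Psi(\boldsymbol y)>0\). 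On the other hand \(\Psi(\boldsymbol x)=0\), so \(\min_R\Psi\le0\). A minimizer therefore cannot lie on the boundary, which gives \(\boldsymbol z\in\operatorname{int}(R)\), respectively \(\boldsymbol z\in\operatorname{int}_{[0,1]^d}(R)\).

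One edge case needs care. If \(\boldsymbol x\) itself lies on the boundary, the certificate applied at \(\boldsymbol y=\boldsymbol x\) would require \(\phi_{\boldsymbol x,\varepsilon/2}(\boldsymbol x)=0>0\), which is impossible. The hypothesis therefore forces \(\boldsymbol x\) into the relative interior, so the comparison with \(\Psi(\boldsymbol x)=0\) is legitimate.

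\textbf{Step 2 (optimality conditions).} The function \(\Psi\) is the sum of the \(C^1\) function \(P-P(\boldsymbol x)\) and the convex continuous function \(\frac{3\varepsilon}{4}\|\cdot-\boldsymbol x\|_2\). The nonsmooth sum rule for a local minimizer over the convex set \(R\) then gives
\[
\nabla P(\boldsymbol z)+\tfrac{3\varepsilon}{4}\boldsymbol v+\boldsymbol n=0,
\qquad \|\boldsymbol v\|_2\le1,\quad \boldsymbol n\in N_R(\boldsymbol z).
\]
The term \(\nabla P\) is a Fréchet gradient, so the sum rule applies, as in Lemma~\ref{lem:trapping-implies-stationarity}. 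Rewriting, \(\nabla f(\boldsymbol z)+\boldsymbol n=-\frac{3\varepsilon}{4}\boldsymbol v+(\nabla f-\nabla P)(\boldsymbol z)\), and therefore
\[
\|\nabla f(\boldsymbol z)+\boldsymbol n\|_2\le\tfrac{3\varepsilon}{4}+\tfrac{\varepsilon}{8}<\varepsilon .
\]

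\textbf{Case 1 (unconstrained).} Here \(\boldsymbol z\in\operatorname{int}(R)\), so \(N_R(\boldsymbol z)=\{\boldsymbol 0\}\) and \(\|\nabla f(\boldsymbol z)\|_2\le\varepsilon\).

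\textbf{Case 2 (constrained).} I reuse verbatim the argument from Lemma~\ref{lem:trapping-implies-stationarity} that \(N_R(\boldsymbol z)=N_{[0,1]^d}(\boldsymbol z)\) for \(\boldsymbol z\in\operatorname{int}_{[0,1]^d}(R)\). Then the nonexpansiveness estimate \eqref{eq:gradient-mapping-normal-residual} yields \(\|G_\gamma(\boldsymbol z)\|_2\le\|\nabla f(\boldsymbol z)+\boldsymbol n\|_2\le\varepsilon\) for every \(\gamma>0\).

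\textbf{Main obstacle.} The delicate point is Step 1: the model error must be controlled through an integral along segments inside \(R\), which is why convexity of \(R\) and a uniform gradient bound, rather than a value bound, are needed. The \(\frac{\varepsilon}{8}\) margin is exactly what keeps the penalized model a strict barrier at the \(\frac{\varepsilon}{2}\)-unreachable boundary.
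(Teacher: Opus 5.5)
Your proof is correct and follows the paper's argument essentially step for step. The shared steps are: integrating $\nabla P-\nabla f$ along segments to get $\Psi\ge\phi_{\boldsymbol x,\varepsilon/2}+\frac{\varepsilon}{8}\|\cdot-\boldsymbol x\|_2$ on the certified boundary; comparing with $\Psi(\boldsymbol x)=0$; applying the nonsmooth first-order condition to get $\|\nabla f(\boldsymbol z)+\boldsymbol n\|_2\le\frac{7\varepsilon}{8}$; and, in the cube case, identifying $N_R(\boldsymbol z)=N_{[0,1]^d}(\boldsymbol z)$ and using nonexpansiveness of the projection. Your edge-case remark about $\boldsymbol x$ lying on the boundary is true but unnecessary, since $\min_R\Psi\le\Psi(\boldsymbol x)=0$ uses only $\boldsymbol x\in R$.
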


\begin{proof}
Since $\Psi$ is continuous and $R$ is nonempty and compact,
a global minimizer exists. Fix any such minimizer $\boldsymbol z$.
Because $\boldsymbol x\in R$ and $\Psi(\boldsymbol x)=0$,
we have $\Psi(\boldsymbol z)\le0$.

For every $\boldsymbol y\in R$, convexity keeps the segment from
$\boldsymbol x$ to $\boldsymbol y$ in $R$. Integrating
$\nabla P-\nabla f$ along this segment gives
\begin{equation}
\label{eq:model-increment-error}
\begin{split}
 &|P(\boldsymbol y)-P(\boldsymbol x)-f(\boldsymbol y)+f(\boldsymbol x)|=
 \left|\int_0^1
 \left\langle
 (\nabla P-\nabla f)
 (\boldsymbol x+t(\boldsymbol y-\boldsymbol x)),
 \boldsymbol y-\boldsymbol x
 \right\rangle\,dt\right|
 \le\frac{\varepsilon}{8}\|\boldsymbol y-\boldsymbol x\|_2.
\end{split}
\end{equation}
Consequently, on the certified boundary,
$\Psi(\boldsymbol y)\ge
\phi_{\boldsymbol x,\frac{\varepsilon}{2}}(\boldsymbol y)
+\frac{\varepsilon}{8}\|\boldsymbol y-\boldsymbol x\|_2>0$.
Since $\Psi(\boldsymbol z)\le0$, the minimizer cannot lie on
that boundary. Thus $\boldsymbol z\in\operatorname{int}(R)$
in case~(i), and
$\boldsymbol z\in\operatorname{int}_{[0,1]^d}(R)$ in case~(ii).

The constrained first-order necessary condition for minimizing
the sum of the continuously differentiable function $P$ and
the convex distance penalty supplies vectors
$\boldsymbol v\in\partial\|\cdot-\boldsymbol x\|_2(\boldsymbol z)$
and $\boldsymbol n\in N_R(\boldsymbol z)$ such that
$\nabla P(\boldsymbol z)
+(3\varepsilon/4)\boldsymbol v+\boldsymbol n=\boldsymbol0$.
Here $\|\boldsymbol v\|_2\le1$, including when
$\boldsymbol z=\boldsymbol x$. Therefore,
\begin{equation}
\label{eq:model-normal-residual}
    \|\nabla f(\boldsymbol z)+\boldsymbol n\|_2
    \le
    \|\nabla f(\boldsymbol z)-\nabla P(\boldsymbol z)\|_2
    +\frac{3\varepsilon}{4}\|\boldsymbol v\|_2
    \le\frac{7\varepsilon}{8}
    \le\varepsilon.
\end{equation}
In case~(i), $\boldsymbol z\in\operatorname{int}(R)$ implies
$N_R(\boldsymbol z)=\{\boldsymbol0\}$, so
\eqref{eq:model-normal-residual} proves the claimed gradient bound.

For case~(ii), we first show that
$N_R(\boldsymbol z)=N_{[0,1]^d}(\boldsymbol z)$.
The inclusion
$N_{[0,1]^d}(\boldsymbol z)\subseteq N_R(\boldsymbol z)$
follows from $R\subseteq[0,1]^d$.
Conversely, take $\boldsymbol n\in N_R(\boldsymbol z)$ and
$\boldsymbol y\in[0,1]^d$.
Since $\boldsymbol z\in\operatorname{int}_{[0,1]^d}(R)$,
the point $\boldsymbol z+t(\boldsymbol y-\boldsymbol z)$
belongs to $R$ for all sufficiently small $t>0$.
The normal-cone inequality gives
$t\langle\boldsymbol n,\boldsymbol y-\boldsymbol z\rangle\le0$.
Dividing by $t$ shows that
$\langle\boldsymbol n,\boldsymbol y-\boldsymbol z\rangle\le0$.
As $\boldsymbol y$ was arbitrary,
$\boldsymbol n\in N_{[0,1]^d}(\boldsymbol z)$, proving the identity.

For every $\gamma>0$, the projection characterization now gives
$\operatorname{proj}_{[0,1]^d}
(\boldsymbol z+\gamma\boldsymbol n)=\boldsymbol z$.
Hence, by nonexpansiveness of projection,
\[
\begin{split}
 \|G_\gamma(\boldsymbol z)\|_2
 &=
 \frac{1}{\gamma}
 \left\|
 \operatorname{proj}_{[0,1]^d}
     (\boldsymbol z+\gamma\boldsymbol n)
 -
 \operatorname{proj}_{[0,1]^d}
     (\boldsymbol z-\gamma\nabla f(\boldsymbol z))
 \right\|_2\\
 &\le\|\nabla f(\boldsymbol z)+\boldsymbol n\|_2
 \le\varepsilon,
\end{split}
\]
where the last inequality follows from
\eqref{eq:model-normal-residual}.
This proves case~(ii).
\end{proof}

\subsection{Gradient Flow Trapping Framework}
\label{subsec:trapping-algorithm}
\label{subsec:separator-minimization}

We now apply the results of Section~\ref{sec:trapping-to-stationarity}
to construct a function-value algorithm for
$f\in C^{p,\beta_p}(L_p)$, with $\nu:=p+\beta_p>1$.
The unconstrained and cube-constrained versions use the same
trisection procedure and terminal model minimization.
At iteration $t$, the algorithm maintains a full-dimensional
hyperrectangle $R_t=\prod_{i=1}^d[a_i^t,b_i^t]$, a pivot
$\boldsymbol x^t\in R_t$ with known value $f(\boldsymbol x^t)$,
and a trapping parameter $\varepsilon_t$.
Write $r^t:=\max_i(b_i^t-a_i^t)$.
Each iteration approximately minimizes the trapping function on
two separators, updates the pivot according to reachability, and
removes one third of a longest coordinate interval.
The geometric update follows \cite{hollender2023computational,huanjian2025adaptive}.
Algorithm~\ref{alg:main0} states the complete procedure.

\begin{algorithm2e}[ht]
\caption{Higher-Order Gradient-Flow Trapping from Function Values}
\label{alg:main0}
\SetAlgoLined
\DontPrintSemicolon
\SetKwInOut{Input}{Input}
\Input{Accuracy $\varepsilon>0$; parameters $p,\beta_p,L_p$;
an exact function-value oracle; and $R_0$, $\boldsymbol x^0$,
$f(\boldsymbol x^0)$ from the initialization above.}

Set $t\leftarrow0$, $\varepsilon_0\leftarrow\frac{\varepsilon}{4}$,
and $r^0\leftarrow\max_i(b_i^0-a_i^0)$\;
Choose $\alpha$ as above and $r_\varepsilon$ by
\eqref{eq:trapping-terminal-radius}\;

\While{$r^t>r_\varepsilon$}{
  Choose $j\in\operatorname*{argmax}_i(b_i^t-a_i^t)$\;
  Set $\Delta_t$ and $\varepsilon_0^t$ by
  \eqref{eq:trapping-error-budget}, and form $\mathcal E^t$
  by \eqref{eq:trisection-separators}\;

  Find an $\varepsilon_0^t$-approximate global minimizer
  $\boldsymbol z^\star$ of
  $\phi_{\boldsymbol x^t,\varepsilon_t}$ over $\mathcal E^t$
  using Theorem~\ref{thm:global-minimization-known-term}\;
  Query $f(\boldsymbol z^\star)$\;

  \eIf{$\phi_{\boldsymbol x^t,\varepsilon_t}
          (\boldsymbol z^\star)\le0$}{
    Set $\boldsymbol x^{t+1}\leftarrow\boldsymbol z^\star$\;
  }{
    Set $\boldsymbol x^{t+1}\leftarrow\boldsymbol x^t$\;
  }

  Set $a_i^{t+1}\leftarrow a_i^t$ and
  $b_i^{t+1}\leftarrow b_i^t$ for every $i\ne j$\;
  \eIf{$[\boldsymbol x^{t+1}]_j\ge a_j^t+\frac{r^t}{2}$}{
    Set $a_j^{t+1}\leftarrow a_j^t+\frac{r^t}{3}$ and
    $b_j^{t+1}\leftarrow b_j^t$\;
  }{
    Set $a_j^{t+1}\leftarrow a_j^t$ and
    $b_j^{t+1}\leftarrow b_j^t-\frac{r^t}{3}$\;
  }

  Set $R_{t+1}\leftarrow\prod_i[a_i^{t+1},b_i^{t+1}]$
  and $r^{t+1}\leftarrow\max_i(b_i^{t+1}-a_i^{t+1})$\;
  Set $\varepsilon_{t+1}\leftarrow\varepsilon_t+\Delta_t$
  and $t\leftarrow t+1$\;
}

Set $T\leftarrow t$ and construct $P_T$ and $\Psi_T$ as Lemma~\ref{lem:stationarity-from-model}\;
Compute $\boldsymbol z^T\in
\operatorname*{argmin}_{\boldsymbol y\in R_T}\Psi_T(\boldsymbol y)$\;
\Return{$\boldsymbol z^T$}\;
\end{algorithm2e}
\paragraph{Initialization.}
Set $\varepsilon_0:=\frac{\varepsilon}{4}$ and query $f(\boldsymbol x^0)$.
In the unconstrained case, use the normalization $f\ge0$, obtained
by subtracting a known lower bound if necessary.
If $f(\boldsymbol x^0)=0$, return $\boldsymbol x^0$.
Otherwise, set $r^0:=\frac{16f(\boldsymbol x^0)}{\varepsilon}$ and
$R_0:=\{\boldsymbol y\in\mathbb R^d:
\|\boldsymbol y-\boldsymbol x^0\|_\infty\le \frac{r^0}{2}\}$.
The boundary to be certified is $\partial R_t$.
In the cube-constrained case, take any $\boldsymbol x^0\in[0,1]^d$
and set $R_0:=[0,1]^d$ and $r^0:=1$.
Here the boundary to be certified is
$\partial_{\mathrm{art}}R_t=\partial_{[0,1]^d}R_t$, as defined
in Section~\ref{subsec:trapping-certificates} and physical facets
remain constraints of the original problem.
This initialization does not require a lower bound on $f$.

\paragraph{Error Budgets.}
Choose
$\alpha:=\frac{1+\max\left\{\frac{2}{3},
\left(\frac{2}{3}\right)^{\nu-1}\right\}}{2}$.
At iteration $t$, use the parameter increment $\Delta_t$ and
minimization tolerance $\varepsilon_0^t$ given by
\begin{equation}
\label{eq:trapping-error-budget}
    \Delta_t:=\frac{\varepsilon(1-\alpha)}{4d}
              \alpha^{\lfloor t/d\rfloor},
    \qquad
    \varepsilon_0^t:=\frac{r^t\Delta_t}{12},
    \qquad
    \varepsilon_{t+1}:=\varepsilon_t+\Delta_t.
\end{equation}
The parameter $\varepsilon_t$ defines reachability at the current
pivot, whereas $\varepsilon_0^t$ specifies the required value
accuracy of the separator minimization.
After cutting, the algorithm increases the trapping parameter
by $\Delta_t$ to account for this minimization error.
\begin{figure}[t]
    \centering
    \includegraphics[width=0.7\linewidth]
    {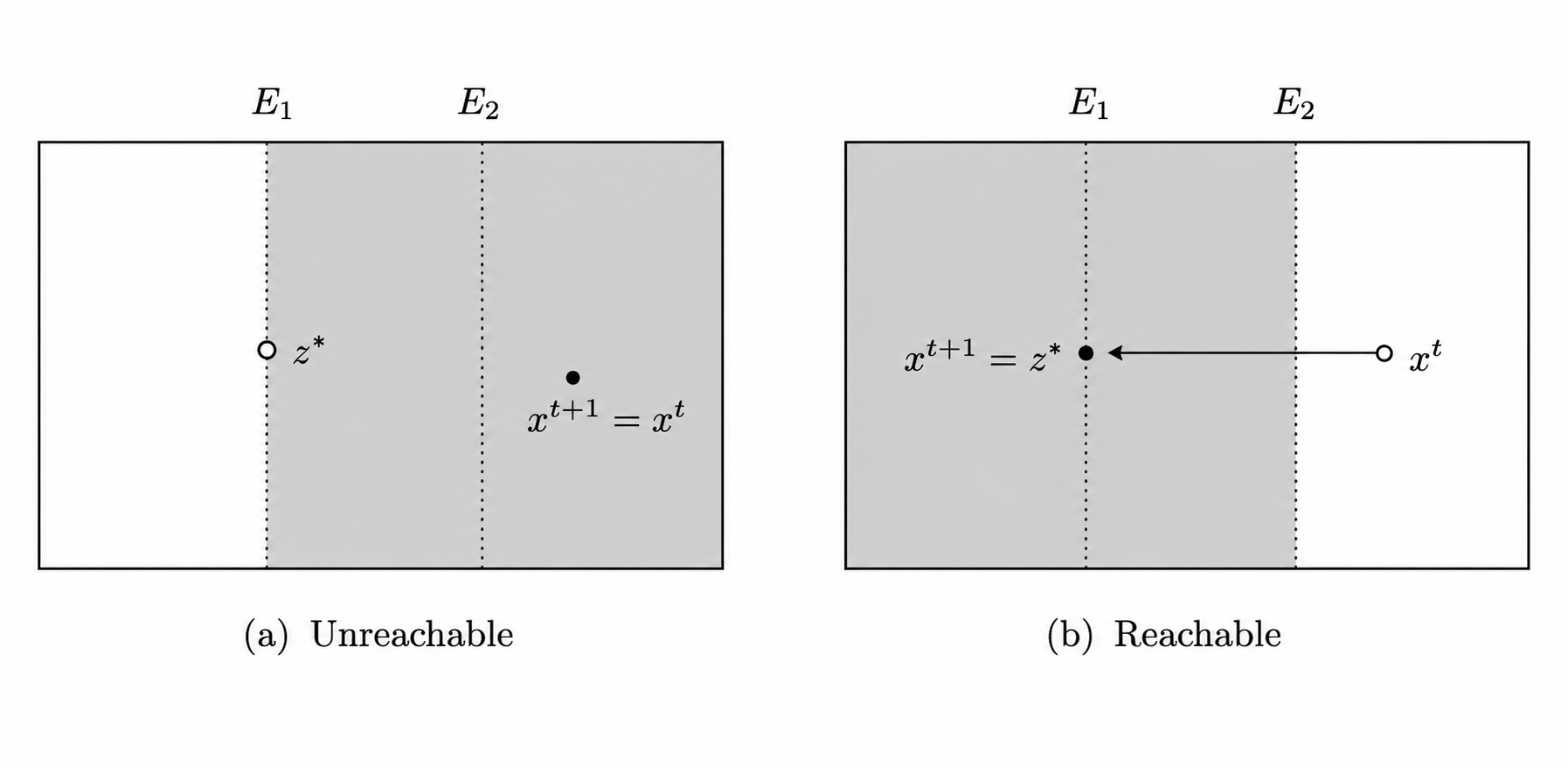}
    \caption{
    The two cases of the parallel-trapping update. Left:
    $\boldsymbol{z}^\star$ is $\varepsilon_t$-unreachable from
    $\boldsymbol{x}^t$, so
    $\boldsymbol{x}^{t+1}=\boldsymbol{x}^t$. Right:
    $\boldsymbol{z}^\star$ is $\varepsilon_t$-reachable, so
    $\boldsymbol{x}^{t+1}=\boldsymbol{z}^\star$. In both cases, the
    shaded two-thirds subrectangle represents $R_{t+1}$. Adapted from
    Figures~4--5 of
    \cite{hollender2023computational}.
    }
    \label{fig:parallel-trap-update}
\end{figure}
\paragraph{Trapping Function Minimization.}
Choose $j\in\operatorname*{argmax}_i(b_i^t-a_i^t)$ and place two
separators at the trisection points of the selected interval:
\begin{equation}
\label{eq:trisection-separators}
    E_m^t:=\left\{\boldsymbol y\in R_t:
        [\boldsymbol y]_j=a_j^t+\frac{mr^t}{3}\right\},
    \quad m\in\{1,2\},
    \qquad \mathcal E^t:=E_1^t\cup E_2^t.
\end{equation}
Using Theorem~\ref{thm:global-minimization-known-term} in
Appendix~\ref{app:approximation}, compute an
$\varepsilon_0^t$-approximate global minimizer
$\boldsymbol z^\star\in\mathcal E^t$ of
$\phi_{\boldsymbol x^t,\varepsilon_t}$.

\paragraph{Pivot Update and Cutting.}
If $\phi_{\boldsymbol x^t,\varepsilon_t}(\boldsymbol z^\star)\le0$,
set $\boldsymbol x^{t+1}:=\boldsymbol z^\star$ and retain its
queried function value. Otherwise, set
$\boldsymbol x^{t+1}:=\boldsymbol x^t$ and retain the known value
at the current pivot.
If $[\boldsymbol x^{t+1}]_j\ge a_j^t+\frac{r^t}{2}$, discard the leftmost
third of the selected interval by setting
$a_j^{t+1}:=a_j^t+\frac{r^t}{3}$ and $b_j^{t+1}:=b_j^t$.
Otherwise, discard the rightmost third by setting
$a_j^{t+1}:=a_j^t$ and $b_j^{t+1}:=b_j^t-\frac{r^t}{3}$.
All other coordinate intervals are unchanged.
Set $R_{t+1}:=\prod_i[a_i^{t+1},b_i^{t+1}]$, recompute $r^{t+1}$,
and update $\varepsilon_{t+1}$ according to
\eqref{eq:trapping-error-budget}. We illustrate this in
Figure~\ref{fig:parallel-trap-update}.

\paragraph{Stopping and Output.}
We use the terminal model of Lemma~\ref{lem:stationarity-from-model}
at the following scale.
Recall $a_{p-1,\beta_p}:=
\frac{\Gamma(\beta_p+1)}{\Gamma(p+\beta_p)}$
and, for $L_p>0$, define
\begin{equation}
\label{eq:trapping-terminal-radius}
    r_\varepsilon:=\frac{2}{\sqrt d}
       \left(\frac{\varepsilon}
       {12a_{p-1,\beta_p}L_p}\right)^{1/(\nu-1)}.
\end{equation}
Set $r_\varepsilon:=+\infty$ when $L_p=0$.
Stop at the first iteration $T$ with $r^T\le r_\varepsilon$.
Let $\boldsymbol c_T$ be the geometric center of $R_T$ and apply
Proposition~\ref{prop:local-reconstruction} with center
$\boldsymbol c_T$ to construct $P_T\in\Pi_p^d$ with
$\|\nabla P_T-\nabla f\|_{\infty,R_T}\le\frac{\varepsilon}{8}$.
The reconstruction is centered at $\boldsymbol c_T$, while the
distance penalty remains centered at the final pivot
$\boldsymbol x^T$.
Following \eqref{eq:penalized-scalar-model}, define
$\Psi_T(\boldsymbol y):=P_T(\boldsymbol y)-P_T(\boldsymbol x^T)
+\frac{3\varepsilon}{4}\|\boldsymbol y-\boldsymbol x^T\|_2$
and return a  minimizer
$\boldsymbol z^T\in\operatorname*{argmin}\limits_{\boldsymbol y\in R_T}
\Psi_T(\boldsymbol y)$.

\subsection{Analysis of Algorithm~\ref{alg:main0}: Proof Sketch of Theorem~\ref{the:main1}}
\label{sec:main1-analysis}

We prove both parts of Theorem~\ref{the:main1} through a common
boundary invariant and the terminal model construction of
Section~\ref{subsec:stationarity-from-models}.
The two settings differ in their initialization and in the boundary
on which unreachability is maintained.
The complete proof appears in Appendix~\ref{app:full-proof-main1}.

\paragraph{Correctness.}
By Lemma~\ref{lem:stationarity-from-model}, it suffices to
establish boundary unreachability and gradient accuracy of
the terminal model.
We maintain $\boldsymbol x^t\in R_t$ and
$\phi_{\boldsymbol x^t,\varepsilon_t}>0$ on $\partial R_t$
in the unconstrained case, or on
$\partial_{\mathrm{art}}R_t=\partial_{[0,1]^d}R_t$
in the cube-constrained case.

Suppose the invariant holds at iteration $t$, and let $F_t$
be the new facet created by the cut.
The cutting rule ensures
$\boldsymbol x^{t+1}\in R_{t+1}$ and
$\operatorname{dist}(\boldsymbol x^{t+1},F_t)
\ge\frac{r^t}{6}$.
Every certified boundary point of $R_{t+1}$ lies on a
retained old boundary or on $F_t$.
The pivot rule gives
$\phi_{\boldsymbol x^t,\varepsilon_t}(\boldsymbol x^{t+1})
=\min\{0,\phi_{\boldsymbol x^t,\varepsilon_t}(\boldsymbol z^\star)\}
\le0$.
\textbf{Case (1):} for every $\boldsymbol y\in R_t$: the triangle inequality gives
\[
\begin{aligned}
 \phi_{\boldsymbol x^{t+1},\varepsilon_{t+1}}(\boldsymbol y)
 &\ge
 \phi_{\boldsymbol x^t,\varepsilon_t}(\boldsymbol y)
 -\phi_{\boldsymbol x^t,\varepsilon_t}(\boldsymbol x^{t+1})
 +\Delta_t\|\boldsymbol y-\boldsymbol x^{t+1}\|_2>0.
\end{aligned}
\]
\textbf{Case (2):} for $\boldsymbol y\in F_t\subseteq\mathcal E^t$,
approximate optimality of $\boldsymbol z^\star$ and the pivot
rule give
$\phi_{\boldsymbol x^t,\varepsilon_t}(\boldsymbol y)
-\phi_{\boldsymbol x^t,\varepsilon_t}(\boldsymbol x^{t+1})
\ge-\varepsilon_0^t$.
Consequently,
\[
\begin{aligned}
 \phi_{\boldsymbol x^{t+1},\varepsilon_{t+1}}(\boldsymbol y)
 &\ge-\varepsilon_0^t
      +\Delta_t\|\boldsymbol y-\boldsymbol x^{t+1}\|_2\ge-\frac{r^t\Delta_t}{12}
      +\frac{r^t\Delta_t}{6}
  =\frac{r^t\Delta_t}{12}>0,
\end{aligned}
\]
where the second inequality uses the prescribed tolerance
and the distance to $F_t$.
This proves preservation of the invariant.

The longest-side cuts reach $r^T\le r_\varepsilon$ in
finitely many iterations, as quantified below.
The parameter budget gives
$\varepsilon_T
=\frac{\varepsilon}{4}+\sum_{t=0}^{T-1}\Delta_t
\le\frac{\varepsilon}{2}$,
so the invariant implies
$\phi_{\boldsymbol x^T,\frac{\varepsilon}{2}}>0$
on the certified terminal boundary.
Moreover, the stopping scale and
Proposition~\ref{prop:local-reconstruction} give
$\|\nabla P_T-\nabla f\|_{\infty,R_T}
\le\frac{\varepsilon}{8}$.
Lemma~\ref{lem:stationarity-from-model} therefore applies
to the returned minimizer $\boldsymbol z^T$ of $\Psi_T$:
$\|\nabla f(\boldsymbol z^T)\|_2\le\varepsilon$
in the unconstrained case, and
$\|G_\gamma(\boldsymbol z^T)\|_2\le\varepsilon$
for every $\gamma>0$ in the cube-constrained case.
\paragraph{Total Query Complexity.}
Both initial regions are cubes, and each iteration cuts a longest
side, giving $r^t=r^0\left(\frac{2}{3}\right)^{\lfloor t/d\rfloor}$.
Applying Theorem~\ref{thm:global-minimization-known-term} to the two
separators and substituting the minimization tolerance gives,
including the exact query at $\boldsymbol z^\star$,
\begin{equation}
\label{eq:main1-iteration-cost-sketch}
\begin{aligned}
Q_t\le C_{d,p,\beta_p}\Biggl[
1
+\left(\frac{L_p}{\varepsilon}\right)^{\frac{d-1}{\nu}}
(r^0)^{\frac{(d-1)(\nu-1)}{\nu}}
\times
\left(
\frac{\left(\frac{2}{3}\right)^{\nu-1}}{\alpha}
\right)^{
\frac{d-1}{\nu}\lfloor t/d\rfloor}
 \Biggr].
\end{aligned}
\end{equation}
Since $(2/3)^{\nu-1}<\alpha$ and each value of
$\lfloor t/d\rfloor$ occurs at most $d$ times, the scale-dependent
terms form a summable geometric sequence.
The constant terms contribute $\mathcal O(T)$ queries.
As shown in Appendix~\ref{app:full-proof-main1}, the stopping scale
ensures that
$T\le C_{d,p,\beta_p}
\left(\frac{L_p}{\varepsilon}\right)^{\frac{d-1}{\nu}}
(r^0)^{\frac{(d-1)(\nu-1)}{\nu}}$,
so this contribution is absorbed into the same bound.

Including the initial query and the $\binom{d+p}{p}$ terminal
reconstruction queries, and substituting the two initial side
lengths, we obtain
\begin{equation}
\label{eq:main1-total-cost-sketch}
Q\le1+\binom{d+p}{p}+C_{d,p,\beta_p}
\begin{cases}
L_p^{\frac{d-1}{\nu}}
f(\boldsymbol x^0)^{\frac{(d-1)(\nu-1)}{\nu}}
\varepsilon^{-(d-1)},
&\text{unconstrained},\\
\left(\frac{L_p}{\varepsilon}\right)^{\frac{d-1}{\nu}},
&\text{cube-constrained}.
\end{cases}
\end{equation}
For fixed problem parameters, the total query complexity is
$\mathcal O(\varepsilon^{-(d-1)})$ in the unconstrained case
and $\mathcal O(\varepsilon^{-\frac{d-1}{p+\beta_p}})$
in the cube-constrained case.

\section{Higher-Order Gradient-Only Trapping}
\label{sec:4}

In this section, we implement the trapping framework of
Section~\ref{sec:3} using exact gradient queries. Since the separator
subproblems depend on $f$ only through increments
$f(\boldsymbol y)-f(\boldsymbol x^t)$, we estimate these increments
by line integration and reconstruct a common relative-value model. Deterministic and randomized integration
give two versions of the algorithm. At termination, a scalar model
constructed from gradients supplies the output certificate.
In this section, $d\ge2$, $p\ge1$, and $\beta_p\in(0,1]$ are fixed,
and we write $\nu:=p+\beta_p>1$.

\begin{theorem}[\bf Higher-order gradient-only trapping]
\label{the:main_gradient}
Suppose that $f\in C^{p,\beta_p}(L_p)$. For $\varepsilon\in(0,1)$, the following
\emph{total} gradient-query bounds hold, with $d,p,\beta_p,L_p$
and the initial-gap bound $\Delta$ fixed independently of
$\varepsilon$ whenever applicable.

\begin{enumerate}[leftmargin = 2em]
\item \textbf{Unconstrained optimization.}
Given $\boldsymbol x^0$ and a known bound
$f(\boldsymbol x^0)-\inf f\le\Delta<\infty$, an
$\varepsilon$-stationary point can be found deterministically using
$\widetilde O\!\left(
\varepsilon^{-\max\{d-1,\frac{\nu}{\nu-1}\}}\right)$
gradient queries, or with randomization using
$\widetilde O\!\left(
\varepsilon^{-\max\{d-1,\frac{2\nu}{2\nu-1}\}}\right)$
gradient queries.

\item \textbf{Cube-constrained optimization.}
A point $\boldsymbol z\in[0,1]^d$ satisfying
$\|G_\gamma(\boldsymbol z)\|_2\le\varepsilon$ for every $\gamma>0$
can be found deterministically using
$\widetilde O\!\left(
\varepsilon^{-\max\{\frac{d-1}{\nu},\ \frac{1}{\nu-1}\}}\right)$
gradient queries, or with randomization using
$\widetilde O\!\left(
\varepsilon^{-\max\{\frac{d-1}{\nu},\ \frac{2}{2\nu-1}\}}\right)$
gradient queries.
\end{enumerate}
For any $\rho\in(0,1)$, the randomized algorithms succeed with
probability at least $1-\rho$ and their query bounds hold for every
random seed, with only logarithmic dependence on $1/\rho$.
Both implementations use $O(\log(1/\varepsilon))$ adaptive rounds.
\end{theorem}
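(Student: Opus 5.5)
We would run Algorithm~\ref{alg:main0} essentially verbatim, replacing every function-value query by a gradient-based estimate of a \emph{relative} value $f(\boldsymbol y)-f(\boldsymbol x^t)$. The correctness argument of Section~\ref{sec:main1-analysis} only uses $\phi_{\boldsymbol x^t,\varepsilon_t}$, which depends on $f$ through increments from the pivot. It therefore suffices to produce, on each separator $\mathcal E^t$, an $\varepsilon_0^t$-approximate global minimizer of $\phi_{\boldsymbol x^t,\varepsilon_t}$, together with a correct reachability decision at $\boldsymbol z^\star$. The pivot update $\phi_{\boldsymbol x^t,\varepsilon_t}(\boldsymbol z^\star)\le 0$ and the new pivot's relative value must be computed with error absorbed into the budget $r^t\Delta_t/12$.

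\textbf{Increment estimates.} We fix the interpolation nodes prescribed by Theorem~\ref{thm:global-minimization-known-term} on $E_1^t\cup E_2^t$. For each node $\boldsymbol y$ we write $f(\boldsymbol y)-f(\boldsymbol x^t)=\int_0^1\langle\nabla f(\boldsymbol x^t+s(\boldsymbol y-\boldsymbol x^t)),\boldsymbol y-\boldsymbol x^t\rangle\,ds$. In the deterministic version we use a Gauss/composite quadrature of order $p$ on $m$ subintervals. The integrand $g(s)$ is $C^{p-1,\beta_p}$ in $s$ with constant $L_p\|\boldsymbol y-\boldsymbol x^t\|^{\nu}$, so the error is $O(L_p D^{\nu} m^{-(\nu-1)})$. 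In the randomized version we use stratified Monte Carlo with control variates: a local degree-$(p-1)$ interpolant on each cell plus one uniformly random evaluation per cell. This gives error $O(L_p D^{\nu} m^{-(\nu-1)-1/2}\sqrt{\log(1/\rho)})$ with high probability, by Hoeffding's inequality applied to the independent, bounded cell errors. A union bound over all $O(\mathrm{poly}(1/\varepsilon))$ estimates costs only logarithmic factors. We then reconstruct the relative-value model from these noisy node values. The reconstruction of Theorem~\ref{thm:global-minimization-known-term} is a bounded linear operator (Lebesgue constant $O_{d,p}(1)$), so uniform node errors $\le\tau$ translate into an additive $O(\tau)$ in the minimization accuracy. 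We need $\tau\lesssim r^t\Delta_t$.

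\textbf{Counting.} The per-round cost is (number of nodes)$\times m_t$. The node count equals the value-oracle bound $O(1+(L_p/\varepsilon)^{(d-1)/\nu}(r^0)^{\cdots})$. In the unconstrained case $r^0\sim\Delta/\varepsilon$; the segment length $D\le\sqrt d\, r^t$ is the main scale. Requiring $L_p(r^t)^{\nu}m^{-(\nu-1)}\lesssim r^t\varepsilon$ gives $m_t\sim (L_p(r^t)^{\nu-1}/\varepsilon)^{1/(\nu-1)}=r^t(L_p/\varepsilon)^{1/(\nu-1)}$ deterministically. Randomly we get $m_t\sim (r^t)^{2(\nu-1)/(2\nu-1)}(L_p/\varepsilon)^{2/(2\nu-1)}$. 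At late rounds, where the node count is $O(1)$ and $r^t$ is small, the total is dominated by the earliest rounds when the node count is large, or by the integration cost at $r^0$.

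Balancing these two regimes should give the stated maxima. The $d-1$ (respectively $(d-1)/\nu$) exponent comes from the node count, and the exponent $\nu/(\nu-1)$ or $2\nu/(2\nu-1)$ comes from the integration cost with $r^0\sim\varepsilon^{-1}$. Over the cube, where $r^0=1$, these become $1/(\nu-1)$ or $2/(2\nu-1)$. I expect the main obstacle to be making this bookkeeping close. Whenever both the node count and $m_t$ are large, the product must not exceed the maximum, not the sum. To keep it from exceeding, I would first try sharing integration paths: one adaptive polyline per separator along which all nodes' increments are obtained by cumulative integration. This makes the cost additive (nodes plus path quadrature) rather than multiplicative, with path length $O(r^t)$ per separator and a telescoping chain from $\boldsymbol x^t$.

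\textbf{Termination and certification.} For termination we replace the terminal value reconstruction by a gradient interpolant $P_T$. We take a polynomial of degree $p$ whose gradient interpolates $\nabla f$ at $O_{d,p}(1)$ nodes of $R_T$; at scale $r_\varepsilon$ this achieves $\|\nabla P_T-\nabla f\|_{\infty,R_T}\le\varepsilon/8$. We then apply Lemma~\ref{lem:stationarity-from-model} exactly as before. The invariant $\varepsilon_T\le\varepsilon/2$ holds because all estimation errors were charged to $\varepsilon_0^t$. The adaptive depth stays $O(\log(1/\varepsilon))$, since all quadrature nodes in a round are nonadaptive.
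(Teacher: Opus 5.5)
\textbf{The counting does not close for $d\ge3$.} Your reduction to relative values, the line-integration estimates, the union bound and the terminal step are in the spirit of the paper, but the cost argument fails once $d\ge3$.

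First, the ``two regimes'' heuristic is not available. With $B_t:=L_p(r^t)^\nu/\varepsilon_0^t$, both the node count $\sim B_t^{(d-1)/\nu}$ and your per-node quadrature size $m_t\sim B_t^{\kappa}$ (where $\kappa=\frac{1}{\nu-1}$ or $\frac{2}{2\nu-1}$) increase with $B_t$. Since $B_t$ decreases geometrically, both peak at $t=0$. Independent per-node integration therefore costs $B_0^{(d-1)/\nu+\kappa}$, the \emph{sum} of the exponents, in every dimension.

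Your single-polyline fix works for $d=2$. There the separator is a segment, the path from $\boldsymbol x^t$ has length $\mathcal O(r^t)$, and the cost is additive, which does give the two-dimensional bounds such as $\widetilde{\mathcal O}(\varepsilon^{-4/3})$. For $d\ge3$, however, the claim ``path length $\mathcal O(r^t)$'' is false. A path through all $N\sim(r^t/h)^{d-1}$ nodes of spacing $h$ has length at least $(N-1)h\sim r^t(r^t/h)^{d-2}$, and cumulative-integration errors add along it. Take $L_ph^\nu\sim\varepsilon_0^t$. By convexity, any subdivision of such a path using your local rules then needs on the order of $N^{\nu/(\nu-1)}=B_t^{(d-1)/(\nu-1)}$ deterministic queries. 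In randomized mode, even exploiting cancellation of the independent residuals via Hoeffding, it needs $N^{2\nu/(2\nu-1)}=B_t^{2(d-1)/(2\nu-1)}$ queries. Both exceed $B_t^{\max\{(d-1)/\nu,\kappa\}}$. For instance, the unconstrained deterministic exponent becomes $\frac{\nu(d-1)}{\nu-1}$ instead of $\max\{d-1,\frac{\nu}{\nu-1}\}$. Using many parallel lines, each joined to $\boldsymbol x^t$, does not rescue this either: it costs $B_t^{(d-2)/\nu+\kappa}$.

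\textbf{The missing idea is hierarchical sharing.} The paper puts nested dyadic grids on each facet and builds a shared-root tree. Level-zero nodes are joined to $\boldsymbol x^t$, and each node first appearing at level $\ell$ is joined to its parent at level $\ell-1$. Level $\ell$ then has $\mathcal O(2^{(d-1)\ell})$ edges of length $\mathcal O(r^t2^{-\ell})$. Every root path uses at most one edge per level and has total length $\mathcal O(r^t)$. Errors therefore accumulate over only $J_t+1=\mathcal O(\log(1+B_t))$ levels, not over $N$ nodes. The level tolerances $\xi_{t,\ell}\propto2^{(d-1-\nu\kappa)\ell/(1+\kappa)}$, normalized to sum to $\varepsilon_0^t/(8\Lambda_{d-1,p})$, balance the level costs. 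Concretely, $\sum_\ell2^{(d-1)\ell}(B_t2^{-\nu\ell}/w_{t,\ell})^\kappa=B_t^\kappa S_t^{1+\kappa}=\widetilde{\mathcal O}\bigl(1+B_t^{\max\{(d-1)/\nu,\kappa\}}\bigr)$, with an extra logarithm only in the critical case $d-1=\nu\kappa$. This is exactly the maximum you need.

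\textbf{Smaller points.}
\begin{itemize}
\item The smooth reconstruction of Theorem~\ref{thm:global-minimization-known-term} is only shown to have a \emph{finite} stability constant $\Lambda_0$, not a uniform one. This is why the paper switches to tensor-product grid interpolation with constant $\lambda_p^{d-1}$.
\item A scalar polynomial's gradient cannot in general interpolate $\nabla f$ exactly. Interpolate $\nabla f$ by a vector polynomial and integrate it radially, as in Lemma~\ref{lem:radial-scalar-reconstruction} and Corollary~\ref{lem:gradient-only-scalar-reconstruction}.
\item Make the acceptance test buffered: accept only if the estimated trapping value is at most $-\varepsilon_0^t/4$. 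Accepted pivots then satisfy $\phi_{\boldsymbol x^t,\varepsilon_t}\le0$ exactly, and rejected ones still give the $-\varepsilon_0^t$ margin on the new facet.
\end{itemize}
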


Each query exponent is the maximum of a dimension-dependent term
and a dimension-independent smoothness term.
Randomization reduces the latter while leaving the former unchanged,
and therefore improves the overall exponent whenever the smoothness
term strictly dominates the deterministic bound.
For example, when $d=2$ and $\nu=2$, the unconstrained query bound
improves from $\widetilde{\mathcal O}(\varepsilon^{-2})$
to $\widetilde{\mathcal O}(\varepsilon^{-\frac{4}{3}})$,
and the cube-constrained bound improves from
$\widetilde{\mathcal O}(\varepsilon^{-1})$
to $\widetilde{\mathcal O}(\varepsilon^{-\frac{2}{3}})$.
Both versions retain logarithmically many adaptive rounds.
The randomized query bounds hold for every random seed,
rather than only in expectation, while the output satisfies
the stationarity guarantee with probability at least $1-\rho$.

\subsection{Line Integration from Gradient Queries}
\label{subsec:randomized-line-integration}

We recover function increments by integrating the gradient along
line segments. Piecewise polynomial interpolation gives a
deterministic quadrature rule and independent residual samples give
an unbiased randomized rule with a better accuracy exponent.
The conditional formulation below permits segments and accuracy
targets selected by the trapping algorithm.

\begin{lemma}[\bf High-order line integration]
\label{lem:randomized-line-integration}
\label{lem:randomized-high-order-line-integration}
Let $\mathcal F$ be the sigma-field generated by the past
transcript, and let $u,v\in\mathbb R^d$ and $\varepsilon_0>0$
be $\mathcal F$-measurable. Suppose that the fixed objective $f$
belongs to $C^{p,\beta_p}(L_p)$ on a neighborhood of $[u,v]$.
Write $\ell:=\|v-u\|_2$.
Each of the following constructions uses one nonadaptive batch
of exact gradient queries on $[u,v]$.
\begin{enumerate}[leftmargin =*]
\item \textbf{Deterministic integration.}
An estimate $I^{\mathrm{det}}$ satisfying
\[|I^{\mathrm{det}}-(f(v)-f(u))|\le\varepsilon_0\] can be computed
using $\mathcal O_p(1+(L_p\ell^\nu/\varepsilon_0)^{\frac{1}{\nu-1}})$
queries.

\item \textbf{Randomized integration.}
For any $\mathcal F$-measurable $\vartheta\in(0,1)$, an estimate
$\widehat I$ can be computed using
$\mathcal O_p(1+(L_p\ell^\nu/\varepsilon_0)^{\frac{2}{2\nu-1}}
[\log(2/\vartheta)]^{\frac{1}{2\nu-1}})$ queries such that
$\mathbb E[\widehat I\mid\mathcal F]=f(v)-f(u)$ and
\begin{equation}
\label{eq:apx-line-integration-accuracy}
 \mathbb P\!\left(
 |\widehat I-(f(v)-f(u))|\le\varepsilon_0
 \mid\mathcal F\right)\ge1-\vartheta.
\end{equation}
The random seeds are independent of one another and of
$\mathcal F$.
\end{enumerate}
If $L_p=0$, both estimates are exact. If $\ell=0$, both return
zero without queries.
\end{lemma}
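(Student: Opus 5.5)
We reduce to a one-dimensional problem. Set $g(s):=\langle\nabla f(u+s e),e\rangle$ for $s\in[0,\ell]$, where $e:=(v-u)/\ell$. Then $f(v)-f(u)=\int_0^\ell g(s)\,ds$, and every evaluation of $g$ costs one gradient query. Since $f\in C^{p,\beta_p}(L_p)$, the $(p-1)$-th derivative of $g$ is $\nabla^pf(u+se)[e,\dots,e]$. It is therefore $\beta_p$-H\"older with constant $L_p$, so $g\in C^{p-1,\beta_p}(L_p)$ on $[0,\ell]$. The cases $\ell=0$ and $L_p=0$ are trivial. In the latter case $g$ is a polynomial of degree at most $p-1$, so exact Gauss (or any $p$-point interpolatory) quadrature gives the integral exactly with $\mathcal O_p(1)$ queries.

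\textbf{Deterministic part.} Split $[0,\ell]$ into $m$ equal pieces of length $h=\ell/m$. On each piece, interpolate $g$ at $p$ fixed nodes, for instance Chebyshev nodes rescaled to the piece, by a polynomial of degree $p-1$. The standard Taylor-remainder argument for H\"older classes gives $\sup|g-\Pi g|\le C_p L_p h^{p-1+\beta_p}=C_pL_ph^{\nu-1}$. Here one writes $g=T+R$, with $T$ the degree-$(p-1)$ Taylor polynomial at the piece's left endpoint and $|R|\le a_{p-1,\beta_p}L_ph^{\nu-1}$. Interpolation reproduces $T$ exactly, and the Lebesgue constant is $\mathcal O_p(1)$. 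Integrating gives total error at most $\ell\cdot C_pL_ph^{\nu-1}=C_pL_p\ell^\nu m^{-(\nu-1)}$. Choosing $m=\lceil (C_pL_p\ell^\nu/\varepsilon_0)^{1/(\nu-1)}\rceil$ gives the claimed query count $pm$. All nodes are fixed once $u,v,\varepsilon_0$ are known, so the queries form one nonadaptive batch.

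\textbf{Randomized part (control variate plus Monte Carlo on the residual).} Use the same piecewise interpolant $\Pi g$ with $m$ pieces, integrate it exactly, and add an unbiased estimate of $\int(g-\Pi g)$. Concretely, in each piece $k$ draw $n$ i.i.d.\ uniform points $\xi_{k,i}$ and form
\[
\widehat I=\int_0^\ell \Pi g+\sum_{k=1}^m \frac{h}{n}\sum_{i=1}^n (g-\Pi g)(\xi_{k,i}).
\]
The seeds are fresh, and the nodes depend only on $\mathcal F$-measurable data, so conditional unbiasedness is immediate. Each summand $\frac{h}{n}(g-\Pi g)(\xi)$ has conditional mean $\frac{1}{n}\int_{\text{piece}}(g-\Pi g)$. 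Each centered summand is bounded in magnitude by $2hC_pL_ph^{\nu-1}/n$. Hoeffding over the $mn$ independent terms gives deviation at most
\[
C_pL_p h^{\nu}\sqrt{\frac{m}{n}}\sqrt{2\log(2/\vartheta)}
=C_pL_p\ell^\nu m^{-\nu+1/2}\,n^{-1/2}\sqrt{\log(2/\vartheta)}
\]
with probability at least $1-\vartheta$. Take $n=1$ and the total query count $N\asymp m(p+1)$. The requirement then becomes $m^{\nu-1/2}\gtrsim (L_p\ell^\nu/\varepsilon_0)\sqrt{\log(2/\vartheta)}$. This yields $m\asymp (L_p\ell^\nu/\varepsilon_0)^{2/(2\nu-1)}[\log(2/\vartheta)]^{1/(2\nu-1)}$, which matches the stated bound. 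All sample points are drawn at once, so this is still a single batch.

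\textbf{Main obstacle.} The delicate points are bookkeeping ones. First, one must justify the uniform interpolation error for H\"older (not $C^p$) functions with an $\mathcal O_p(1)$ constant. Second, one must handle the conditional formulation correctly: apply Hoeffding conditionally on $\mathcal F$, using that the randomness is independent of $\mathcal F$ and that $u,v,\varepsilon_0,\vartheta$ are $\mathcal F$-measurable. The "$1+$" terms in the counts cover the regime where the formula gives $m<1$, in which case we take $m=1$.
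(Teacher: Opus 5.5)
Your proposal is correct and is essentially the paper's proof. It uses the same reduction to the scalar function $g$ along the segment and the same piecewise degree-$(p-1)$ interpolation integrated exactly for the deterministic rule, with error of order $L_p\ell^\nu m^{-(\nu-1)}$. For the randomized rule it uses the same control-variate estimator with one uniform residual sample per piece and conditional Hoeffding, with error of order $L_p\ell^\nu m^{-(\nu-1/2)}\sqrt{\log(2/\vartheta)}$; the only differences are cosmetic, such as Chebyshev instead of equispaced nodes and an arc-length instead of unit parameterization.

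One small bookkeeping point: for $p=1$ the paper bounds the residual directly by H\"older continuity, not through the $(1+\Lambda)$ interpolation estimate. This keeps the constant raised to the power $1/(\nu-1)=1/\beta_1$ at most one, so the deterministic count is $\mathcal O_p$ uniformly in $\beta_p$. Your midpoint node achieves the same through $|g(s)-g(\text{mid})|\le L_1(h/2)^{\beta_1}$, whereas your generic constant $2$ would give a factor $2^{1/\beta_1}$.
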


\begin{proof}[Proof sketch]
Condition on $\mathcal F$ and assume $\ell>0$.
The function $g(t):=\langle\nabla f(u+t(v-u)),v-u\rangle$
satisfies $\int_0^1g(t)\,dt=f(v)-f(u)$, and its
$(p-1)$st derivative is $\beta_p$-H\"older with constant
$L_p\ell^\nu$.

Partition $[0,1]$ into $n$ equal subintervals $J_j$ and let
$P_j$ be the local degree-$(p-1)$ interpolant of $g$, using
left-endpoint constants when $p=1$. Taylor approximation and
interpolation stability give
$\|g-P_j\|_{\infty,J_j}
=\mathcal O_p(L_p\ell^\nu n^{-(\nu-1)})$.
Integrating the interpolants gives a deterministic estimate
with error of the same order. For $p=1$, the sharper bound is
$L_p\ell^\nu n^{-\beta_p}/(1+\beta_p)$.

For the randomized rule, draw independent uniform points
$U_j\in J_j$, using fresh seeds, and set
\[
 \widehat I_n
 :=\sum_{j=1}^n\left[
 \int_{J_j}P_j(t)\,dt+\frac{g(U_j)-P_j(U_j)}{n}
 \right].
\]
The residual corrections make this estimate conditionally
unbiased. Conditional Hoeffding's inequality bounds its error
by $\mathcal O_p(L_p\ell^\nu
n^{-(\nu-1/2)}\sqrt{\log(2/\vartheta)})$ with probability at
least $1-\vartheta$.
Choosing $n$ to meet the accuracy target gives the two claimed
query bounds. Each construction uses $\mathcal O_p(n)$
gradient queries, with all nodes selected before any new answer
is observed. The full proof, including the degenerate cases and
constants uniform in $\beta_p$, is given in
Appendix~\ref{app:4.1}.
\end{proof}
\subsection{Gradient-Only Implementation}
\label{subsec:gradient-only-implementation}

We retain the trisection loop of Algorithm~\ref{alg:main0}.
The changes are a gradient-only separator
subroutine (Line 6 and Line 8), and a gradient-only terminal model (Lines 22-23). All optimization of
explicit models is offline.

\paragraph{Initialization and parameter schedule.}
Set $\varepsilon_0:=\frac{\varepsilon}{4}$. For the unconstrained problem
with $\Delta>0$, take
$R_0:=\{\boldsymbol y:\|\boldsymbol y-\boldsymbol x^0\|_\infty
\le\frac{8\Delta}{\varepsilon}\}$ and
$r^0:=\frac{16\Delta}{\varepsilon}$.
For the constrained problem, take $R_0=[0,1]^d$, $r^0=1$, and
any $\boldsymbol x^0\in R_0$. Neither initialization requires a
function-value query.

Choose
$\alpha:=\frac12+\frac12\max\!\left\{\frac23,
\left(\frac23\right)^{\nu-1}\right\}$.
At iteration $t$, use the same parameter schedule as in
Section~\ref{sec:3}:
\begin{equation}
\label{eq:gradient-only-outer-schedule}
 \Delta_t:=\frac{\varepsilon(1-\alpha)}{4d}
            \alpha^{\lfloor t/d\rfloor},\qquad
\varepsilon_0^t:=\frac{r^t\Delta_t}{12},\qquad
 \varepsilon_{t+1}:=\varepsilon_t+\Delta_t.
\end{equation}
Each initial region is a cube, so the
longest-side rule gives
$r^t=r^0\left(\frac23\right)^{\lfloor t/d\rfloor}$.
Also, $\sum_{t\ge0}\Delta_t=\frac{\varepsilon}{4}$ and hence
$\varepsilon_t\le\frac{\varepsilon}{2}$.

\paragraph{Relative-value models on the separators.}
Following section~\ref{sec:3}, we write $R_t=\prod_i[a_i^t,b_i^t]$, choose
$j\in\operatorname*{argmax}_i(b_i^t-a_i^t)$, and let
$E_q^t:=\{\boldsymbol y\in R_t:
[\boldsymbol y]_j=a_j^t+\frac{qr^t}{3}\}$ for $q=1,2$.
Set $\mathcal E^t:=E_1^t\cup E_2^t$ and
$F_t(\boldsymbol y):=f(\boldsymbol y)-f(\boldsymbol x^t)$.
Both facets will use this same reference value.
Let $\lambda_p$ denote the univariate
interpolation Lebesgue constant defined in
Appendix~\ref{app:approximation}, and write
$\Lambda_{{d-1},p}:=\lambda_p^{d-1}$ for the corresponding
tensor-product interpolation constant.
With the Taylor remainder constant
$a_{p,\beta_p}
:=
\frac{\Gamma(\beta_p+1)}
{\Gamma(p+\beta_p+1)}$,
define $A_{{d-1},p,\beta_p}:=(1+\Lambda_{{d-1},p})a_{p,\beta_p}$.

Apply Lemma~\ref{lem:interpolation-error-decomposition}
on each separator cell using degree-$p$ tensor-product
interpolation. Its polynomial space contains
$\Pi_p^{d-1}$, and its value stability constant is
$\Lambda_{d-1,p}$.
Shared face nodes use the same supplied values, so the
cellwise interpolants agree on common faces.
Thus, cells of diameter at most $H$ and nodal errors
at most $\tau$ yield a continuous piecewise interpolant
satisfying
\begin{equation}
\label{eq:gradient-only-stable-recovery}
 \|\widehat F_t-F_t\|_{\infty,\mathcal E^t}
 \le
 A_{d-1,p,\beta_p}L_pH^\nu
 +\Lambda_{d-1,p}\tau.
\end{equation}
We allocate $\frac{\varepsilon_0^t}{8}$ to each term,
giving a model error of at most $\frac{\varepsilon_0^t}{4}$,
and reserve $\frac{\varepsilon_0^t}{2}$ for approximate
surrogate minimization.

\paragraph{Nested grids.}
Choose the smallest refinement level $J_t\in Z_{\ge0}$ for which
the interpolation approximation error is at most $\frac{\varepsilon_0^t}{8}$:
\begin{equation}
\label{eq:gradient-only-refinement-level}
 J_t:=\min\left\{\ell\in\mathbb Z_{\ge0}:
 A_{d-1,p,\beta_p}L_p
 \left(\frac{\sqrt{d-1}\,r^t}{2^\ell}\right)^\nu
 \le\frac{\varepsilon_0^t}{8}\right\}.
\end{equation}
For each facet and $0\le\ell\le J_t$, use the grid
\begin{equation}
\label{eq:nested-facet-grid}
 \mathcal V_{q,\ell}^t:=
 \left\{\boldsymbol y\in E_q^t:
 [\boldsymbol y]_i=a_i^t+
 \frac{k_i(b_i^t-a_i^t)}{p2^\ell},\
 k_i\in\{0,\ldots,p2^\ell\},\ i\ne j\right\}.
\end{equation}
The grids are nested and have $(p2^\ell+1)^{d-1}$ nodes each.
At level $\ell$, every free coordinate interval is partitioned
into $2^\ell$ macro-intervals. The $(p+1)^{d-1}$ tensor nodes on each
macro-cell determine its degree-$p$ interpolant in each coordinate.
At the finest level the cell diameter is at most
$\sqrt{d-1}\,r^t2^{-J_t}$. The nested grids are illustrated in Figure~\ref{fig:shared-root-reconstruction}(a).

\paragraph{Shared-root reconstruction.}
On each facet, retain each grid node once and connect all
level-zero nodes to a common root at $\boldsymbol x^t$.
A node first appearing at level $\ell\ge1$ with indices $k_i$
has as its parent the node with indices $\lfloor k_i/2\rfloor$
in $\mathcal V_{q,\ell-1}^t$.%
\footnote{A node coinciding with $\boldsymbol x^t$ remains
distinct and is connected directly to the root by a zero-length edge.}
See Figure~\ref{fig:shared-root-reconstruction}(b).

Orient edges from parent to child.
Let $\mathcal A_{t,\ell}$ denote edges whose children first
appear at level $\ell$, set
$\mathcal A_t:=\bigcup_{\ell=0}^{J_t}\mathcal A_{t,\ell}$,
and write $h_e:=\|\boldsymbol v-\boldsymbol u\|_2$
for $e=(\boldsymbol u,\boldsymbol v)$.
Each root-to-node path contains at most one edge per level.
Summing estimated increments along this path recovers
$f(\boldsymbol v)-f(\boldsymbol x^t)$ approximately,
with each edge estimate reused by all descendants.

\IfFileExists{figures/shared-root-reconstruction.pdf}{%
\begin{figure}[t]
\centering
\includegraphics[width=\linewidth]{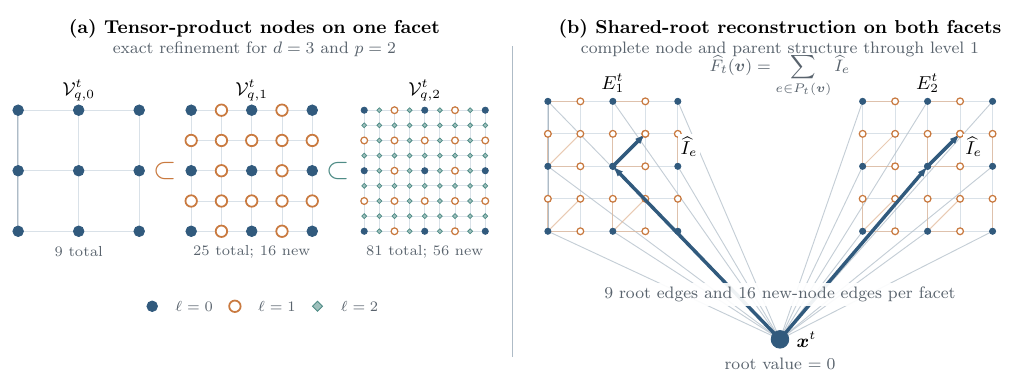}
\caption{Nested interpolation grids and shared-root reconstruction
for $d=3$ and $p=2$. Both facetwise trees share the pivot as their
root. A node value is reconstructed by summing increments along its
root path, which uses at most one edge from each creation level.}
\label{fig:shared-root-reconstruction}
\end{figure}
}{}

\paragraph{Integration budgets and buffered update.}
Fix the integration mode
$\mathsf m\in\{\mathsf{det},\mathsf{rand}\}$ and let
$\kappa:=\kappa_{\mathsf m}$ denote its query-complexity exponent,
where $\kappa_{\mathsf{det}}:=\frac{1}{\nu-1}$ and
$\kappa_{\mathsf{rand}}:=\frac{2}{2\nu-1}$.
Define the normalization constant $S_t$, the level weights
$w_{t,\ell}$, and the per-edge integration tolerances
$\xi_{t,\ell}$ by
\begin{equation}
\label{eq:edge-error-budget}
 S_t:=\sum_{\ell=0}^{J_t}2^{\frac{(d-1-\nu\kappa)\ell}{1+\kappa}},
 \qquad
 w_{t,\ell}:=\frac{2^{\frac{(d-1-\nu\kappa)\ell}{1+\kappa}}}{S_t},
 \qquad
 \xi_{t,\ell}:=\frac{\varepsilon_0^t w_{t,\ell}}{8\Lambda_{d-1,p}},
 \quad 0\le\ell\le J_t.
\end{equation}
Integrate each level-$\ell$ edge to accuracy $\xi_{t,\ell}$ using
Lemma~\ref{lem:randomized-line-integration}. Since
$\sum_\ell w_{t,\ell}=1$, simultaneous edge accuracy bounds every
nodal error by $\frac{\varepsilon_0^t}{8\Lambda_{d-1,p}}$. The choice of weights
also balances the query cost across levels, as shown in
Section~\ref{subsubsec:gradient-only-complexity}.
In randomized mode, set
$\rho_t:=\frac{6\rho}{\pi^2(t+1)^2}$ and give
each positive-length edge failure budget
$\frac{\rho_t}{|\mathcal A_t|}$.

\begin{algorithm2e}[t]
\caption{Shared-Root Reconstruction and Buffered Update}
\label{alg:gradient-only-dyadic}
\SetAlgoLined
\DontPrintSemicolon
\SetKwInOut{Input}{Input}
\SetKwInOut{Output}{Output}
\Input{$R_t,\boldsymbol x^t,\varepsilon_t,r^t,\varepsilon_0^t$;
separators $E_1^t,E_2^t$; parameters $p,\beta_p,L_p$;
mode $\mathsf m\in\{\mathsf{det},\mathsf{rand}\}$; and $\rho_t$ in randomized mode.}
\Output{The next pivot $\boldsymbol x^{t+1}$.}
Choose $J_t$ by~\eqref{eq:gradient-only-refinement-level};
construct the nested grids and their shared-root tree\;
Set $\xi_{t,\ell}$ by~\eqref{eq:edge-error-budget}\;
\ForEach{$e=(\boldsymbol u,\boldsymbol v)\in\mathcal A_{t,\ell}$,
 $0\le\ell\le J_t$, \textnormal{in parallel}}{
 \uIf{$\boldsymbol u=\boldsymbol v$}{Set $\widehat I_e:=0$\;}
 \Else{Estimate $f(\boldsymbol v)-f(\boldsymbol u)$ by $\widehat I_e$
using Lemma~\ref{lem:randomized-line-integration} with
\[
\left[
\begin{array}{ll}
\text{tolerance }\xi_{t,\ell},
& \text{if }\mathsf m=\mathsf{det},\\[2pt]
\text{tolerance }\xi_{t,\ell}
\text{ and failure budget }\frac{\rho_t}{|\mathcal A_t|},
& \text{if }\mathsf m=\mathsf{rand}.
\end{array}
\right.
\]\;}
}
Assign value zero to the root and sum increments along every
root-to-node path\;
Interpolate the finest nodal values to obtain $\widehat F_t$\;
Set $\widehat\phi_t(\boldsymbol y):=\widehat F_t(\boldsymbol y)
+\varepsilon_t\|\boldsymbol y-\boldsymbol x^t\|_2$\;
Compute $\widehat{\boldsymbol z}^t\in\mathcal E^t$ with
$\widehat\phi_t(\widehat{\boldsymbol z}^t)
\le
\min_{\mathcal E^t}\widehat\phi_t
+\frac{\varepsilon_0^t}{2}$\;
\uIf{$\widehat\phi_t(\widehat{\boldsymbol z}^t)
\le -\frac{\varepsilon_0^t}{4}$}{
 Set $\boldsymbol x^{t+1}:=\widehat{\boldsymbol z}^t$\;
}
\Else{Set $\boldsymbol x^{t+1}:=\boldsymbol x^t$\;}
\Return{$\boldsymbol x^{t+1}$}\;
\end{algorithm2e}

The subroutine replaces the separator minimization, candidate-value
query, and reachability test in Algorithm~\ref{alg:main0}.
After it returns $\boldsymbol x':=\boldsymbol x^{t+1}$, retain
$\left[a_j^t+\frac{r^t}{3},b_j^t\right]$ in coordinate $j$ if
$[\boldsymbol x']_j\ge a_j^t+\frac{r^t}{2}$., and retain
$\left[a_j^t,\; b_j^t-\frac{r^t}{3}\right]$ otherwise. Keep all other coordinate
intervals and update $\varepsilon_t$ by
\eqref{eq:gradient-only-outer-schedule}.

\begin{remark}[\textbf{Nonadaptivity}]
Conditional on the past transcript, each iteration selects all
query points before observing any new answers and thus uses
one nonadaptive batch.
Reconstruction and minimization require no further queries,
and all integration segments lie in $R_t$ by convexity.
\end{remark}

\paragraph{Stopping and output.}
Let $c_{1,\beta_p}:=1$ and
$c_{p,\beta_p}:=2a_{p-1,\beta_p}+a_{p-2,\beta_p}$ for $p\ge2$.
Define
\begin{equation}
\label{eq:gradient-only-terminal-radius}
 r_\varepsilon^\nabla:=
 \frac2{\sqrt d}
 \left(\frac{\varepsilon}{8c_{p,\beta_p}L_p}\right)^{1/(\nu-1)}.
\end{equation}
Stop at the first $T$ with $r^T\le r_\varepsilon^\nabla$.
At the geometric center of $R_T$, apply
Corollary~\ref{lem:gradient-only-scalar-reconstruction} to construct
$P_T\in\Pi_p^d$ using
$N_{d,p}^\nabla:=\binom{d+p-1}{p-1}$ gradient queries in one batch.
Set \[\Psi_T(\boldsymbol y):=
P_T(\boldsymbol y)-P_T(\boldsymbol x^T)
+\frac{3\varepsilon}{4}\|\boldsymbol y-\boldsymbol x^T\|_2\]
and
$\zeta_T:=\frac{\varepsilon r^T}{16}$.
Compute offline a $\zeta_T$-approximate global minimizer
$\boldsymbol z^T$ with
\begin{equation}
\label{eq:gradient-only-terminal-certificates}
 \begin{gathered}
 \Psi_T(\boldsymbol z^T)\le0,\qquad
 \left\|\nabla P_T(\boldsymbol z^T)
       +\frac{3\varepsilon}{4}\boldsymbol v+\boldsymbol n\right\|_2
 \le\frac\varepsilon8,\\
 \boldsymbol v\in\partial\|\cdot-\boldsymbol x^T\|_2(\boldsymbol z^T),
 \qquad \boldsymbol n\in N_{R_T}(\boldsymbol z^T).
 \end{gathered}
\end{equation}
Return $\boldsymbol z^T$.
Lemma~\ref{lem:gradient-only-terminal-output} proves that these
certificates can be obtained by finite offline computation.

\subsection{Proof Sketch of Theorem~\ref{the:main_gradient}}
\label{subsec:gradient-only-analysis}

We sketch the correctness and query-complexity arguments.
The proof combines the boundary-invariant argument of
Section~\ref{sec:main1-analysis} with gradient-only reconstruction.
Detailed proofs of correctness and complexity are provided in
Appendices~\ref{subsec:gradient-only-analysis}
and~\ref{subsubsec:gradient-only-complexity}, respectively.

\paragraph{Correctness.}
Write $\phi_t:=\phi_{\boldsymbol x^t,\varepsilon_t}$.
When all edge estimates meet their prescribed tolerances,
the shared-root construction and stable interpolation give
$\|\widehat\phi_t-\phi_t\|_{\infty,\mathcal E^t}
\le\frac{\varepsilon_0^t}{4}$:
each root path contains at most one edge from each level.
The surrogate minimization tolerance and the buffered
acceptance test then ensure
\[
\begin{aligned}
 \phi_t(\boldsymbol x^{t+1})&\le0,\\
 \phi_t(\boldsymbol y)-\phi_t(\boldsymbol x^{t+1})
 &\ge-\varepsilon_0^t,
 \qquad \boldsymbol y\in\mathcal E^t.
\end{aligned}
\]
These are precisely the update guarantees used in
Section~\ref{sec:main1-analysis} to preserve the boundary invariant.
The invariant holds initially by the gap bound in the
unconstrained case and vacuously in the cube-constrained case.
At termination,
Lemma~\ref{lem:gradient-only-terminal-output}
certifies the returned point using finite offline computation.
Deterministic integration always meets the edge tolerances.
In randomized mode,
Lemma~\ref{lem:randomized-line-integration}
and the edgewise union bound give conditional failure
probability at most $\rho_t$ in iteration $t$.
Since $\sum_{t\ge0}\rho_t\le\rho$, all iterations succeed
simultaneously with probability at least $1-\rho$.

\paragraph{Query complexity per iteration.}
Set
$B_t:=\frac{L_p(r^t)^\nu}{\varepsilon_0^t}$.
The refinement rule produces
$\mathcal O_{d,p,\beta_p}
(1+B_t^{\frac{d-1}{\nu}})$ tree edges.
At level $\ell$, there are
$\mathcal O_{d,p}(2^{(d-1)\ell})$ edges of length
$\mathcal O_{d,p}(r^t2^{-\ell})$.
Combining these bounds with the line-integration costs
and the prescribed levelwise error allocation gives
\[
 Q_t^{\mathsf m}
 =\widetilde{\mathcal O}_{d,p,\beta_p}
 \left(1+B_t^{\chi_{\mathsf m}}\right),
 \qquad
 \chi_{\mathsf m}
 :=\max\left\{\frac{d-1}{\nu},\kappa_{\mathsf m}\right\},
\]
where
$\kappa_{\mathrm{det}}=\frac{1}{\nu-1}$ and
$\kappa_{\mathrm{rand}}=\frac{2}{2\nu-1}$.
The two exponents correspond to the number of fine-grid
nodes and the cost of integrating coarse-level edges.
The suppressed factors are logarithmic, including the
confidence dependence in randomized mode.

\paragraph{Total complexity.}
The geometric schedules imply $B_t\le B_0$ and
$T=\mathcal O_{d,p,\beta_p}(1+\log(1+B_0))$.
Summing the per-iteration bounds and adding the terminal
reconstruction therefore gives
$Q^{\mathsf m}
=\widetilde{\mathcal O}_{d,p,\beta_p}
(1+B_0^{\chi_{\mathsf m}})$ with  $ B_0=
 \begin{cases}
 \mathcal O_{d,p,\beta_p}\!\left(
 L_p\Delta^{\nu-1}\varepsilon^{-\nu}
 \right),
 &\text{unconstrained},\\
 \mathcal O_{d,p,\beta_p}\!\left(
 \frac{L_p}{\varepsilon}
 \right),
 &\text{cube-constrained}.
 \end{cases}$
Thus the accuracy exponents are
$\nu\chi_{\mathsf m}$ and $\chi_{\mathsf m}$, respectively,
as claimed.

\section{Trapping or Descent}
\label{sec:trapping-or-descent}

Sections~\ref{sec:3} and~\ref{sec:4} show that trapping can
provably outperform gradient descent based methods in low dimensions.
However, the accuracy exponents grow with $d$, limiting the
efficiency of these implementations in high dimensions.
This raises the question of whether such dimensional
deterioration is intrinsic to the trapping framework.

In this section, we introduce ``Trapping or Descent,'' a trapping
framework that works in arbitrary dimension using low-order oracles.
At each iteration, we construct a local polynomial model on a
local ball.
Approximate minimization of this model over the ball boundary either
yields a guaranteed decrease in the objective or certifies that the
ball contains an approximate stationary point. 
In the latter case, with such unreachable certification, minimizing the local model yields such a target point through a finite terminal computation.
Furthermore, a bounded initial gap yields an explicit
bound on the number of iterations before termination.



\begin{theorem}[\bf Trapping or descent]
\label{the:fresh-trapping-or-descent}
Under assumption~\ref{def:holder} and~\ref{ass:lower-bounded}, each of the following deterministic
algorithms returns $\boldsymbol z\in\mathcal X$ satisfying
$\|\nabla f(\boldsymbol z)\|_2\le\varepsilon$ when
$\mathcal X=\mathbb R^d$, or
$\|G_\gamma(\boldsymbol z)\|_2\le\varepsilon$ for every $\gamma>0$
when $\mathcal X=[0,1]^d$.
\begin{enumerate}[leftmargin =*]
\item \textbf{Function values.}
The total number of function-value queries is
\[
\mathcal O_{p,\beta_p}\!\left(
d^p\left[
1+L_p^{\frac{1}{\nu-1}}
\Delta\,
\varepsilon^{-\frac{\nu}{\nu-1}}
\right]
\right).
\]
\item \textbf{Gradients.}
The total number of gradient queries is
\[
\mathcal O_{p,\beta_p}\!\left(
d^{p-1}\left[
1+L_p^{\frac{1}{\nu-1}}
\Delta\,
\varepsilon^{-\frac{\nu}{\nu-1}}
\right]
\right).
\]
\end{enumerate}
All queries lie in $\mathcal X$. If $\Delta=0$, both algorithms
return $\boldsymbol x^0$ without queries.
\end{theorem}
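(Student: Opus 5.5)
Each iteration works on a ball of radius $r:=c_{p,\beta}\,(\varepsilon/L_p)^{1/(\nu-1)}$ around the current pivot $\boldsymbol x^k$ (intersected with $[0,1]^d$ in the constrained case). The constant $c_{p,\beta}$ is chosen so that a local degree-$p$ reconstruction $P_k\in\Pi_p^d$ centered at $\boldsymbol x^k$ satisfies
\[
\|\nabla P_k-\nabla f\|_{\infty,Q_{\boldsymbol x^k}(r)}\le\tfrac{\varepsilon}{8}.
\]
This reconstruction comes from the results of Appendix~\ref{app:approximation}: Proposition~\ref{prop:local-reconstruction} when we use function values, and Corollary~\ref{lem:gradient-only-scalar-reconstruction} when we use gradients. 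The former costs $\binom{d+p}{p}=\mathcal O_p(d^p)$ value queries and the latter $\binom{d+p-1}{p-1}=\mathcal O_p(d^{p-1})$ gradient queries, since gradients determine $P_k$ up to an additive constant and only increments enter. These are the only queries made in an iteration, and all of them lie in the ball, hence in $\mathcal X$.

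With $P_k$ in hand, we minimize $\widehat\phi_k(\boldsymbol y):=P_k(\boldsymbol y)-P_k(\boldsymbol x^k)+\frac{\varepsilon}{2}\|\boldsymbol y-\boldsymbol x^k\|_2$ offline over the certified boundary $\Gamma_{\boldsymbol x^k}(r)$. By \eqref{eq:model-increment-error}, $\widehat\phi_k$ agrees with $\phi_{\boldsymbol x^k,\varepsilon/2}$ up to $\frac{\varepsilon}{8}r$ on that boundary. Let $\boldsymbol y^\star$ be the offline minimizer. There are two cases.

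\textbf{Descent.} If $\widehat\phi_k(\boldsymbol y^\star)\le 0$ (a buffered threshold as in Algorithm~\ref{alg:gradient-only-dyadic}), then
\[
f(\boldsymbol y^\star)-f(\boldsymbol x^k)\le-\tfrac{\varepsilon}{2}r+\tfrac{\varepsilon}{8}r=-\tfrac{3}{8}\varepsilon r.
\]
We move to $\boldsymbol x^{k+1}:=\boldsymbol y^\star$. Because the segment lies in the ball, we can track the decrease through model increments, so no function value is ever needed in the gradient-only version.

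\textbf{Trapping.} Otherwise $\widehat\phi_k>0$ on $\Gamma_{\boldsymbol x^k}(r)$. Then $\phi_{\boldsymbol x^k,\varepsilon/2}$ exceeds $-\frac{\varepsilon}{8}r$ there, and I would restate the certificate with parameter $5\varepsilon/8$, or rescale constants, so that every boundary point is strictly unreachable at a level below the $\frac{3\varepsilon}{4}$ penalty. We then apply Lemma~\ref{lem:stationarity-from-model} to $R=Q_{\boldsymbol x^k}(r)\cap\mathcal X$ with $\Psi$ built from $P_k$. This is a compact convex set, and $\partial_{[0,1]^d}R\subseteq\Gamma$ by the remark preceding Lemma~\ref{lem:trapping-implies-stationarity}. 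Case (i) of the lemma handles $\mathbb R^d$ and case (ii) handles the cube; the returned minimizer is $\varepsilon$-stationary or $\varepsilon$-KKT, with no extra queries.

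\textbf{Counting.} Each descent step lowers $f$ by at least $\frac{3}{8}\varepsilon r=\Omega_{p,\beta}(\varepsilon^{\nu/(\nu-1)}L_p^{-1/(\nu-1)})$. Since $f\ge f_\star$ and the initial gap is at most $\Delta$, the number of iterations is
\[
1+\mathcal O(L_p^{1/(\nu-1)}\Delta\,\varepsilon^{-\nu/(\nu-1)}).
\]
Multiplying by the per-iteration cost of $d^p$ or $d^{p-1}$ queries gives both stated bounds. If $\Delta=0$, then $\boldsymbol x^0$ is a global minimizer; it is stationary, and in the cube case it is a KKT point. We return it immediately.

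\textbf{Main obstacle.} The hard part is calibrating the constants so that the certified boundary margin, which now has no distance-to-separator slack as in Section~\ref{sec:3}, dominates the model error $\frac{\varepsilon}{8}r$ while still leaving a descent of order $\varepsilon r$. A second concern is the ball geometry: the gradient-accurate reconstruction has to hold on the whole ball, including the part cut off by the cube, so I would center the reconstruction nodes inside $Q\cap[0,1]^d$ and use a scaled simplex-type stencil of diameter $\lesssim r$ whose stability constant is independent of $d$. I expect the $d$-independence of this interpolation constant to need the most care.
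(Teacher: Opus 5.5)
Your proposal is correct and follows the paper's proof. Both use fresh reconstruction by Proposition~\ref{prop:local-reconstruction} or Corollary~\ref{lem:gradient-only-scalar-reconstruction} on the feasible cap of radius $r\asymp(\varepsilon/L_p)^{1/(\nu-1)}$, a boundary test that yields either a decrease of order $\varepsilon r$ or a trapping certificate, and $\mathcal O(1+\Delta/(\varepsilon r))$ model calls, each costing $\binom{d+p}{p}$ or $\binom{d+p-1}{p-1}$ queries.

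The differences are cosmetic: you use penalty $\frac{\varepsilon}{2}$ rather than $\frac{\varepsilon}{4}$ in the boundary surrogate, and an exact minimizer with Lemma~\ref{lem:stationarity-from-model} rather than the finite $\mathsf{TrapLocalize}$. Your $\frac{5\varepsilon}{8}$ certificate plugs into that lemma's proof without any rescaling, since $\Psi_k=\widehat\phi_k+\frac{\varepsilon}{4}\|\cdot-\boldsymbol x^k\|_2>0$ on $\Gamma$. You should also note that when $\Gamma=\varnothing$ the cap is the whole cube and the trapping case holds vacuously.

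The obstacle you flag is already resolved. With exact data, the node contraction in Proposition~\ref{prop:local-reconstruction} absorbs any $d$-dependent stability constant, so the gradient error $\frac32 a_{p-1,\beta_p}L_p\rho^{\nu-1}$ with $\rho\le 2r$ is dimension-free. Accuracy is needed only on the cap, using an interior center, not on the full ball.
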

The theorem gives the same query bounds for unconstrained and
cube-constrained optimization. In both oracle models, the accuracy
exponent $\frac{\nu}{\nu-1}$ depends only on the smoothness
$\nu=p+\beta_p$ and is independent of the dimension.
The dimension enters through the polynomial prefactor
$d^p$ for function values and $d^{p-1}$ for gradients.
Thus gradient access saves a factor of $d$ in the query bound,
while both bounds have the same dependence on
$L_p$, $\Delta$, and $\varepsilon$.
Greater smoothness improves the accuracy exponent, which
approaches one as $\nu$ increases.

A fresh implementation reconstructs the model after every accepted
move. The following implementation instead fixes its interpolation
nodes for a phase of $d$ model calls and reconstructs only a
lower-degree correction between refreshes.

\begin{theorem}[\bf Lazy trapping or descent]
\label{the:lazy-trapping-or-descent}
Under the assumptions of Theorem~\ref{the:fresh-trapping-or-descent},
the following deterministic query bounds achieve the same output
guarantees.
\begin{enumerate}[leftmargin =*]
\item \textbf{Function values.}
The total number of function-value queries is
\[
\mathcal O_{p,\beta_p}\!\left(
d^p
+d^{p-1+\frac{\beta_p}{\nu-1}}
L_p^{\frac{1}{\nu-1}}
\Delta\,
\varepsilon^{-\frac{\nu}{\nu-1}}
\right).
\]
\item \textbf{Gradients.}
The total number of gradient queries is
\[
\mathcal O_{p,\beta_p}\!\left(
d^{p-1}
+d^{p-2+\frac{\beta_p}{\nu-1}}
L_p^{\frac{1}{\nu-1}}
\Delta\,
\varepsilon^{-\frac{\nu}{\nu-1}}
\right).
\]
\end{enumerate}
\end{theorem}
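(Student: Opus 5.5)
The plan is to reuse the correctness argument of Theorem~\ref{the:fresh-trapping-or-descent} verbatim and change only the accounting of how models are built. Set the local radius $r:=c\,(\varepsilon/L_p)^{1/(\nu-1)}$, as in the fresh algorithm, so that a model whose gradient is accurate to $O(\varepsilon)$ on the ball $Q_{\boldsymbol x}(r)$ (intersected with $\mathcal X$) supports one of two outcomes:
\begin{enumerate}[leftmargin=*]
\item \emph{Descent.} Approximate minimization of $\phi_{\boldsymbol x,\varepsilon/2}$ over the sphere $\Gamma_{\boldsymbol x}(r)$ (via Lemma~\ref{lmm:to_local_minima}) finds a point with $f$ decreased by at least $\Omega(\varepsilon r)=\Omega(L_p^{-1/(\nu-1)}\varepsilon^{\nu/(\nu-1)})$.
\item \emph{Trapping.} Otherwise the boundary is certified unreachable, and Lemma~\ref{lem:stationarity-from-model} returns an $\varepsilon$-stationary or $\varepsilon$-KKT point.
\end{enumerate}
Hence the number of model calls is still at most $M=O(1+L_p^{1/(\nu-1)}\Delta\varepsilon^{-\nu/(\nu-1)})$. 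What remains is to show that the \emph{amortized} query cost per model call drops from $d^p$ (respectively $d^{p-1}$) to $d^{p-1+\beta_p/(\nu-1)}$ (respectively $d^{p-2+\beta_p/(\nu-1)}$), plus a one-time $d^p$ (respectively $d^{p-1}$).

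Group model calls into phases of $d$ consecutive calls. At the start of a phase, at the anchor $\boldsymbol x_0$, build a full degree-$p$ model $P_0$ by interpolation: $\binom{d+p}{p}=O(d^p)$ values, or $\binom{d+p-1}{p-1}=O(d^{p-1})$ gradients (Corollary~\ref{lem:gradient-only-scalar-reconstruction}). Within the phase, each accepted move stays within distance $dr$ of $\boldsymbol x_0$. At a later center $\boldsymbol x_k$, keep the top-degree part $\nabla^p P_0$ frozen and reconstruct only a degree-$(p-1)$ correction, i.e.\ the Taylor data of $f-P_0^{(p)}$ through order $p-1$. This costs $\binom{d+p-1}{p-1}=O(d^{p-1})$ values, or $O(d^{p-2})$ gradients. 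The error in the frozen $p$-th derivative is $L_p(dr)^{\beta_p}$ by Definition~\ref{def:holder}, which contributes a gradient error of order $L_p(dr)^{\beta_p}r^{p-1}$ on the ball.

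To keep this error at most $\varepsilon/8$, shrink the working radius to $r'$ with $L_p d^{\beta_p}r'^{\nu-1}\asymp\varepsilon$, that is $r'=r\,d^{-\beta_p/(\nu-1)}$. Each descent step then gains $\Omega(\varepsilon r')$, so the number of model calls becomes $M'=O(1+d^{\beta_p/(\nu-1)}L_p^{1/(\nu-1)}\Delta\varepsilon^{-\nu/(\nu-1)})$. The total cost is then:
\begin{itemize}[leftmargin=*]
\item $M'\cdot O(d^{p-1})$ for the corrections;
\item $\lceil M'/d\rceil\cdot O(d^p)=O(d^p+d^{p-1}M')$ for the refreshes.
\end{itemize}
This gives $O(d^p+d^{p-1+\beta_p/(\nu-1)}L_p^{1/(\nu-1)}\Delta\varepsilon^{-\nu/(\nu-1)})$, and the same computation with each exponent lowered by one gives the gradient bound. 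The terminal step is unchanged: Lemma~\ref{lem:stationarity-from-model} needs only a gradient-accurate model on the final ball, and the lazy model supplies one. In the cube-constrained case all nodes are placed in $Q\cap[0,1]^d$, as in the fresh algorithm.

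The main obstacle is the correction reconstruction. With the top-order tensor frozen, the lower-order interpolation at $\boldsymbol x_k$ must be stable with constants independent of $d$ in the relevant norm, and its error must combine the Hölder mismatch $L_p(dr')^{\beta_p}$ with the ordinary remainder $L_p r'^{\nu}$ without extra factors of $d$. I would first try to apply the interpolation error decomposition (Lemma~\ref{lem:interpolation-error-decomposition}) to $g:=f-\tfrac1{p!}\nabla^pP_0[\cdot-\boldsymbol x_0]^{p}$. Its $(p-1)$-st derivative has modulus $\|\nabla^{p-1}g(\boldsymbol y)-\nabla^{p-1}g(\boldsymbol y')\|\le (L_p(dr')^{\beta_p}+L_p r'^{\beta_p})\|\boldsymbol y-\boldsymbol y'\|$ on the ball, so a degree-$(p-1)$ fit on the same simplex-type node set gives gradient error $O_p(L_p d^{\beta_p}r'^{\nu-1})$. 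A second check is the phase-length trade-off: a phase of length $d$ is what balances refresh cost against correction cost, and I would verify that no better balance arises when $\beta_p<1$.
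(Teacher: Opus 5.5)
Your proposal is correct, and its algorithmic skeleton is the paper's. You refresh every $m=d$ calls at cost $N_{d,p}$ (resp.\ $N_{d,p-1}$), apply a degree-lowered correction at cost $N_{d,p-1}$ (resp.\ $N_{d,p-2}$) otherwise, shrink the radius to $r'\asymp(\varepsilon/(d^{\beta_p}L_p))^{1/(\nu-1)}$ (the paper's $r_m$ with $m=d$), and count queries exactly as the paper does. Where you differ is the error analysis of the correction.

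The paper evaluates every model on the same normalized nodes $\boldsymbol x+r\mathcal U_q$ and uses the telescoping identity $\widehat A_{\boldsymbol x}=I_pA_{\boldsymbol a}+I_{p-1}(A_{\boldsymbol x}-A_{\boldsymbol a})$. The error then splits into two pieces. The first is the anchor's fresh error $I_pA_{\boldsymbol a}-A_{\boldsymbol a}$, transplanted to the unit ball, which needs only the $j=0,1$ bounds of Proposition~\ref{prop:local-reconstruction}. The second is the degree-$(p-1)$ interpolation error of the translation difference $A_{\boldsymbol x}-A_{\boldsymbol a}$, whose $p$th derivative is at most $L_pr^p\|\boldsymbol x-\boldsymbol a\|_2^{\beta_p}$ directly by H\"older continuity.

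Your decomposition interpolates $g=f-\frac1{p!}T_0[\cdot-\boldsymbol x_0]^p$. It therefore needs the frozen tensor $T_0=\nabla^pP_0$ to satisfy $\|T_0-\nabla^pf(\boldsymbol c_0)\|_{\mathrm{op}}=O_p(L_pr'^{\beta_p})$ with no dependence on $d$. This is a $j=p$ estimate that Proposition~\ref{prop:local-reconstruction} does not state, and you use it (your extra $L_pr'^{\beta_p}$ term) without proof. It is true, and you should add the argument. Since $\|D^p\boldsymbol w^{\boldsymbol\alpha}\|\le p!$ for $|\boldsymbol\alpha|=p$, the reference Lagrange basis satisfies $\sum_i\|D^p\ell_i\|\le(p-1)!K_{d,p}$. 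The paper's contraction $K_{d,p}\eta^{\beta}\le\rho^{\beta}/2$ then already gives $\|\nabla^pP_0-\nabla^pf(\boldsymbol c_0)\|_{\mathrm{op}}\le\frac{(p-1)!}{2}a_{p,\beta_p}L_p\rho^{\beta_p}$.

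In value mode with the same translated nodes, your model actually equals the paper's up to an additive constant, because $\mathrm{Id}-I_{p-1}$ sees only the top-degree part of a degree-$p$ polynomial. So the two arguments are two analyses of one algorithm. The paper's route avoids any top-derivative accuracy requirement. Yours does not rely on pairing nodes across centers, so the correction nodes can be contracted inside the feasible cap and all queries stay in $[0,1]^d$. By contrast, the paper's lazy variant queries the full ball $\boldsymbol x+r\mathbb B$ and assumes oracle access on $[0,1]^d+r\mathbb B$.
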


The repeated dimension factors improve by
$d^{1-\frac{\beta_p}{\nu-1}}=d^{\frac{p-1}{\nu-1}}$.
The essential observation is that fixed linear interpolation makes
the rolling corrections telescope: the error depends on the net
displacement from the refresh center, through its $\beta_p$th power.
For $p=2$, the gradient bound becomes
$\mathcal O_{\beta_2}\!\left(
d+d^{\frac{\beta_2}{1+\beta_2}}
\varepsilon^{-\frac{2+\beta_2}{1+\beta_2}}
\right).$
At $\beta_p=1$, both the fresh and lazy bounds recover the
corresponding Lipschitz highest-derivative rates.

The remainder of this section is organized as follows.
Section~\ref{subsec:trapping-framework} presents the framework
and terminal localization procedure.
Sections~\ref{subsec:trapping-fresh-implementations}
and~\ref{subsec:lazy-boundary-reuse} develop the fresh and lazy
implementations, respectively.
Section~\ref{subsec:trapping-proof-sketch} sketches the proofs
of correctness and query complexity.
Finally, comparisons with regularized tensor methods, including the
distinction between their derivative oracles and the strict oracles
used here, are given in
Section~\ref{app:trapping-comparisons}.
All omitted proofs are provided in
Appendix~\ref{app:trapping-or-descent}.

\subsection{The Trapping-or-Descent Framework}
\label{subsec:trapping-framework}

For initial point $\boldsymbol x\in\mathcal X$ and local search radius $r>0$ $r>0$, write the local search ball as 
$Q_{\boldsymbol x}(r):=\mathcal X\cap(\boldsymbol x+r\mathbb B)$
and its boundary as $\Gamma_{\boldsymbol x}(r):=
\{\boldsymbol y\in\mathcal X:
\|\boldsymbol y-\boldsymbol x\|_2=r\}$.
Thus $\partial_{\mathcal X}Q_{\boldsymbol x}(r)
\subseteq\Gamma_{\boldsymbol x}(r)$.
In the unconstrained case these sets are the ball and its sphere.
Recall the trapping function is defined as $\phi_{\boldsymbol x,\eta}(\boldsymbol y)
:=f(\boldsymbol y)-f(\boldsymbol x)
+\eta\|\boldsymbol y-\boldsymbol x\|_2$~(Definition~\ref{def:trapping-certificate}).

The boundary procedure retains a scalar polynomial
$P_{\boldsymbol x}\in\Pi_p^d$ and returns
$\widehat\phi_{\boldsymbol x}(\boldsymbol y)
:=P_{\boldsymbol x}(\boldsymbol y)-P_{\boldsymbol x}(\boldsymbol x)
+\frac{\varepsilon}{4}\|\boldsymbol y-\boldsymbol x\|_2$
on $\Gamma_{\boldsymbol x}(r)$ in Line 3.
The outer loop uses only the condition that
\begin{equation}
\label{eq:boundary-approximation}
 \|\widehat\phi_{\boldsymbol x}
       -\phi_{\boldsymbol x,\varepsilon/4}\|_{\infty,
       \Gamma_{\boldsymbol x}(r)}
 \le\frac{\varepsilon r}{16}.
\end{equation}
If the sphere is nonempty, compute an
$\frac{\varepsilon r}{32}$-approximate minimizer
$\widehat{\boldsymbol y}$ of $\widehat\phi_{\boldsymbol x}$ in Line 7.
When $\widehat\phi_{\boldsymbol x}(\widehat{\boldsymbol y})
>\frac{\varepsilon r}{32}$, every relative-boundary point is
$\frac{3\varepsilon}{8}$-unreachable from $\boldsymbol x$.
Otherwise,
$f(\widehat{\boldsymbol y})\le f(\boldsymbol x)-\frac{5\varepsilon r}{32}$.
Lemma~\ref{lem:boundary-trapping-or-descent} proves this dichotomy.

\begin{algorithm2e}[t]
\caption{Trapping or Descent}
\label{alg:unconstrained-boundary-lazy}
\SetAlgoLined
\DontPrintSemicolon
\SetKw{Return}{return}
\SetKwInOut{Input}{Input}
\SetKwInOut{Output}{Output}
\Input{$\boldsymbol x^0\in\mathcal X$, $\varepsilon>0$, local search radius $r>0$,
a fresh or lazy boundary-model procedure.}
\Output{An $\varepsilon$-stationary point or an $\varepsilon$-KKT point.}
Set $\boldsymbol x\gets\boldsymbol x^0$ and initialize the model state\;
\For{$t=0,1,2,\ldots$}{
 Construct $(P_{\boldsymbol x},\widehat\phi_{\boldsymbol x})$
 using the selected boundary-model procedure\;
 \If{$\Gamma_{\boldsymbol x}(r)=\varnothing$}{
  \Return $\mathsf{TrapLocalize}
  (Q_{\boldsymbol x}(r),\boldsymbol x,P_{\boldsymbol x},\varepsilon,r)$\;
 }
 Compute $\widehat{\boldsymbol y}\in\Gamma_{\boldsymbol x}(r)$ with
 $\widehat\phi_{\boldsymbol x}(\widehat{\boldsymbol y})
 \le\inf_{\Gamma_{\boldsymbol x}(r)}\widehat\phi_{\boldsymbol x}
 +\frac{r\varepsilon }{32}$\;
 \eIf{$\widehat\phi_{\boldsymbol x}(\widehat{\boldsymbol y})
       >\frac{r \varepsilon }{32}$}{
  \Return $\mathsf{TrapLocalize}
  (Q_{\boldsymbol x}(r),\boldsymbol x,P_{\boldsymbol x},\varepsilon,r)$\;
 }{
  Set $\boldsymbol x\gets\widehat{\boldsymbol y}$\;
 }
}
\end{algorithm2e}

Every accepted move has length $r$ and decreases $f$ by at least
$\frac{\varepsilon r}{8}$. Hence
$N_{\mathrm{mov}}\le\left\lfloor\frac{8\Delta}{\varepsilon r}\right\rfloor$,
and there are exactly $N_{\mathrm{mov}}+1$ model calls.
If $\Gamma_{\boldsymbol x}(r)=\varnothing$, the cap equals
$\mathcal X$ and its relative boundary is empty, so localization
can be invoked immediately.

We first justify the boundary test in
Algorithm~\ref{alg:unconstrained-boundary-lazy} in the following lemma with proof deferred in Appendix~\ref{app:trapping-framework-proofs}.
For a fixed pivot $x\in X$, write $Q:=Q_x(r)$ and
$\Gamma:=\Gamma_x(r)$.
The following lemma shows that the prescribed approximation
tolerances allow this test to certify either trapping in $Q$
or an objective decrease of at least $\frac{\varepsilon r}{8}$.
\begin{lemma}[\bf Trapping-or-descent dichotomy]
\label{lem:boundary-trapping-or-descent}
\label{lem:box-trapping-or-descent}
Suppose that $\Gamma\ne\varnothing$ and that a continuous
surrogate $\widehat\phi_x$ on $\Gamma$ satisfies
$\|\widehat\phi_x-\phi_{x,\frac{\varepsilon}{4}}\|_{\infty,\Gamma}
\le\frac{\varepsilon r}{16}$.
Let $\widehat y\in\Gamma$ satisfy
$\widehat\phi_x(\widehat y)
\le\min_\Gamma\widehat\phi_x+\frac{\varepsilon r}{32}$.
Then the following alternatives are exhaustive and disjoint.
\begin{enumerate}
\item If $\widehat\phi_x(\widehat y)>\frac{\varepsilon r}{32}$, then
every point of $\partial_XQ$ is $\frac{3\varepsilon}{8}$-unreachable
from $x$.
\item If $\widehat\phi_x(\widehat y)\le\frac{\varepsilon r}{32}$, then
$f(\widehat y)\le f(x)-\frac{5\varepsilon r}{32}
\le f(x)-\frac{\varepsilon r}{8}$.
\end{enumerate}
If $\Gamma=\varnothing$, the trapping certificate in the first
alternative holds vacuously.
\end{lemma}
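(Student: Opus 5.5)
The plan is a direct comparison argument on $\Gamma$, using only the uniform surrogate error $\tfrac{\varepsilon r}{16}$ and the fact that every point of $\Gamma$ lies at distance exactly $r$ from $x$. Write $\phi:=\phi_{x,\varepsilon/4}$, so that on $\Gamma$ we have
\[
\phi(y)=f(y)-f(x)+\tfrac{\varepsilon r}{4}.
\]
The two alternatives are disjoint and exhaustive because they are decided by whether $\widehat\phi_x(\widehat y)$ exceeds the threshold $\tfrac{\varepsilon r}{32}$. The case $\Gamma=\varnothing$ is immediate: the inclusion $\partial_XQ\subseteq\Gamma$ noted before Algorithm~\ref{alg:unconstrained-boundary-lazy} makes $\partial_XQ$ empty, so the trapping certificate holds vacuously.

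\textbf{Alternative 1.} Suppose $\widehat\phi_x(\widehat y)>\tfrac{\varepsilon r}{32}$. Approximate optimality gives, for every $y\in\Gamma$,
\[
\widehat\phi_x(y)\ \ge\ \widehat\phi_x(\widehat y)-\tfrac{\varepsilon r}{32}\ >\ 0 .
\]
The uniform bound then gives $\phi(y)>-\tfrac{\varepsilon r}{16}$. Next we convert to the parameter $\tfrac{3\varepsilon}{8}$:
\[
\phi_{x,3\varepsilon/8}(y)=\phi(y)+\tfrac{\varepsilon}{8}\|y-x\|_2=\phi(y)+\tfrac{\varepsilon r}{8}>\tfrac{\varepsilon r}{16}>0 .
\]
So every $y\in\Gamma$ is $\tfrac{3\varepsilon}{8}$-unreachable, and the inclusion $\partial_XQ\subseteq\Gamma$ finishes this case. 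One could also appeal to Lemma~\ref{lmm:to_local_minima}, but the direct computation is simpler because $\|y-x\|_2=r$ is constant on $\Gamma$.

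\textbf{Alternative 2.} Suppose $\widehat\phi_x(\widehat y)\le\tfrac{\varepsilon r}{32}$. Then
\[
\phi(\widehat y)\le\tfrac{\varepsilon r}{32}+\tfrac{\varepsilon r}{16}=\tfrac{3\varepsilon r}{32},
\qquad\text{so}\qquad
f(\widehat y)-f(x)\le\tfrac{3\varepsilon r}{32}-\tfrac{8\varepsilon r}{32}=-\tfrac{5\varepsilon r}{32}\le-\tfrac{\varepsilon r}{8}.
\]

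No step is a real obstacle. The one point needing care is that $\partial_XQ\subseteq\Gamma$ holds in the constrained case, since $Q=X\cap(x+r\mathbb B)$ and points of $Q$ at distance strictly less than $r$ from $x$ are relatively interior; this was already recorded in the setup.
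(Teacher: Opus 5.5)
Your proof is correct and follows essentially the same route as the paper. Both arguments use $\|y-x\|_2=r$ on $\Gamma$ to pass from $\phi_{x,\varepsilon/4}$ to $\phi_{x,3\varepsilon/8}$ by adding $\frac{\varepsilon r}{8}$, combine the $\frac{\varepsilon r}{16}$ surrogate error with approximate minimality for the first alternative, and subtract $\frac{\varepsilon r}{4}$ for the second; your version differs only in the order in which the inequalities of the first alternative are chained.
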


\begin{algorithm2e}[ht]
\caption{Finite trap Localization procedure $\mathsf{TrapLocalize}
  (Q_{\boldsymbol x}(r),\boldsymbol x,P_{\boldsymbol x},\varepsilon,r)$}
\label{alg:trap-localize}
\SetAlgoLined
\DontPrintSemicolon
\SetKw{Return}{return}
\SetKwInOut{Input}{Input}
\SetKwInOut{Output}{Output}
\Input{$Q=Q_{\boldsymbol x}(r)$, $\boldsymbol x$,
$\varepsilon>0$, $r>0$, a polynomial $P_{\boldsymbol x}$
satisfying~\eqref{eq:terminal-ball-gradient-accuracy},
and a $\frac{3\varepsilon}{8}$ trapping certificate
on $\partial_{\mathcal X}Q$.}
\Output{A point $\boldsymbol z$ satisfying
\eqref{eq:trapping-terminal-certificates}.}

Set $\zeta\gets\frac{\varepsilon r}{32}$ and form
$\Psi_{\boldsymbol x}$ by~\eqref{eq:local_terminal}\;

Compute bounds from the coefficients of $P_{\boldsymbol x}$:
\[
M_P\ge
\sup_{\boldsymbol y\in Q}
\|\nabla P_{\boldsymbol x}(\boldsymbol y)\|_2
+\frac{\varepsilon}{2},
\qquad
L_P\ge
\sup_{\boldsymbol y\in Q}
\|\nabla^2P_{\boldsymbol x}(\boldsymbol y)\|_{\mathrm{op}}
\]
Set $\tau\gets\frac{1}{L_P+1}$\;

Choose a finite $\frac{\zeta}{M_P}$-net
$\mathcal V\subseteq Q$ containing $\boldsymbol x$\;
Compute $\boldsymbol z^0\in
\arg\min_{\boldsymbol y\in\mathcal V}
\Psi_{\boldsymbol x}(\boldsymbol y)$\;

\For{$k=0,1,2,\ldots$}{
 Compute $ \boldsymbol z^{k+1}
 :=
 \arg\min_{\boldsymbol y\in Q}
 \left\{
 \left\langle\nabla P_{\boldsymbol x}(\boldsymbol z^k),
 \boldsymbol y-\boldsymbol z^k\right\rangle
 +\frac{\|\boldsymbol y-\boldsymbol z^k\|_2^2}{2\tau}
 +\frac{\varepsilon}{2}
 \|\boldsymbol y-\boldsymbol x\|_2
 \right\}$
 Set $ \boldsymbol e^{k+1}
 :=
 \nabla P_{\boldsymbol x}(\boldsymbol z^{k+1})
 -\nabla P_{\boldsymbol x}(\boldsymbol z^k)
 +\frac{\boldsymbol z^k-\boldsymbol z^{k+1}}{\tau}$\;
 \If{$\|\boldsymbol e^{k+1}\|_2
       \le\frac{\varepsilon}{16}$}{
  \Return $\boldsymbol z^{k+1}$\;
 }
}
\end{algorithm2e}
In Line~9, we find the target point via a subroutine $\mathsf{TrapLocalize}$ which optimizes a local model.
The constructions below also guarantee
\begin{equation}
\label{eq:terminal-ball-gradient-accuracy}
 \|\nabla P_{\boldsymbol x}-\nabla f\|_{\infty,Q_{\boldsymbol x}(r)}
 \le\frac{\varepsilon}{16}.
\end{equation}
Set local model 
\begin{equation}
\label{eq:local_terminal}
\Psi_{\boldsymbol x}(\boldsymbol y)
:=P_{\boldsymbol x}(\boldsymbol y)-P_{\boldsymbol x}(\boldsymbol x)
+\frac{\varepsilon}{2}\|\boldsymbol y-\boldsymbol x\|_2
\end{equation}
and
$\zeta:=\frac{\varepsilon r}{32}$.
The terminal point is a $\zeta$-approximate global minimizer
of $\Psi_{\boldsymbol x}$ on $Q_{\boldsymbol x}(r)$ with
\begin{equation}
\label{eq:trapping-terminal-certificates}
 \begin{gathered}
 \Psi_{\boldsymbol x}(\boldsymbol z)\le0,\qquad
 \left\|\nabla P_{\boldsymbol x}(\boldsymbol z)
       +\frac{\varepsilon}{2}\boldsymbol v+\boldsymbol n\right\|_2
 \le\frac{\varepsilon}{16},\\
 \boldsymbol v\in\partial\|\cdot-\boldsymbol x\|_2(\boldsymbol z),
 \qquad \boldsymbol n\in N_{Q_{\boldsymbol x}(r)}(\boldsymbol z).
 \end{gathered}
\end{equation}
The residual bound expresses approximate first-order optimality
for $\Psi_{\boldsymbol x}$ on $Q_{\boldsymbol x}(r)$. Specifically, 
$\boldsymbol v$ accounts for the distance penalty, with
$\|\boldsymbol v\|_2\le1$, and $\boldsymbol n$ accounts for
the constraint.
The nonpositive model value excludes the trapped relative boundary,
so $\boldsymbol n\in N_{\mathcal X}(\boldsymbol z)$.
Combining the residual and model-gradient bounds gives
$\|\nabla f(\boldsymbol z)+\boldsymbol n\|_2
\le\frac{5\varepsilon}{8}$.
In the unconstrained case, $\boldsymbol n=0$, yielding
$\varepsilon$-stationarity.
In the cube-constrained case, nonexpansiveness of projection
gives $\|G_\gamma(\boldsymbol z)\|_2\le\varepsilon$
for every $\gamma>0$.


Lemma~\ref{lem:trapped-ball-localization} establishes finite
termination and correctness of Algorithm~\ref{alg:trap-localize} with proof is deferred to Appendix~\ref{app:trapping-framework-proofs}.
The procedure uses only the known polynomial and requires
no additional oracle queries.

\begin{lemma}[\bf Correctness of terminal localization]
\label{lem:trapped-ball-localization}
\label{lem:box-trapped-ball-localization}
Let $Q:=Q_x(r)$.
Suppose that $\phi_{x,\frac{3\varepsilon}{8}}>0$ on
$\partial_XQ$, and that the known scalar polynomial $P_x$
satisfies
$\|\nabla P_x-\nabla f\|_{\infty,Q}
\le\frac{\varepsilon}{16}$.
Then Algorithm~\ref{alg:trap-localize} terminates after
finitely many offline operations, without further oracle queries,
and returns a point $z\in Q$ satisfying
\[
\begin{cases}
 \|\nabla f(z)\|_2\le\varepsilon,
 &X=\mathbb R^d,\\
 \|G_\gamma(z)\|_2\le\varepsilon
 \quad\text{for every }\gamma>0,
 &X=[0,1]^d.
\end{cases}
\]
\end{lemma}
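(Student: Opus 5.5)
The proof has two parts. First, every iterate satisfies the certificate \eqref{eq:trapping-terminal-certificates}, so any returned point is correct. Second, the loop in Algorithm~\ref{alg:trap-localize} must stop after finitely many offline steps.

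\textbf{Correctness given termination.} Let $\Psi_x$ be as in \eqref{eq:local_terminal}. The net point $z^0$ satisfies $\Psi_x(z^0)\le\Psi_x(x)=0$, since $x\in\mathcal V$. The proximal step $z^{k+1}$ minimizes a model majorizing $\Psi_x$ (step size $\tau\le 1/L_P$ on the convex set $Q$), so the standard descent inequality gives
\[
\Psi_x(z^{k+1})\le\Psi_x(z^k)-\tfrac{1}{2\tau}\|z^{k+1}-z^k\|_2^2 .
\]
Hence $\Psi_x(z^k)\le0$ for all $k$.

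Next, the increment bound \eqref{eq:model-increment-error}, now with tolerance $\varepsilon/16$, gives on $\partial_{\mathcal X}Q$
\[
\Psi_x(y)\ge \phi_{x,3\varepsilon/8}(y)+\bigl(\tfrac{\varepsilon}{2}-\tfrac{3\varepsilon}{8}-\tfrac{\varepsilon}{16}\bigr)\|y-x\|_2>0 .
\]
So no iterate lies on the trapped relative boundary.

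At each step, the optimality condition of the prox subproblem yields $v\in\partial\|\cdot-x\|_2(z^{k+1})$ and $n\in N_Q(z^{k+1})$ with
\[
\nabla P_x(z^k)+(z^{k+1}-z^k)/\tau+\tfrac{\varepsilon}{2}v+n=0 .
\]
Rewriting, $\nabla P_x(z^{k+1})+\tfrac{\varepsilon}{2}v+n=e^{k+1}$. The stopping rule therefore delivers the residual bound in \eqref{eq:trapping-terminal-certificates}.

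Since $z^{k+1}\in\operatorname{int}_{\mathcal X}(Q)$, the normal-cone identity from the proof of Lemma~\ref{lem:stationarity-from-model} gives $N_Q(z^{k+1})=N_{\mathcal X}(z^{k+1})$. Combining,
\[
\|\nabla f(z^{k+1})+n\|_2\le\tfrac{\varepsilon}{16}+\tfrac{\varepsilon}{2}+\tfrac{\varepsilon}{16}\le\varepsilon .
\]
In the unconstrained case $n=0$. In the cube case, nonexpansiveness of projection as in \eqref{eq:gradient-mapping-normal-residual} gives $\|G_\gamma(z^{k+1})\|_2\le\varepsilon$.

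\textbf{Termination.} This is the main obstacle, and I plan to handle it with a sufficient-decrease argument. Summing the descent inequality shows
\[
\sum_k\tfrac{1}{2\tau}\|z^{k+1}-z^k\|_2^2\le\Psi_x(z^0)-\inf_Q\Psi_x<\infty,
\]
which is finite because $\Psi_x$ is continuous on the compact set $Q$. Hence $\|z^{k+1}-z^k\|_2\to0$.

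Because $\nabla P_x$ is $L_P$-Lipschitz on the convex set $Q$,
\[
\|e^{k+1}\|_2\le (L_P+1/\tau)\|z^{k+1}-z^k\|_2 ,
\]
which tends to $0$. The test $\|e^{k+1}\|_2\le\varepsilon/16$ therefore fires after at most
\[
\frac{2\tau(L_P+1/\tau)^2\,(\Psi_x(z^0)-\inf_Q\Psi_x)}{(\varepsilon/16)^2}+1
\]
iterations. The quantity $\Psi_x(z^0)-\inf_Q\Psi_x$ is finite and computable from coefficient bounds.

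The net step only starts the iteration with $\Psi_x(z^0)\le0$ and within $\zeta$ of the minimum. The $\zeta/M_P$ spacing makes $z^0$ a $\zeta$-approximate minimizer, because $M_P$ bounds the Lipschitz constant of $\Psi_x$ on $Q$. The prox steps are monotone, so every later iterate inherits this $\zeta$-approximate property as well.

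Each prox subproblem is a strongly convex problem on a ball intersected with a box. It is solved offline from the known polynomial, so no oracle queries are used.
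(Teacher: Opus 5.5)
Your proposal is correct and follows essentially the same route as the paper: a net initialization with $\Psi_x(z^0)\le 0$, exact proximal steps that give sufficient decrease and the residual bound $\|e^{k+1}\|_2\le (L_P+1/\tau)\|z^{k+1}-z^k\|_2$, and a step count from a finite decrease budget. The correctness half is also the paper's: the $\frac{\varepsilon}{16}\|y-x\|_2$ boundary barrier, the normal-cone identity $N_Q=N_{\mathcal X}$ at relatively interior points, and nonexpansiveness of projection. The differences are minor: you use the strongly convex $\frac{1}{2\tau}$ decrease and budget $\Psi_x(z^0)-\inf_Q\Psi_x$, where the paper uses a decrease of $\frac12\|z^{k+1}-z^k\|_2^2$ and the budget $\zeta$. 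The paper also justifies that each proximal subproblem and the net construction can be computed finitely by real-algebraic elimination and grid projection, which you only assert.
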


\subsection{Fresh Local Model Construction}
\label{subsec:trapping-fresh-implementations}
In this section, we give two implementations that reconstruct a polynomial
model at every pivot, using function values or gradients.
A suitable local radius ensures uniform gradient accuracy on
the feasible cap. This also controls relative function-value
errors on the boundary, so the same model supports both the
trapping-or-descent test and terminal localization.

For $q\ge0$, write $N_{d,q}:=\binom{d+q}{q}$.
For $L_p>0$, use the common radius
\begin{equation}
\label{eq:function-value-radius}
 r_{\mathrm F}:=\frac12
 \left(\frac{\varepsilon}{48L_p}\right)^{\frac{1}{\nu-1}}.
\end{equation}
At each pivot $\boldsymbol x$, take
$S:=Q_{\boldsymbol x}(r_{\mathrm F})$ and
$\boldsymbol c\in\operatorname{int}(S)$.
The point $\boldsymbol c=\boldsymbol x$ is admissible in the
unconstrained case and for a cap case, use any computable interior point.
Then $\rho:=\max\limits_{\boldsymbol y\in S}
\|\boldsymbol y-\boldsymbol c\|_2\le2r_{\mathrm F}$.

\begin{enumerate}[leftmargin=*]
\item \textbf{Function values.}
Apply Proposition~\ref{prop:local-reconstruction} to $f$ on $S$
with reconstruction center $\boldsymbol c$.
Using $N_{d,p}$ function-value queries, it constructs
$P_{\boldsymbol x}\in\Pi_p^d$ satisfying
\[
\begin{aligned}
\|\nabla P_{\boldsymbol x}-\nabla f\|_{\infty,S}
&\le\frac{3}{2}a_{p-1,\beta_p}L_p\rho^{\nu-1}\le\frac{\varepsilon}{32}.
\end{aligned}
\]

\item \textbf{Gradients.}
Apply Corollary~\ref{lem:gradient-only-scalar-reconstruction}
on $S$ with reconstruction center $\boldsymbol c$.
Using $N_{d,p-1}$ gradient queries, it constructs
$P_{\boldsymbol x}\in\Pi_p^d$ satisfying
\[
\begin{aligned}
\|\nabla P_{\boldsymbol x}-\nabla f\|_{\infty,S}
&\le
(2a_{p-1,\beta_p}+a_{p-2,\beta_p})L_p\rho^{\nu-1}
\le\frac{\varepsilon}{16}.
\end{aligned}
\]
\end{enumerate}

Integrating their gradient error along
$[\boldsymbol x,\boldsymbol y]\subseteq S$ gives
\[|P_{\boldsymbol x}(\boldsymbol y)-P_{\boldsymbol x}(\boldsymbol x)
-f(\boldsymbol y)+f(\boldsymbol x)|
\le\frac{\varepsilon}{16}\|\boldsymbol y-\boldsymbol x\|_2.\]
Thus Eq.~\eqref{eq:boundary-approximation} holds even when
$\boldsymbol c\ne\boldsymbol x$, and the retained scalar polynomial
also serves at termination.



\subsection{Lazy Local Model Construction}
\label{subsec:lazy-boundary-reuse}

In this subsection, we reduce the cost of repeated reconstruction by reusing a model
over a phase of $m$ calls. Each phase begins with a full
reconstruction but subsequent calls update the model with a
lower-degree correction that approximates the change caused by
moving the center. These corrections require much fewer queries.
By keeping the interpolation nodes fixed, the corrections
telescope, allowing us to control the accumulated error through
the net displacement from the refresh center.

Assume $L_p>0$ and fix a refresh period $m\ge1$.
Fix a refresh period $m\ge1$ and a mode
$\mathsf m \in\{\mathrm{val},\mathrm{grad}\}$,
corresponding to function-value and gradient queries,
respectively.
The construction in this subsection queries the whole ball
$\boldsymbol x+r\mathbb B$ at each pivot and uses the search radius
\begin{equation}
\label{eq:lazy-radius}
 r=r_m:=
 \left(\frac{\varepsilon}{96m^{\beta_p}L_p}\right)^{\frac{1}{\nu-1}}.
\end{equation}
We represent local functions on the unit ball through
$\boldsymbol y=\boldsymbol x+r\boldsymbol u$.
For $\boldsymbol u\in\mathbb B$, define
\[
F_{\boldsymbol x}(\boldsymbol u):=f(\boldsymbol x+r\boldsymbol u),
\qquad
A_{\boldsymbol x}(\boldsymbol u):=
F_{\boldsymbol x}(\boldsymbol u)-F_{\boldsymbol x}(\boldsymbol 0),
\qquad
B_{\boldsymbol x}(\boldsymbol u):=
r\nabla f(\boldsymbol x+r\boldsymbol u).
\]
Here $F_{\boldsymbol x}$ is the objective in normalized coordinates,
and $A_{\boldsymbol x}$ measures its increment relative to the
current pivot, with $A_{\boldsymbol x}(\boldsymbol 0)=0$.
The vector field $B_{\boldsymbol x}$ is the gradient with respect
to $\boldsymbol u$.


Choose fixed nested interpolation nodes
$\{\boldsymbol 0\}=\mathcal U_0\subseteq\cdots
\subseteq\mathcal U_p\subseteq\mathbb B$,
with $|\mathcal U_q|=N_{d,q}$, as in
Proposition~\ref{prop:local-reconstruction}.
Let $I_q$ denote the corresponding linear interpolation operator
of total degree at most $q$, acting componentwise on vector data.
Use a common contraction toward the origin so that the
proposition's estimates hold for the required degrees,
with exponent $\beta_p$ for fresh models and exponent $1$
for rolling corrections.
These normalized nodes and operators remain fixed throughout
the run and at center $\boldsymbol x$, the physical query locations
are $\boldsymbol x+r\mathcal U_q$.

The retained model is a scalar polynomial
$\widehat A_{\boldsymbol x}$ approximating $A_{\boldsymbol x}$
when $\mathsf m =\mathrm{val}$, or a vector polynomial
$V_{\boldsymbol x}$ approximating $B_{\boldsymbol x}$
when $\mathsf m =\mathrm{grad}$.
The cached state is
$\mathsf S=(j,\boldsymbol z,\mathcal C_{\boldsymbol z})$,
where $j$ counts the model calls in the current phase,
$\boldsymbol z$ is the preceding center, and
\[
\mathcal C_{\boldsymbol z}:=
\begin{cases}
\bigl(\widehat A_{\boldsymbol z},
F_{\boldsymbol z}|_{\mathcal U_{p-1}}\bigr),
& \mathsf m =\mathrm{val},\\[2pt]
\bigl(V_{\boldsymbol z},
B_{\boldsymbol z}|_{\mathcal U_{p-2}}\bigr),
& \mathsf m =\mathrm{grad}.
\end{cases}
\]
Thus the cache contains the retained model and the exact
data at the lower-degree nodes.
An empty state is represented by $j=0$.

Now we are ready to state our lazy algorithm (Algorithm~\ref{alg:lazy-approx-boundary}).
Algorithm~\ref{alg:lazy-approx-boundary} constructs the local
polynomial $P_{\boldsymbol x}$ and the boundary model
$\widehat\phi_{\boldsymbol x}$, while updating the cached state
for the next call.

\begin{algorithm2e}[ht]
\caption{Lazy Model}
\label{alg:lazy-approx-boundary}
\SetAlgoLined
\DontPrintSemicolon
\SetKw{Return}{return}
\SetKwInOut{Input}{Input}
\SetKwInOut{Output}{Output}
\Input{$\boldsymbol x$, $r=r_m$, $\varepsilon$, $\mathsf m$, $m$,
and $\mathsf S=(j,\boldsymbol z,\mathcal C_{\boldsymbol z})$.}
\Output{$P_{\boldsymbol x}$ and $\widehat\phi_{\boldsymbol x}$,
with $\mathsf S$ updated.}

$\ell\gets p-\mathbf 1_{\{\mathsf m=\mathrm{grad}\}}
-\mathbf 1_{\{0<j<m\}}$\;
Query the selected oracle on $\boldsymbol x+r\mathcal U_\ell$\;
Update the retained model according to
\[
\begin{cases}
\widehat A_{\boldsymbol x}\gets I_pA_{\boldsymbol x},
& \mathsf m=\mathrm{val},\ j\in\{0,m\},\\[2pt]
\widehat A_{\boldsymbol x}\gets
\widehat A_{\boldsymbol z}
+I_{p-1}(A_{\boldsymbol x}-A_{\boldsymbol z}),
& \mathsf m=\mathrm{val},\ 0<j<m,\\[2pt]
V_{\boldsymbol x}\gets I_{p-1}B_{\boldsymbol x},
& \mathsf m=\mathrm{grad},\ j\in\{0,m\},\\[2pt]
V_{\boldsymbol x}\gets
V_{\boldsymbol z}
+I_{p-2}(B_{\boldsymbol x}-B_{\boldsymbol z}),
& \mathsf m=\mathrm{grad},\ 0<j<m.
\end{cases}
\]
Define, for $\boldsymbol u\in\mathbb B$,
\[
P_{\boldsymbol x}(\boldsymbol x+r\boldsymbol u):=
\begin{cases}
\widehat A_{\boldsymbol x}(\boldsymbol u),
& \mathsf m=\mathrm{val},\\[2pt]
\displaystyle\int_0^1
\langle V_{\boldsymbol x}(t\boldsymbol u),\boldsymbol u\rangle\,dt,
& \mathsf m=\mathrm{grad}.
\end{cases}
\]
$\widehat\phi_{\boldsymbol x}\gets
P_{\boldsymbol x}-P_{\boldsymbol x}(\boldsymbol x)
+\frac{\varepsilon}{4}\|\cdot-\boldsymbol x\|_2$
on $\Gamma_{\boldsymbol x}(r)$\;
$j\gets1+j\mathbf 1_{\{0<j<m\}}$\;
$\mathsf S\gets(j,\boldsymbol x,\mathcal C_{\boldsymbol x})$\;
\Return $(P_{\boldsymbol x},\widehat\phi_{\boldsymbol x})$\;
\end{algorithm2e}

\begin{figure}[ht]
    \centering
    \includegraphics[width=0.7\linewidth]{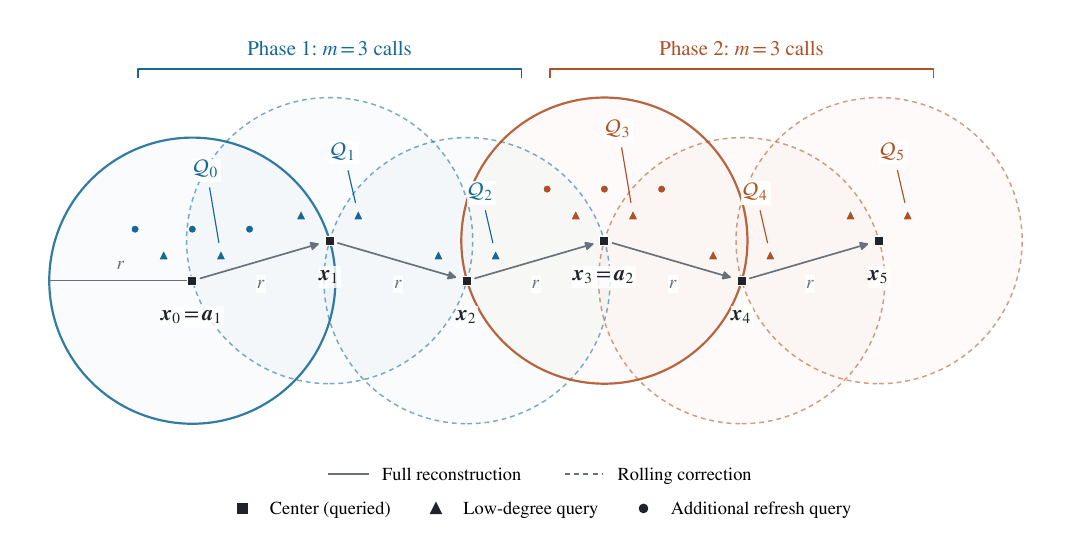}
    \caption{
    Two phases of lazy local-model construction ($m=3$).
    }
    \label{fig:lazy-local-models}
\end{figure}

The retained models are expressed in the same normalized
variable $\boldsymbol u$.
Line~1 selects the full interpolation degree at a refresh
and one lower degree otherwise.
When the center moves from $\boldsymbol z$ to $\boldsymbol x$,
the reference domain remains $\mathbb B$, while the target
changes from $A_{\boldsymbol z}$ to $A_{\boldsymbol x}$,
or from $B_{\boldsymbol z}$ to $B_{\boldsymbol x}$.
In Line~3, the rolling correction approximates this change
and adds it to the retained model.
Thus the highest-degree part remains fixed within a phase,
while the lower-degree coefficients are updated.
The counter in Line~4 schedules a new full reconstruction
after $m$ model calls.

To construct a correction, the algorithm pairs nodes with
the same normalized coordinate $\boldsymbol u$.
Their physical locations are
$\boldsymbol z+r\boldsymbol u$ and
$\boldsymbol x+r\boldsymbol u$.
The former value was cached in Line~5 of the preceding call,
so Line~2 only queries the latter.
These paired data determine the interpolated difference
used in Line~3.

Because all rolling corrections in Line~3 use the same
linear interpolation operator, their sum is the interpolant
of the sum of the successive differences.
The intermediate functions cancel, giving
\begin{equation}
\label{eq:trapping-lazy-telescoping}
\begin{aligned}
\widehat A_{\boldsymbol x}
&=I_pA_{\boldsymbol a}
  +I_{p-1}(A_{\boldsymbol x}-A_{\boldsymbol a}),\\
V_{\boldsymbol x}
&=I_{p-1}B_{\boldsymbol a}
  +I_{p-2}(B_{\boldsymbol x}-B_{\boldsymbol a}),
\end{aligned}
\end{equation}
where $\boldsymbol a$ is the refresh center.
The current model therefore consists of the initial
reconstruction and a single interpolated net change.
This identity allows the accuracy analysis to depend on
the net displacement from $\boldsymbol a$.

Finally, the scalar reconstruction uses
$\widehat A_{\boldsymbol x}$ directly in function-value mode.
In gradient mode, it follows the identity
$A_{\boldsymbol x}(\boldsymbol u)
=\int_0^1\langle B_{\boldsymbol x}(t\boldsymbol u),
\boldsymbol u\rangle\,dt$,
with $B_{\boldsymbol x}$ replaced by $V_{\boldsymbol x}$.
This conversion requires no additional queries.
Line~6 returns the resulting polynomial and its boundary model. Figure~\ref{fig:lazy-local-models} illustrates the center movements
and query locations over two consecutive phases.

\subsection{Proof Sketch of Correctness and Complexity}
\label{subsec:trapping-proof-sketch}

\paragraph{Correctness.}
The reconstruction bounds imply the boundary dichotomy:
either every relative-boundary point is unreachable, or the
objective decreases by at least $\varepsilon r/8$.
The gap bound limits the number of moves, and the retained local model
makes the terminal computation finite.
The boundary barrier, the polynomial-gradient accuracy, and
Eq.~\eqref{eq:trapping-terminal-certificates} then certify the output.


\paragraph{Complexity.}
We consider $L_p>0$ and $\Delta>0$.
Each accepted move decreases the objective by at least
$\frac{\varepsilon r}{8}$.
Summing these decreases gives
$N_{\mathrm{mov}}\frac{\varepsilon r}{8}\le\Delta$.
Thus the number of model calls satisfies
$K:=N_{\mathrm{mov}}+1\le1+\frac{8\Delta}{\varepsilon r}$,
where the final call supplies the polynomial used at termination.
The terminal computation requires no additional oracle queries.

For fresh reconstruction, every model call uses $N_{d,p}$
function-value queries or $N_{d,p-1}$ gradient queries.
Substituting $r=r_{\mathrm F}$ from
\eqref{eq:function-value-radius} gives
$K\le1+C_{p,\beta_p}
L_p^{\frac{1}{\nu-1}}\Delta
\varepsilon^{-\frac{\nu}{\nu-1}}$.
Multiplying this bound by the corresponding query count and
using $N_{d,q}=\mathcal O_p(d^q)$ yields the two bounds in
Theorem~\ref{the:fresh-trapping-or-descent}.

For lazy reconstruction, a refresh occurs once every $m$
model calls.
Hence there are $R:=\lceil K/m\rceil$ refreshes and $K-R$
rolling updates, giving
\[
Q_{\mathrm{val}}=RN_{d,p}+(K-R)N_{d,p-1},
\qquad
Q_{\mathrm{grad}}=RN_{d,p-1}+(K-R)N_{d,p-2}.
\]
The radius $r_m$ in \eqref{eq:lazy-radius} gives
$K\le1+C_{p,\beta_p}m^{\frac{\beta_p}{\nu-1}}
L_p^{\frac{1}{\nu-1}}\Delta
\varepsilon^{-\frac{\nu}{\nu-1}}$.
Using $R\le1+K/m$, we obtain
\[
\begin{aligned}
Q_{\mathrm{val}}
&\le C_p\left[
d^p+K\left(\frac{d^p}{m}+d^{p-1}\right)\right]\\
&\le C_{p,\beta_p}\left\{
d^p+
\left[
d^p m^{-\frac{p-1}{\nu-1}}
+d^{p-1}m^{\frac{\beta_p}{\nu-1}}
\right]
L_p^{\frac{1}{\nu-1}}\Delta
\varepsilon^{-\frac{\nu}{\nu-1}}
\right\}.
\end{aligned}
\]
The last inequality uses
$\frac{\beta_p}{\nu-1}-1=-\frac{p-1}{\nu-1}$ and
the constant term in the bound on $K$ is absorbed into
$C_{p,\beta_p}d^p$.
Similarly,
$Q_{\mathrm{grad}}\le
N_{d,p-1}+K(\frac{N_{d,p-1}}{m}+N_{d,p-2})$,
so its bound has each dimension power reduced by one.

The two terms involving $m$ account for refreshes and
rolling updates, respectively.
Increasing $m$ reduces the refresh contribution, but the
smaller search radius increases the number of model calls.
Choosing $m=d$ balances these contributions and yields
\[
\begin{aligned}
Q_{\mathrm{val}}
&=\mathcal O_{p,\beta_p}\!\left(
d^p+d^{p-1+\frac{\beta_p}{\nu-1}}
L_p^{\frac{1}{\nu-1}}\Delta
\varepsilon^{-\frac{\nu}{\nu-1}}
\right),\quad
Q_{\mathrm{grad}}
=\mathcal O_{p,\beta_p}\!\left(
d^{p-1}+d^{p-2+\frac{\beta_p}{\nu-1}}
L_p^{\frac{1}{\nu-1}}\Delta
\varepsilon^{-\frac{\nu}{\nu-1}}
\right).
\end{aligned}
\]
The full calculation is given in
Proposition~\ref{prop:lazy-boundary-reuse-complexity}.

\subsection{Comparison with Regularized Tensor Methods}
\label{app:trapping-comparisons}
In this section, we compare our trapping constructions with
regularized tensor methods~\citep{doikov2026lowerorder,gratton2023convergence},
focusing on their local models, certified objective decrease,
and oracle requirements.
We first explain the relation between the models, then compare
their query costs, and finally discuss their assumptions on
smoothness information.
The fixed-radius identity below holds for every
$\nu=p+\beta_p>1$, whereas the literature comparisons are
restricted to unconstrained minimization with $\beta_p=1$
and $p\ge2$.
Throughout this subsection,
$\Delta\ge f(\boldsymbol x^0)-\inf f$.


\paragraph{Relation between the local models.}
On a sphere of prescribed radius $r$, the boundary surrogate
coincides with a regularized polynomial model.
Indeed, whenever $\|\boldsymbol y-\boldsymbol x\|_2=r$,
\[
\widehat\phi_{\boldsymbol x}(\boldsymbol y)
=
P_{\boldsymbol x}(\boldsymbol y)-P_{\boldsymbol x}(\boldsymbol x)
+\frac{\varepsilon}{4r^{\nu-1}}
 \|\boldsymbol y-\boldsymbol x\|_2^\nu,
\]
because the regularization term equals $\varepsilon r/4$,
as does the linear penalty defining
$\widehat\phi_{\boldsymbol x}$.
At the fresh radius (Eq.~\eqref{eq:function-value-radius}),
the regularization coefficient is
$\frac{\varepsilon}{4r_{\mathrm F}^{\nu-1}}
=12\cdot2^{\nu-1}L_p$.
Thus the radius satisfies
$L_pr_{\mathrm F}^{\nu-1}\asymp_{p,\beta_p}\varepsilon$,
with certified decrease proportional to
$\varepsilon r_{\mathrm F}$.
When $\beta_p=1$, the regularization power is $p+1$,
as in AR$p$.

However, the two constructions use this model differently.
Our algorithm fixes the radius and approximately minimizes
the boundary surrogate to obtain either an accepted move
or a trapping certificate.
An AR$p$ step also determines its length by optimizing
the regularized model.
Since our reconstruction guarantee holds on the prescribed
ball, the identity above does not by itself justify an
unrestricted regularized-model step.

\paragraph{Descent progress and query complexity for $\beta_p=1$.}
The fresh and lazy trapping bounds recover the progress scales
of the AR$p$ schemes compared in~\citep{doikov2026lowerorder}.
Table~\ref{tab:boundary-versus-regularized-tensor} distinguishes
these progress scales from the oracle information used to
construct models. A lazy boundary phase contains at most $m$
moves and its displayed decrease applies to a completed $m$-move
phase. Every individual accepted boundary move decreases $f$ by
$\Omega_p (
m^{-\frac{1}{p}}
L_p^{-\frac{1}{p}}
\varepsilon^{1+\frac{1}{p}}
)$.

\begin{table*}[t]
\centering
\small
\renewcommand{\arraystretch}{1.15}
\caption{Comparison for $\beta_p=1$ and $p\ge2$.
The last two columns give model-reconstruction costs for the
indicated unit. Daggered entries count the information needed
for polynomial reconstruction at the prescribed accuracy and
they are not strict-oracle complexity guarantees for AR$p$.
All costs are deterministic.}
\label{tab:boundary-versus-regularized-tensor}
\begin{tabular}{@{}lccc@{}}
\toprule
Method (unit) & Objective decrease & Function values & Gradients\\
\midrule
Fresh AR$p$ (step)
& $\Omega_p(L_p^{-1/p}\varepsilon^{1+1/p})$
& $\mathcal O_p(d^p)^\dagger$
& $\mathcal O_p(d^{p-1})^\dagger$\\
Fresh boundary (move)
& $\Omega_p(L_p^{-1/p}\varepsilon^{1+1/p})$
& $\mathcal O_p(d^p)$
& $\mathcal O_p(d^{p-1})$\\
Lazy AR$p$ ($m$-step block)
& $\Omega_p(m^{1-1/p}L_p^{-1/p}\varepsilon^{1+1/p})$
& $\mathcal O_p(d^{p-1}(d+m))^\dagger$
& $\mathcal O_p(d^{p-2}(d+m))^\dagger$\\
Lazy boundary ($m$-move phase)
& $\Omega_p(m^{1-1/p}L_p^{-1/p}\varepsilon^{1+1/p})$
& $\mathcal O_p(d^{p-1}(d+m))$
& $\mathcal O_p(d^{p-2}(d+m))$\\
\bottomrule
\end{tabular}
\end{table*}

Standard AR$p$ accesses derivatives through order $p$ but the lazy
scheme in~\citep{doikov2026lowerorder} instead uses an
order-$(p-1)$ oracle. The daggered columns do not claim an exact
simulation of these derivative oracles by finitely many values
or gradients. In contrast, Theorems~
\ref{the:fresh-trapping-or-descent} and
\ref{the:lazy-trapping-or-descent} analyze the strict function-value
and gradient-only implementations directly.

For $p=2$, our lazy gradient-only bound is
$\mathcal O(d+\sqrt d\,L_2^{1/2}\Delta\varepsilon^{-3/2})$.
The $\sqrt d$ factor matches the lazy cubic-regularization
comparison, but does not improve on the $d^{1/3}$-type
dependences obtained by gradient--Hessian balancing
\citep{adil2025balancing} and NALEN~\citep{chen2026faster}.
Those methods use different progress analyses; the comparison
above concerns the displayed model costs and certified decrease.

\paragraph{Objective-function-free regularization.}
\label{rem:comparison-offarp}
OFFAR$p$~\citep{gratton2023convergence} attains the
$\varepsilon^{-(p+1)/p}$ dependence without function values,
using the full derivative tuple
$\{\nabla^if(\boldsymbol x)\}_{i=1}^p$ at each iteration.
The stochastic and random-subspace extensions use approximate
or sketched higher-order derivatives
\citep{gratton2025stochastic,bellavia2026optimally}.
These oracle models differ from gradient-only access for $p>1$;
the purely first-order OFFO result discussed in
\citep{gratton2026complexity} has the classical
$\mathcal O(\varepsilon^{-2})$ dependence.

Our fresh gradient-only bound for $\beta_p=1$ is
$\mathcal O_p(d^{p-1}[1+L_p^{1/p}\Delta
\varepsilon^{-(p+1)/p}])$. For $p=2$, it uses $d+1$ gradient
queries per reconstructed scalar model. OFFAR$2$ instead uses
gradient and Hessian evaluations, and additionally assumes a
uniform lower bound on the Hessian eigenvalues. It does not
require $L_2$ as an algorithmic input. Our fixed-radius method
uses a known upper bound on $L_2$, exact oracle answers, and
uncharged optimization of explicit models. Thus matching the
accuracy exponent does not identify the oracle models or all
of their assumptions.

\paragraph{Dependence on the smoothness bound.}
The stated finite-output guarantees use a known upper bound on
$L_p$. With function values, a trial move can instead be checked
directly by evaluating whether
$f(\boldsymbol y)\le f(\boldsymbol x)-\varepsilon r/8$.
This certifies the decrease of an accepted move even if the
radius was chosen using an underestimated smoothness constant.
It does not certify the uniform model-error bound needed for
the trapping branch.

One may increase the smoothness estimate after a failed trial
or an uncertified trapping decision, but the present analysis
does not establish the same total complexity for this adaptive
variant. In particular, generating an approximately stationary
candidate and identifying a valid stopping time are different
requirements. This distinction also arises in parameter-free
zeroth-order cubic regularization~\citep{doikov2025first}.
A parameter-free implementation with a certified stopping rule
and the stated strict-oracle bounds remains a separate question.

\section{Randomized First-Order Lower Bounds in High Dimension}
\label{sec:lb}

In this section, we prove a randomized lower bound for finding approximate stationary points
with an exact first-order oracle.  The hard family is a chain of independently
rotated quadratic systems.  The quadratic matrix action in every block is
present from the outset, whereas the explicit disclosure of a later
right-hand side is delayed until the preceding block is reached.  Before that
time, the ordinary oracle transcript is independent of the hidden right-hand
side unless the current phase is prematurely outer-hit, excluding precisely
this event is part of the information argument.  

For a differentiable function \(f:\mathbb R^d\to\mathbb R\), the exact
value-and-gradient oracle returns  $ \mathsf{FO}_f(x)=(f(x),\nabla f(x)).$
A randomized method with query budget \(T\) makes at most \(T\) oracle calls
for every input function and every realization of its internal randomness.
Its \(j\)-th query may be any Borel function of its random seed and the first
\(j-1\) oracle responses.  After its final response, the method may return any
point measurable with respect to the seed and the transcript. 
We define the complexity
\(T_{\varepsilon,\mathrm{FO}}^{\mathrm{rand}}(\mathcal F)\) as the least
\(T\) queries for which one algorithm satisfies
\[
 \inf_{f\in\mathcal F}
 \Pr\bigl(\|\nabla f(x^{\mathrm{out}})\|_2\leq\varepsilon\bigr)
 \geq\frac23.
\]
Fix an integer \(p\geq2\) and parameters
\(\Delta,L_1,L_p,\varepsilon>0\).  Define the effective curvature scale
\begin{equation}
\label{eq:drhs-effective-scale}
 \gamma_p(\varepsilon)
 :=
 \min\left\{
   L_1,
   \bigl(L_p\varepsilon^{p-1}\bigr)^{\frac{1}{p}}
 \right\}.
\end{equation}
All constants below that depend on the fixed cutoff profile are absorbed into
constants depending only on \(p\). For $\Delta,L_1,L_p>0$, define
\[
\mathcal F_{1,p}^{(d)}(\Delta,L_1,L_p)
:=
\left\{
f\in C^{1,1}(L_1)\cap C^{p,1}(L_p):
f\ge0,\quad f(0)-f_\star\le\Delta
\right\},
\]
where $f_\star=\inf_{x\in\mathbb R^d}f(x)$. Without gradient Lipschitz bound, we consider the class
\[
\mathcal F_p^{(d)}(\Delta,L_p)
:=
\left\{
f\in C^{p,1}(L_p):
f\ge0,\quad f(0)-f_\star\le\Delta
\right\}.
\]

\begin{theorem}[\bf Randomized first-order lower bound]
\label{thm:rand-fo-lower-bound}
For every integer \(p\geq2\), there is a constant \(c_p>0\) such that the
following holds.  If $ 0<\varepsilon
 \leq
 c_p\min\big\{
   (\Delta L_1)^{\frac{1}{2}},
\Delta^{\frac{p}{p+1}}L_p^{\frac{1}{p+1}}
 \big\},$
then there is a finite constant \(C_{p,\Delta,L_1,L_p}\) such that, for every
$ d\geq
 C_{p,\Delta,L_1,L_p}\varepsilon^{-\big(4+\frac{2}{p}\big)},$
one has
\begin{equation}
\label{eq:drhs-main-rate}
 T_{\varepsilon,\mathrm{FO}}^{\mathrm{rand}}
 \bigl(\mathcal F_{1,p}^{(d)}(\Delta,L_1,L_p)\bigr)
 \geq
 c_p\Delta\sqrt{L_1\gamma_p(\varepsilon)}\,
 \varepsilon^{-2}.
\end{equation}
\end{theorem}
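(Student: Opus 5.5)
Our plan is to build a randomized hard family from a chain of independently rotated quadratic blocks, following the description at the start of Section~\ref{sec:lb}, and to lower bound the number of first-order queries needed to traverse the chain. Fix a block length \(\kappa\) and a curvature pair: each block is the quadratic \(\frac{1}{2}\langle z,Az\rangle-\langle b,z\rangle\) with \(A\) a tridiagonal (Nesterov-type) matrix of condition number \(\kappa\asymp L_1/\gamma_p(\varepsilon)\), top eigenvalue \(\asymp L_1\), smallest eigenvalue \(\asymp\gamma_p(\varepsilon)\). Block \(i\) is embedded through a uniformly random orthogonal frame \(U_i\in\mathbb R^{d\times k}\). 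The right-hand side of block \(i+1\) is coupled to the solution of block \(i\) through a smooth cutoff gate: it is switched on only once the iterate's projection onto block \(i\) reaches its minimizer. Outside a large ball we glue on a coercive quadratic, which keeps \(f\ge0\), \(C^\infty\) and coercive. The gradient stays of size \(>\varepsilon\) until the final block is solved. The scale \(\gamma_p\) is chosen so that the gate and cutoffs, rescaled to gradient size \(\varepsilon\) and width \(\varepsilon/\gamma_p\), have \(p\)th-derivative Lipschitz constant at most \(L_p\); this is exactly the balance \(\gamma_p^p\le L_p\varepsilon^{p-1}\).

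The counting step has three parts.
\begin{itemize}
\item Within one block, an iterate confined to the Krylov-like span after \(t\) queries leaves gradient norm \(>\varepsilon\) unless \(t\gtrsim\sqrt{\kappa}\). This is the standard zero-chain property for the tridiagonal matrix, so each block costs \(\Omega(\sqrt{L_1/\gamma_p})\) queries.
\item Each solved block decreases \(f\) by at most about \(\varepsilon^2/\gamma_p\), since the minimizer sits at distance \(\varepsilon/\gamma_p\) and gradients are of size \(\varepsilon\). The gap \(\Delta\) therefore supports \(N\asymp\Delta\gamma_p/\varepsilon^2\) blocks.
\item Multiplying gives \(T\gtrsim N\sqrt{\kappa}=\Delta\sqrt{L_1\gamma_p}\,\varepsilon^{-2}\), matching \eqref{eq:drhs-main-rate}.
\end{itemize}
The condition \(\varepsilon\le c_p\min\{(\Delta L_1)^{1/2},\Delta^{p/(p+1)}L_p^{1/(p+1)}\}\) ensures \(N\ge1\) and \(\kappa\ge1\).

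The randomized information argument is the main obstacle. We must show that, with probability at least \(1-o(1)\) over the rotations, no query made before block \(i\) is reached has non-negligible inner product with the hidden directions of block \(i+1\) or its right-hand side. Such an inner product would constitute a premature outer hit. We would follow Carmon et al.~\cite{carmon2020lowerI}. Conditioned on the transcript up to the current phase, the unrevealed frame is uniform on a Stiefel manifold orthogonal to the observed span. Concentration of measure then bounds each query's correlation by \(\sqrt{\log(NT)/d}\), which is \(\ll\) the gate width relative to the ball radius. A union bound over \(T\cdot N\) events requires \(d\gtrsim\) (ball radius / gate width)\(^2\cdot\log\), which is how \(d\ge C\varepsilon^{-(4+2/p)}\) enters.

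The delicate part is that the quadratic action of every block is present from the outset, so gradients do leak \(A U_{i+1}^\top x\). Only the right-hand side is gated. We therefore have to show that this leaked linear information is itself an independent rotated chain whose Krylov growth is one index per query. With \(T\) queries and block length \(k\gg T\), the leak cannot reach the gradient-relevant coordinates. We would formalize this by a coupling with an oracle that reveals block \(i+1\) only after the gate opens, and then bound the total-variation distance between the two transcripts by the outer-hit probability.
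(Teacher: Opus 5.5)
Your architecture and calibration match the paper's. Both use quadratic blocks whose matrix action is present from the start, with only the forcing gated. Both take $\mu\asymp\gamma_p(\varepsilon)$ from the gate constraint $\mu^p\lesssim L_p\varepsilon^{p-1}$, $N\asymp\Delta\mu/\varepsilon^2$ from the gap bound, $t\asymp\sqrt{\kappa}$ with $\kappa\asymp L_1/\mu$, and $T=Nt$.

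The gap is in the one-phase information argument, where the whole difficulty of the randomized bound lies. The route you propose there does not work. The zero-chain property of a tridiagonal matrix only constrains iterates lying in the Krylov span generated from $b\propto e_1$. Here the quadratic of block $i+1$ is ungated, so a randomized algorithm can apply $A_{i+1}$ to arbitrary vectors long before $b_{i+1}$ is disclosed.

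A quadratic chain is not a robust zero chain in the sense of Carmon et al. The gradient at a generic query $x$ contains $U_{i+1}A_{i+1}U_{i+1}^\top x$. After normalization this is an order-one vector in the hidden span whose chain coordinates are spread over all indices. Iterating builds Krylov spaces of $A_{i+1}$ from generic starting vectors. Correlations of size $\sqrt{\log(NT)/d}$ therefore protect nothing. Your claims that the leak ``grows one index per query'' and ``cannot reach the gradient-relevant coordinates'' are false. If instead $b_{i+1}$ is drawn independently of the chain, the zero-chain property is simply unavailable.

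What you actually need is a delayed-right-hand-side bound. After up to $T$ adaptive products with $A_s$, disclosure of $b_s$, and fewer than $t$ further products, no named vector may lie within $3\rho$ of $A_s^{-1}b_s$, except with probability $\delta$. ``Named vector'' must include pre-disclosure candidates (which covers premature hits), queries, and an unqueried output. The paper proves this as Lemma~\ref{lem:drhs-one-phase}, using ingredients absent from your plan:
\begin{itemize}
\item $A_s=U^\top DU$ with $U$ Haar-distributed and $D$ having $t+2$ eigenvalues, each of multiplicity at least $M/(2\kappa)$. No degree-$t$ polynomial approximates $\lambda^{-1}$ on this spectrum in weighted $L_2$ (Lemma~\ref{lem:drhs-spectrum}).
\item $b_s$ uniform on a sphere, independent of $A_s$.
\item A delayed-input block-Krylov lifting (Lemma~\ref{lem:drhs-delayed-krylov}). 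It places the component of $b_s$ orthogonal to the $O(T^2)$-dimensional augmented preprocessing span at the disclosure slot, so the output depends on it only through $q(A_s)$ with $\deg q\le t$.
\item A fresh-mass bound $\beta\ge\frac12$, which needs $M\gtrsim T^2$ (Lemma~\ref{lem:drhs-fresh-mass}).
\end{itemize}
A tridiagonal matrix has simple eigenvalues. So even the eigenspace-projection step that isolates the fresh Gaussian column would fail for your choice of $A$.

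This is also where the dimension requirement comes from, and your radius-ratio heuristic cannot reproduce it. In this construction $R/\rho=C_R$ is a constant. In the paper, $d_0=NM$ with $M\gtrsim T^2+\kappa[T+\log(12NT(T+2))]$ per block, giving $\varepsilon^{-(1+1/p)}\cdot\varepsilon^{-(3+1/p)}=\varepsilon^{-(4+2/p)}$.

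Finally, the paper chains the phasewise bounds with an exact simulator rather than a total-variation coupling (Proposition~\ref{prop:drhs-phasewise-reduction}). Until phase $s-1$ is first outer-hit, every response is computable from products with $A_s$ alone. This requires deleting the outgoing link $G_s\Phi_{s+1}$, whose gate center $A_s^{-1}b_s$ is hidden; it vanishes with all derivatives before phase $s$ is outer-hit. The simulator gives $\Pr(\tau_s<\tau_{s-1}+t)\le 1/(12N)$ for each phase.
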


\begin{remark}
The hard functions may be chosen \(C^\infty\) and coercive.  The conclusion
holds for the exact value-and-gradient oracle and for an arbitrary
transcript-measurable terminal output.  It therefore also holds for the
weaker gradient-only oracle.
\end{remark}

For sufficiently small $\varepsilon$, the effective curvature scale
$\gamma_p(\varepsilon)$ is determined by the higher-order smoothness
constraint. Substituting its value into
Theorem~\ref{thm:rand-fo-lower-bound} yields the following explicit
dependence on the accuracy and smoothness parameters.
\begin{corollary}
\label{cor:drhs-high-accuracy-rate}
Under the assumptions of Theorem~\ref{thm:rand-fo-lower-bound}, suppose in
addition that $ \varepsilon
 \leq
 \big(\frac{L_1^p}{L_p}\big)^{\frac{1}{p-1}}.$
Then
\[
 T_{\varepsilon,\mathrm{FO}}^{\mathrm{rand}}
 \bigl(\mathcal F_{1,p}^{(d)}(\Delta,L_1,L_p)\bigr)
 \geq
 c_p\Delta L_1^{\frac{1}{2}}L_p^{\frac{1}{2p}}
\varepsilon^{-\big(\frac{3}{2}+\frac{1}{2p}\big)}.
\]
In particular, the exponent of $\varepsilon^{-1}$ is \(\frac 7 4\) for \(p=2\) and \(\frac 5 3\) for \(p=3\).
\end{corollary}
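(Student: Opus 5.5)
This corollary follows directly from Theorem~\ref{thm:rand-fo-lower-bound}. The only work is to identify which term attains the minimum in the effective curvature scale \eqref{eq:drhs-effective-scale} under the extra assumption on $\varepsilon$, and then substitute.

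First, show that $\gamma_p(\varepsilon)=(L_p\varepsilon^{p-1})^{1/p}$. This holds exactly when $(L_p\varepsilon^{p-1})^{1/p}\le L_1$, that is, when $L_p\varepsilon^{p-1}\le L_1^p$, or equivalently $\varepsilon^{p-1}\le L_1^p/L_p$. Since $p\ge2$, the exponent $p-1$ is positive, so this is the same as $\varepsilon\le (L_1^p/L_p)^{1/(p-1)}$, which is the added hypothesis.

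Second, all the hypotheses of Theorem~\ref{thm:rand-fo-lower-bound} are assumed: the smallness condition on $\varepsilon$ and the dimension condition $d\ge C_{p,\Delta,L_1,L_p}\varepsilon^{-(4+2/p)}$. So \eqref{eq:drhs-main-rate} holds, and we substitute:
\[
\sqrt{L_1\gamma_p(\varepsilon)}
=L_1^{\frac12}L_p^{\frac1{2p}}\varepsilon^{\frac{p-1}{2p}} .
\]
Multiplying by $\Delta\varepsilon^{-2}$ gives
\[
\Delta L_1^{\frac12}L_p^{\frac1{2p}}\,\varepsilon^{\frac{p-1}{2p}-2}
=\Delta L_1^{\frac12}L_p^{\frac1{2p}}\,\varepsilon^{-\left(\frac32+\frac1{2p}\right)},
\]
using $2-\frac{p-1}{2p}=\frac{4p-p+1}{2p}=\frac{3p+1}{2p}=\frac32+\frac1{2p}$. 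The constant $c_p$ carries over unchanged.

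Finally, evaluate the exponent in the two named cases. For $p=2$ it is $\frac32+\frac14=\frac74$, and for $p=3$ it is $\frac32+\frac16=\frac53$.

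There is no real obstacle. The one point to check is the direction of the inequality when raising to the power $1/(p-1)$, which is valid because $p-1>0$.
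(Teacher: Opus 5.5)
Your proposal is correct and is the same argument the paper uses. Under the extra hypothesis, the minimum defining $\gamma_p(\varepsilon)$ is attained by $(L_p\varepsilon^{p-1})^{1/p}$, and substituting this into \eqref{eq:drhs-main-rate} gives the stated rate with the same constant $c_p$ and the exponents $\frac74$ and $\frac53$.
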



\begin{remark}[\bf Comparison with deterministic lower bounds]
For the class $\bigcap_{q=1}^p C^{q,1}(L_q)$,
\cite{zhou2026sharp} establishes a deterministic first-order
lower bound of order
$\Delta\sqrt{L_1\Gamma_p(\varepsilon)}\,\varepsilon^{-2}$, where
\[
\Gamma_p(\varepsilon)
:=
\sup_{r\ge1}
\min\left\{
L_1,\,
\frac{L_1}{\ell_1r^2},\,
\min_{2\le q\le p}
\left(
\frac{L_q\varepsilon^{q-1}r^{q-3}}{\ell_q}
\right)^{\frac1q}
\right\},
\]
with constants $\ell_q$ for $q\ge1$.
The parameter $r$ controls the hard function and is
optimized to balance the derivative bounds.

Theorem~\ref{thm:rand-fo-lower-bound} applies to randomized
first-order methods under
$C^{1,1}(L_1)\cap C^{p,1}(L_p)$.
For fixed positive smoothness constants and sufficiently small
$\varepsilon$, one has
$\Gamma_p(\varepsilon)\sim \gamma_p(\varepsilon)$
when $p=2,3$.
Thus our randomized lower bound matches the corresponding
deterministic scales in these cases.
For $p\ge4$, however,
$\Gamma_p(\varepsilon)
=\Theta_{p,L_1,\ldots,L_p}(\varepsilon^{\frac23})$,
whereas
$\gamma_p(\varepsilon)
=L_p^{\frac1p}\varepsilon^{\frac{p-1}{p}}$.

Theorem~\ref{thm:rand-fo-lower-bound} gives a sufficient
dimension requirement of order
$\varepsilon^{-(4+\frac2p)}$ while the deterministic construction of \cite{zhou2026sharp} can be realized in
dimension $\mathcal O(\varepsilon^{-\frac74})$ for $p=2$
and $\mathcal O(\varepsilon^{-\frac53})$ for $p\ge3$.
These dimension estimates follow from its block sizes and phase count.
Whether the sufficient dimension threshold for our randomized
lower bound can be reduced remains open.
\end{remark}

The loss for \(p\geq4\) in
Corollary~\ref{cor:drhs-high-accuracy-rate} comes from the prescribed
\(C^{1,1}(L_1)\) budget, which fixes the upper spectral scale of the
quadratic blocks.  If this condition \(C^{1,1}(L_1)\) is removed, the quadratic scale can
instead be chosen as part of the hard instance.  This additional freedom
allows the construction to attain the \(\frac{5}{3}\) exponent for every
\(p\geq3\). 
We have the following theorem with same construction.

\begin{theorem}[\bf Lower bounds without gradient Lipschitzness]
\label{thm:drhs-without-l1}
For every integer \(p\geq2\), there are constants \(c_p,C_p>0\) such
that the following holds whenever
\(0<\varepsilon\leq
c_p\Delta^{p/(p+1)}L_p^{1/(p+1)}\):
\begin{equation*}
\label{eq:drhs-without-l1-rates}
\begin{aligned}
p=2,\quad
d&\geq
C_p\left(
 \frac{\Delta^{2/3}L_2^{1/3}}{\varepsilon}
\right)^5
&&\Longrightarrow&
T_{\varepsilon,\mathrm{FO}}^{\mathrm{rand}}
\bigl(\mathcal F_2^{(d)}(\Delta,L_2)\bigr)
&\geq
c_p\left(
 \frac{\Delta^{2/3}L_2^{1/3}}{\varepsilon}
\right)^{7/4},\\
p\geq3,\quad
d&\geq
C_p\left(
 \frac{\Delta^{p/(p+1)}L_p^{1/(p+1)}}{\varepsilon}
\right)^{13/3+1/p}
&&\Longrightarrow&
T_{\varepsilon,\mathrm{FO}}^{\mathrm{rand}}
\bigl(\mathcal F_p^{(d)}(\Delta,L_p)\bigr)
&\geq
c_p\left(
 \frac{\Delta^{p/(p+1)}L_p^{1/(p+1)}}{\varepsilon}
\right)^{5/3}.
\end{aligned}
\end{equation*}
The hard functions may be chosen \(C^\infty\) and coercive.
\end{theorem}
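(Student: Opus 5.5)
The plan is to reuse the hard family behind Theorem~\ref{thm:rand-fo-lower-bound} and its information-theoretic argument verbatim, and change only the parameter choices. Without the $C^{1,1}(L_1)$ budget, the quadratic spectral scale $\lambda$ of the blocks becomes a free parameter. Concretely, the instance is a chain of $N$ independently rotated quadratic systems in blocks of dimension $k$. Each block has condition scale $\kappa=\lambda/\mu$, and a smooth cutoff profile couples consecutive blocks. The right-hand side of block $i+1$ is disclosed only once the iterate reaches block $i$.

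The randomized information argument gives two things. Each phase costs $\Omega(\sqrt{\kappa})$ queries unless it is prematurely outer-hit, which happens with probability at most $\exp(-ck)$ per query by concentration on random rotations once $d\gtrsim$ (total queries)$\times k$. The transcript before disclosure is independent of the hidden right-hand side. I take all of this as established. The total is then $T\gtrsim N\sqrt{\kappa}$.

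The first step is to redo the scaling bookkeeping with $\lambda$ free. The gap constraint is $N\cdot(\text{per-block value drop})\lesssim\Delta$, where the per-block drop is of order $\varepsilon^2/\mu$. Stationarity requires each unsolved block to keep gradient $\gtrsim\varepsilon$. The $C^{p,1}(L_p)$ constraint on the cutoff coupling and on the quadratic pieces restricted to the localization radius $\varepsilon/\mu$ reads $\lambda(\varepsilon/\mu)^{-(p-1)}\lesssim L_p$ (up to profile constants). In Theorem~\ref{thm:rand-fo-lower-bound} this combined with $\lambda\le L_1$ to give $\gamma_p$. Here I instead optimize over $\lambda$ and $\mu$, and possibly also over an extra width parameter $r$ as in the $\Gamma_p$ formula of \cite{zhou2026sharp}. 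The aim is to maximize $N\sqrt{\lambda/\mu}\approx\Delta\mu\varepsilon^{-2}\sqrt{\lambda/\mu}=\Delta\varepsilon^{-2}\sqrt{\lambda\mu}$ subject to the $p$-th order constraints.

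For $p=2$ the optimum should place the curvature at the scale where $L_2$ binds on both the quadratic part and the coupling. This gives $\sqrt{\lambda\mu}\sim(L_2\varepsilon)^{1/4}\cdot(\text{scale})$, which after normalizing by $\Delta^{2/3}L_2^{1/3}$ yields the exponent $7/4$. For $p\ge3$ I expect the optimum to saturate at an effective $\sqrt{\lambda\mu}\sim\varepsilon^{1/3}$ times a power of $\Delta,L_p$. The reason is that the third-order term of the cutoff coupling becomes the binding constraint regardless of $p$, mirroring the $\Gamma_p=\Theta(\varepsilon^{2/3})$ saturation in \cite{zhou2026sharp}. Dimensional analysis fixes the prefactor as $(\Delta^{p/(p+1)}L_p^{1/(p+1)}/\varepsilon)^{5/3}$. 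The dimension requirement comes from multiplying the query budget by the block size $k\sim\log$ factors times $N$ and the needed rotation dimension. I will check that this gives exponent $5$ for $p=2$ and $13/3+1/p$ for $p\ge3$, consistent with $4+2/p$ at the corresponding scale.

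The second step is to verify membership. I need $f\ge0$, $f(0)-f_\star\le\Delta$, coercivity and $C^\infty$ after adding a mild outer growth term, and $f\in C^{p,1}(L_p)$. All derivatives of order $2,\dots,p+1$ of the rescaled blocks and cutoffs must be bounded using the chosen $\lambda$. The main obstacle is this derivative bound for $p\ge3$. The quadratic blocks have vanishing third and higher derivatives, but the cutoff profile multiplies them. The mixed terms therefore scale like $\lambda\cdot(\text{cutoff width})^{-(q-2)}\cdot(\text{radius})^{2}$, and all of them must be controlled simultaneously with the single constant $L_p$ while $\lambda$ is no longer capped. I would first attempt to choose the cutoff width equal to the localization radius $\varepsilon/\mu$, bound each order $q\le p+1$ by interpolation between order $2$ and order $p+1$, and confirm that the order-$(p+1)$ bound is the binding one. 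The condition $\varepsilon\le c_p\Delta^{p/(p+1)}L_p^{1/(p+1)}$ then ensures $N\ge1$ and $\kappa\ge1$.
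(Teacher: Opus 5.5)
There is a genuine gap, and it is in the one step you actually have to carry out: the scaling bookkeeping. Write $R_\varepsilon:=\Delta^{p/(p+1)}L_p^{1/(p+1)}/\varepsilon$. Your objective $T\approx N\sqrt{\kappa}\approx\Delta\varepsilon^{-2}\sqrt{\lambda\mu}$ is right. The problem is that $\lambda(\varepsilon/\mu)^{-(p-1)}\lesssim L_p$ is not a constraint of the family you say you are reusing.

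In \eqref{eq:drhs-hard-function} the quadratic blocks $\frac12 z_s^\top A_s z_s$ are global and are never multiplied by a cutoff. The gates $G_{s-1}$ multiply only the clipped linear forcings $\Phi_s$, whose $j$th derivatives are $O(\varepsilon\rho^{1-j})$ regardless of the spectrum. Every derivative of order at least three therefore comes from the coupling alone. Lemma~\ref{lem:drhs-regularity} accordingly gives $\operatorname{Lip}(D^qF)\le C_q\mu^q/\varepsilon^{q-1}$ and the gap bound $C_{\mathrm{gap}}N\varepsilon^2/\mu$, both uniformly in the upper spectral scale $\bar L$.

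Imposing your constraint would in fact destroy the rate. At the largest $\mu$ the coupling allows, $\mu\asymp(L_p\varepsilon^{p-1})^{1/p}$, it forces $\kappa=\lambda/\mu=O(1)$. For $p=2$ it gives $\lambda\mu\lesssim L_2\varepsilon$ for every $\mu$, hence $T\lesssim\Delta L_2^{1/2}\varepsilon^{-3/2}=R_\varepsilon^{3/2}$, strictly short of $7/4$. The unexplained ``(scale)'' factor in your $p=2$ paragraph would have to be $\sqrt{\bar L}$ with $\bar L\asymp\Delta^{1/3}L_2^{2/3}$, which your own constraint forbids. Likewise, the third-order saturation you import from $\Gamma_p$ in \cite{zhou2026sharp} has no counterpart in this construction. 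The ``main obstacle'' of cutoffs multiplying quadratics does not arise either.

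The missing observation is that smoothness constrains only $\mu$, while $\kappa$ is limited only by dimension. The paper sets $\mu:=c_{\mu,p}(L_p\varepsilon^{p-1})^{1/p}$, so that $C_p\mu^p/\varepsilon^{p-1}\le L_p$ and $N\asymp\Delta\mu/\varepsilon^2\asymp R_\varepsilon^{1+1/p}$. It then chooses $\kappa\asymp R_\varepsilon^{2(\alpha_p-1-1/p)}$ and $\bar L:=\mu\kappa$, giving $t\asymp R_\varepsilon^{\alpha_p-1-1/p}$ and $T=Nt\asymp R_\varepsilon^{\alpha_p}$.

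The block dimension required by Lemma~\ref{lem:drhs-sequential} is $M\gtrsim T^2+\kappa[T+\log(12NT(T+2))]$. Since $1+1/p<\alpha_p<2+2/p$, one has $\kappa\ge\kappa_0$ and $\kappa\lesssim T$. Hence $M=O(T^2)$ and $d_0=NM=O(R_\varepsilon^{1+1/p+2\alpha_p})$, which is $5$ for $p=2$ and $13/3+1/p$ for $p\ge3$.

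So $7/4$ and $5/3$ are chosen to match known rates, not forced by any smoothness optimum; a larger $\kappa$ would give a larger $T$ at a larger dimension cost. Your dimension heuristic, with blocks of logarithmic size and $d\gtrsim T\times k$, misses exactly the $T^2$ and $\kappa T$ growth of $M$, and that growth is what ties the rate to the threshold. Finally, nonnegativity comes from adding the realization-independent constant $C_{\mathrm{sh}}N\varepsilon^2/\mu$ from the coercivity lower bound. The shifted oracle is simulated trivially, and no outer growth term is needed.
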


For $p=2$, Theorem~\ref{thm:drhs-without-l1} establishes a
randomized first-order lower bound with the sharp accuracy
exponent $\frac{7}{4}$ under Hessian Lipschitzness, matching
the upper bound of \cite[Theorem 1]{marumo2025universal} (under the additional assumption $f\in C^{1,1}(L_1)$) and the
deterministic lower bound of \cite{zhou2026sharp}
in their dependence on $\varepsilon$.
For every \(p\geq3\), the second conclusion gives
\[
 T
 \gtrsim_p
 \Delta^{\frac{5p}{3(p+1)}}
 L_p^{\frac{5}{3(p+1)}}
 \varepsilon^{-\frac{5}{3}}.
\]
Thus the lower bound matches the known \(\frac{5}{3}\) first-order exponent under
third-order smoothness, up to the logarithmic factor in the
dimension-free upper bound of
\cite[Theorem~2]{carmon2017convex} (under the additional assumption $f\in C^{1,1}(L_1)$).  The saturation of this exponent at
every finite order \(p\geq3\) is also consistent with the deterministic
lower bounds of \cite{zhou2026sharp}.

For $p\ge4$, removing the $L_1$-budget strengthens the
randomized query lower bound from
$\Omega_p(\varepsilon^{-(\frac32+\frac1{2p})})$ in
Corollary~\ref{cor:drhs-high-accuracy-rate} to
$\Omega_p(\varepsilon^{-\frac53})$, with the remaining
parameters fixed.
The resulting construction has a larger sufficient dimension
threshold, increasing from order
$\varepsilon^{-(4+\frac2p)}$ to
$\varepsilon^{-(\frac{13}{3}+\frac1p)}$.
Thus the dimension exponent increases by
$\frac{p-3}{3p}$.

\subsection{The Delayed-Information Chain and Proof of the Theorem~\ref{thm:rand-fo-lower-bound}}
\label{subsec:drhs-main-proof}

We now construct the hard distribution and prove
Theorem~\ref{thm:rand-fo-lower-bound} using key lemmas
whose proofs are deferred to Appendix~\ref{app:lb}. 

Our construction links a sequence of quadratic problems in
separate blocks of variables. Each phase has a target
$\boldsymbol x_s^\star=A_s^{-1}\boldsymbol b_s$, and the
construction ensures that every $\varepsilon$-stationary point
must approach all these targets.
For each phase after the first, a smooth coupling activates
the linear term containing $\boldsymbol b_s$ only when the
preceding block approaches its target.
This creates a sequential dependence that information about the
next target remains hidden unless the algorithm reaches the
preceding target or happens to approach the next target
prematurely.

The information argument establishes two properties of this
construction. First, premature hits are unlikely in sufficiently
high dimension. Second, even when $\boldsymbol b_s$ is explicitly
revealed, approaching $A_s^{-1}\boldsymbol b_s$ requires
additional matrix-vector queries despite the information
collected before disclosure.
Thus advance exploration of the quadratic terms cannot
eliminate the subsequent cost of solving each phase, and
these costs accumulate along the chain.
All phase data are sampled in advance from a distribution
$\mathcal D_d$ that is independent of the algorithm.
The resulting distributional bound for deterministic algorithms
then extends to randomized algorithms by averaging over their
internal randomness.

More precisely, the first lemma shows that the phasewise costs accumulate
sequentially
(Lemma~\ref{lem:drhs-sequential}).  The second provides the one-phase
hardness estimate underlying this accumulation
(Lemma~\ref{lem:drhs-one-phase}).  The third shows that every
\(\varepsilon\)-stationary point must complete all phases
(Lemma~\ref{lem:drhs-certification}).  Finally, the fourth establishes the
regularity and initial-gap bounds of the chain
(Lemma~\ref{lem:drhs-regularity}).

\subsubsection{Calibrated Scales and Phase Data}
Let $C_{\mathrm{sm}},C_{\mathrm{gap}},C_p$ be the constants in
Lemma~\ref{lem:drhs-regularity}, $c_0,\kappa_0$ those in
Lemma~\ref{lem:drhs-spectrum}, and $C_0$ the constant in
Lemma~\ref{lem:drhs-one-phase}.
Choose positive constants $c_{\mu,p},c_N$ satisfying
\begin{equation}
\label{eq:drhs-constant-choice}
 c_{\mu,p}\leq
 \min\left\{
   \frac{1}{2C_{\mathrm{sm}}},
   C_p^{-\frac{1}{p}},
   \frac{1}{2\kappa_0},
   \frac{c_0^2}{8}
 \right\},
 \qquad
 c_N\leq\frac{1}{C_{\mathrm{gap}}}.
\end{equation}
Using the effective curvature scale
$\gamma_p(\varepsilon)$ from
Eq.~\eqref{eq:drhs-effective-scale}, set
\begin{equation}
\label{eq:drhs-calibrated-scales}
 \mu:=c_{\mu,p}\gamma_p(\varepsilon),
 \qquad
 \rho:=\frac{\varepsilon}{\mu},
 \qquad
 \kappa:=\frac{L_1}{2\mu}.
\end{equation}
The number of phases $N$, the delay per phase $t$, and the
total delay $T$ are
\begin{equation}
\label{eq:drhs-calibrated-depth}
 N:=\left\lfloor
      c_N\frac{\Delta\mu}{\varepsilon^2}
    \right\rfloor,
 \qquad
 t:=\left\lfloor c_0\sqrt{\kappa}\right\rfloor,
 \qquad
 T:=Nt.
\end{equation}
Choose
$0<c_{\mathrm{acc},p}\leq
\min\{1,(c_Nc_{\mu,p}/2)^{\frac{p}{p+1}}\}$
and require
\begin{equation}
\label{eq:drhs-calibrated-small-accuracy}
 \varepsilon\leq c_{\mathrm{acc},p}
 \min\left\{
   (\Delta L_1)^{\frac{1}{2}},
   \Delta^{\frac{p}{p+1}}L_p^{\frac{1}{p+1}}
 \right\}.
\end{equation}
For $C_{\mathrm{dim}}\geq\max\{C_0,2\}$, define the
dimension of each phase and the total required dimension by
\begin{equation}
\label{eq:drhs-calibrated-dimension}
 M:=\left\lceil
 C_{\mathrm{dim}}
 \left\{
   T^2+\kappa\left[
     T+\log\bigl(12NT(T+2)\bigr)
   \right]
 \right\}
 \right\rceil,
 \qquad
 d_0:=NM.
\end{equation}

Set $B_0:=512$.
For each $s\in[N]$, take a symmetric matrix
$A_s\in\mathbb R^{M\times M}$ and a vector
$b_s\in\mathbb R^M$, with target $x_s^\star:=A_s^{-1}b_s$,
satisfying
\begin{equation}
\label{eq:drhs-admissible-data}
 \mu I\preceq A_s\preceq\frac{L_1}{2}I,
 \qquad
 \|b_s\|_2=B_0\varepsilon,
 \qquad
 16\rho\leq\|x_s^\star\|_2\leq B_0\rho.
\end{equation}
We first define the chain for fixed admissible phase data
$(A_s,b_s)_{s=1}^N$, then specify their joint distribution.

\subsubsection{The Delayed-information Chain}
Fix once and for all a nonincreasing function
$\eta\in C^\infty(\mathbb R;[0,1])$ such that $\begin{cases}
\eta(a)=1, & a\leq4,\\
0<\eta(a)<1, & 4<a<9,\\
\eta(a)=0, & a\geq9.
\end{cases}$\footnote{One
concrete choice is $\eta(a)=1$ for $a\leq4$,
$\eta(a)=\frac{\exp(-1/(9-a))}
{\exp(-1/(9-a))+\exp(-1/(a-4))}$ for $4<a<9$, and
$\eta(a)=0$ for $a\geq9$.}.  Define the
clipping profile from the same cutoff by $ \chi(r):=\eta(5r-1).$
Thus $\chi(r)=1$ for $r\leq1$ and $\chi(r)=0$ for $r\geq2$.  Set
\begin{equation}
\label{eq:drhs-clipping-constants}
 a_\chi:=1+2\|\chi'\|_\infty,
 \qquad
 C_R:=(a_\chi+1)B_0+4,
 \qquad
 R:=C_R\rho,
 \qquad
 H:=4B_0\varepsilon R.
\end{equation}
For a block variable $w\in\mathbb R^M$, define its smoothly
clipped version as $\pi_R(w):=\chi\!\left(\frac{\|w\|_2}{R}\right)w.$
For every phase $s$, its clipped linear forcing and its positive shifted
forcing are
\begin{equation}
\label{eq:drhs-forcing-definition}
 \psi_s(w):=\langle b_s,\pi_R(w)\rangle,
 \qquad
 \Phi_s(w):=H+\psi_s(w),
\end{equation}
and its gate is $ G_s(w):=
 \eta\!\left(\frac{\|w-x_s^\star\|_2^2}{\rho^2}\right)$ with target $x_s^\star:=A_s^{-1}b_s$. For $z=(z_1,\ldots,z_N)\in(\mathbb R^M)^N$, define a chaining function
\begin{equation}
\label{eq:drhs-hard-function}
 F_{\mathbf A,\mathbf b}(z)
 :=
 \sum_{s=1}^N\frac12z_s^\top A_sz_s
 -\Phi_1(z_1)
 -\sum_{s=2}^N
   G_{s-1}(z_{s-1})\Phi_s(z_s),
\end{equation}
where the final sum is empty when $N=1$.  Equivalently, the initiating
term is $-H-\psi_1(z_1)$.  


For $d\geq d_0$, write
$x=(z,u)\in\mathbb R^{NM}\times\mathbb R^{d-NM}$ and set
\begin{equation}
\label{eq:drhs-ambient-extension}
 F_{\mathbf A,\mathbf b}^{(d)}(z,u)
 :=F_{\mathbf A,\mathbf b}(z)+\frac\mu2\|u\|_2^2,
\end{equation}
with the second term absent when $d=NM$.
For the ambient extension, these are the first $N$ gradient blocks and
\(\nabla_uF_{\mathbf A,\mathbf b}^{(d)}(z,u)=\mu u\).

We first record the elementary geometry of each components that explains the construction (See Figure~\ref{fig:drhs-local-geometry}).   The clipping map $\pi_R$ is the identity on $B(0,R)$
and vanishes outside $B(0,2R)$.  Moreover,
\begin{equation}
\label{eq:drhs-clipping-separation}
 \|x_s^\star\|_2+3\rho
 \leq(B_0+3)\rho<R,
 \qquad
 C_R-a_\chi B_0=B_0+4>2.
\end{equation}
Thus clipping is inactive throughout every gate ball.  It changes none of
the local quadratic--linear geometry and serves only to control the forcing
globally.  Indeed, $\|\pi_R(w)\|_2\leq2R$, while direct differentiation on
the transition annulus gives
$\|D\pi_R(w)\|_{\mathrm{op}}\leq a_\chi$.  Hence
\begin{equation}
\label{eq:drhs-forcing-positive}
 2B_0\varepsilon R
 \leq\Phi_s(w)\leq
 6B_0\varepsilon R
 \qquad(w\in\mathbb R^M).
\end{equation}
In particular, every shifted forcing is strictly positive. The
gate $G_s$ equals one on the closed ball
$\overline B(x_s^\star,2\rho)$ and vanishes, together with all of its
derivatives, outside the open ball $B(x_s^\star,3\rho)$. 


\begin{figure}[ht]
\centering
\begin{tikzpicture}[
  x=1cm,
  y=1cm,
  every node/.style={font=\footnotesize},
  inner boundary/.style={draw=black!75,line width=0.75pt},
  outer boundary/.style={draw=black!75,line width=0.75pt},
  leader/.style={-{Stealth[length=1.7mm]},black!65,line width=0.6pt},
  region name/.style={font=\footnotesize,align=center},
  key/.style={font=\footnotesize,align=left,inner sep=0pt},
]
  \coordinate (clip) at (-3.65,0.65);
  \coordinate (gate) at ( 3.65,0.65);

  \node[font=\small] at (-3.65,3.18)
    {\textup{(a) Global clipping scale}};
  \node[font=\small] at (3.65,3.18)
    {\textup{(b) Gate scale (magnified)}};
  \node at (3.65,2.89)
    {$r_s(w):=\|w-x_s^\star\|_2$};

  \fill[black!7] (clip) circle[radius=1.95];
  \filldraw[fill=white,inner boundary]
    (clip) circle[radius=1.20];
  \draw[outer boundary]
    (clip) circle[radius=1.95];

  \fill (clip) circle[radius=1.25pt];
  \node[anchor=north west,inner sep=1.5pt]
    at ($(clip)+(0.04,-0.04)$) {$0$};

  \node[region name]
    at ($(clip)+(0,-0.52)$) {unclipped};
  \node[region name]
    at ($(clip)+(0,-1.53)$) {transition};

  \node[anchor=south west,inner sep=1pt]
    at ($(clip)+(42:1.20)$) {$R$};
  \node[anchor=south west,inner sep=1pt]
    at ($(clip)+(42:1.95)$) {$2R$};

  \coordinate (smallgate)
    at ($(clip)+(-0.38,0.42)$);

  \filldraw[
    fill=black!22,
    draw=black!75,
    line width=0.65pt
  ]
    (smallgate) circle[radius=0.22];

  \fill (smallgate) circle[radius=0.9pt];

  \node[anchor=east,align=right]
    (smallgate-label) at (-5.82,1.95)
    {$\overline B(x_s^\star,3\rho)$\\[-1pt]
     \textup{gate support}};

  \draw[leader]
    (smallgate-label.east)
    --($(smallgate)+(-0.18,0.13)$);

  \draw[leader]
    (-1.38,0.65)--(1.38,0.65)
    node[
      midway,
      above=2pt,
      fill=white,
      inner sep=1.5pt
    ] {magnify};

  \fill[black!7] (gate) circle[radius=1.95];
  \filldraw[fill=white,inner boundary]
    (gate) circle[radius=1.30];
  \draw[outer boundary]
    (gate) circle[radius=1.95];

  \fill (gate) circle[radius=1.25pt];
  \node[anchor=north west,inner sep=1.5pt]
    at ($(gate)+(0.04,-0.04)$)
    {$x_s^\star$};

  \node[region name]
    at ($(gate)+(0,-0.55)$)
    {gate fully on};
  \node[region name]
    at ($(gate)+(0,-1.53)$)
    {gate transition};

  \node[anchor=south west,inner sep=1pt]
    at ($(gate)+(42:1.30)$) {$2\rho$};
  \node[anchor=south west,inner sep=1pt]
    at ($(gate)+(42:1.95)$) {$3\rho$};




\end{tikzpicture}
\caption{A two-dimensional section of the set geometry for one phase block. Panel~(a) shows the global clipping $\Phi_s$'s scale and the
gate support inside the no-clipping ball. Panel~(b) magnifies the gate $G_s$'s scale.}
\label{fig:drhs-local-geometry}
\end{figure}

\begin{remark}[\textbf{Information hiding}]
The ungated term $-\Phi_1(z_1)$ initiates the first phase.
For $s\geq2$, the coupling
$-G_{s-1}(z_{s-1})\Phi_s(z_s)$ activates the forcing in
phase $s$ only when the preceding block approaches its target.
The quadratic terms remain present throughout, allowing
earlier queries to collect information about $A_s$.

Before $z_{s-1}$ enters $B(x_{s-1}^\star,3\rho)$,
both $G_{s-1}$ and $\nabla G_{s-1}$ vanish.
Hence the incoming coupling containing $\Phi_s$ contributes
nothing to either the function value or the gradient.
The vector $b_s$ also determines the center of the outgoing
gate $G_s$ when $s<N$.
This second information channel likewise vanishes while
$z_s\notin B(x_s^\star,3\rho)$.

Thus, before the preceding target is approached, these
couplings can reveal information about $b_s$ only through
a premature hit on the current target neighborhood.
The information argument below shows that such hits are
unlikely and that prior exploration of $A_s$ cannot eliminate
the query cost after $b_s$ is revealed.
\end{remark}

Differentiating
Eq.~\eqref{eq:drhs-hard-function}, the $s$th block of the
gradient is
\begin{equation}
\label{eq:drhs-hard-gradient}
\begin{aligned}
 \nabla_sF_{\mathbf A,\mathbf b}(z)
 ={}&A_sz_s
 -\begin{cases}
    \nabla\Phi_1(z_1),&s=1,\\
    G_{s-1}(z_{s-1})\nabla\Phi_s(z_s),&s\geq2,
  \end{cases}\\
 &-\begin{cases}
    \Phi_{s+1}(z_{s+1})\nabla G_s(z_s),&s<N,\\
    0,&s=N.
  \end{cases}
\end{aligned}
\end{equation}

\begin{remark}[\textbf{Stationarity requires all phases}]
Let $s$ be the first phase with
$\|z_s-x_s^\star\|_2\geq2\rho$.
The incoming forcing in Eq.~\eqref{eq:drhs-hard-gradient}
is then fully active.
When $\|z_s\|_2\leq R$, its first two terms combine into
$A_s(z_s-x_s^\star)$.
The last term is a nonnegative multiple of
$z_s-x_s^\star$, since $\eta'\leq0$ and $\Phi_{s+1}>0$,
so it cannot cancel the outward radial component.
Hence
$\|\nabla_sF_{\mathbf A,\mathbf b}(z)\|_2
\geq\mu\|z_s-x_s^\star\|_2\geq2\varepsilon$.
When $\|z_s\|_2>R$, the last term vanishes and $A_sz_s$
dominates the bounded forcing by the choice of $R$.
Thus every $\varepsilon$-stationary point must approach
all targets within distance $2\rho$;
see Lemma~\ref{lem:drhs-certification}.
\end{remark}

\subsubsection{Spectral Construction of the Hard Distribution}
Every $\varepsilon$-stationary point must approach each target
$x_s^\star=A_s^{-1}b_s$, so each phase requires approximately
solving the linear system $A_sx=b_s$.
To make this task difficult, we choose the spectrum of $A_s$
so that low-degree polynomials cannot accurately approximate
the inverse map $\lambda\mapsto\frac{1}{\lambda}$.

Set \(K:=t+2\).  There are eigenvalues
\(\mu\leq\lambda_1<\cdots<\lambda_K\leq\frac{L_1}{2}\) and positive integers
\(n_1,\ldots,n_K\), summing to \(M\), such that
\begin{equation}
\label{eq:drhs-phase-spectrum-property}
 n_i\geq\frac{M}{2\kappa}\quad(i\in[K]),
 \qquad
 \sum_{i=1}^K\frac{n_i}{M}
 \left|\frac1{\lambda_i}-q(\lambda_i)\right|^2
 \geq\frac{3}{512\mu^2}
\end{equation}
for every real polynomial \(q\) of degree at most \(t\)
(Lemma~\ref{lem:drhs-spectrum}).
Fix once and for all one such spectral witness and define
\begin{equation}
\label{eq:drhs-phase-spectrum}
 D:=\operatorname{diag}
 \bigl(\lambda_1I_{n_1},\ldots,\lambda_KI_{n_K}\bigr).
\end{equation}
This choice depends only on the calibrated problem parameters and is made
before the algorithm is chosen.

Define the unconditioned one-phase law \(\mathsf P_0\) by drawing
\(U\) Haar-uniformly from \(O(M)\), drawing
\(\xi\) independently and uniformly from the unit sphere in
\(\mathbb R^M\), and setting
\begin{equation}
\label{eq:drhs-flat-base-law}
 A:=U^\top D U,
 \qquad
 b:=B_0\varepsilon\xi.
\end{equation}
Let
\begin{equation}
\label{eq:drhs-phase-conditioning-event}
 \mathcal G
 :=
 \left\{
   16\rho\leq\|A^{-1}b\|_2\leq B_0\rho
 \right\}.
\end{equation}
This event already has positive \(\mathsf P_0\)-probability.  Indeed,
putting \(q=0\) in
Eq.~\eqref{eq:drhs-phase-spectrum-property} gives
\[
 \sum_{i=1}^K\frac{n_i}{M}\lambda_i^{-2}
 \geq\frac{3}{512\mu^2}
 >\frac1{1024\mu^2}.
\]
Hence some \(i_0\) satisfies
\(\lambda_{i_0}^{-1}>1/(32\mu)\).  Conditional on any \(U\), every unit
vector \(\xi\) in a sufficiently small spherical neighborhood of the
corresponding eigenspace satisfies
\(\|A^{-1}b\|_2> B_0\varepsilon/(32\mu)=16\rho\), and that neighborhood
has positive surface measure.  On the other hand,
\(\|A^{-1}b\|_2\leq\|A^{-1}\|_{\mathrm{op}}\|b\|_2\leq B_0\rho\)
deterministically.  Thus \(\mathsf P_0(\mathcal G)>0\).

We may therefore define the algorithm-independent conditioned phase law
\begin{equation}
\label{eq:drhs-phase-law}
 \mathsf P_{\mathrm{ph}}(E)
 :=
 \mathsf P_0(E\mid\mathcal G)
 =
 \frac{\mathsf P_0(E\cap\mathcal G)}
      {\mathsf P_0(\mathcal G)}
 \qquad\text{for every Borel set }E.
\end{equation}
The stronger normalization bound
\(\mathsf P_0(\mathcal G)\geq7/8\), as well as the
information-theoretic hardness of this same fixed law, holds
(Lemma~\ref{lem:drhs-one-phase}).

Define the product phase law and then draw the phase data according to
\begin{equation}
\label{eq:drhs-chain-law}
 \mathsf P_{\mathrm{chain}}
 :=
 \mathsf P_{\mathrm{ph}}^{\otimes N},
 \qquad
 (A_s,b_s)_{s=1}^N
 \sim\mathsf P_{\mathrm{chain}}.
\end{equation}
Every realization in the support of
\(\mathsf P_{\mathrm{chain}}\) satisfies the admissibility conditions
Eq.~\eqref{eq:drhs-admissible-data}. Hence the deterministic chain above is
defined on the entire support of this product law.

For \(d\geq d_0\), let
\(\mathfrak F_d((A_s,b_s)_{s=1}^N):=
F_{\mathbf A,\mathbf b}^{(d)}\), and define the hard distribution on
functions by the pushforward $ \mathcal D_d
 :=
 (\mathfrak F_d)_\#
 \mathsf P_{\mathrm{chain}}.$
Thus the randomized hard instance used in the lower bound is
\(F\sim\mathcal D_d\).  The notation
\(F_{\mathbf A,\mathbf b}^{(d)}\) refers to one deterministic realization
of this distribution.  We suppress the dependence of \(\mathcal D_d\) on
\((p,\Delta,L_1,L_p,\varepsilon)\); as these parameters and \(d\) vary,
the resulting collection is the hard distribution family used in the
randomized lower bound.

\begin{remark}[{\bf Strengthened oracle}]
\label{remark:str_oracle}
For the lower-bound proof, we additionally reveal \(b_1\) before the first candidate is
selected and, for \(s\geq2\), reveal \(b_s\) after the first candidate
that outer-hits phase \(s-1\) has been frozen but before its
value-and-gradient response is returned.  This only strengthens the
oracle, so a lower bound under this disclosure protocol also applies to
the original model.
\end{remark}

\subsection{Key Properties and Proof of the Randomized Lower Bound}
We now present four properties of hard distribution that together prove
Theorem~\ref{thm:rand-fo-lower-bound}.
Before each statement, we will explain its role in the argument
and the main idea behind its proof.

\paragraph{Why advancing one disclosed phase is hard (Section~\ref{subsec:drhs-one-phase}).}
At the time \(b_s\) is disclosed, the algorithm may already have used its
previous first-order queries to collect arbitrary adaptive products with
\(A_s\).  We therefore strengthen the algorithm by allowing an arbitrary
exact
matrix-vector products stage before disclosure.  After disclosure,
outer-hitting phase \(s\) requires naming a vector within \(3\rho\) of
\(x_s^\star=A_s^{-1}b_s\).  The replicated hard spectrum prevents
low-degree polynomial approximation of
\(\lambda\mapsto\lambda^{-1}\), while the delayed-input Krylov
representation shows that the component of every vector produced with
fewer than \(t\) post-disclosure products is governed by such a low-degree
polynomial in the fresh direction.  The argument permits an adaptive
disclosure time and includes candidates named before disclosure, the
candidate frozen at disclosure, and a terminal output that is never queried.

To state the result, suppress the phase index and write
$(A,b)\sim\mathsf P_{\mathrm{ph}}$, with target
$x^\star:=A^{-1}b\in\mathbb R^M$.
Recall that $M$ is the phase dimension,
$\rho=\frac{\varepsilon}{\mu}$, and
$\kappa=\frac{L_1}{2\mu}$.
Let $c_0,\kappa_0$ be the constants from
Lemma~\ref{lem:drhs-spectrum}, and assume
$\kappa\geq\kappa_0$ and
$t:=\lfloor c_0\sqrt{\kappa}\rfloor\geq1$.

Consider a deterministic procedure that accesses $A$
through exact queries $v\mapsto Av$.
Let the integer $Q\geq t$ bound the number of queries
before $b$ is revealed. Afterward, the procedure may
make fewer than $t$ further queries.
The disclosure time may be chosen adaptively using only
the pre-disclosure transcript.

A candidate is a vector $y\in\mathbb R^M$ recorded by
the procedure as a possible approximation to $x^\star$.
It may be a query vector or an unqueried output.
Let the integer $\mathsf L\geq1$ bound the total number
of candidates, and let $\mathcal Y$ denote their set.
Candidates may be recorded at any stage, and every query
and candidate must be a Borel-measurable function of the
information available when selected.
Fix a probability tolerance $\delta\in(0,\frac{1}{4})$.
The following lemma bounds the probability that any
recorded candidate approaches the target.

\begin{lemma}[\textbf{One-Phase Hardness with Delayed Disclosure}]
\label{lem:drhs-one-phase}
There is a numerical constant $C_0>0$ such that, if the dimension of each phase block
\begin{equation}
\label{eq:drhs-one-phase-dimension}
 M\geq C_0\left(
 Q^2+\kappa\left[
 Q+\log\frac{(Q+2)\mathsf L}{\delta}
 \right]\right),
\end{equation}
every procedure described above satisfies
\begin{equation}
\label{eq:drhs-one-phase-conclusion}
 \Pr_{(A,b)\sim\mathsf P_{\mathrm{ph}}}
 \left(
 \exists\,y\in\mathcal Y:
 \|y-x^\star\|_2<3\rho
 \right)\leq\delta.
\end{equation}
Moreover, $\mathsf P_0(\mathcal G)\geq\frac{7}{8}$,
and $\mathsf P_{\mathrm{ph}}$ is well defined and supported
on admissible pairs satisfying
Eq.~\eqref{eq:drhs-admissible-data}.
\end{lemma}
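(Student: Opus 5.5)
The plan is to prove the probability bound \eqref{eq:drhs-one-phase-conclusion} under the unconditioned law \(\mathsf P_0\) first, and then transfer it to \(\mathsf P_{\mathrm{ph}}\). Since \(\mathsf P_{\mathrm{ph}}(E)\le \mathsf P_0(E)/\mathsf P_0(\mathcal G)\), it suffices to show two things: \(\mathsf P_0(\mathcal G)\ge \frac78\), and that the \(\mathsf P_0\)-probability of a \(3\rho\)-hit is at most \(\frac{7}{8}\delta\). The bound \(\mathsf P_0(\mathcal G)\ge\frac78\) is itself a concentration statement. Write \(\|A^{-1}b\|_2^2=B_0^2\varepsilon^2\sum_i \lambda_i^{-2}\|\Pi_i U\xi\|_2^2\), where \(\Pi_i\) projects onto the \(i\)th eigenblock. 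Each weight \(\|\Pi_iU\xi\|_2^2\) concentrates around \(n_i/M\ge 1/(2\kappa)\) once \(M\gtrsim \kappa\log K\). Taking \(q=0\) in \eqref{eq:drhs-phase-spectrum-property} then gives \(\|A^{-1}b\|_2\ge B_0\varepsilon\sqrt{3/(1024)}/\mu\ge16\rho\) with probability at least \(\frac78\). The upper bound \(B_0\rho\) holds deterministically. Admissibility of the support of \(\mathsf P_{\mathrm{ph}}\) is then immediate.

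\textbf{Decoupling the pre-disclosure stage.} Under \(\mathsf P_0\), \(\xi\) is independent of \(U\) and hence of the whole pre-disclosure transcript \((v_k,Av_k)_{k\le Q}\), including the adaptively chosen disclosure time. I will use the rotational invariance of the Haar measure. Let \(S\) be the span of the pre-disclosure query vectors together with their images under \(A\), so \(\dim S\le 2Q\). Conditional on the transcript, the law of \(U\) is invariant under rotations fixing \(S\) pointwise, and \(\xi\) remains uniform on the sphere. Because \(M\ge C_0Q^2\), the vector \(\xi\) is nearly orthogonal to the \(\mathcal O(Q)\)-dimensional space generated by \(S\) and its few images under \(A\). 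More precisely, I will bound \(\|P_{S_+}\xi\|_2\lesssim \sqrt{Q/M}\) together with the cross terms, using the same \(\log(\cdot/\delta)\) budget.

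\textbf{Post-disclosure Krylov structure.} Next I show by induction that after \(k<t\) post-disclosure products, every query and candidate lies in
\[
\mathcal W_k:=S_k+\operatorname{span}\{b,Ab,\dots,A^{k}b\},
\]
where \(S_k\) is obtained from \(S\) by at most \(k\) further applications of \(A\), so \(\dim S_k\le 2Q+k\). The induction step is that a new query is a combination of known vectors, and applying \(A\) to it increases the Krylov degree in \(b\) by one or enlarges \(S_k\) by one dimension. Consequently every candidate has the form \(y=q(A)b+s\), with \(\deg q\le t-1\) and \(s\) in a subspace of dimension \(\mathcal O(Q+t)\) that is essentially independent of the fresh direction. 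Here \(q\) and \(s\) may depend on everything observed. Candidates named before disclosure correspond to \(q=0\), and the terminal output is covered in the same way.

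\textbf{Lower-bounding the distance.} This is the main obstacle. I must show that for every polynomial \(q\) of degree at most \(t\) and every \(s\) in the small subspace, \(\|A^{-1}b-q(A)b-s\|_2\ge3\rho\), simultaneously over all candidates. I will first project away the small subspace; the near-orthogonality from the decoupling step makes the loss negligible. The remaining quantity is at least
\[
\min_{\deg q\le t}\|(A^{-1}-q(A))\tilde b\|_2^2=B_0^2\varepsilon^2\sum_i|\lambda_i^{-1}-q(\lambda_i)|^2 w_i ,
\]
where \(\tilde b\) is the fresh component of \(b\) and \(w_i\) are its eigenblock weights. The minimum over \(q\) is a fixed deterministic function of the weights \((w_i)_{i\le K}\). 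So I only need \(w_i\ge \frac12\, n_i/M\) for all \(K=t+2\) blocks simultaneously. Each \(w_i\) is a ratio of \(\chi^2\) variables with \(n_i\ge M/(2\kappa)\) degrees of freedom. A Laurent--Massart bound with a union bound over the \(K\) blocks and the \(\mathsf L\) candidates succeeds when \(M\gtrsim\kappa\,[Q+\log((Q+2)\mathsf L/\delta)]\), which is exactly where the \(\kappa\) term in \eqref{eq:drhs-one-phase-dimension} comes from. Plugging into \eqref{eq:drhs-phase-spectrum-property} then yields a distance of at least \(B_0\varepsilon\sqrt{3/1024}/\mu\approx 27\rho>3\rho\) outside an event of probability at most \(\frac78\delta\). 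The delicate point is to make the conditioning on the adaptively chosen \(S\) rigorous, via the conditional Haar invariance, before applying concentration.
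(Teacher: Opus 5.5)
Your outer structure agrees with the paper's. You work under $\mathsf P_0$, obtain $\mathsf P_0(\mathcal G)\ge\frac78$ from block-weight concentration with $q\equiv0$, and then divide by $\mathsf P_0(\mathcal G)$. That part is fine.

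\textbf{The post-disclosure Krylov step.} This is where the gap lies. Your induction assumes that each new query is a linear combination of already known vectors. That is a span restriction the lemma does not grant. An arbitrary Borel procedure may, after seeing $b$ and the responses, query entirely new directions that depend on $b$. The subspace containing your $s$ is then itself a function of $b$. The claim that it is ``essentially independent of the fresh direction'' is exactly what the lemma has to prove. Your decoupling step cannot supply it, because it uses conditional Haar invariance only before disclosure and only bounds $\|P_{S_+}\xi\|_2$. What is missing is conditional invariance applied sequentially at every query, before and after disclosure. The paper packages this as the adaptive block-Krylov lifting (Lemma~\ref{lem:drhs-adaptive-gaussian-lifting}), extended in Lemma~\ref{lem:drhs-delayed-krylov}. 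There the fresh part of $\xi$ enters as its own Gaussian column $h_{j_\star}$ at the disclosure slot. Every other innovation, before or after disclosure, becomes an independent Gaussian $h_j$ with $j\ne j_\star$. Moreover, $h_{j_\star}$ reaches the output only through $\widehat A^a$ with $a\le t$.

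\textbf{Projecting away the small subspace.} Even granting independence, this step does not yield $\sum_i|\lambda_i^{-1}-q(\lambda_i)|^2w_i$. The projection does not commute with the spectral projectors of $A$. The loss has to be compared with each block component $\Pi_i\tilde b$, whose norm is only about $\sqrt{n_i/M}$. The relevant relative overlap is therefore of order $\sqrt{Q/n_i}\approx\sqrt{\kappa Q/M}$, not $\sqrt{Q/M}$, and controlling it needs a per-block argument.

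The paper avoids approximation altogether. It lets $R_i$ project $E_i$ onto the orthogonal complement of $\{P_ih_j:j\ne j_\star\}$. This removes at most $m-1$ directions, since $P_i\widehat A^ah_j=\lambda_i^aP_ih_j$. The projector $R_i$ then has three properties:
\begin{itemize}
\item it annihilates $\mathcal K_{\mathrm{pre}}$ and every output term except the $h_{j_\star}$ term, exactly;
\item it satisfies $R_i\widehat A^{-1}=\lambda_i^{-1}R_i$;
\item it is independent of $h_{j_\star}$.
\end{itemize}
Hence $\|R_ih_{j_\star}\|_2^2$ is $\chi^2$ with at least $n_i/2$ degrees of freedom once $n_i\ge2(m-1)$. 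That requirement is a source of the $\kappa Q$ term in \eqref{eq:drhs-one-phase-dimension}, alongside the union bound you mention.

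\textbf{Multiple candidates and adaptive disclosure.} Candidates can be named at different stages, and the disclosure index is adaptive. The paper handles both by a union bound over $(Q+1)\mathsf L$ fixed-time, single-output simulators, each controlled by Lemma~\ref{lem:drhs-fixed-time}. Your sketch would need the same reduction.
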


\paragraph{Why the right-hand sides are disclosed sequentially (Section~\ref{subsec:proof-drhs-sequential}).}
We analyze the chain through a strengthened oracle (Remark~\ref{remark:str_oracle}).  It reveals \(b_1\)
before the first candidate is selected.  For each \(s\geq2\), it reveals
\(b_s\) only after the algorithm has selected and frozen the first candidate
that outer-hits phase \(s-1\), and before returning the value and gradient
at that candidate.  Thus the response at the disclosure candidate may
depend on \(b_s\), but the candidate itself, including its \(s\)th block,
is chosen without access to \(b_s\).  If one candidate triggers several
consecutive disclosures, all of them are made in increasing phase order
after that same candidate has been frozen.

The exact dead zones make this protocol compatible with the complete
value-and-gradient transcript.  Before \(b_s\) is disclosed, the
predecessor gate and all of its derivatives vanish, so the entire incoming
link containing \(b_s\) contributes zero to both the function value and
the gradient.  Before phase \(s\) outer-hits, the outgoing gate \(G_s\)
and all of its derivatives also vanish, hiding the dependence of its
center \(x_s^\star=A_s^{-1}b_s\) on \(b_s\).  Consequently, after fixing
the other phases, the execution up to the selection of the first
phase-\(s\) outer-hitting candidate can be simulated using only products
with \(A_s\) before \(b_s\) is disclosed.  At the first such hit, the
response may contain new information about \(b_s\), but the successful
candidate has already been frozen and named.

Applying Lemma~\ref{lem:drhs-one-phase} to each phase with
$Q=\mathsf L=T$ and $\delta=\frac{1}{12N}$, followed by
a union bound over the $N$ phases, yields the following
sequential accumulation bound.

\begin{lemma}[\bf Sequential accumulation]
\label{lem:drhs-sequential}
Let
\((\mathbf A,\mathbf b)\sim
\mathsf P_{\mathrm{chain}}=\mathsf P_{\mathrm{ph}}^{\otimes N}\),
and let \(C_0\) be the constant furnished by
Lemma~\ref{lem:drhs-one-phase}.  Set $ Q=T,
 \mathsf L=T,
 \delta=\frac1{12N}.$
If 
\[ M\geq C_0\left\{
   T^2+\kappa\left[
     T+\log\bigl(12NT(T+2)\bigr)
   \right]
 \right\},\]
then every deterministic exact value-and-gradient algorithm making at most
\(T-1\) oracle calls and returning a
point
\(z^{\mathrm{out}}\) satisfies
\begin{equation}
\label{eq:drhs-sequential-conclusion}
 \Pr_{(\mathbf A,\mathbf b)\sim\mathsf P_{\mathrm{chain}}}
 \left(
   \|z_s^{\mathrm{out}}-x_s^\star\|_2<3\rho
   \text{ for every }s\in[N]
 \right)
 \leq\frac1{12}.
\end{equation}
The output need not have been queried.  The same assertion holds for the
ambient extension \(F_{\mathbf A,\mathbf b}^{(d)}\).
\end{lemma}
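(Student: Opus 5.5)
The plan is a phase-by-phase reduction to Lemma~\ref{lem:drhs-one-phase} under the strengthened oracle of Remark~\ref{remark:str_oracle}, followed by a union bound. Fix a deterministic algorithm making at most $T-1$ oracle calls. Its candidate set consists of the at most $T-1$ query points together with the output, so at most $T=\mathsf L$ candidates. Let $\tau_s$ be the index of the first candidate whose $s$th block lies in $B(x_s^\star,3\rho)$; call this an outer hit of phase $s$. If the output satisfies $\|z^{\mathrm{out}}_s-x_s^\star\|_2<3\rho$ for every $s$, then every phase is outer-hit at some candidate. Since the $\tau_s$ are at most $T$ in number and there are $N$ phases while $T=Nt$, pigeonhole gives some phase $s$ with $\tau_s-\tau_{s-1}<t$ (set $\tau_0:=0$, identifying disclosure of $b_1$ with time $0$). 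Between the disclosure of $b_s$, which occurs at $\tau_{s-1}$, and the hit at $\tau_s$, fewer than $t$ queries are made. It therefore suffices to show that, for each fixed $s$, the event $E_s$ that ``phase $s$ is outer-hit fewer than $t$ queries after $b_s$ is disclosed'' has probability at most $\frac{1}{12N}$. A union bound over the $N$ phases then yields the claimed $\frac1{12}$.

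To bound $\Pr(E_s)$, condition on all phase data $(A_r,b_r)_{r\ne s}$, which are independent of $(A_s,b_s)$ under the product law $\mathsf P_{\mathrm{chain}}$. I then build a simulator of the type allowed in Lemma~\ref{lem:drhs-one-phase}, which accesses $A_s$ only through products $v\mapsto A_sv$ and learns $b_s$ at an adaptive time. The key step is to verify, using the gradient formula~\eqref{eq:drhs-hard-gradient}, that every oracle response before the first outer hit of phase $s$ is computable from the known data and $A_sz_s$. Before $b_s$ is disclosed, $G_{s-1}$ and all its derivatives vanish at the queried $z_{s-1}$, so $\Phi_s$ and $b_s$ do not enter. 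Before phase $s$ is hit, $G_s(z_s)$ and $\nabla G_s(z_s)$ vanish, so the center $x_s^\star$ does not enter. After disclosure, $\psi_s$ and $\nabla\Phi_s$ are explicit in $b_s$ and $z_s$. Each oracle call therefore costs one product with $A_s$, plus one product to evaluate the quadratic $\frac12 z_s^\top A_sz_s$ through $A_sz_s$. This gives at most $T=Q$ pre-disclosure products. The candidates recorded are the $s$th blocks of all queried points and of the output, all Borel functions of the available information. The disclosure time is a stopping time for the pre-disclosure transcript, because whether phase $s-1$ is hit depends only on the other phases' data and the candidate, which was frozen before $b_s$ was revealed. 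The ambient coordinate $u$ contributes $\mu u$, which is known, so the same argument covers $F^{(d)}_{\mathbf A,\mathbf b}$.

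On $E_s$ the simulator makes fewer than $t$ post-disclosure products before some candidate lands within $3\rho$ of $x_s^\star$. I truncate the simulator at the first hit, or after $t-1$ post-disclosure products. Lemma~\ref{lem:drhs-one-phase} then applies with $Q=\mathsf L=T$ and $\delta=\frac1{12N}$, and its dimension hypothesis is exactly the assumed bound on $M$, since $\log\frac{(T+2)T}{\delta}=\log(12NT(T+2))$. Integrating over the conditioned data gives $\Pr(E_s)\le\frac{1}{12N}$.

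The main obstacle is making the simulation exact when a single candidate triggers several disclosures at once, and handling the response at the disclosure candidate itself, which may depend on $b_s$. To deal with this, I would order these disclosures by phase as in Remark~\ref{remark:str_oracle}. I would also count the hitting candidate as frozen before its response is returned, so that the candidate the lemma evaluates is always chosen without access to the newly revealed information.
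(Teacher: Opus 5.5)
Your architecture is the paper's. It uses telescoping on the first outer-hit indices $\tau_s$, so that hitting every phase forces some $s$ with $\tau_s<\tau_{s-1}+t$. It then builds a per-phase simulator after conditioning on $(A_r,b_r)_{r\ne s}$, applies Lemma~\ref{lem:drhs-one-phase} with $Q=\mathsf L=T$ and $\delta=\frac1{12N}$, and finishes with a union bound.

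The step that fails as written is the simulator's stopping rule. You cannot ``truncate at the first hit'' of phase $s$, because testing $\|z_s-x_s^\star\|_2<3\rho$ requires $x_s^\star=A_s^{-1}b_s$, which the simulator never has. For the same reason it cannot compute the true responses once $G_s$ and $\nabla G_s$ become nonzero. Nor can it trigger the strengthened-oracle disclosure of $b_{s+1}$. Since it cannot detect the hit, it must supply a computable response at every step up to its own stopping time.

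The repair, which is what the paper does, has three parts. Delete the entire outgoing link $-G_s\Phi_{s+1}$, including its value and both gradient contributions. Suppress the disclosure of $b_{s+1}$. Stop only by the computable rule after candidate $\ell_s=\min\{\sigma_s+t-1,T-1\}$, where $\sigma_s$ is the simulated disclosure index of $b_s$. You then need an induction on the returned information showing that simulated and true candidates coincide for all indices $j\le\min\{\tau_s,\ell_s\}$, and a check that $\tau_s<\tau_{s-1}+t$ implies $\tau_s\le\ell_s$.

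This event includes premature hits $\tau_s<\tau_{s-1}$, where the coupling gives $\sigma_s>\tau_s$ and the hit is named before $b_s$ is disclosed. Your $E_s$ must be defined to include them, since the telescoping only yields $\tau_s-\tau_{s-1}<t$ with a possibly negative difference.

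Finally, each oracle call must cost exactly one product. The value $\frac12\langle z_s,A_sz_s\rangle$ is read off the same vector $A_sz_s$ as the gradient block. Two products per call would allow up to $2(t-1)$ post-disclosure products, exceeding the lemma's budget of fewer than $t$.
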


\paragraph{Why stationarity requires full chain completion~(Section~\ref{subsec:drhs-certification-proof}).}
It remains to connect phase completion to approximate stationarity.  
%
%
Suppose that a point fails to inner-complete at
least one phase, and let \(s\) be the smallest index such that
\(\|z_s-x_s^\star\|_2\geq2\rho\).  If \(s=1\), its incoming forcing is
ungated.  If \(s\geq2\), the minimality of \(s\) gives
\(\|z_{s-1}-x_{s-1}^\star\|_2<2\rho\), and hence
\(G_{s-1}(z_{s-1})=1\).  Thus phase \(s\) receives its full incoming
forcing in either case. 

\noindent \textbf{Case 1 (\(\|z_s\|_2\leq R\)):} Set \(e_s:=z_s-x_s^\star\). 
 Clipping is inactive, so
\(\nabla\Phi_s(z_s)=b_s=A_sx_s^\star\).  Therefore
\[
 \nabla_sF_{\mathbf A,\mathbf b}(z)
 =
 A_se_s
 -\mathbf 1_{\{s<N\}}
  \Phi_{s+1}(z_{s+1})\nabla G_s(z_s).
\]
Since
\(\nabla G_s(z_s)
 =2\rho^{-2}\eta'(\|e_s\|_2^2/\rho^2)e_s\),
the inequalities \(\eta'\leq0\) and \(\Phi_{s+1}>0\) imply
\[
 \bigl\langle e_s,\nabla_sF_{\mathbf A,\mathbf b}(z)\bigr\rangle
 \geq e_s^\top A_se_s
 \geq\mu\|e_s\|_2^2.
\]
Cauchy--Schwarz and \(\|e_s\|_2\geq2\rho\) then give
\[
 \|\nabla_sF_{\mathbf A,\mathbf b}(z)\|_2
 \geq\mu\|e_s\|_2
 \geq2\mu\rho
 =2\varepsilon.
\]

\noindent \textbf{Case 1 (\(\|z_s\|_2>R\)):} we have
\[
 \|z_s-x_s^\star\|_2
 \geq\|z_s\|_2-\|x_s^\star\|_2
 >R-B_0\rho
 >3\rho.
\]
The outgoing gate is therefore in its dead zone, so
\(\nabla G_s(z_s)=0\).  Because the incoming forcing is fully active,
\(\nabla_sF_{\mathbf A,\mathbf b}(z)
 =A_sz_s-\nabla\Phi_s(z_s)\).  Using
\(\|\nabla\Phi_s(z_s)\|_2\leq a_\chi B_0\varepsilon\) and
\(\mu R=C_R\varepsilon\), we obtain
\[
 \|\nabla_sF_{\mathbf A,\mathbf b}(z)\|_2
 \geq\mu\|z_s\|_2-a_\chi B_0\varepsilon
 >
 (C_R-a_\chi B_0)\varepsilon
 =(B_0+4)\varepsilon
 >\varepsilon.
\]
Thus failure to inner-complete even one phase forces one hard-coordinate
gradient block to have norm greater than \(\varepsilon\).  Finally,
\(\nabla F_{\mathbf A,\mathbf b}^{(d)}(z,u)
 =(\nabla F_{\mathbf A,\mathbf b}(z),\mu u)\), so the ambient coordinates
cannot decrease the gradient norm.  Taking the contrapositive proves the
following lemma.

\begin{lemma}[\bf Stationarity certification]
\label{lem:drhs-certification}
For every \(M,N\geq1\), every admissible phase realization, every
\(d\geq NM\), and every \((z,u)\in\mathbb R^d\),
\begin{equation}
\label{eq:drhs-certification-stationarity}
 \bigl\|\nabla F_{\mathbf A,\mathbf b}^{(d)}(z,u)\bigr\|_2
 \leq\varepsilon
 \quad\Longrightarrow\quad
 \|z_s-x_s^\star\|_2<2\rho
 \quad\text{for every }s\in[N].
\end{equation}
In particular, every \(\varepsilon\)-stationary point lies in the inner,
and hence also the outer, ball of every phase.
\end{lemma}

\paragraph{Why the chain is regular~(Section~\ref{subsec:drhs-regularity-proof}).}
The local structure of the chain also gives the required smoothness and
initial-gap bounds.  For every \(j\geq1\), a \(j\)th derivative of a gate is
of order \(O_j(\rho^{-j})\), while a \(j\)th derivative of a shifted forcing
term is of order \(O_j(\varepsilon\rho^{1-j})\).  The Leibniz rule therefore
shows that every \(r\)th derivative of a coupling term
\(G_s\Phi_{s+1}\) is of order
\(O_r(\varepsilon\rho^{1-r})\).  Each coupling term involves only two
adjacent blocks, so a blockwise generalized H\"older inequality combines
these local estimates with a constant independent of \(N\).  Finally, the
uniformly positive definite quadratic terms, together with the bounded
forcing terms, make the full function coercive.  The same bounds show that
the initial gap is of order \(N\varepsilon^2/\mu\).  The following lemma
records these properties precisely.

\begin{lemma}[\bf Regularity and initial gap]
\label{lem:drhs-regularity}
There exist constants \(C_{\mathrm{sm}},C_{\mathrm{gap}}<\infty\),
and
a constant \(C_q<\infty\) for each integer \(q\geq2\)
with the following property.  For every \(M,N\geq1\), every admissible phase
realization, and every \(d\geq NM\), the function
\(F_{\mathbf A,\mathbf b}^{(d)}\) is \(C^\infty\) and coercive, and
\[
\operatorname{Lip}\bigl(\nabla F_{\mathbf A,\mathbf b}^{(d)}\bigr)\leq\frac{L_1}{2}+C_{\mathrm{sm}}\mu,\qquad \operatorname{Lip}\bigl(\nabla^qF_{\mathbf A,\mathbf b}^{(d)}\bigr)\leq C_q\frac{\mu^q}{\varepsilon^{q-1}},\qquad F_{\mathbf A,\mathbf b}^{(d)}(0)-\inf_{x\in\mathbb R^d}F_{\mathbf A,\mathbf b}^{(d)}(x)\leq C_{\mathrm{gap}}N\frac{\varepsilon^2}{\mu}.
\]
These constants are independent of
\(M,N,d,\mu,\frac{L_1}{2},\varepsilon\), and the phase realization.  For the fixed
order \(p\) in the theorem, we denote the corresponding constant in the
second inequality by \(C_p\).
\end{lemma}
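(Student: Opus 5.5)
The three claims are proved separately, working blockwise in the variables $z=(z_1,\ldots,z_N)$. The ambient term $\frac\mu2\|u\|_2^2$ is added at the end. It is a quadratic with Hessian $\mu I\preceq\frac{L_1}{2}I$ and no derivatives of order $\ge3$, so it only shifts the constants in the first bound and leaves the higher-order bounds unchanged.

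\textbf{Smoothness and $C^\infty$.} Write $F=\sum_s\frac12 z_s^\top A_sz_s-\Phi_1(z_1)-\sum_{s\ge2}G_{s-1}(z_{s-1})\Phi_s(z_s)$. Each piece is a composition of $\eta$, $\chi$, norms, and linear maps. The apparent singularity of $\|w\|_2$ at $0$ is harmless because $\chi(\|w\|/R)\equiv1$ near $0$. Hence $F$ is $C^\infty$.

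\textbf{Scale bounds.} For each piece I establish
\[
\|D^j G_s\|_{\mathrm{op}}\le c_j\rho^{-j},\qquad
\|D^j\psi_s\|_{\mathrm{op}}\le c_j\varepsilon R^{1-j}\le c_j'\varepsilon\rho^{1-j}\quad (j\ge1),\qquad
|\Phi_s|\le 6B_0\varepsilon R.
\]
The gate bound comes from rescaling $w\mapsto (w-x_s^\star)/\rho$, using that $\eta$ has bounded derivatives and that the gate support is a ball of radius $3\rho$. The forcing bound comes from the same rescaling by $R=C_R\rho$ applied to $w\mapsto\chi(\|w\|/R)w$, with $\|b_s\|=B_0\varepsilon$. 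By Leibniz, every mixed derivative of total order $j$ of a coupling $G_{s-1}\Phi_s$, taken in the pair of blocks $(z_{s-1},z_s)$, is $O_j(\varepsilon\rho^{1-j})$; the order-zero contribution of $\Phi_s$ is $\varepsilon R\sim\varepsilon\rho$.

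\textbf{Summing without losing $N$.} The coupling terms overlap only between neighbours, and this is the step I expect to be the main obstacle. For a $j$-linear form $D^jF(z)[h^1,\ldots,h^j]$, the coupling part splits as a sum over $s$ of forms that depend only on $(h^i_{s-1},h^i_s)_i$. Each is bounded by $c\,\varepsilon\rho^{1-j}\prod_i\|(h^i_{s-1},h^i_s)\|$. Summing over $s$ and applying the generalized Hölder inequality $\sum_s\prod_{i=1}^j a_{i,s}\le\prod_i\|a_i\|_{\ell^j}\le\prod_i\|a_i\|_{\ell^2}$ for $j\ge2$ gives $\sum_s\prod_i\|(h^i_{s-1},h^i_s)\|\le 2^{j/2}\prod_i\|h^i\|$. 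For $j=2$ this is Cauchy--Schwarz.

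\textbf{Conversion to Lipschitz constants.} With $j=2$ the Hessian is bounded by $\frac{L_1}{2}+c\varepsilon/\rho=\frac{L_1}{2}+c\mu$, since $\rho=\varepsilon/\mu$; this gives $C_{\mathrm{sm}}$. With $j=q+1$, $\mathrm{Lip}(\nabla^qF)\le\sup\|D^{q+1}F\|\le C\varepsilon\rho^{-q}=C\mu^q\varepsilon^{1-q}$, because the quadratics have vanishing third and higher derivatives. This gives $C_q$.

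\textbf{Coercivity and initial gap.} From $\Phi_s\le 6B_0\varepsilon R$ and $G\le1$,
\[
F(z)\ge\frac\mu2\|z\|^2-6NB_0\varepsilon R,
\]
so $F$ is coercive and $\inf F\ge -6NB_0C_R\,\varepsilon^2/\mu$. At $z=0$ we have $\psi_s(0)=0$, so $F(0)\le -H$ minus nonnegative terms, giving $F(0)\le0$. Together these yield $F(0)-\inf F\le C_{\mathrm{gap}}N\varepsilon^2/\mu$, with all constants depending only on $\eta$ and $B_0$.
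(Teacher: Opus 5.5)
Your proposal is correct and follows essentially the same route as the paper. It uses scale bounds $\rho^{-j}$ and $\varepsilon\rho^{1-j}$ for the gates and clipped forcings, applies the Leibniz rule to each nearest-neighbour link, uses generalized H\"older to sum over $s$ without an $N$-dependent factor, and derives coercivity and the gap from $0\le\Phi_1+\sum_{s\ge2}G_{s-1}\Phi_s\le 6NB_0\varepsilon R$. The two differences are inessential: you apply H\"older to pair norms, which costs a factor $2^{j/2}$, and you use $F(0)\le-H\le0$ where the paper computes $F(0)=-H$ exactly.
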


These four lemmas now give the main theorem with no further structural
input.  

\begin{proof}[Proof of Theorem~\ref{thm:rand-fo-lower-bound}]
Choose \(c_p>0\) so that $ c_p\leq
 \min\left\{
   c_{\mathrm{acc},p},
   \frac{c_Nc_0\sqrt{c_{\mu,p}}}{4\sqrt2}
 \right\}.$
Then the small-accuracy assumption of the theorem implies
Eq.~\eqref{eq:drhs-calibrated-small-accuracy}, and hence the calibration above
gives \(N,t\geq1\).

We first record the routine consequences of the calibration.  Since
\(\gamma_p(\varepsilon)
 =\Theta_{p,\Delta,L_1,L_p}
 (\varepsilon^{(p-1)/p})\), the definitions of
\(N,\kappa,t,T\), and \(M\) give
\[
 N=O\bigl(\varepsilon^{-(1+1/p)}\bigr),
 \qquad
 T=O\bigl(\varepsilon^{-(3/2+1/(2p))}\bigr),
 \qquad
 M=O\bigl(\varepsilon^{-(3+1/p)}\bigr).
\]
Consequently,
\[
 d_0=NM
 =O_{p,\Delta,L_1,L_p}
   \bigl(\varepsilon^{-(4+2/p)}\bigr).
\]
Thus \(C_{p,\Delta,L_1,L_p}\) is chosen so that \(d\geq d_0\).

Now use the algorithm-independent hard distribution
\(\mathcal D_d\) .
Because \(C_{\mathrm{dim}}\geq C_0\), the choice of \(M\) in
Eq.~\eqref{eq:drhs-calibrated-dimension} satisfies the dimension hypothesis
of Lemma~\ref{lem:drhs-sequential}.  Lemma~\ref{lem:drhs-one-phase}
ensures that the phase law is supported on admissible pairs.  Moreover,
Lemma~\ref{lem:drhs-regularity}, together with
Eq.~\eqref{eq:drhs-constant-choice} and the definition of \(N\), immediately
gives
\[
 \operatorname{Lip}(\nabla F)\leq L_1,
 \qquad
 \operatorname{Lip}(D^pF)\leq L_p,
 \qquad
 F(0)-\inf F\leq\Delta.
\]
Every realization is also \(C^\infty\) and coercive.  Hence
\(\mathcal D_d\) is supported on
\(\mathcal F_{1,p}^{(d)}(\Delta,L_1,L_p)\).

Fix a deterministic value-and-gradient algorithm making at most \(T-1\)
queries, and let
\(x^{\mathrm{out}}=(z^{\mathrm{out}},u^{\mathrm{out}})\) be its output.
Lemma~\ref{lem:drhs-certification} gives
\[
 \left\{
   \|\nabla F(x^{\mathrm{out}})\|_2\leq\varepsilon
 \right\}
 \subseteq
 \bigcap_{s=1}^N
 \left\{
   \|z_s^{\mathrm{out}}-x_s^\star\|_2<2\rho
 \right\}
 \subseteq
 \bigcap_{s=1}^N
 \left\{
   \|z_s^{\mathrm{out}}-x_s^\star\|_2<3\rho
 \right\}.
\]
Lemma~\ref{lem:drhs-sequential} and the pushforward definition of
\(\mathcal D_d\) therefore imply
\begin{equation}
\label{eq:drhs-distributional-conclusion}
 \Pr_{F\sim\mathcal D_d}
 \left(
   \|\nabla F(x^{\mathrm{out}})\|_2\leq\varepsilon
 \right)
 \leq\frac1{12}.
\end{equation}

It remains to calculate the query rate.  Both quantities inside the floors
in Eq.~\eqref{eq:drhs-calibrated-depth} are at least two, so
\(\lfloor a\rfloor\geq a/2\) gives
\begin{align}
 T=Nt
 &\geq
 \frac{c_Nc_0}{4}
 \frac{\Delta\mu}{\varepsilon^2}
 \sqrt{\frac{L_1}{2\mu}}=
 \frac{c_Nc_0\sqrt{c_{\mu,p}}}{4\sqrt2}
 \Delta\sqrt{L_1\gamma_p(\varepsilon)}\,
 \varepsilon^{-2}.
\label{eq:drhs-query-rate}
\end{align}

Finally, consider a randomized algorithm with hard query budget \(T-1\).
For every fixed realization \(\omega\) of its random seed, the resulting
algorithm is deterministic, so Eq.~\eqref{eq:drhs-distributional-conclusion}
applies.  Averaging over \(\omega\) gives
\[
 \mathbb E_{F\sim\mathcal D_d}
 \Pr_\omega\left(
   \|\nabla F(x^{\mathrm{out}}(F,\omega))\|_2
   \leq\varepsilon
 \right)
 \leq\frac1{12}.
\]
Therefore, for this randomized algorithm, there is a fixed
\(f\in\mathcal F_{1,p}^{(d)}(\Delta,L_1,L_p)\) such that
\[
 \Pr_\omega\left(
   \|\nabla f(x^{\mathrm{out}}(f,\omega))\|_2
   \leq\varepsilon
 \right)
 \leq\frac1{12}<\frac23.
\]
Thus no randomized algorithm with budget \(T-1\) satisfies the uniform
\(2/3\)-success requirement, and hence
\[
 T_{\varepsilon,\mathrm{FO}}^{\mathrm{rand}}
 \bigl(\mathcal F_{1,p}^{(d)}(\Delta,L_1,L_p)\bigr)
 \geq T.
\]
Combining this with Eq.~\eqref{eq:drhs-query-rate} and the choice of \(c_p\)
proves Eq.~\eqref{eq:drhs-main-rate}.
\end{proof}

\section{Conclusions and Future Works}

We studied how higher-order smoothness can be exploited using only
function values or only gradients. In fixed dimension, our trapping
algorithms extend to H\"older smoothness, and their randomized
gradient-only implementation establishes a polynomial separation from
deterministic algorithms for $C^{1,1}$ objectives on $\mathbb R^2$.
For general dimension, deterministic trapping-or-descent algorithms
combine local boundary certificates with model reuse to attain the
accuracy exponent $\nu/(\nu-1)$ under $C^{p,\beta_p}$ smoothness,
where $p\ge2$ and $\nu=p+\beta_p$. This exponent is independent of
dimension and coincides with the classical ARp exponent when
$\beta_p=1$. Our randomized first-order lower bounds complement these
algorithms by establishing limitations in sufficiently high dimension.
Together, these results show how dimension, oracle information, and
randomization shape the benefits of higher-order smoothness. There are several interesting directions for future exploration and we list some as follows.
\paragraph{Optimal complexity in fixed dimension.}
The optimal randomized gradient-only complexity remains open even
for $C^{1,1}$ objectives on $\mathbb R^2$. Is our
$\widetilde{\mathcal O}(\varepsilon^{-4/3})$ bound optimal, or can it
be improved toward the $\mathcal O(\varepsilon^{-1})$ deterministic
function-value benchmark? More generally, it remains to determine the
optimal gradient-only rates under H\"older smoothness, both in the
unconstrained setting and on the cube. Such results would clarify the
extent to which randomization can compensate for the absence of
function values.

\paragraph{Joint dependence on dimension and accuracy.}
Our general-dimensional upper bounds have polynomial dependence on
$d$, whereas our randomized lower bounds require $d$ to grow
polynomially with $1/\varepsilon$. Determining the optimal joint
dependence on these parameters would connect the fixed-dimensional
and high-dimensional regimes. Concrete goals include improving the
dimension factors in the trapping-or-descent bounds and reducing the
dimension required by the lower-bound construction. These improvements
would help identify the dimensional regimes in which randomization
improves gradient-only complexity.


\section*{AI Disclosure}
AI tools were used to assist with literature searches, the development and verification of proof arguments, the presentation of proofs, the polishing of the exposition, and typesetting. The authors independently verified all mathematical arguments and take full responsibility for the content of the paper.

\section*{Acknowledgment}
We thank Andi Han and Chengchang Liu for helpful discussions.
TS was partially supported by JST CREST (PMJCR2015) and JSPS KAKENHI (24K02905).
This research is supported by the National Research Foundation, Singapore and the Ministry of Digital Development and Information under the AI Visiting Professorship Programme (award number AIVP-2024-004). Any opinions, findings and conclusions or recommendations expressed in this material are those of the author(s) and do not reflect the views of National Research Foundation, Singapore and the Ministry of Digital Development and Information.

\newpage
\bibliography{references}
\bibliographystyle{alpha}

\newpage

\appendix

\section{Approximation and Global Minimization}
\label{app:approximation}

We study function reconstruction from finite data
(Problem~\ref{prob:apx-function-reconstruction})
and approximate global minimization with a known additive term
(Problem~\ref{prob:apx-global-minimization}).
We establish reconstruction guarantees for exact and inexact
data, then derive the minimization guarantee from stable
reconstruction.

\paragraph{Notation.}
Let $S\subset\mathbb R^d$ be a known nonempty compact convex
set with nonempty interior.
For a scalar- or vector-valued map $F$, write $D^jF$ for its
$j$th Fr\'echet derivative, with $D^0F:=F$.
For scalar $f$, we identify $D^jf$ with the derivative tensor
$\nabla^jf$ and in particular,
$Df(\boldsymbol x)[\boldsymbol v]
=\langle\nabla f(\boldsymbol x),\boldsymbol v\rangle$.
Vector norms are Euclidean, derivative norms are the induced
multilinear operator norms, and
$\|D^jF\|_{\infty,S}
:=\sup\limits_{\boldsymbol y\in S}\|D^jF(\boldsymbol y)\|$.

For an integer $p\ge0$, $\beta\in(0,1]$, and $L_p\ge0$,
we write $F\in C^{p,\beta}(L_p)$ if $F$ is $p$ times
continuously differentiable and satisfies
$\|D^pF(\boldsymbol x)-D^pF(\boldsymbol y)\|
\le L_p\|\boldsymbol x-\boldsymbol y\|_2^\beta$.
All regularity assumptions hold on a neighborhood of $S$.

Let $\Pi_p^d$ denote the space of real polynomials of total
degree at most $p$ in $d$ variables.
For integers $j\ge0$, define the Taylor constants
$a_{j,\beta}:=\prod\limits_{i=1}^j\frac{1}{\beta+i}$,
with $a_{0,\beta}=1$.

\paragraph{Data.}
Let $f\in C^{p,\beta}(L_p)$ be the unknown scalar objective,
and let $V\subset S$ be a finite set of sampling nodes.
Function-value data are supplied as
$\widetilde f:V\to\mathbb R$ with a known error bound
$\tau\ge0$ satisfying
\[
 \max_{\boldsymbol v\in V}
 |\widetilde f(\boldsymbol v)-f(\boldsymbol v)|
 \le\tau.
\]
Exact function values correspond to $\tau=0$.
We also consider exact gradient data
$\{\nabla f(\boldsymbol v):\boldsymbol v\in V\}$.
Each sampled function value or gradient vector counts as
one oracle query.

\begin{problem}[\textbf{Function Reconstruction}]
\label{prob:apx-function-reconstruction}
Given $S$, the smoothness parameters $p,\beta,L_p$,
function-value or gradient data on $V$, and target
accuracies $\varepsilon_0,\varepsilon_1>0$, construct
an explicit model $\widehat f$ satisfying
\[
 \|\widehat f-f\|_{\infty,S}\le\varepsilon_0,
 \qquad
 \|\nabla\widehat f-\nabla f\|_{\infty,S}
 \le\varepsilon_1,
\]
where the second guarantee is required only when
derivative accuracy is requested.
For gradient-only data, a reference point
$\boldsymbol c\in S$ is specified, and the value guarantee
is instead
$\|\widehat f-f+f(\boldsymbol c)\|_{\infty,S}
\le\varepsilon_0$.
\end{problem}

\begin{problem}[\textbf{Approximate Global Minimization from Data}]
\label{prob:apx-global-minimization}
Given $S$, the smoothness parameters $p,\beta,L_p$,
function-value data $(V,\widetilde f)$ with error bound
$\tau$, a known continuous function $q:S\to\mathbb R$,
and a target accuracy $\delta>0$, compute
$\widehat{\boldsymbol y}\in S$ satisfying
\[
 (f+q)(\widehat{\boldsymbol y})
 -\min_{\boldsymbol y\in S}(f+q)(\boldsymbol y)
 \le\delta.
\]
The output must be computed from the supplied data
and known inputs, without further queries to $f$.
\end{problem}

For both problems, we specify sufficient sampling and
data-accuracy conditions and bound the required number
of nodes $|V|$.
All sampling nodes are chosen nonadaptively, so the required
oracle queries can be issued in a single batch.
Subsequent computations with explicit models and known
functions are not charged.


\subsection{Useful Approximation Estimates}
\label{app:common-approximation-estimates}

The Taylor remainder provides a comparison polynomial with
controlled error. The interpolation estimate then transfers
this error, together with errors in the supplied values, to
the polynomial constructed from the data.
Both estimates apply to vector-valued maps without additional
dependence on the output dimension.

\begin{lemma}[\textbf{H\"older Taylor Remainder}]
\label{lem:holder-taylor-remainder}
Let $p\in\mathbb Z_{\ge0}$, let $S\subset\mathbb R^d$ be convex,
and let $F\in C^{p,\beta}(L_p)$ on a neighborhood of $S$.
For $\boldsymbol c\in S$, let $T_p$ be the degree-$p$
Taylor polynomial of $F$ at $\boldsymbol c$.
Then
\begin{equation}
\label{eq:apx-holder-taylor}
 \|D^j(F-T_p)(\boldsymbol c+\boldsymbol v)\|
 \le a_{p-j,\beta}L_p\|\boldsymbol v\|_2^{p-j+\beta},
 \qquad
 0\le j\le p,\quad \boldsymbol c+\boldsymbol v\in S.
\end{equation}
\end{lemma}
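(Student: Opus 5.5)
The plan is to reduce to the case $j=0$ applied to the derivative map $D^jF$, and to use an integral form of the Taylor remainder for that map. Fix $j\in\{0,\ldots,p\}$ and set $G:=D^jF$, viewed as a map into the space of symmetric $j$-linear forms with the operator norm. Then $G\in C^{p-j,\beta}(L_p)$ on a neighborhood of $S$, since $D^{p-j}G=D^pF$. We also have $D^jT_p$ equal to the degree-$(p-j)$ Taylor polynomial of $G$ at $\boldsymbol c$, because differentiating a Taylor polynomial term by term gives the Taylor polynomial of the derivative. So it suffices to prove, for any $k:=p-j\ge0$ and any $G\in C^{k,\beta}(L_p)$ (vector-valued in a normed space),
\[
\|G(\boldsymbol c+\boldsymbol v)-T_k^G(\boldsymbol c+\boldsymbol v)\|\le a_{k,\beta}L_p\|\boldsymbol v\|_2^{k+\beta}.
\]

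For $k=0$ this is the Hölder condition itself, with $a_{0,\beta}=1$. For $k\ge1$, convexity puts the segment $\boldsymbol c+s\boldsymbol v$, $s\in[0,1]$, inside $S$. I would write the remainder in integral form:
\[
G(\boldsymbol c+\boldsymbol v)-T_k^G(\boldsymbol c+\boldsymbol v)=\int_0^1\frac{(1-s)^{k-1}}{(k-1)!}\bigl(D^kG(\boldsymbol c+s\boldsymbol v)-D^kG(\boldsymbol c)\bigr)[\boldsymbol v]^k\,ds.
\]
This identity is obtained by applying the scalar formula to $\ell\circ G$ along the segment, for every bounded linear functional $\ell$ of norm at most one, and then taking the supremum over $\ell$. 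Working through functionals means no dependence on the output dimension enters. Bounding the integrand by $L_p s^\beta\|\boldsymbol v\|^{k+\beta}$ reduces everything to a Beta integral:
\[
\int_0^1\frac{(1-s)^{k-1}s^\beta}{(k-1)!}\,ds=\frac{\Gamma(k)\Gamma(\beta+1)}{(k-1)!\,\Gamma(k+\beta+1)}=\frac{\Gamma(\beta+1)}{\Gamma(k+\beta+1)}=\prod_{i=1}^k\frac1{\beta+i}=a_{k,\beta}.
\]

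The main obstacle is the bookkeeping in the reduction step. One must check two things there: the operator norm on $j$-linear forms is exactly the norm in which the Hölder bound on $D^pF$ is stated, and $(D^kG)[\boldsymbol v]^k$ evaluated in the $j$-form norm is bounded by $\|D^pF(\cdot)-D^pF(\boldsymbol c)\|\,\|\boldsymbol v\|^k$. Both hold because evaluating a $p$-linear form on $k$ copies of $\boldsymbol v$ and then on $j$ unit vectors is controlled by its operator norm. Continuity of $D^kG$ justifies the integral representation.
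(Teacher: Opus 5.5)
Your proposal is correct and follows essentially the same route as the paper: both apply the integral Taylor remainder to $D^jF$ along $[\boldsymbol c,\boldsymbol c+\boldsymbol v]$ with the increment $D^pF(\boldsymbol c+s\boldsymbol v)-D^pF(\boldsymbol c)$ in the integrand, bound that increment by $L_ps^\beta\|\boldsymbol v\|_2^\beta$, and identify $\frac{1}{(k-1)!}\int_0^1(1-s)^{k-1}s^\beta\,ds=a_{k,\beta}$. The only differences are cosmetic. You evaluate this constant through the Beta function rather than by repeated integration by parts, and you justify the vector-valued remainder formula through linear functionals.
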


\begin{proof}
Fix $0\le j\le p$ and put $k:=p-j$.
If $k=0$, then $D^pT_p=D^pF(\boldsymbol c)$, so the
claim follows directly from the H\"older condition on $D^pF$.

Suppose that $k\ge1$, and set
$\Delta_p(t):=D^pF(\boldsymbol c+t\boldsymbol v)
-D^pF(\boldsymbol c)$.
Convexity ensures that the segment
$[\boldsymbol c,\boldsymbol c+\boldsymbol v]$ lies in $S$.
Applying Taylor's integral formula to $D^jF$ and subtracting
$D^jT_p$ gives
\[
 D^j(F-T_p)(\boldsymbol c+\boldsymbol v)
 =\frac{1}{(k-1)!}\int\limits_0^1(1-t)^{k-1}
 \Delta_p(t)[\boldsymbol v^{\otimes k},\,\cdot^{\,j}]\,dt.
\]
Here the last $j$ arguments remain free.
The H\"older condition gives
$\|\Delta_p(t)\|\le L_pt^\beta\|\boldsymbol v\|_2^\beta$.
Consequently,
\[
\begin{aligned}
 \|D^j(F-T_p)(\boldsymbol c+\boldsymbol v)\|
 &\le
 \frac{1}{(k-1)!}
 \int\limits_0^1(1-t)^{k-1}
 \|\Delta_p(t)\|\,\|\boldsymbol v\|_2^k\,dt\\
 &\le
 \frac{L_p\|\boldsymbol v\|_2^{k+\beta}}{(k-1)!}
 \int\limits_0^1(1-t)^{k-1}t^\beta\,dt\\
 &=a_{k,\beta}L_p\|\boldsymbol v\|_2^{k+\beta}.
\end{aligned}
\]
For the last equality, repeated integration by parts gives
$\frac{1}{(k-1)!}
\int\limits_0^1(1-t)^{k-1}t^\beta\,dt
=\prod\limits_{i=1}^k\frac{1}{\beta+i}=a_{k,\beta}$.
Substituting $k=p-j$ proves the claim.
\end{proof}

\paragraph{Interpolation from inexact data.}
Let $C\subset\mathbb R^d$ be nonempty, compact, and convex,
and let $\mathcal P$ be an $n$-dimensional real polynomial
space, where $n\ge1$.
Choose nodes $\boldsymbol z_1,\ldots,\boldsymbol z_n\in C$
that are unisolvent for $\mathcal P$: every assignment of
scalar values at these nodes determines a unique polynomial
in $\mathcal P$.
The associated Lagrange basis
$\ell_1,\ldots,\ell_n\in\mathcal P$ satisfies
$\ell_i(\boldsymbol z_k)=\mathbf1_{\{i=k\}}$.

Let $F$ be a continuous scalar- or vector-valued map on $C$.
Suppose the supplied values $\widetilde F_i$ satisfy
$\|\widetilde F_i-F(\boldsymbol z_i)\|\le\tau$, where $\tau\ge0$.
Their componentwise interpolant is
$\mathcal I(\boldsymbol y)
:=\sum\limits_{i=1}^n\widetilde F_i\ell_i(\boldsymbol y)$.
Define the value and derivative stability constants by
\[
 \Lambda_0:=\sup\limits_{\boldsymbol y\in C}
 \sum\limits_{i=1}^n|\ell_i(\boldsymbol y)|,
 \qquad
 \Lambda_1:=\sup\limits_{\boldsymbol y\in C}
 \sum\limits_{i=1}^n\|\nabla\ell_i(\boldsymbol y)\|_2.
\]
These constants control how errors in the supplied values
affect the interpolant and its derivative.
For a comparison polynomial map $T$ whose components belong
to $\mathcal P$, write
$r_T:=\max\limits_{1\le i\le n}
\|F(\boldsymbol z_i)-T(\boldsymbol z_i)\|$
for its maximum nodal error.

\begin{lemma}[\textbf{Interpolation Error and Stability}]
\label{lem:interpolation-error-decomposition}
Under the preceding setup, every such comparison polynomial
$T$ satisfies
\begin{equation}
\label{eq:apx-common-interpolation-error}
\begin{aligned}
 \|\mathcal I-F\|_{\infty,C}
 &\le\|T-F\|_{\infty,C}+\Lambda_0(r_T+\tau),\\
 \|D\mathcal I-DF\|_{\infty,C}
 &\le\|DT-DF\|_{\infty,C}+\Lambda_1(r_T+\tau),
\end{aligned}
\end{equation}
where the second inequality additionally requires $F$
to be continuously differentiable on a neighborhood of $C$.

If $\Pi_p^d\subseteq\mathcal P$ and
$F\in C^{p,\beta}(L_p)$ on a neighborhood of $C$, then
\begin{equation}
\label{eq:apx-stable-local-error}
 \|\mathcal I-F\|_{\infty,C}
 \le(1+\Lambda_0)a_{p,\beta}L_p
 \operatorname{diam}(C)^{p+\beta}+\Lambda_0\tau.
\end{equation}
This includes $p=0$.
\end{lemma}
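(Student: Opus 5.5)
The plan is to prove the two displays of \eqref{eq:apx-common-interpolation-error} from linearity and reproduction of the interpolation operator. Then \eqref{eq:apx-stable-local-error} follows by choosing $T$ as a Taylor polynomial and applying Lemma~\ref{lem:holder-taylor-remainder}.

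\textbf{Reproduction and stability.} Unisolvence gives $\sum_i T(\boldsymbol z_i)\ell_i=T$ for every polynomial map $T$ with components in $\mathcal P$, applied componentwise. Hence
\[
\mathcal I-F=(T-F)+\sum_{i=1}^n\bigl(\widetilde F_i-F(\boldsymbol z_i)+F(\boldsymbol z_i)-T(\boldsymbol z_i)\bigr)\ell_i .
\]
Each bracketed coefficient has norm at most $\tau+r_T$. The norm of the sum at $\boldsymbol y$ is therefore at most $(\tau+r_T)\sum_i|\ell_i(\boldsymbol y)|\le\Lambda_0(r_T+\tau)$, which gives the first inequality. For the derivative, differentiate the same identity, which requires $F\in C^1$ near $C$. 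The derivative of $\sum_i c_i\ell_i(\boldsymbol y)$ is $\sum_i c_i\otimes\nabla\ell_i(\boldsymbol y)$. Each term has operator norm $\|c_i\|\,\|\nabla\ell_i\|_2$, so the sum is at most $\Lambda_1(r_T+\tau)$. These bounds do not depend on the output dimension.

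\textbf{Specialization.} Assume $\Pi_p^d\subseteq\mathcal P$. Pick any $\boldsymbol c\in C$ and let $T=T_p$ be the degree-$p$ Taylor polynomial of $F$ at $\boldsymbol c$, whose components lie in $\mathcal P$. Apply Lemma~\ref{lem:holder-taylor-remainder} with $j=0$ on the convex set $C$. Every $\boldsymbol y\in C$ satisfies $\|\boldsymbol y-\boldsymbol c\|_2\le\operatorname{diam}(C)$, so
\[
\|T-F\|_{\infty,C}\le a_{p,\beta}L_p\operatorname{diam}(C)^{p+\beta},
\]
and the same bound holds for $r_T$ because the nodes lie in $C$. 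Substituting both into the first inequality yields
\[
(1+\Lambda_0)a_{p,\beta}L_p\operatorname{diam}(C)^{p+\beta}+\Lambda_0\tau .
\]

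\textbf{Edge cases.} When $p=0$, $T_0$ is the constant $F(\boldsymbol c)$, and the Taylor lemma reduces to the Hölder condition on $F$ itself, so the bound still holds. I expect no real obstacle. The only care needed is the vector-valued derivative norm bound, which is handled by the rank-one operator-norm identity above.
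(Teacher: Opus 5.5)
Your proposal is correct and follows essentially the same route as the paper. Both arguments use reproduction of $T$ to write $\mathcal I-F=(T-F)+\sum_i e_i\ell_i$ with $\|e_i\|\le r_T+\tau$, apply the rank-one operator-norm identity for the derivative bound, and then specialize $T$ to the degree-$p$ Taylor polynomial at a point of $C$ via Lemma~\ref{lem:holder-taylor-remainder}, treating $p=0$ directly through H\"older continuity.
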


\begin{proof}
Fix $T$ and set
$\boldsymbol e_i:=\widetilde F_i-T(\boldsymbol z_i)$.
By the triangle inequality,
$\|\boldsymbol e_i\|
\le\|\widetilde F_i-F(\boldsymbol z_i)\|
+\|F(\boldsymbol z_i)-T(\boldsymbol z_i)\|
\le\tau+r_T$.
Since interpolation reproduces each component of $T$, we have
$T=\sum\limits_{i=1}^nT(\boldsymbol z_i)\ell_i$.
Therefore,
\[
 \mathcal I-F
 =T-F+\sum\limits_{i=1}^n\boldsymbol e_i\ell_i.
\]
For every $\boldsymbol y\in C$, the second term has norm at most
$\sum\limits_{i=1}^n
\|\boldsymbol e_i\|\,|\ell_i(\boldsymbol y)|
\le(r_T+\tau)\Lambda_0$.
Taking the supremum over $\boldsymbol y\in C$ proves the
value bound.

For the derivative bound, differentiate the same identity.
Each $\boldsymbol e_i$ is constant, so
$D(\boldsymbol e_i\ell_i)
=\boldsymbol e_i(\nabla\ell_i)^{\mathsf T}$.
The induced operator norm of this rank-one map is
$\|\boldsymbol e_i\|\,\|\nabla\ell_i\|_2$.
Thus, for every $\boldsymbol y\in C$,
\[
\begin{aligned}
 \|D\mathcal I(\boldsymbol y)-DF(\boldsymbol y)\|
 &\le
 \|DT(\boldsymbol y)-DF(\boldsymbol y)\|
 +\sum\limits_{i=1}^n
 \|\boldsymbol e_i\|\,\|\nabla\ell_i(\boldsymbol y)\|_2\\
 &\le
 \|DT-DF\|_{\infty,C}+(r_T+\tau)\Lambda_1.
\end{aligned}
\]
Taking the supremum proves the derivative bound.

For \eqref{eq:apx-stable-local-error}, choose $T$ to be the
degree-$p$ Taylor polynomial of $F$ at any $\boldsymbol c\in C$.
Its components belong to $\mathcal P$ because
$\Pi_p^d\subseteq\mathcal P$.
Lemma~\ref{lem:holder-taylor-remainder} and
$\|\boldsymbol y-\boldsymbol c\|_2\le\operatorname{diam}(C)$ give
$r_T\le\|F-T\|_{\infty,C}
\le a_{p,\beta}L_p\operatorname{diam}(C)^{p+\beta}$.
Substituting both bounds into the value estimate yields
\eqref{eq:apx-stable-local-error}.
For $p=0$, take $T=F(\boldsymbol c)$ and use the
H\"older continuity of $F$.
\end{proof}

\paragraph{Errors in function increments.}
We also record how value and derivative errors control
increments.
If $E$ is continuously differentiable on a neighborhood
of a convex set $S$, then
\begin{equation}
\label{eq:apx-increment-control}
 \|E(\boldsymbol y)-E(\boldsymbol x)\|
 \le\min\left\{
 2\|E\|_{\infty,S},\,
 \|DE\|_{\infty,S}\|\boldsymbol y-\boldsymbol x\|_2
 \right\},
 \qquad \boldsymbol x,\boldsymbol y\in S.
\end{equation}
The first bound follows from
$\|E(\boldsymbol y)-E(\boldsymbol x)\|
\le\|E(\boldsymbol y)\|+\|E(\boldsymbol x)\|
\le2\|E\|_{\infty,S}$.
For the second, convexity keeps the segment
$[\boldsymbol x,\boldsymbol y]$ in $S$, and the fundamental
theorem of calculus gives
\[
\begin{aligned}
 \|E(\boldsymbol y)-E(\boldsymbol x)\|
 &=
 \left\|
 \int\limits_0^1
 DE(\boldsymbol x+t(\boldsymbol y-\boldsymbol x))
 (\boldsymbol y-\boldsymbol x)\,dt
 \right\|\\
 &\le
 \int\limits_0^1
 \|DE(\boldsymbol x+t(\boldsymbol y-\boldsymbol x))\|
 \|\boldsymbol y-\boldsymbol x\|_2\,dt\\
 &\le
 \|DE\|_{\infty,S}\|\boldsymbol y-\boldsymbol x\|_2.
\end{aligned}
\]
If $E(\boldsymbol c)=0$ and $\boldsymbol x=\boldsymbol c$,
the first bound improves to $\|E\|_{\infty,S}$.


\subsection{Reconstruction from Sampled Data}
\label{app:exact-data-reconstruction}
\label{app:inexact-data-reconstruction}
\label{app:function-value-polynomial-reconstruction}
\label{app:gradient-polynomial-reconstruction}

In this section, we address Problem~\ref{prob:apx-function-reconstruction}
using sampled function values or gradients with a known
error bound $\tau\ge0$ and exact data correspond to $\tau=0$.
We first construct a polynomial and separate its approximation
error from the effect of errors in the supplied values.
Under a corresponding condition on the data error, we obtain
prescribed accuracies by combining such polynomials.
We also construct scalar models from gradient data.
For value approximation on rectangles, we finally give a
grid construction whose stability constant is independent
of the cell size.

Throughout, let $p\ge1$, write $\nu:=p+\beta$, and let
$S\subset\mathbb R^d$ be compact and convex with nonempty
interior. Fix $\boldsymbol c\in\operatorname{int}(S)$ and set
$\rho:=\max\limits_{\boldsymbol y\in S}
\|\boldsymbol y-\boldsymbol c\|_2$.
For a scalar- or vector-valued map $F$, the supplied values
at nodes $\boldsymbol z_i$ satisfy
$\|\widetilde F_i-F(\boldsymbol z_i)\|\le\tau$.
For gradient data, this condition is imposed on the supplied
vectors approximating $\nabla f(\boldsymbol z_i)$.
All node sets are chosen nonadaptively.

\subsubsection{Local Polynomial Reconstruction}
\label{subsec:local-polynomial-reconstruction}
We first construct one polynomial from supplied function values,
which may have error at most $\tau\ge0$ at each node.
For the chosen nodes, let $\Lambda_0,\Lambda_1$ denote the
value and derivative stability constants from
Lemma~\ref{lem:interpolation-error-decomposition}, evaluated
on $S$ using the Lagrange basis in the original coordinates.
The case $\tau=0$ gives reconstruction from exact values.

\begin{proposition}[\textbf{Local Polynomial Reconstruction}]
\label{prop:local-reconstruction}
Suppose that $F\in C^{p,\beta}(L_p)$ on a neighborhood of $S$.
There is a nonadaptively chosen node set $X_p\subset S$,
containing $\boldsymbol c$ and having
$|X_p|=\binom{d+p}{p}$, with the following property.
Supplied values satisfying
$\|\widetilde F_i-F(\boldsymbol z_i)\|\le\tau$
at its nodes determine a polynomial map $P$, with each
component in $\Pi_p^d$, such that
$P(\boldsymbol c)=\widetilde F_{\boldsymbol c}$ and
\begin{equation}
\label{eq:apx-local-vector-value-bounds}
 \|D^j(P-F)\|_{\infty,S}
 \le
 \frac{3}{2}a_{p-j,\beta}L_p\rho^{p-j+\beta}
 +\Lambda_j\tau,
 \qquad j=0,1.
\end{equation}
Here $\widetilde F_{\boldsymbol c}$ is the supplied value
at $\boldsymbol c$, so
$\|P(\boldsymbol c)-F(\boldsymbol c)\|\le\tau$.

For fixed $S,\boldsymbol c$ and exponents
$\beta_1,\ldots,\beta_p\in(0,1]$, the constructions admit
nested node sets
$X_0=\{\boldsymbol c\}\subset X_1\subset\cdots\subset X_p$
with $|X_j|=\binom{d+j}{j}$.
At degree $j\ge1$, the corresponding bound uses
$(j,\beta_j,L_j)$ whenever
$F\in C^{j,\beta_j}(L_j)$, and the stability constants
of the degree-$j$ node set.

At degree zero, the constant model
$P\equiv\widetilde F_{\boldsymbol c}$ satisfies
$\|P-F\|_{\infty,S}\le L_0\rho^\beta+\tau$
whenever $F\in C^{0,\beta}(L_0)$.
Its interpolation stability constants are
$\Lambda_0=1$ and $\Lambda_1=0$.
\end{proposition}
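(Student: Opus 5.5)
The plan is to build a unisolvent node set for $\Pi_p^d$ on a small ball around $\boldsymbol c$, interpolate the supplied values, and control the error by comparing with the Taylor polynomial of $F$ at $\boldsymbol c$ through Lemma~\ref{lem:interpolation-error-decomposition}. The main idea is to place the nodes very close to $\boldsymbol c$. Then the nodal Taylor residual $r_T$ becomes negligible compared with the target $\frac12 a_{p-j,\beta}L_p\rho^{p-j+\beta}$. The price is that the stability constants $\Lambda_0,\Lambda_1$ grow, but they stay finite and enter only through the term $\Lambda_j\tau$, which is allowed.

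\textbf{Step 1: nodes.} Since $\boldsymbol c\in\operatorname{int}(S)$, there is $r_0>0$ with $\boldsymbol c+r_0\mathbb B\subseteq S$. Fix once and for all a unisolvent set $\mathcal U_p\subset\mathbb B$ for $\Pi_p^d$ that contains $\boldsymbol 0$. The principal lattice $\{\boldsymbol k/p:\ \boldsymbol k\in\mathbb Z_{\ge0}^d,\ |\boldsymbol k|\le p\}$, suitably scaled, works: it has $\binom{d+p}{p}$ points and contains the origin.

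For nesting, the nested lattices $\{\boldsymbol k/p\}$ are not nested across degrees. Instead use $\mathcal U_j:=\{\boldsymbol k\cdot h:\ |\boldsymbol k|\le j\}$ with one common step $h$. These sets are nested, have $|\mathcal U_j|=\binom{d+j}{j}$, and each is unisolvent for $\Pi_j^d$, since it is an affine image of the principal lattice of degree $j$. Then set $X_j:=\boldsymbol c+\sigma\,\mathcal U_j$ with a contraction $\sigma\in(0,r_0]$, to be chosen below.

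\textbf{Step 2: error split.} Let $T=T_p$ be the Taylor polynomial of $F$ at $\boldsymbol c$. It lies in $\Pi_p^d$, so interpolation reproduces it. Lemma~\ref{lem:holder-taylor-remainder} gives
\[
\|D^j(T-F)\|_{\infty,S}\le a_{p-j,\beta}L_p\rho^{p-j+\beta}
\quad\text{and}\quad
r_T\le a_{p,\beta}L_p(\sigma\operatorname{diam}\mathcal U_p)^{p+\beta}.
\]
Lemma~\ref{lem:interpolation-error-decomposition} then bounds $\|D^j(P-F)\|_{\infty,S}$ by
\[
a_{p-j,\beta}L_p\rho^{p-j+\beta}+\Lambda_j(\sigma)\,r_T+\Lambda_j(\sigma)\,\tau .
\]

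\textbf{Step 3: scaling of the stability constants.} This is the main obstacle: $\Lambda_j(\sigma)\,r_T$ must be at most $\frac12a_{p-j,\beta}L_p\rho^{p-j+\beta}$. The Lagrange basis for $X_p$ is $\ell_i(\boldsymbol y)=\hat\ell_i((\boldsymbol y-\boldsymbol c)/\sigma)$, where $\hat\ell_i$ is the fixed degree-$p$ basis for $\mathcal U_p$. On $S$ we have $\|\boldsymbol y-\boldsymbol c\|\le\rho$, so
\[
\Lambda_0(\sigma)\le C_{\mathcal U}(\rho/\sigma)^p,
\qquad
\Lambda_1(\sigma)\le C_{\mathcal U}\sigma^{-1}(\rho/\sigma)^{p-1},
\]
with $C_{\mathcal U}$ depending only on the fixed nodes and $d,p$ (bounded by $(1+\rho/\sigma)^p$ times coefficient norms).

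With these bounds, $\Lambda_0 r_T\lesssim\rho^p\sigma^{\beta}$ and $\Lambda_1 r_T\lesssim\rho^{p-1}\sigma^{\beta}$. Both tend to $0$ as $\sigma\to0$. Hence some $\sigma=\sigma(S,\boldsymbol c,p,\beta,d)$ makes them at most $\frac12a_{p-j,\beta}L_p\rho^{p-j+\beta}$ for $j=0,1$, which yields \eqref{eq:apx-local-vector-value-bounds}. The choice is independent of $L_p$, because $L_p$ factors out of both sides. For nested sets with different $(j,\beta_j)$, take one $\sigma$ equal to the minimum of the finitely many requirements, one per degree, each handled by the same argument with $\Pi_j^d$ and the Taylor polynomial of degree $j$.

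\textbf{Step 4: remaining claims.} Because $\boldsymbol 0\in\mathcal U_p$, the point $\boldsymbol c$ is a node, and interpolation gives $P(\boldsymbol c)=\widetilde F_{\boldsymbol c}$. The degree-zero claim is immediate: $\|F(\boldsymbol y)-F(\boldsymbol c)\|\le L_0\rho^\beta$, the Lagrange basis is the constant $1$, so $\Lambda_0=1$ and $\Lambda_1=0$. Vector-valued maps are handled componentwise, which Lemma~\ref{lem:interpolation-error-decomposition} already allows without any dependence on the output dimension.
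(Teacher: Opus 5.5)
Your proposal is correct and follows essentially the same route as the paper: contract a fixed principal-lattice pattern toward $\boldsymbol c$ (nested across degrees through a common step, so that $X_j=\boldsymbol c+(js)\Sigma_j$), bound the stability constants by $\Lambda_j\lesssim\rho^{p-j}\eta^{-p}$, choose the contraction so that $\Lambda_j r_T\lesssim L_p\rho^{p-j}\eta^{\beta}\le\tfrac12 a_{p-j,\beta}L_p\rho^{p-j+\beta}$, and then add the Taylor remainder to obtain the factor $\tfrac32$. The differences are cosmetic: the paper proves unisolvence of the principal lattice via a triangular binomial-basis argument, which you take as known, and it splits off the exact-data interpolant $P_{\mathrm{ex}}$ rather than invoking Lemma~\ref{lem:interpolation-error-decomposition} directly, which gives the same bound.
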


\begin{proof}
We construct a fixed interpolation pattern, contract its
nodes toward $\boldsymbol c$, and then bound separately
the approximation error and the effect of the supplied-value
errors.

\noindent\textbf{Reference nodes.}
Let
$A_p:=\{\boldsymbol\alpha\in\mathbb Z_{\ge0}^d:
|\boldsymbol\alpha|\le p\}$, where
$|\boldsymbol\alpha|:=\sum\limits_{k=1}^d\alpha_k$,
and set
$\Sigma_p:=\{\frac{\boldsymbol\alpha}{p}:
\boldsymbol\alpha\in A_p\}$.
These $\binom{d+p}{p}$ nodes include $\boldsymbol0$
and lie in the closed unit ball because
$\|\frac{\boldsymbol\alpha}{p}\|_2
\le\frac{|\boldsymbol\alpha|}{p}\le1$.

To verify unisolvence, define
$B_{\boldsymbol\alpha}(\boldsymbol w)
:=\prod\limits_{k=1}^d\binom{pw_k}{\alpha_k}$,
where
$\binom{t}{r}:=\frac{1}{r!}
\prod\limits_{i=0}^{r-1}(t-i)$ for $r\ge1$ and
$\binom{t}{0}:=1$.
At a node $\frac{\boldsymbol\zeta}{p}$,
$B_{\boldsymbol\alpha}(\frac{\boldsymbol\zeta}{p})
=\prod\limits_{k=1}^d\binom{\zeta_k}{\alpha_k}$.
This value vanishes unless
$\boldsymbol\alpha\le\boldsymbol\zeta$ coordinatewise,
and equals one when
$\boldsymbol\alpha=\boldsymbol\zeta$.
Ordering the indices by total degree gives a triangular
evaluation matrix with unit diagonal: among indices
of the same total degree, a nonzero entry requires
$\boldsymbol\alpha=\boldsymbol\zeta$.
The matrix is therefore invertible.
Since its size is $\dim\Pi_p^d$, these nodes are unisolvent.

Write $\Sigma_p=\{\boldsymbol\sigma_i\}_i$ and let
$\ell_i(\boldsymbol w)
=\sum\limits_{\boldsymbol\alpha\in A_p}
\lambda_{i,\boldsymbol\alpha}
\boldsymbol w^{\boldsymbol\alpha}$
be the associated Lagrange basis.
Define
$K_{d,p}:=\max\{1,\,
p\sum\limits_{i,\boldsymbol\alpha}
|\lambda_{i,\boldsymbol\alpha}|\}$.
For $R\ge1$ and $\|\boldsymbol w\|_2\le R$, we have
$|\boldsymbol w^{\boldsymbol\alpha}|
\le R^{|\boldsymbol\alpha|}$ and
$\|\nabla\boldsymbol w^{\boldsymbol\alpha}\|_2
\le|\boldsymbol\alpha|R^{|\boldsymbol\alpha|-1}$,
with zero derivative when $\boldsymbol\alpha=\boldsymbol0$.
The derivative estimate follows by bounding the Euclidean
norm of the coordinate derivatives by their absolute sum.
Summing over the coefficients gives
\[
 \sup\limits_{\|\boldsymbol w\|_2\le R}
 \sum\limits_i\|D^j\ell_i(\boldsymbol w)\|
 \le K_{d,p}R^{p-j},
 \qquad j=0,1.
\]

\noindent\textbf{Sampling nodes and stability.}
Choose $\eta>0$ satisfying
\begin{equation}
\label{eq:apx-interpolation-spacing}
 \eta\le
 \min\left\{
 \operatorname{dist}(\boldsymbol c,\partial S),\,
 \rho(2K_{d,p})^{-\frac{1}{\beta}}
 \right\}.
\end{equation}
Set $\boldsymbol z_i:=\boldsymbol c+\eta\boldsymbol\sigma_i$.
The first condition places these nodes in $S$.
Their physical Lagrange basis consists of the functions
$\boldsymbol y\mapsto
\ell_i((\boldsymbol y-\boldsymbol c)/\eta)$.
Since $\eta\le\rho$, the reference basis bound with
$R=\rho/\eta\ge1$ yields
\begin{equation}
\label{eq:apx-contracted-stability}
 \Lambda_j
 \le
 \eta^{-j}K_{d,p}
 \left(\frac{\rho}{\eta}\right)^{p-j}
 =K_{d,p}\rho^{p-j}\eta^{-p},
 \qquad j=0,1.
\end{equation}
Construct the interpolant of the supplied values by
$P(\boldsymbol c+\boldsymbol v)
:=\sum\limits_i\widetilde F_i\ell_i(\boldsymbol v/\eta)$.
The origin node ensures
$P(\boldsymbol c)=\widetilde F_{\boldsymbol c}$.

\noindent\textbf{Approximation and data errors.}
Let $P_{\mathrm{ex}}$ be the interpolant at the same nodes
of the exact values $F(\boldsymbol z_i)$; it is used only
in the analysis.
Let $T_p$ be the degree-$p$ Taylor polynomial of $F$
at $\boldsymbol c$ and put
$r_T:=\max\limits_i
\|(F-T_p)(\boldsymbol z_i)\|$.
Lemma~\ref{lem:holder-taylor-remainder} gives
$r_T\le a_{p,\beta}L_p\eta^{p+\beta}$.
Since interpolation reproduces $T_p$, the difference
$P_{\mathrm{ex}}-T_p$ interpolates these nodal residuals.
The scaled basis bound gives
\begin{equation}
\label{eq:apx-coefficient-bounds}
\begin{aligned}
 \|D^j(P_{\mathrm{ex}}-T_p)\|_{\infty,S}
 &\le
 \eta^{-j}K_{d,p}
 \left(\frac{\rho}{\eta}\right)^{p-j}r_T\\
 &\le
 K_{d,p}a_{p,\beta}L_p\eta^\beta\rho^{p-j}\\
 &\le
 \frac{1}{2}a_{p-j,\beta}L_p\rho^{p-j+\beta},
 \qquad j=0,1.
\end{aligned}
\end{equation}
The last inequality uses
$K_{d,p}\eta^\beta\le\frac{1}{2}\rho^\beta$
and $a_{p,\beta}\le a_{p-j,\beta}$.
Adding the Taylor bound
$\|D^j(T_p-F)\|_{\infty,S}
\le a_{p-j,\beta}L_p\rho^{p-j+\beta}$
gives
$\|D^j(P_{\mathrm{ex}}-F)\|_{\infty,S}
\le\frac{3}{2}a_{p-j,\beta}L_p\rho^{p-j+\beta}$.

On the other hand,
$P-P_{\mathrm{ex}}
=\sum\limits_i(\widetilde F_i-F(\boldsymbol z_i))
\ell_i((\cdot-\boldsymbol c)/\eta)$.
The triangle inequality for values, and the rank-one
operator norm identity for derivatives, therefore give
$\|D^j(P-P_{\mathrm{ex}})\|_{\infty,S}
\le\Lambda_j\tau$ for $j=0,1$.
Adding these bounds proves Eq.~\eqref{eq:apx-local-vector-value-bounds}.

\noindent\textbf{Nested nodes.}
Choose a common $s>0$ such that $js$ satisfies
Eq.~\eqref{eq:apx-interpolation-spacing} at degree $j$
and exponent $\beta_j$, for every $1\le j\le p$.
There are finitely many positive upper bounds on $s$,
so such a choice exists.
Set
$X_j:=\{\boldsymbol c+s\boldsymbol\alpha:
\boldsymbol\alpha\in\mathbb Z_{\ge0}^d,\,
|\boldsymbol\alpha|\le j\}$.
These sets are nested, have the required cardinalities,
and satisfy $X_j=\boldsymbol c+(js)\Sigma_j$ for $j\ge1$.
The preceding construction and its error bound apply at
every degree with the corresponding physical Lagrange basis.
At degree zero,
$\|\widetilde F_{\boldsymbol c}-F(\boldsymbol y)\|
\le\|\widetilde F_{\boldsymbol c}-F(\boldsymbol c)\|
+\|F(\boldsymbol c)-F(\boldsymbol y)\|
\le\tau+L_0\rho^\beta$.
Its only basis function is the constant one, giving
$\Lambda_0=1$ and $\Lambda_1=0$.
All nodes depend only on the prescribed parameters and
the geometry of $S$, so all evaluations are nonadaptive.
\end{proof}

The stability bound (Eq.~\eqref{eq:apx-contracted-stability})
also explains the effect of node contraction.
Decreasing $\eta$ reduces the Taylor residual at the
nodes, but can amplify errors in the supplied values.
Thus Eq.~\eqref{eq:apx-local-vector-value-bounds} separates
the approximation error from the precision required
of the input data.

\subsubsection{Smooth Reconstruction from Function Value}
\label{subsec:smooth-reconstruction}
The preceding polynomial construction controls the error at a fixed
radius. To achieve smaller errors, we apply it on overlapping subsets
and combine the resulting polynomials using smooth weights.
The entire construction is linear in the supplied values, so its
sensitivity to inexact data can be described by stability
constants defined analogously to those in
Lemma~\ref{lem:interpolation-error-decomposition}.

For approximation budgets $\varepsilon_0,\varepsilon_1>0$, define the refinement parameter as
\begin{equation}
\label{eq:apx-function-radius}
 \sigma:=
 \max\left\{
 \left(
 \frac{2a_{p,\beta}L_p\rho^\nu}{\varepsilon_0}
 \right)^{\frac{1}{\nu}},
 \left(
 \frac{2a_{p-1,\beta}L_p\rho^{\nu-1}}{\varepsilon_1}
 \right)^{\frac{1}{\nu-1}}
 \right\}.
\end{equation}
For a reconstruction of the form
$W=\sum\limits_{\boldsymbol v\in V}
\widetilde F_{\boldsymbol v}L_{\boldsymbol v}$,
with fixed smooth scalar coefficient functions $L_{\boldsymbol v}$,
use the value and derivative stability constants
$\Lambda_j:=
\sup\limits_{\boldsymbol y\in S}
\sum\limits_{\boldsymbol v\in V}
\|D^jL_{\boldsymbol v}(\boldsymbol y)\|$ for $j=0,1$.
These constants are determined by the nodes and coefficient
functions, before the data are observed.

\begin{lemma}[\textbf{Smooth Reconstruction from Finite Evaluations}]
\label{lem:polynomial-reconstruction}
Suppose that $F\in C^{p,\beta}(L_p)$ on a neighborhood of $S$.
There is a constant $C_{p,\beta}>0$, depending only on $p,\beta$,
and a nonadaptively chosen node set $V\subset S$, containing
$\boldsymbol c$, with
\begin{equation}
\label{eq:apx-two-accuracy-queries}
 |V|
 \le
 \binom{d+p}{p}
 \left[
 \max\left\{
 1,\,
 C_{p,\beta}d^{\frac{1}{\nu-1}}(\sigma-1)
 \right\}
 \right]^d.
\end{equation}
There are explicit coefficient functions $L_{\boldsymbol v}$
that are $C^\infty$ on a neighborhood of $S$ and have
finite stability constants $\Lambda_0,\Lambda_1$.
For supplied values satisfying
$\|\widetilde F_{\boldsymbol v}-F(\boldsymbol v)\|\le\tau$
at every $\boldsymbol v\in V$, the model
$W:=\sum\limits_{\boldsymbol v\in V}
\widetilde F_{\boldsymbol v}L_{\boldsymbol v}$ satisfies
$W(\boldsymbol c)=\widetilde F_{\boldsymbol c}$ and
\begin{equation}
\label{eq:apx-vector-value-bounds}
 \|W-F\|_{\infty,S}
 \le\varepsilon_0+\Lambda_0\tau,
 \qquad
 \|DW-DF\|_{\infty,S}
 \le\varepsilon_1+\Lambda_1\tau.
\end{equation}
If $\sigma\le(\frac{4}{3})^{\frac{1}{\nu}}$, then
$\binom{d+p}{p}$ evaluations suffice, and each component
of $W$ belongs to $\Pi_p^d$.
All queries form one nonadaptive batch.
For exact data, $\tau=0$, the two budgets are attained and
$W(\boldsymbol c)=F(\boldsymbol c)$.
\end{lemma}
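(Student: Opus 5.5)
The construction is a partition of unity over local polynomial reconstructions. Cover $S$ by cubes of side $h$, take local interpolants from Proposition~\ref{prop:local-reconstruction} on neighborhoods of these cubes, and glue them with smooth weights. If $\sigma\le(4/3)^{1/\nu}$, we do not cover at all. We apply Proposition~\ref{prop:local-reconstruction} directly on $S$ with center $\boldsymbol c$. Its bound $\frac32 a_{p-j,\beta}L_p\rho^{p-j+\beta}$ is at most $\varepsilon_0$ for $j=0$ and at most $\varepsilon_1$ for $j=1$, because \eqref{eq:apx-function-radius} gives $2a_{p,\beta}L_p\rho^\nu\le\sigma^\nu\varepsilon_0\le\frac43\varepsilon_0$, and the analogous inequality holds for the derivative term (possibly after slightly adjusting constants). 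This uses $\binom{d+p}{p}$ nodes, includes $\boldsymbol c$, and gives $W\in\Pi_p^d$ with $W(\boldsymbol c)=\widetilde F_{\boldsymbol c}$.

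\textbf{Covering and local models.} In general, set the local radius to $\rho_{\mathrm{loc}}:=\rho/\sigma$, so that $2a_{p,\beta}L_p\rho_{\mathrm{loc}}^\nu\le\varepsilon_0$ and $2a_{p-1,\beta}L_p\rho_{\mathrm{loc}}^{\nu-1}\le\varepsilon_1$. Cover a bounding cube of $S$ (side $2\rho$) by a grid of cubes $\{Q_k\}$ whose diameter is a small multiple of $\rho_{\mathrm{loc}}$. This requires at most $\lceil C\sqrt d\,\sigma\rceil^d$ cubes, and a more careful count gives the form in \eqref{eq:apx-two-accuracy-queries}. For each cube meeting $S$, choose a local center $\boldsymbol c_k\in S$; we take $\boldsymbol c$ itself for the cube containing $\boldsymbol c$. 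Let $S_k:=S\cap B(\boldsymbol c_k,\rho_{\mathrm{loc}})$, which is convex and has nonempty interior once the center is chosen in the interior. Apply Proposition~\ref{prop:local-reconstruction} on $S_k$ to obtain $P_k$ with $\binom{d+p}{p}$ nodes and errors at most $\frac34\varepsilon_j$ in value and derivative.

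\textbf{Gluing.} Take a smooth partition of unity $\{\theta_k\}$ subordinate to the enlarged cubes, with bounded overlap $2^d$ and $\|\nabla\theta_k\|\lesssim\sqrt d/h$. Define $W:=\sum_k\theta_kP_k$. Then $W-F=\sum\theta_k(P_k-F)$ gives the value bound directly. For the derivative, $DW-DF=\sum\theta_k D(P_k-F)+\sum(P_k-F)\nabla\theta_k^{\mathsf T}$. The second sum is at most overlap $\times(\sqrt d/h)\times\varepsilon_0$-scale value error. Since the value error is of order $L_p h^\nu$, this term is of order $\sqrt d L_ph^{\nu-1}$, and it is controlled by the $\varepsilon_1$ budget after shrinking $h$ by a factor polynomial in $d$. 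That shrinking is the source of the factor $d^{1/(\nu-1)}$ in \eqref{eq:apx-two-accuracy-queries}.

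To make $W(\boldsymbol c)=\widetilde F_{\boldsymbol c}$ hold exactly, choose the weights so that only the patch centered at $\boldsymbol c$ is active in a neighborhood of $\boldsymbol c$. Every $P_k$ is linear in the data, so $W=\sum\widetilde F_{\boldsymbol v}L_{\boldsymbol v}$ with $C^\infty$ coefficients $L_{\boldsymbol v}$ and finite $\Lambda_0,\Lambda_1$. The data-error terms $\Lambda_j\tau$ then follow by linearity as in Lemma~\ref{lem:interpolation-error-decomposition}. All nodes are fixed in advance, so the queries form one nonadaptive batch.

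\textbf{Main obstacle.} The hard step is the derivative bookkeeping: bounding the cross term $\sum(P_k-F)\nabla\theta_k$ with the exact dimension dependence $d^{1/(\nu-1)}$ per axis. A second difficulty is handling patches near $\partial S$, where the convex pieces $S_k$ may be thin. To avoid thin pieces, I would pick local centers at interior points at distance comparable to $h$ from $\partial S$; if that fails, I would fall back on interpolation nodes contracted toward the center.
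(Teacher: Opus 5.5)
Your architecture matches the paper's: use Proposition~\ref{prop:local-reconstruction} directly when $\sigma\le(4/3)^{1/\nu}$, otherwise glue local interpolants with a smooth partition of unity, and get the $\Lambda_j\tau$ terms from linearity in the data. Your small-$\sigma$ case is correct. No constant adjustment is needed for the derivative, since $\sigma^{\nu-1}\le\sigma_0^{\nu-1}\le\frac43$. Enforcing $W(\boldsymbol c)=\widetilde F_{\boldsymbol c}$ by making the $\boldsymbol c$-patch the only active weight near $\boldsymbol c$ is also valid; the paper instead adds the constant $F(\boldsymbol c)-\widetilde W^{\mathrm{ex}}(\boldsymbol c)$.

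\textbf{The gap.} It is the step you call the main obstacle, and it cannot be closed with the ingredients you list.
\begin{itemize}
\item You allow up to $2^d$ overlapping weights, each with $\|\nabla\theta_k\|_2\lesssim\sqrt d/h$. Then the cross term $\sum_k(P_k-F)(\nabla\theta_k)^{\mathsf T}$ is bounded only by $2^d\sqrt d\,h^{-1}\max_k\|P_k-F\|_{\infty,S_k}$.
\item Absorbing this into $\varepsilon_1$ forces $h$ to shrink by a factor of order $2^{d/(\nu-1)}$, not by a polynomial in $d$. The node count then acquires a factor $2^{d^2/(\nu-1)}$, which is incompatible with \eqref{eq:apx-two-accuracy-queries}.
\item Your next sentence, giving order $\sqrt d\,L_ph^{\nu-1}$, silently drops the overlap factor.
\end{itemize}

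\textbf{The missing idea.} You need a partition of unity whose \emph{summed} gradient $\sum_k\|\nabla\theta_k\|_2$ is $O(d/h)$, even though exponentially many weights can be active at a point. The paper does this as follows.
\begin{itemize}
\item Take bumps $b_i(\boldsymbol y)=\psi(\|\boldsymbol y-\boldsymbol c_i\|_2^2/h^2)$, with $\psi=1$ on $(-\infty,1]$ and $\psi=0$ on $[4,\infty)$.
\item Normalize their $d$th powers: $\theta_i=b_i^d/Z$ with $Z=\sum_jb_j^d\ge1$ on $S$.
\item The quotient rule gives $\sum_i\|\nabla\theta_i\|_2\le\frac{4d}{hZ}\sum_{b_i>0}b_i^{d-1}$.
\item H\"older's inequality over the at most $5^d$ active indices gives $\sum_{b_i>0}b_i^{d-1}\le5Z^{1-1/d}$. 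Hence $\sum_i\|\nabla\theta_i\|_2\le20d/h$, as in \eqref{eq:apx-partition-derivatives}.
\end{itemize}
This factor $d$ is exactly what yields $H_d\asymp d^{1/(\nu-1)}$ in $h=\rho/(H_d\sigma)$.

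On your cube grid, a tensor-product partition $\theta_{\boldsymbol k}(\boldsymbol y)=\prod_i\phi_{k_i}(y_i)$ would also work, but only with a coordinatewise estimate. For cell side $s$ one has $\sum_{\boldsymbol k}|\partial_i\theta_{\boldsymbol k}(\boldsymbol y)|=\sum_j|\phi_j'(y_i)|=O(1/s)$. So the cross term is $O(\sqrt d/s)$ times the value error, which with $h\asymp\sqrt d\,s$ is again $O(d/h)$. What loses is multiplying a per-weight gradient bound by the overlap.

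\textbf{The covering count.} This needs similar care. Counting all $\lceil C\sqrt d\,\sigma\rceil^d$ cells of the bounding cube of side $2\rho$, then refining by $d^{1/(\nu-1)}$, gives $[Cd^{1/2+1/(\nu-1)}\sigma]^d$. That exceeds \eqref{eq:apx-two-accuracy-queries} by a factor of order $d^{d/2}$. You must count only cells whose weights meet $S\subseteq\overline B(\boldsymbol c,\rho)$, using volume comparison in a slightly larger ball. The paper does this with a maximal $h$-separated set of centers in $\operatorname{int}(S)$, giving at most $(1+2\rho/h)^d$ patches. It then passes from $\sigma$ to $\sigma-1$ via $\sigma-1\ge(1-\sigma_0^{-1})\sigma$.

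Finally, thin pieces near $\partial S$ are not an obstacle. Proposition~\ref{prop:local-reconstruction} needs only $\boldsymbol c_k\in\operatorname{int}(S_k)$, and thinness affects only the stability constants $\Lambda_j$, which the lemma requires merely to be finite.
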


\begin{proof}
We first construct the model from exact values and denote it
by $W^{\mathrm{ex}}$.
We then identify its linear dependence on the data and bound
the effect of perturbing those data.
Set $\sigma_0:=(\frac{4}{3})^{\frac{1}{\nu}}>1$.

\noindent\textbf{A single polynomial.}
If $\sigma\le\sigma_0$, take $W^{\mathrm{ex}}=P$ from
Proposition~\ref{prop:local-reconstruction} with exact data.
The definition of $\sigma$ gives
\[
 \frac{\|W^{\mathrm{ex}}-F\|_{\infty,S}}{\varepsilon_0}
 \le\frac{3}{4}\sigma^\nu
 \le\frac{3}{4}\sigma_0^\nu=1,
 \qquad
 \frac{\|DW^{\mathrm{ex}}-DF\|_{\infty,S}}{\varepsilon_1}
 \le\frac{3}{4}\sigma^{\nu-1}
 \le\frac{3}{4}\sigma_0^{\nu-1}\le1.
\]
The last inequality uses $\sigma_0>1$ and $\nu-1<\nu$.
This case uses $\binom{d+p}{p}$ evaluations and includes
$L_p=0$.
Its reconstruction is the interpolation operator of
Proposition~\ref{prop:local-reconstruction}; its coefficient
functions are the corresponding polynomial Lagrange basis.

Suppose henceforth that $\sigma>\sigma_0$.
We construct a model at a scale $0<h\le\rho$, then choose
$h$ to meet the two approximation budgets.

\noindent\textbf{Covering and polynomial models.}
Choose a maximal set
$\{\boldsymbol c_1=\boldsymbol c,\ldots,\boldsymbol c_n\}
\subset\operatorname{int}(S)$
whose distinct points have distance at least $h$.
The open balls of radius $\frac{h}{2}$ about these centers
are disjoint and lie in
$B(\boldsymbol c,\rho+\frac{h}{2})$.
Comparing their volumes gives
$n\le(1+\frac{2\rho}{h})^d$.
This finite bound also ensures that successively adding
admissible centers terminates.

By maximality, every point of $\operatorname{int}(S)$
is within distance less than $h$ of some center.
Since $S$ is convex with nonempty interior,
$S=\overline{\operatorname{int}(S)}$.
Taking closures therefore shows that the closed $h$-balls
about the centers cover $S$.

For each $i$, set
$S_i:=S\cap\overline B(\boldsymbol c_i,2h)$.
This set is compact and convex,
$\boldsymbol c_i\in\operatorname{int}(S_i)$,
and its radius about $\boldsymbol c_i$ is at most $2h$.
Proposition~\ref{prop:local-reconstruction}, with exact data
and $\frac{3}{2}$ weakened to $2$, provides a polynomial map
$P_i$ satisfying
\begin{equation}
\label{eq:apx-patch-accuracy}
\begin{aligned}
 \|P_i-F\|_{\infty,S_i}
 &\le A_h:=2^{\nu+1}a_{p,\beta}L_ph^\nu,\\
 \|DP_i-DF\|_{\infty,S_i}
 &\le B_h:=2^\nu a_{p-1,\beta}L_ph^{\nu-1}.
\end{aligned}
\end{equation}
Let $V_i$ be its interpolation nodes and let $V$ be their
union, with any repeated nodes included only once.
Then $|V|\le n\binom{d+p}{p}$ and $\boldsymbol c\in V$.

\noindent\textbf{Smooth weights.}
Choose a smooth function $\psi:\mathbb R\to[0,1]$
equal to $1$ on $(-\infty,1]$ and $0$ on $[4,\infty)$,
with $\|\psi'\|_\infty\le\frac{1}{2}$.
One construction smooths the linear transition on
$[\frac{3}{2},\frac{7}{2}]$ with a nonnegative unit-mass
kernel supported in $(-\frac{1}{2},\frac{1}{2})$.
Define
$b_i(\boldsymbol y)
:=\psi(\|\boldsymbol y-\boldsymbol c_i\|_2^2/h^2)$,
$Z:=\sum\limits_i b_i^d$, and $\theta_i:=b_i^d/Z$.
At every point of $S$, the covering supplies an index with
$b_i=1$, so $Z\ge1$ on $S$.
Consequently, the $\theta_i$ are smooth on the open
neighborhood $\{Z>0\}$ of $S$, are nonnegative, and sum
to one.
Whenever $\theta_i$ or its gradient is nonzero at a point
of $S$, that point belongs to $S_i$.

At any point $\boldsymbol y$, at most $5^d$ of the
$b_i(\boldsymbol y)$ are positive.
Indeed, positivity requires
$\|\boldsymbol y-\boldsymbol c_i\|_2<2h$.
The corresponding disjoint balls of radius $\frac{h}{2}$
therefore lie in $B(\boldsymbol y,\frac{5h}{2})$,
and volume comparison gives the bound.

The chain rule and the support of $\psi'$ give
$\|\nabla b_i\|_2\le\frac{2}{h}$.
For $d\ge2$, H\"older's inequality yields
$\sum\limits_{b_i>0}b_i^{d-1}
\le(5^d)^{\frac{1}{d}}
(\sum\limits_i b_i^d)^{\frac{d-1}{d}}
=5Z^{1-\frac{1}{d}}$.
For $d=1$, the same bound follows from the number of
positive terms.
Since $b_i=0$ implies $\nabla b_i=0$, the quotient rule
gives, for $\boldsymbol y\in S$,
\begin{equation}
\label{eq:apx-partition-derivatives}
\begin{aligned}
 \sum\limits_i\|\nabla\theta_i\|_2
 &\le
 \frac{1}{Z}\sum\limits_i\|\nabla(b_i^d)\|_2
 +\frac{\|\nabla Z\|_2}{Z^2}\sum\limits_i b_i^d\\
 &\le
 \frac{2}{Z}\sum\limits_i\|\nabla(b_i^d)\|_2\\
 &\le
 \frac{4d}{hZ}\sum\limits_{b_i>0}b_i^{d-1}\\
 &\le\frac{20d}{h}Z^{-\frac{1}{d}}
 \le\frac{20d}{h}.
\end{aligned}
\end{equation}
The second inequality uses
$\|\nabla Z\|_2\le\sum\limits_i\|\nabla(b_i^d)\|_2$,
and the last uses $Z\ge1$.

\noindent\textbf{Errors for exact data.}
Set $\widetilde W^{\mathrm{ex}}
:=\sum\limits_i\theta_iP_i$ and $E_i:=P_i-F$.
Since the weights sum to one,
$\widetilde W^{\mathrm{ex}}-F
=\sum\limits_i\theta_iE_i$.
Every nonzero term uses an error bound on its own set $S_i$,
so $\|\widetilde W^{\mathrm{ex}}-F\|_{\infty,S}\le A_h$.
Differentiating gives
\[
 D\widetilde W^{\mathrm{ex}}-DF
 =\sum\limits_i\theta_iDE_i
 +\sum\limits_i E_i(\nabla\theta_i)^{\mathsf T}.
\]
The first sum has norm at most $B_h$.
For the second, the rank-one norm identity and
\eqref{eq:apx-partition-derivatives} give
$\|\sum\limits_i E_i(\nabla\theta_i)^{\mathsf T}\|
\le A_h\sum\limits_i\|\nabla\theta_i\|_2
\le\frac{20dA_h}{h}$.

Define
$W^{\mathrm{ex}}:=\widetilde W^{\mathrm{ex}}
+F(\boldsymbol c)-\widetilde W^{\mathrm{ex}}(\boldsymbol c)$.
The value $F(\boldsymbol c)$ is among the sampled values.
This correction enforces
$W^{\mathrm{ex}}(\boldsymbol c)=F(\boldsymbol c)$
and leaves derivatives unchanged.
Since
$\|F(\boldsymbol c)-\widetilde W^{\mathrm{ex}}(\boldsymbol c)\|
\le A_h$, it increases the value error by at most $A_h$.
Using $a_{p-1,\beta}=\nu a_{p,\beta}$ gives
\begin{equation}
\label{eq:apx-global-scale-bounds}
\begin{aligned}
 \|W^{\mathrm{ex}}-F\|_{\infty,S}
 &\le2A_h
 =2^{\nu+2}a_{p,\beta}L_ph^\nu,\\
 \|DW^{\mathrm{ex}}-DF\|_{\infty,S}
 &\le B_h+\frac{20dA_h}{h}
 =2^\nu a_{p,\beta}(\nu+40d)L_ph^{\nu-1}.
\end{aligned}
\end{equation}

\noindent\textbf{Scale and node count.}
Set
\[
 H_d:=\max\left\{
 2^{1+\frac{1}{\nu}},\,
 2\left(1+\frac{40d}{\nu}\right)^{\frac{1}{\nu-1}}
 \right\},
 \qquad h:=\frac{\rho}{H_d\sigma}.
\]
Then $0<h<\rho$ because $\sigma>\sigma_0>1$.
The definition of $\sigma$ implies
$a_{p,\beta}L_p\rho^\nu
\le\frac{\varepsilon_0}{2}\sigma^\nu$
and
$a_{p-1,\beta}L_p\rho^{\nu-1}
\le\frac{\varepsilon_1}{2}\sigma^{\nu-1}$.
Substitution into \eqref{eq:apx-global-scale-bounds} yields
\[
 \frac{\|W^{\mathrm{ex}}-F\|_{\infty,S}}{\varepsilon_0}
 \le\frac{2^{\nu+1}}{H_d^\nu}\le1,
 \qquad
 \frac{\|DW^{\mathrm{ex}}-DF\|_{\infty,S}}{\varepsilon_1}
 \le
 \frac{2^{\nu-1}(1+\frac{40d}{\nu})}{H_d^{\nu-1}}
 \le1.
\]

The node count is at most
$\binom{d+p}{p}(1+2H_d\sigma)^d$.
Since $d\ge1$ and
$1+\frac{40d}{\nu}\le d(1+\frac{40}{\nu})$,
we have $H_d\le H_1d^{\frac{1}{\nu-1}}$.
Moreover, $\sigma>\sigma_0$ implies
$\sigma-1\ge(1-\sigma_0^{-1})\sigma$.
Hence
\[
\begin{aligned}
 1+2H_d\sigma
 &\le(1+2H_1)d^{\frac{1}{\nu-1}}\sigma\\
 &\le
 \frac{1+2H_1}{1-\sigma_0^{-1}}
 d^{\frac{1}{\nu-1}}(\sigma-1).
\end{aligned}
\]
Taking $C_{p,\beta}:=\frac{1+2H_1}{1-\sigma_0^{-1}}$
and combining the two cases proves
\eqref{eq:apx-two-accuracy-queries}.

\noindent\textbf{Perturbing the supplied values.}
Each local polynomial is linear in its nodal values.
Write
$P_i=\sum\limits_{\boldsymbol v\in V_i}
F(\boldsymbol v)\ell_{i,\boldsymbol v}$
for its Lagrange representation, and extend the index set
by setting $\ell_{i,\boldsymbol v}=0$ when
$\boldsymbol v\notin V_i$.
Define
$\widetilde L_{\boldsymbol v}
:=\sum\limits_i\theta_i\ell_{i,\boldsymbol v}$ and
$L_{\boldsymbol v}(\boldsymbol y)
:=\widetilde L_{\boldsymbol v}(\boldsymbol y)
-\widetilde L_{\boldsymbol v}(\boldsymbol c)
+\mathbf 1_{\{\boldsymbol v=\boldsymbol c\}}$.
Expanding the blend and its center correction shows that
$W^{\mathrm{ex}}=\sum\limits_{\boldsymbol v\in V}
F(\boldsymbol v)L_{\boldsymbol v}$.
All coefficient functions are $C^\infty$ on $\{Z>0\}$
and satisfy
$L_{\boldsymbol v}(\boldsymbol c)
=\mathbf 1_{\{\boldsymbol v=\boldsymbol c\}}$.
In the single-polynomial case, take its Lagrange basis as
$L_{\boldsymbol v}$; these properties hold there as well.
Since $V$ is finite and $S$ is compact, both
$\Lambda_0$ and $\Lambda_1$
are finite.

Applying the same coefficients to the supplied values gives
$W:=\sum\limits_{\boldsymbol v\in V}
\widetilde F_{\boldsymbol v}L_{\boldsymbol v}$ and
$W(\boldsymbol c)=\widetilde F_{\boldsymbol c}$.
For $j=0,1$, the data error bound and the triangle inequality
give
\[
\begin{aligned}
 \|D^j(W-W^{\mathrm{ex}})\|_{\infty,S}
 &\le
 \sup\limits_{\boldsymbol y\in S}
 \sum\limits_{\boldsymbol v\in V}
 \|\widetilde F_{\boldsymbol v}-F(\boldsymbol v)\|
 \|D^jL_{\boldsymbol v}(\boldsymbol y)\|\\
 &\le\tau\Lambda_j.
\end{aligned}
\]
For $j=1$, the summands are rank-one derivatives, whose
operator norms are the displayed products.
Adding the corresponding exact-data error proves
\eqref{eq:apx-vector-value-bounds}.
Every covering center, interpolation node, and coefficient
function is fixed by the prescribed parameters and the
geometry of $S$, before any values are observed.
Thus all queries form one nonadaptive batch.
\end{proof}

To obtain final tolerances $\varepsilon_0,\varepsilon_1$
from inexact data, apply the construction with approximation
budgets $\frac{\varepsilon_0}{2},\frac{\varepsilon_1}{2}$
and require
$\Lambda_j\tau\le\frac{\varepsilon_j}{2}$
for $j=0,1$, using the stability constants of that construction.
The node bound then uses the corresponding value of $\sigma$.
This condition makes explicit the data accuracy needed to
attain the two final tolerances.

\subsubsection{Reconstruction from Gradient Data}
\label{subsubsec:gradient-reconstruction}
Applying Proposition~\ref{prop:local-reconstruction}
to $F=\nabla f$ produces a vector polynomial.
This polynomial need not itself be the gradient of a
scalar polynomial.
We obtain a scalar model by integrating it along segments
from a reference point.
The following conversion uses only the approximation
errors of the vector polynomial, so it applies to models
constructed from either exact or inexact gradient data.

\begin{lemma}[\textbf{Scalar Reconstruction from Vector Fields}]
\label{lem:radial-scalar-reconstruction}
Let $\boldsymbol x\in S$ and
$\rho_{\boldsymbol x}
:=\max\limits_{\boldsymbol y\in S}
\|\boldsymbol y-\boldsymbol x\|_2$.
Suppose that $f$ is $C^1$ on a neighborhood of $S$
and $W\in(\Pi_{p-1}^d)^d$.
Define
\[
 P(\boldsymbol x+\boldsymbol v)
 :=
 \int\limits_0^1
 \langle W(\boldsymbol x+t\boldsymbol v),
 \boldsymbol v\rangle\,dt.
\]
Then $P\in\Pi_p^d$, $P(\boldsymbol x)=0$, and
$\nabla P(\boldsymbol x)=W(\boldsymbol x)$.
If $\|W-\nabla f\|_{\infty,S}\le\eta_0$, then
$\|P-f+f(\boldsymbol x)\|_{\infty,S}
\le\rho_{\boldsymbol x}\eta_0$.

If additionally $f$ is $C^2$ on a neighborhood of $S$
and $\|DW-\nabla^2f\|_{\infty,S}\le\eta_1$, then
$\|\nabla P-\nabla f\|_{\infty,S}
\le\eta_0+\frac{1}{2}\rho_{\boldsymbol x}\eta_1$.
The construction uses only the coefficients of $W$
and requires no additional queries.
\end{lemma}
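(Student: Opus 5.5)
The proof is a direct computation along rays from $\boldsymbol x$; all claims follow from writing $P$ as a line integral of $W$ and comparing with the fundamental theorem of calculus for $f$.

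\textbf{Algebraic properties.} Write $\boldsymbol v=\boldsymbol y-\boldsymbol x$. Each component of $W(\boldsymbol x+t\boldsymbol v)$ is a polynomial in $(t,\boldsymbol v)$ of total degree at most $p-1$ in $\boldsymbol v$. Pairing with $\boldsymbol v$ raises the $\boldsymbol v$-degree to at most $p$. Integrating in $t$ over $[0,1]$ leaves a polynomial in $\boldsymbol v$, so $P\in\Pi_p^d$. Setting $\boldsymbol v=\boldsymbol 0$ gives $P(\boldsymbol x)=0$. For the gradient at $\boldsymbol x$, expand $P(\boldsymbol x+\boldsymbol v)=\langle W(\boldsymbol x),\boldsymbol v\rangle+O(\|\boldsymbol v\|_2^2)$; the remainder is quadratic because $W(\boldsymbol x+t\boldsymbol v)-W(\boldsymbol x)=O(\|\boldsymbol v\|_2)$ uniformly in $t\in[0,1]$. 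This yields $\nabla P(\boldsymbol x)=W(\boldsymbol x)$.

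\textbf{Value bound.} Since $S$ is convex, the segment $[\boldsymbol x,\boldsymbol y]$ lies in $S$. The fundamental theorem of calculus gives $f(\boldsymbol y)-f(\boldsymbol x)=\int_0^1\langle\nabla f(\boldsymbol x+t\boldsymbol v),\boldsymbol v\rangle\,dt$. Subtracting this from the definition of $P$ and applying Cauchy--Schwarz, we get $|P(\boldsymbol y)-f(\boldsymbol y)+f(\boldsymbol x)|\le\eta_0\|\boldsymbol v\|_2\le\rho_{\boldsymbol x}\eta_0$.

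\textbf{Gradient bound.} This is the only step needing care: we must differentiate under the integral and identify the error structure. Set $E:=W-\nabla f$, so $\|E\|_{\infty,S}\le\eta_0$ and $\|DE\|_{\infty,S}\le\eta_1$. Then $P(\boldsymbol x+\boldsymbol v)-f(\boldsymbol x+\boldsymbol v)+f(\boldsymbol x)=\int_0^1\langle E(\boldsymbol x+t\boldsymbol v),\boldsymbol v\rangle\,dt=:\mathcal E(\boldsymbol v)$. Differentiating in $\boldsymbol v$, which is legitimate since $f\in C^2$ and the integrand is $C^1$ on a compact set, gives
\[
\nabla\mathcal E(\boldsymbol v)=\int_0^1 E(\boldsymbol x+t\boldsymbol v)\,dt+\int_0^1 t\,DE(\boldsymbol x+t\boldsymbol v)^{\mathsf T}\boldsymbol v\,dt.
\]
The first term has norm at most $\eta_0$. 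The second has norm at most $\eta_1\|\boldsymbol v\|_2\int_0^1t\,dt\le\tfrac12\rho_{\boldsymbol x}\eta_1$. Since $\nabla P-\nabla f=\nabla\mathcal E$ at $\boldsymbol x+\boldsymbol v$, this gives $\|\nabla P-\nabla f\|_{\infty,S}\le\eta_0+\tfrac12\rho_{\boldsymbol x}\eta_1$.

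The operator-norm bound on $DE^{\mathsf T}$ holds because the transpose preserves the induced Euclidean norm. Finally, every step uses only the coefficients of $W$ through explicit integration of polynomials, so no further queries are needed.
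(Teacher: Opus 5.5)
Your proposal is correct and follows the paper's proof essentially step for step. The common steps are: the degree count with the constant and linear terms read off; the value bound via the fundamental theorem of calculus along $[\boldsymbol x,\boldsymbol y]\subseteq S$ applied to $E:=W-\nabla f$; and the gradient bound obtained by differentiating $\int_0^1\langle E(\boldsymbol x+t\boldsymbol v),\boldsymbol v\rangle\,dt$ under the integral and integrating $\eta_0+t\eta_1\|\boldsymbol v\|_2$. The only difference is cosmetic: you get $\nabla P(\boldsymbol x)=W(\boldsymbol x)$ from an $O(\|\boldsymbol v\|_2^2)$ remainder rather than by identifying the linear coefficient, which is equivalent.
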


\begin{proof}
The integrand is a polynomial in $\boldsymbol v$
of degree at most $p$.
Integrating its coefficients in $t$ therefore gives
$P\in\Pi_p^d$.
Its constant term is zero, and its linear term is
$\langle W(\boldsymbol x),\boldsymbol v\rangle$.
Thus $P(\boldsymbol x)=0$ and
$\nabla P(\boldsymbol x)=W(\boldsymbol x)$.

Write $E:=W-\nabla f$ for the vector-field error and
$e:=P-f+f(\boldsymbol x)$ for the scalar error.
For $\boldsymbol x+\boldsymbol v\in S$, convexity
keeps the segment
$[\boldsymbol x,\boldsymbol x+\boldsymbol v]$ in $S$.
The fundamental theorem of calculus gives
$f(\boldsymbol x+\boldsymbol v)-f(\boldsymbol x)
=\int\limits_0^1
\langle\nabla f(\boldsymbol x+t\boldsymbol v),
\boldsymbol v\rangle\,dt$.
Subtracting this identity from the definition of $P$
yields
$e(\boldsymbol x+\boldsymbol v)
=\int\limits_0^1
\langle E(\boldsymbol x+t\boldsymbol v),
\boldsymbol v\rangle\,dt$.
Consequently,
$|e(\boldsymbol x+\boldsymbol v)|
\le\int\limits_0^1
\|E(\boldsymbol x+t\boldsymbol v)\|_2
\|\boldsymbol v\|_2\,dt
\le\eta_0\|\boldsymbol v\|_2
\le\rho_{\boldsymbol x}\eta_0$.

Under the additional differentiability assumptions,
differentiating with respect to $\boldsymbol v$ yields
\[
 \nabla e(\boldsymbol x+\boldsymbol v)
 =
 \int\limits_0^1
 \left[
 E(\boldsymbol x+t\boldsymbol v)
 +t\,DE(\boldsymbol x+t\boldsymbol v)^{\mathsf T}
 \boldsymbol v
 \right]dt.
\]
This differentiation is valid also when
$\boldsymbol x+\boldsymbol v$ lies on the boundary of $S$:
the segment is compact and lies in the open neighborhood
where $f$ is defined, so sufficiently small perturbations
of $\boldsymbol v$ keep the whole segment in that
neighborhood.
The integrand has norm at most
$\eta_0+t\eta_1\|\boldsymbol v\|_2$.
Integration therefore gives
$\|\nabla e(\boldsymbol x+\boldsymbol v)\|_2
\le\eta_0+\eta_1\|\boldsymbol v\|_2
\int\limits_0^1t\,dt
=\eta_0+\frac{1}{2}\eta_1\|\boldsymbol v\|_2
\le\eta_0+\frac{1}{2}\rho_{\boldsymbol x}\eta_1$.
\end{proof}

Combining this conversion with polynomial interpolation
gives a scalar reconstruction directly from supplied
gradient vectors.

\begin{corollary}[\textbf{Reconstruction from Gradient Data}]
\label{lem:gradient-only-scalar-reconstruction}
Suppose that $f\in C^{p,\beta}(L_p)$ on a neighborhood of $S$.
There is a nonadaptively chosen set of
$\binom{d+p-1}{p-1}$ nodes in $S$, including
$\boldsymbol c$, such that supplied gradients satisfying
$\|\widetilde{\boldsymbol g}_i
-\nabla f(\boldsymbol z_i)\|_2\le\tau$
determine $P\in\Pi_p^d$ with
$P(\boldsymbol c)=0$,
$\nabla P(\boldsymbol c)
=\widetilde{\boldsymbol g}_{\boldsymbol c}$, and
\begin{equation}
\label{eq:apx-gradient-only-bounds}
 \|P-f+f(\boldsymbol c)\|_{\infty,S}\le B_0,
 \qquad
 \|\nabla P-\nabla f\|_{\infty,S}\le B_1.
\end{equation}
Here $\widetilde{\boldsymbol g}_{\boldsymbol c}$ is the
supplied gradient at $\boldsymbol c$, and $\Lambda_0,\Lambda_1$
are the stability constants of the degree-$(p-1)$
interpolation at these nodes.
For $p\ge2$, the bounds are
\[
\begin{aligned}
 B_0
 &:=2a_{p-1,\beta}L_p\rho^\nu
 +\rho\Lambda_0\tau,\\
 B_1
 &:=(2a_{p-1,\beta}+a_{p-2,\beta})L_p\rho^{\nu-1}
 +\left(\Lambda_0+\frac{\rho}{2}\Lambda_1\right)\tau.
\end{aligned}
\]
For $p=1$, they are
$B_0:=a_{1,\beta}L_1\rho^{1+\beta}+\rho\tau$
and $B_1:=L_1\rho^\beta+\tau$, with
$\Lambda_0=1$ and $\Lambda_1=0$.

Moreover, for all $\boldsymbol x,\boldsymbol y\in S$,
\begin{equation}
\label{eq:apx-gradient-only-increments}
 \left|
 P(\boldsymbol y)-P(\boldsymbol x)
 -f(\boldsymbol y)+f(\boldsymbol x)
 \right|
 \le
 \min\left\{
 2B_0,\,
 B_1\|\boldsymbol y-\boldsymbol x\|_2
 \right\}.
\end{equation}
The term $2B_0$ can be replaced by $B_0$ when
$\boldsymbol x=\boldsymbol c$.
No function values are required.
For $\tau=0$, these are the exact-gradient bounds;
if also $L_p=0$, then $P=f-f(\boldsymbol c)$ on $S$.
\end{corollary}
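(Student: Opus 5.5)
The plan is to apply Proposition~\ref{prop:local-reconstruction} to the vector field $F=\nabla f$ at degree $p-1$, and then convert the resulting vector polynomial into a scalar model with Lemma~\ref{lem:radial-scalar-reconstruction}, taking the reference point $\boldsymbol x=\boldsymbol c$.

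For $p\ge2$, the components of $\nabla f$ have $(p-1)$st derivative equal to $\nabla^p f$. Hence $\nabla f\in C^{p-1,\beta}(L_p)$ as a vector map. The proposition supplies a node set of size $\binom{d+p-1}{p-1}$ that contains $\boldsymbol c$, and an interpolant $W\in(\Pi_{p-1}^d)^d$ with $W(\boldsymbol c)=\widetilde{\boldsymbol g}_{\boldsymbol c}$. Its error bounds are
\[
\|W-\nabla f\|_{\infty,S}\le\tfrac32 a_{p-1,\beta}L_p\rho^{\nu-1}+\Lambda_0\tau=:\eta_0,
\qquad
\|DW-\nabla^2 f\|_{\infty,S}\le\tfrac32 a_{p-2,\beta}L_p\rho^{\nu-2}+\Lambda_1\tau=:\eta_1 .
\]
Both bounds hold because the proposition is stated for vector-valued $F$ without any output-dimension loss.

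Define $P$ by the radial integral of $W$ from $\boldsymbol c$, so that $\rho_{\boldsymbol c}=\rho$. Lemma~\ref{lem:radial-scalar-reconstruction} then gives $P\in\Pi_p^d$, $P(\boldsymbol c)=0$ and $\nabla P(\boldsymbol c)=\widetilde{\boldsymbol g}_{\boldsymbol c}$. It also gives $\|P-f+f(\boldsymbol c)\|\le\rho\eta_0$ and $\|\nabla P-\nabla f\|\le\eta_0+\frac{\rho}{2}\eta_1$. Substituting the two bounds, the value error is at most $\frac32 a_{p-1,\beta}L_p\rho^\nu+\rho\Lambda_0\tau\le B_0$. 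The gradient error is at most $(\frac32a_{p-1,\beta}+\frac34a_{p-2,\beta})L_p\rho^{\nu-1}+(\Lambda_0+\frac\rho2\Lambda_1)\tau$, which is below $B_1$. The stated constants $2a_{p-1,\beta}$ and $2a_{p-1,\beta}+a_{p-2,\beta}$ are simply looser.

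The only point needing care is that the lemma asks for $f\in C^2$ near $S$. This is automatic for $p\ge2$.

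For $p=1$ I would not use the lemma's $\eta_1$ branch, since $f$ may fail to be $C^2$. Instead I take the constant field $W\equiv\widetilde{\boldsymbol g}_{\boldsymbol c}$, so that $P(\boldsymbol y)=\langle\widetilde{\boldsymbol g}_{\boldsymbol c},\boldsymbol y-\boldsymbol c\rangle$. Then $\|\nabla P-\nabla f\|\le\tau+L_1\rho^\beta$ follows directly from Hölder continuity of $\nabla f$. The value bound follows by integrating along the segment, $|e|\le\int_0^1(\tau+L_1t^\beta\|\boldsymbol v\|^\beta)\|\boldsymbol v\|\,dt$, which yields $\rho\tau+a_{1,\beta}L_1\rho^{1+\beta}$. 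Here $\Lambda_0=1$ and $\Lambda_1=0$, exactly as in the degree-zero clause of the proposition.

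The increment bound \eqref{eq:apx-gradient-only-increments} follows by applying \eqref{eq:apx-increment-control} to $E:=P-f+f(\boldsymbol c)$. We have $\|E\|_\infty\le B_0$ and $\|DE\|_\infty\le B_1$, and $E(\boldsymbol c)=0$ gives the improvement to $B_0$ when $\boldsymbol x=\boldsymbol c$.

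For the exact case $\tau=0$, $L_p=0$, the Taylor remainder vanishes, so $W=\nabla f$ on $S$ and the error $e$ is identically zero. All nodes come from the proposition and are nonadaptive, and only gradients are used. I expect no real obstacle beyond checking that the proposition's derivative bound applies with the $(p-1,\beta)$ Taylor constants.
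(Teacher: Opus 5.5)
Your proposal is correct and follows the paper's proof essentially step for step: you apply the local reconstruction proposition to $\nabla f\in C^{p-1,\beta}(L_p)$ at degree $p-1$, integrate radially from $\boldsymbol c$ via Lemma~\ref{lem:radial-scalar-reconstruction}, handle $p=1$ separately with the constant field, and obtain the increment bounds from \eqref{eq:apx-increment-control}. The only difference is that you keep the factor $\frac32$ where the paper weakens it to $2$, which gives slightly sharper constants than the stated $B_0,B_1$; the paper also spells out the operator-norm identity $\|D^{p-1}\nabla f(\boldsymbol y)-D^{p-1}\nabla f(\boldsymbol z)\|=\|D^pf(\boldsymbol y)-D^pf(\boldsymbol z)\|$ that underlies your one-line smoothness claim.
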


\begin{proof}
Suppose first that $p\ge2$.
Pairing the vector output of $D^{p-1}\nabla f$
with a unit vector supplies the last argument of
the scalar $p$-linear form $D^pf$.
Taking the supremum over this unit vector and the other
$p-1$ unit arguments gives
$\|D^{p-1}\nabla f(\boldsymbol y)
-D^{p-1}\nabla f(\boldsymbol z)\|
=\|D^pf(\boldsymbol y)-D^pf(\boldsymbol z)\|$.
Hence $\nabla f\in C^{p-1,\beta}(L_p)$.

Apply Proposition~\ref{prop:local-reconstruction}
to $F=\nabla f$ at degree $p-1$ with the supplied
gradient vectors as its input values.
The resulting vector polynomial
$W\in(\Pi_{p-1}^d)^d$ uses
$\binom{d+p-1}{p-1}$ supplied gradients and satisfies
$W(\boldsymbol c)=\widetilde{\boldsymbol g}_{\boldsymbol c}$.
Weakening the factor $\frac{3}{2}$ to $2$, its errors obey
$\|W-\nabla f\|_{\infty,S}
\le2a_{p-1,\beta}L_p\rho^{p-1+\beta}+\Lambda_0\tau$
and
$\|DW-\nabla^2f\|_{\infty,S}
\le2a_{p-2,\beta}L_p\rho^{p-2+\beta}+\Lambda_1\tau$.

Use Lemma~\ref{lem:radial-scalar-reconstruction}
with $\boldsymbol x=\boldsymbol c$ to construct $P$.
It gives $P(\boldsymbol c)=0$ and
$\nabla P(\boldsymbol c)=W(\boldsymbol c)
=\widetilde{\boldsymbol g}_{\boldsymbol c}$.
Its error bounds yield
\[
\begin{aligned}
 \|P-f+f(\boldsymbol c)\|_{\infty,S}
 &\le
 \rho\left(
 2a_{p-1,\beta}L_p\rho^{p-1+\beta}
 +\Lambda_0\tau\right)
 =B_0,\\
 \|\nabla P-\nabla f\|_{\infty,S}
 &\le
 2a_{p-1,\beta}L_p\rho^{p-1+\beta}
 +\Lambda_0\tau\\
 &\quad+
 \frac{\rho}{2}\left(
 2a_{p-2,\beta}L_p\rho^{p-2+\beta}
 +\Lambda_1\tau\right)
 =B_1.
\end{aligned}
\]

For $p=1$, use only the supplied gradient
$\widetilde{\boldsymbol g}_{\boldsymbol c}$ and set
$P(\boldsymbol c+\boldsymbol v)
:=\langle\widetilde{\boldsymbol g}_{\boldsymbol c},
\boldsymbol v\rangle$.
Then $P(\boldsymbol c)=0$ and
$\nabla P=\widetilde{\boldsymbol g}_{\boldsymbol c}$.
The triangle inequality and H\"older continuity give
$\|\nabla P(\boldsymbol c+\boldsymbol v)
-\nabla f(\boldsymbol c+\boldsymbol v)\|_2
\le\|\widetilde{\boldsymbol g}_{\boldsymbol c}
-\nabla f(\boldsymbol c)\|_2
+\|\nabla f(\boldsymbol c)
-\nabla f(\boldsymbol c+\boldsymbol v)\|_2
\le\tau+L_1\|\boldsymbol v\|_2^\beta\le B_1$.
For the value error, integrate along the segment to obtain
\[
\begin{aligned}
 &\left|
 P(\boldsymbol c+\boldsymbol v)
 -f(\boldsymbol c+\boldsymbol v)+f(\boldsymbol c)
 \right|\\
 &\quad=
 \left|
 \int\limits_0^1
 \left\langle
 \widetilde{\boldsymbol g}_{\boldsymbol c}
 -\nabla f(\boldsymbol c+t\boldsymbol v),
 \boldsymbol v
 \right\rangle\,dt
 \right|\\
 &\quad\le
 \int\limits_0^1
 \left(\tau+L_1t^\beta\|\boldsymbol v\|_2^\beta\right)
 \|\boldsymbol v\|_2\,dt\\
 &\quad=
 \tau\|\boldsymbol v\|_2
 +a_{1,\beta}L_1\|\boldsymbol v\|_2^{1+\beta}
 \le B_0.
\end{aligned}
\]
The constant vector interpolant has the single basis
function $1$, so $\Lambda_0=1$ and $\Lambda_1=0$.

Finally, set $e:=P-f+f(\boldsymbol c)$.
The established bounds give
$\|e\|_{\infty,S}\le B_0$ and
$\|\nabla e\|_{\infty,S}\le B_1$.
Applying \eqref{eq:apx-increment-control} to $e$
proves \eqref{eq:apx-gradient-only-increments}.
Since $P(\boldsymbol c)=0$, we have $e(\boldsymbol c)=0$
even for inexact gradient data, so the centered bound
replaces $2B_0$ by $B_0$.
Setting $\tau=0$ recovers all exact-gradient conclusions,
including $\nabla P(\boldsymbol c)=\nabla f(\boldsymbol c)$.
If $L_p=\tau=0$, then $B_0=0$, which gives
$P=f-f(\boldsymbol c)$ throughout $S$.
The interpolation nodes are chosen nonadaptively, and
integration of the known vector polynomial requires
no further queries.
\end{proof}

The gradient construction meets prescribed value and
derivative tolerances whenever
\begin{equation}
\label{eq:apx-gradient-only-radius}
 B_0\le\varepsilon_0,
 \qquad B_1\le\varepsilon_1,
\end{equation}
respectively.
Either condition may be imposed independently when
only the corresponding accuracy is required.

\subsection{Approximate Global Minimization}
\label{subsec:approximation-global-minimization}

We now address Problem~\ref{prob:apx-global-minimization}.
Given sampled values of $f$ and a known continuous function
$q$, we reconstruct a model $\widehat f$ and approximately
minimize $\widehat f+q$.
The reconstruction error then controls the suboptimality
for $f+q$.
We count only queries to the unknown function $f$ while the
optimization of the reconstructed model plus the known
term $q$ is treated as postprocessing.

\paragraph{From reconstruction error to suboptimality.}
For a continuous scalar function $E$ on a nonempty compact
set $Y$, define its oscillation by
$\operatorname{osc}_Y(E)
:=\max\limits_{\boldsymbol y\in Y}E(\boldsymbol y)
-\min\limits_{\boldsymbol y\in Y}E(\boldsymbol y)$.
Applied to the reconstruction error $E:=\widehat f-f$,
this quantity measures how much the error can change
between two points.
It is unchanged by adding a constant to the model.

\begin{lemma}[\textbf{Suboptimality under Model Error}]
\label{lem:approximation-minimization-transfer}
Let $Y$ be nonempty and compact, let
$f,\widehat f,q\in C(Y)$, and let $\zeta\ge0$.
If $\widehat{\boldsymbol y}\in Y$ satisfies
$(\widehat f+q)(\widehat{\boldsymbol y})
\le\min\limits_{\boldsymbol y\in Y}
(\widehat f+q)(\boldsymbol y)+\zeta$, then
\[
 (f+q)(\widehat{\boldsymbol y})
 -\min\limits_{\boldsymbol y\in Y}(f+q)(\boldsymbol y)
 \le
 \operatorname{osc}_Y(\widehat f-f)+\zeta
 \le
 2\|\widehat f-f\|_{\infty,Y}+\zeta.
\]
\end{lemma}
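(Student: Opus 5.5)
The plan is to compare the objective at $\widehat{\boldsymbol y}$ and at a true minimizer, using the model as an intermediary. Let $E:=\widehat f-f$, which is continuous on the compact set $Y$. Since $f+q$ is continuous on $Y$, it attains its minimum at some $\boldsymbol y^\star\in Y$. The identity $f+q=(\widehat f+q)-E$ then reduces everything to the single approximate-optimality hypothesis plus bounds on $E$ at two points.

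First I write
\[
 (f+q)(\widehat{\boldsymbol y})
 =(\widehat f+q)(\widehat{\boldsymbol y})-E(\widehat{\boldsymbol y})
 \le(\widehat f+q)(\boldsymbol y^\star)+\zeta-E(\widehat{\boldsymbol y}).
\]
This uses the hypothesis together with $\min_Y(\widehat f+q)\le(\widehat f+q)(\boldsymbol y^\star)$. Next I substitute $(\widehat f+q)(\boldsymbol y^\star)=(f+q)(\boldsymbol y^\star)+E(\boldsymbol y^\star)$, which gives
\[
 (f+q)(\widehat{\boldsymbol y})-\min_Y(f+q)
 \le E(\boldsymbol y^\star)-E(\widehat{\boldsymbol y})+\zeta .
\]

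To finish, I bound the difference of errors. I have $E(\boldsymbol y^\star)-E(\widehat{\boldsymbol y})\le\max_YE-\min_YE=\operatorname{osc}_Y(E)$, where both extrema exist by compactness. I also have $\operatorname{osc}_Y(E)\le|\max_YE|+|\min_YE|\le2\|E\|_{\infty,Y}$.

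There is no real obstacle. The only care needed is to note the existence of the minimizer $\boldsymbol y^\star$ and of the extrema of $E$, both of which follow from continuity on the compact set $Y$.
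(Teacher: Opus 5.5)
Your proof is correct and is essentially the paper's argument: fix a minimizer $\boldsymbol y^\star$ of $f+q$, use $\min_Y(\widehat f+q)\le(\widehat f+q)(\boldsymbol y^\star)$ together with the $\zeta$-suboptimality of $\widehat{\boldsymbol y}$, rewrite via $E=\widehat f-f$ to obtain the bound $E(\boldsymbol y^\star)-E(\widehat{\boldsymbol y})+\zeta$, and then bound this by $\operatorname{osc}_Y(E)+\zeta\le2\|E\|_{\infty,Y}+\zeta$. Your attention to the existence of the minimizer and of the extrema of $E$ via compactness matches the paper's justification as well.
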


\begin{proof}
Continuity and compactness give a minimizer
$\boldsymbol y^\star\in
\operatorname*{arg\,min}\limits_{\boldsymbol y\in Y}
(f+q)(\boldsymbol y)$.
Write $E:=\widehat f-f$.
Since
$\min\limits_Y(\widehat f+q)
\le(\widehat f+q)(\boldsymbol y^\star)$,
the assumed model suboptimality implies
$(\widehat f+q)(\widehat{\boldsymbol y})
-(\widehat f+q)(\boldsymbol y^\star)\le\zeta$.
Therefore,
\[
\begin{aligned}
 &(f+q)(\widehat{\boldsymbol y})
 -(f+q)(\boldsymbol y^\star)\\
 &\quad=
 (\widehat f+q)(\widehat{\boldsymbol y})
 -(\widehat f+q)(\boldsymbol y^\star)
 +E(\boldsymbol y^\star)-E(\widehat{\boldsymbol y})\\
 &\quad\le
 \zeta+\max\limits_{\boldsymbol y\in Y}E(\boldsymbol y)
 -\min\limits_{\boldsymbol y\in Y}E(\boldsymbol y)\\
 &\quad=
 \zeta+\operatorname{osc}_Y(E)
 \le\zeta+2\|E\|_{\infty,Y}.
\end{aligned}
\]
The last inequality follows from
$\max\limits_Y E\le\|E\|_{\infty,Y}$ and
$\min\limits_Y E\ge-\|E\|_{\infty,Y}$.
\end{proof}

\paragraph{Minimization from supplied values.}
We combine the smooth reconstruction in
Lemma~\ref{lem:polynomial-reconstruction} with the
preceding minimization-transfer lemma.
The following theorem specifies the sampling nodes and
data accuracy sufficient for a prescribed optimization
tolerance.

\begin{theorem}[\textbf{Global Minimization with a Known Term}]
\label{thm:global-minimization-known-term}
Let $Y:=\prod\limits_{i=1}^k[a_i,b_i]$, where
$a_i<b_i$, and suppose that $\operatorname{diam}(Y)\le D$.
Let $f\in C^{p,\beta}(L_p)$ on a neighborhood of $Y$,
where $p\ge1$, and let $q\in C(Y)$ be known.
Write $\nu:=p+\beta$.

For every $\delta>0$, there is a nonadaptively chosen
node set $V\subset Y$ with
\[
 |V|\le C_{k,p,\beta}
 \max\left\{
 1,\,
 D^k\left(\frac{L_p}{\delta}\right)^{\frac{k}{\nu}}
 \right\},
\]
where $C_{k,p,\beta}$ depends only on $k,p,\beta$.
There are explicit coefficient functions
$L_{\boldsymbol v}$, smooth on a neighborhood of $Y$,
with value stability constant
$\Lambda_0:=
\sup\limits_{\boldsymbol y\in Y}
\sum\limits_{\boldsymbol v\in V}
|L_{\boldsymbol v}(\boldsymbol y)|<\infty$,
such that supplied values satisfying
\[
 \max\limits_{\boldsymbol v\in V}
 |\widetilde f(\boldsymbol v)-f(\boldsymbol v)|
 \le\tau\le\frac{\delta}{8\Lambda_0}
\]
determine the model
$\widehat f:=
\sum\limits_{\boldsymbol v\in V}
\widetilde f(\boldsymbol v)L_{\boldsymbol v}$
with
$\|\widehat f-f\|_{\infty,Y}\le\frac{\delta}{4}$.

Consequently, any $\frac{\delta}{2}$-approximate
minimizer $\widehat{\boldsymbol y}\in Y$ of
$\widehat f+q$ satisfies
\begin{equation}
\label{eq:apx-global-minimum-accuracy}
 (f+q)(\widehat{\boldsymbol y})
 -\min\limits_{\boldsymbol y\in Y}(f+q)(\boldsymbol y)
 \le\delta.
\end{equation}
The model, its optimization, and the evaluation of
$\widehat f(\widehat{\boldsymbol y})$ require no further
queries to $f$.
\end{theorem}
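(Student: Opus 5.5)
The plan is to combine the smooth reconstruction of Lemma~\ref{lem:polynomial-reconstruction}, applied in the $k$ intrinsic coordinates of $Y$, with the transfer estimate of Lemma~\ref{lem:approximation-minimization-transfer}. We take $S:=Y\subset\mathbb R^k$, which is compact and convex with nonempty interior because $a_i<b_i$. We take $\boldsymbol c$ to be the geometric center of $Y$, so that $\rho=\max_{\boldsymbol y\in Y}\|\boldsymbol y-\boldsymbol c\|_2\le D/2\le D$. Only the value guarantee is needed, so we set the value budget to $\varepsilon_0:=\delta/8$. We choose the derivative budget $\varepsilon_1$ large enough that it never determines $\sigma$ in \eqref{eq:apx-function-radius}. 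Concretely, writing $\sigma_v:=(16a_{p,\beta}L_p\rho^\nu/\delta)^{1/\nu}$, we set
\[
\varepsilon_1:=2a_{p-1,\beta}L_p\rho^{\nu-1}\sigma_v^{-(\nu-1)}
\]
when $L_p>0$ (any positive value when $L_p=0$). Since $\nu>1$, this is legitimate and gives $\sigma=\sigma_v$.

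\textbf{Node count.} Lemma~\ref{lem:polynomial-reconstruction} (with $d$ replaced by $k$) yields a nonadaptive node set $V\subset Y$ and explicit $C^\infty$ coefficient functions $L_{\boldsymbol v}$ with finite $\Lambda_0$. It gives
\[
|V|\le\binom{k+p}{p}\max\{1,C_{p,\beta}k^{1/(\nu-1)}(\sigma-1)\}^k .
\]
Using $\sigma-1\le\sigma=(16a_{p,\beta})^{1/\nu}\rho(L_p/\delta)^{1/\nu}$ and $\rho\le D$, the right-hand side is at most
\[
C_{k,p,\beta}\max\bigl\{1,D^k(L_p/\delta)^{k/\nu}\bigr\}.
\]
The factors $\binom{k+p}{p}$, $k^{k/(\nu-1)}$ and $(16a_{p,\beta})^{k/\nu}$ are absorbed into $C_{k,p,\beta}$. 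One point to check is that $k^{1/(\nu-1)}$ stays bounded uniformly for $\nu$ near $1$. Since $\beta$ and $p$ are fixed, this is simply part of the constant's dependence on $(p,\beta)$.

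\textbf{Accuracy and suboptimality.} For the model $\widehat f=\sum_{\boldsymbol v}\widetilde f(\boldsymbol v)L_{\boldsymbol v}$, the bound \eqref{eq:apx-vector-value-bounds} gives
\[
\|\widehat f-f\|_{\infty,Y}\le\varepsilon_0+\Lambda_0\tau\le\tfrac{\delta}{8}+\tfrac{\delta}{8}=\tfrac{\delta}{4},
\]
using the hypothesis $\tau\le\delta/(8\Lambda_0)$. We then apply Lemma~\ref{lem:approximation-minimization-transfer} with $\zeta=\delta/2$, which gives suboptimality at most $2\cdot\frac{\delta}{4}+\frac{\delta}{2}=\delta$, i.e., \eqref{eq:apx-global-minimum-accuracy}. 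Because $V$ and the $L_{\boldsymbol v}$ are fixed before any data are observed, the queries form one batch. Forming $\widehat f$, minimizing $\widehat f+q$, and evaluating $\widehat f(\widehat{\boldsymbol y})$ use only the supplied data, so they need no further queries.

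The main obstacle is bookkeeping rather than analysis. The derivative budget must be neutralized so that $\sigma$ scales as $(L_p/\delta)^{1/\nu}$ and not with the worse exponent $1/(\nu-1)$. All dimension-dependent factors must also be confirmed to involve only $k$, since the reconstruction lives on $Y$ itself. The degenerate case $L_p=0$, or more generally $\sigma\le(4/3)^{1/\nu}$, falls into the single-polynomial branch with $\binom{k+p}{p}$ nodes, which is covered by the $\max\{1,\cdot\}$ term.
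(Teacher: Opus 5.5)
Your proposal is correct and follows essentially the paper's proof: apply Lemma~\ref{lem:polynomial-reconstruction} on $Y$ with value budget $\varepsilon_0=\delta/8$ and a neutralized derivative budget, absorb the node-count factors into $C_{k,p,\beta}$, obtain $\|\widehat f-f\|_{\infty,Y}\le\delta/4$ from $\Lambda_0\tau\le\delta/8$, and conclude with Lemma~\ref{lem:approximation-minimization-transfer} at $\zeta=\delta/2$. The only cosmetic difference is how $\varepsilon_1$ is neutralized. The paper takes $\varepsilon_1\ge 2a_{p-1,\beta}L_p\rho^{\nu-1}$, so the derivative term is at most $1$ and $\sigma\le\max\{1,\sigma_v\}$. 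You instead tune $\varepsilon_1$ so that this term equals $\sigma_v$ exactly, which yields the same node bound.
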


\begin{proof}
Take $\boldsymbol c$ to be the center of $Y$ and set
$\rho:=\max\limits_{\boldsymbol y\in Y}
\|\boldsymbol y-\boldsymbol c\|_2\le D$.
Apply Lemma~\ref{lem:polynomial-reconstruction} with
value budget $\varepsilon_0:=\frac{\delta}{8}$.
Since no derivative accuracy is required here, choose
any $\varepsilon_1>0$ satisfying
$\varepsilon_1\ge
2a_{p-1,\beta}L_p\rho^{\nu-1}$.
The refinement parameter then satisfies
$\sigma\le
\max\{1,
(\frac{16a_{p,\beta}L_p\rho^\nu}{\delta})^{\frac1\nu}\}$.
The node bound in that lemma therefore gives
\[
\begin{aligned}
 |V|
 &\le C_{k,p,\beta}\max\{1,\sigma\}^k\\
 &\le C_{k,p,\beta}
 \max\left\{
 1,\,
 D^k\left(\frac{L_p}{\delta}\right)^{\frac{k}{\nu}}
 \right\},
\end{aligned}
\]
after enlarging $C_{k,p,\beta}$.
This also covers $L_p=0$.

The same lemma supplies the coefficient functions and
their finite stability constant $\Lambda_0$.
Under the assumed data accuracy, its value estimate gives
$\|\widehat f-f\|_{\infty,Y}
\le\frac{\delta}{8}+\Lambda_0\tau
\le\frac{\delta}{4}$.

Since $\widehat f+q$ is continuous on compact $Y$,
a $\frac{\delta}{2}$-approximate minimizer exists.
For any such point $\widehat{\boldsymbol y}$,
Lemma~\ref{lem:approximation-minimization-transfer} yields
\[
\begin{aligned}
 (f+q)(\widehat{\boldsymbol y})
 -\min\limits_{\boldsymbol y\in Y}(f+q)(\boldsymbol y)
 &\le
 2\|\widehat f-f\|_{\infty,Y}+\frac{\delta}{2}\\
 &\le
 2\cdot\frac{\delta}{4}+\frac{\delta}{2}
 =\delta.
\end{aligned}
\]
The coefficient functions are predetermined, and
$\widehat f$ is completely determined by the supplied
values. All subsequent operations use only
$\widehat f$ and the known function $q$.
\end{proof}

\section{Proof of Theorem~\ref{the:main1}}
\label{app:full-proof-main1}
\label{app:full-proof-main1-unconstrained}
\label{sec:constrained-case}

We analyze Algorithm~\ref{alg:main0} with either initialization from
Section~\ref{subsec:trapping-algorithm}.
We first establish the approximation and query guarantees of the
separator subroutine. We then prove correctness by maintaining a
boundary certificate until the region is small enough to apply
Lemma~\ref{lem:stationarity-from-model}. Finally, we bound the total
number of queries and the adaptive depth.
Throughout, write $\nu:=p+\beta_p>1$.
Constants denoted by $C_{d,p,\beta_p}$ may change between occurrences
and depend only on the indicated parameters.
We use the reconstruction and minimization results in
Appendix~\ref{app:approximation}.
Only function-value oracle queries are counted; optimization of
the explicit models requires no additional oracle queries.

\subsection{Guarantees of the Separator Subroutine}
\label{app:main1-separator}

Fix an iteration $t$ and write $\delta:=\varepsilon_0^t$.
We establish the approximation guarantee used in the correctness
proof, together with the query cost and the number of adaptive
rounds needed to obtain it.

Each separator $E_m^t$, $m\in\{1,2\}$, is a compact
$(d-1)$-dimensional hyperrectangle of diameter at most
$\sqrt{d-1}\,r^t$.
An affine isometric parameterization preserves the
$C^{p,\beta_p}(L_p)$ bound: its linear part has operator norm one,
so restriction of the $p$th derivative cannot increase its
H\"older constant.
Apply Theorem~\ref{thm:global-minimization-known-term} in these
intrinsic coordinates, with the common known term
$q_t(\boldsymbol y):=-f(\boldsymbol x^t)
+\varepsilon_t\|\boldsymbol y-\boldsymbol x^t\|_2$.
For each $m$, the theorem constructs $\widehat f_m$ satisfying
$\|\widehat f_m-f\|_{\infty,E_m^t}\le\delta/4$
and a point $\boldsymbol z_m\in E_m^t$ such that
$(\widehat f_m+q_t)(\boldsymbol z_m)
\le\min_{E_m^t}(\widehat f_m+q_t)+\delta/2$.

The algorithm chooses $\widehat m$ minimizing
$(\widehat f_m+q_t)(\boldsymbol z_m)$ and sets
$\boldsymbol z^\star:=\boldsymbol z_{\widehat m}$.
To verify its accuracy on the entire union $\mathcal E^t$, let
$\boldsymbol y_\star\in E_{m_\star}^t$ minimize the true trapping
function over $\mathcal E^t$.
Such a point exists because the union is compact and the trapping
function is continuous.
The model error, candidate selection, and model suboptimality give
\begin{equation}
\label{eq:app-main1-separator-accuracy}
\begin{aligned}
 \phi_{\boldsymbol x^t,\varepsilon_t}(\boldsymbol z^\star)
 &\le(\widehat f_{\widehat m}+q_t)(\boldsymbol z^\star)
       +\delta/4\\
 &\le(\widehat f_{m_\star}+q_t)(\boldsymbol z_{m_\star})
       +\delta/4\\
 &\le(\widehat f_{m_\star}+q_t)(\boldsymbol y_\star)
       +3\delta/4\\
 &\le\phi_{\boldsymbol x^t,\varepsilon_t}(\boldsymbol y_\star)
       +\delta.
\end{aligned}
\end{equation}
Thus $\boldsymbol z^\star$ is an
$\varepsilon_0^t$-approximate minimizer of the trapping function
over $\mathcal E^t$.

Both separator reconstructions can be queried in one nonadaptive
batch, since their sites depend only on the current region and
the prescribed accuracy.
After selecting $\boldsymbol z^\star$, one exact query at this point
determines its reachability and supplies the next pivot value if
the algorithm moves.
Including this query, the iteration cost satisfies
\begin{equation}
\label{eq:app-main1-separator-cost}
 Q_t\le C_{d,p,\beta_p}\left[
  1+(r^t)^{d-1}
  \left(\frac{L_p}{\varepsilon_0^t}\right)^{\frac{d-1}{\nu}}
 \right].
\end{equation}
Each iteration therefore uses at most two adaptive rounds.

\subsection{Correctness}
\label{app:main1-correctness}

We maintain a boundary certificate as the region shrinks.
At termination, this certificate and the accuracy of the local
polynomial model will provide the hypotheses of
Lemma~\ref{lem:stationarity-from-model}.

\paragraph{Boundary invariant.}
The invariant is that $\boldsymbol x^t\in R_t$ and
\[
\phi_{\boldsymbol x^t,\varepsilon_t}(\boldsymbol y)
=
f(\boldsymbol y)-f(\boldsymbol x^t)
+\varepsilon_t\|\boldsymbol y-\boldsymbol x^t\|_2
>0
\]
for every $\boldsymbol y\in\partial R_t$ in the unconstrained case,
or every
$\boldsymbol y\in\partial_{\mathrm{art}}R_t
=\partial_{[0,1]^d}R_t$
in the cube-constrained case.
We call these the certified boundaries in the respective settings.
We first verify the initialization and then prove preservation
under one iteration.

We first check that the invariant holds at initialization.
In the unconstrained case, $f\ge0$.
If $f(\boldsymbol x^0)=0$, then $\boldsymbol x^0$ is a global
minimizer, and differentiability on $\mathbb R^d$ gives
$\nabla f(\boldsymbol x^0)=\boldsymbol0$.
The algorithm therefore terminates correctly after its initial
query. For the rest of the unconstrained analysis, assume
$f(\boldsymbol x^0)>0$.
The initial region is a full-dimensional cube centered at
$\boldsymbol x^0$, with side length
$r^0=16f(\boldsymbol x^0)/\varepsilon$.
For every $\boldsymbol y\in\partial R_0$,
$\|\boldsymbol y-\boldsymbol x^0\|_2
\ge\|\boldsymbol y-\boldsymbol x^0\|_\infty=r^0/2$.
Since $\varepsilon_0=\varepsilon/4$, it follows that
$\phi_{\boldsymbol x^0,\varepsilon_0}(\boldsymbol y)
\ge-f(\boldsymbol x^0)+(\varepsilon/4)(r^0/2)
=f(\boldsymbol x^0)>0$.
In the cube-constrained case, $R_0=[0,1]^d$ contains the prescribed
pivot and has no artificial facets, so its boundary certificate
is vacuous. Thus both initializations satisfy the invariant.

We next prove the induction step. Suppose that the invariant holds
at iteration $t$. We first record the geometry needed to preserve it. Let $j$ be the selected coordinate and $F_t$ the newly created facet.
\begin{fact}
$\boldsymbol x^{t+1}\in R_{t+1}$ and
$\operatorname{dist}(\boldsymbol x^{t+1},F_t)\ge r^t/6$.  
\end{fact}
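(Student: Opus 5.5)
The plan is a two-case analysis according to which third of the $j$th coordinate interval is discarded. Throughout I use two facts: $\boldsymbol x^{t+1}\in R_t$ and $b_j^t-a_j^t=r^t$, the latter because $j$ is a longest side.

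First I check $\boldsymbol x^{t+1}\in R_t$. If the pivot does not move, $\boldsymbol x^{t+1}=\boldsymbol x^t\in R_t$ by the invariant. Otherwise $\boldsymbol x^{t+1}=\boldsymbol z^\star\in\mathcal E^t\subseteq R_t$ by the definition of the separators in \eqref{eq:trisection-separators}. In either case every coordinate $i\ne j$ of $\boldsymbol x^{t+1}$ lies in $[a_i^t,b_i^t]=[a_i^{t+1},b_i^{t+1}]$. Hence only the $j$th coordinate needs attention, and it satisfies $[\boldsymbol x^{t+1}]_j\in[a_j^t,a_j^t+r^t]$.

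\textbf{Case $[\boldsymbol x^{t+1}]_j\ge a_j^t+\frac{r^t}{2}$.} The cut sets $a_j^{t+1}=a_j^t+\frac{r^t}{3}$ and leaves $b_j^{t+1}=b_j^t$. Then $[\boldsymbol x^{t+1}]_j\in[a_j^t+\frac{r^t}{2},b_j^t]\subseteq[a_j^{t+1},b_j^{t+1}]$, so $\boldsymbol x^{t+1}\in R_{t+1}$. The newly created facet is $F_t=\{\boldsymbol y\in R_{t+1}:y_j=a_j^t+\frac{r^t}{3}\}$, which lies in a hyperplane orthogonal to $\boldsymbol e_j$. Therefore
\[
\operatorname{dist}(\boldsymbol x^{t+1},F_t)\ \ge\ [\boldsymbol x^{t+1}]_j-a_j^t-\frac{r^t}{3}\ \ge\ \frac{r^t}{2}-\frac{r^t}{3}=\frac{r^t}{6}.
\]

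\textbf{Case $[\boldsymbol x^{t+1}]_j<a_j^t+\frac{r^t}{2}$.} This is symmetric. The cut sets $b_j^{t+1}=b_j^t-\frac{r^t}{3}=a_j^t+\frac{2r^t}{3}$, and the new facet lies in the hyperplane $y_j=a_j^t+\frac{2r^t}{3}$. Membership follows because $[\boldsymbol x^{t+1}]_j<a_j^t+\frac{r^t}{2}<a_j^t+\frac{2r^t}{3}$. The distance bound follows because $a_j^t+\frac{2r^t}{3}-[\boldsymbol x^{t+1}]_j>\frac{r^t}{6}$.

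There is no real obstacle. The only care needed is to use the distance to the supporting hyperplane as a lower bound for the distance to the facet, and to confirm that $F_t$ is the facet at the new endpoint of coordinate $j$, so that the other coordinates play no role in the bound.
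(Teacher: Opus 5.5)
Your proof is correct and follows the paper's argument. It uses the same two ingredients: $\boldsymbol x^{t+1}\in R_t$ by the pivot rule, and the same two-case computation on coordinate $j$ for both membership in $R_{t+1}$ and the distance to $F_t$. The only cosmetic difference is how the facet distance is obtained: you lower-bound it by the distance to the supporting hyperplane, while the paper observes that orthogonal projection onto the separator stays inside it, so the coordinate gap is the exact distance; either suffices.
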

\begin{proof}
The updated pivot $\boldsymbol x^{t+1}$ is either
$\boldsymbol x^t$ or $\boldsymbol z^\star\in\mathcal E^t$,
and hence belongs to $R_t$.
If $[\boldsymbol x^{t+1}]_j\ge a_j^t+r^t/2$, the algorithm retains
$[a_j^t+r^t/3,b_j^t]$ and $F_t=E_1^t$.
The updated pivot lies in this interval, at distance
$[\boldsymbol x^{t+1}]_j-a_j^t-r^t/3\ge r^t/6$ from $F_t$.
If $[\boldsymbol x^{t+1}]_j<a_j^t+r^t/2$, the retained interval is
$[a_j^t,b_j^t-r^t/3]$ and $F_t=E_2^t$.
The updated pivot again lies in the retained interval, now at
distance $a_j^t+2r^t/3-[\boldsymbol x^{t+1}]_j>r^t/6$ from $F_t$.
These are the Euclidean distances to the corresponding separators:
orthogonal projection leaves the other coordinates unchanged and
within their original intervals.
Consequently, $\boldsymbol x^{t+1}\in R_{t+1}$ and
$\operatorname{dist}(\boldsymbol x^{t+1},F_t)\ge r^t/6$.   
\end{proof}

All side lengths remain positive, and every boundary point of
$R_{t+1}$ lies on a retained old facet or on $F_t$.
In the constrained case, a retained physical facet remains physical,
whereas $F_t$ is artificial because its selected coordinate lies
strictly between $0$ and $1$.
Thus
\begin{equation}
\label{eq:app-main1-boundary-decomposition}
\begin{aligned}
 \partial R_{t+1}
 &\subseteq(\partial R_t\cap R_{t+1})\cup F_t,\\
 \partial_{\mathrm{art}}R_{t+1}
 &\subseteq(\partial_{\mathrm{art}}R_t\cap R_{t+1})\cup F_t.
\end{aligned}
\end{equation}
The second inclusion also covers intersections of artificial and
physical facets, as required by the definition of
$\partial_{\mathrm{art}}$.
It remains to prove positivity on the retained certified boundary
and on the new facet.

For every $\boldsymbol y\in R_t$, we have 
\begin{equation}
\label{eq:app-main1-pivot-transfer}
\begin{aligned}
&\phi_{\boldsymbol x^{t+1},\varepsilon_{t+1}}(\boldsymbol y)\\
=&f(\boldsymbol y)-f(\boldsymbol x^{t+1})
  +\varepsilon_{t+1}\|\boldsymbol y-\boldsymbol x^{t+1}\|_2
  \\
=&f(\boldsymbol y)-f(\boldsymbol x^{t+1})
  +\varepsilon_t\|\boldsymbol y-\boldsymbol x^{t+1}\|_2
  +\Delta_t\|\boldsymbol y-\boldsymbol x^{t+1}\|_2\\
\ge& f(\boldsymbol y)-f(\boldsymbol x^{t+1})
  +\varepsilon_t\|\boldsymbol y-\boldsymbol x^t\|_2
  -\varepsilon_t\|\boldsymbol x^{t+1}-\boldsymbol x^t\|_2
  +\Delta_t\|\boldsymbol y-\boldsymbol x^{t+1}\|_2\\
=&\bigl[f(\boldsymbol y)-f(\boldsymbol x^t)
  +\varepsilon_t\|\boldsymbol y-\boldsymbol x^t\|_2\bigr]-\bigl[f(\boldsymbol x^{t+1})-f(\boldsymbol x^t)
  +\varepsilon_t\|\boldsymbol x^{t+1}-\boldsymbol x^t\|_2\bigr]
  +\Delta_t\|\boldsymbol y-\boldsymbol x^{t+1}\|_2\\
=&\phi_{\boldsymbol x^t,\varepsilon_t}(\boldsymbol y)
  -\phi_{\boldsymbol x^t,\varepsilon_t}(\boldsymbol x^{t+1})
  +\Delta_t\|\boldsymbol y-\boldsymbol x^{t+1}\|_2.
\end{aligned}
\end{equation}
For a retained point $\boldsymbol y$ of the certified old boundary,
we have $\phi_{\boldsymbol x^t,\varepsilon_t}(\boldsymbol y)>0$
by induction,
$\phi_{\boldsymbol x^t,\varepsilon_t}(\boldsymbol x^{t+1})\le0$
by reachability, and
$\Delta_t\|\boldsymbol y-\boldsymbol x^{t+1}\|_2\ge0$.
Thus \eqref{eq:app-main1-pivot-transfer} yields
\[
\begin{aligned}
\phi_{\boldsymbol x^{t+1},\varepsilon_{t+1}}(\boldsymbol y)
&\ge \phi_{\boldsymbol x^t,\varepsilon_t}(\boldsymbol y)
-\phi_{\boldsymbol x^t,\varepsilon_t}(\boldsymbol x^{t+1})
+\Delta_t\|\boldsymbol y-\boldsymbol x^{t+1}\|_2\\
&\ge \phi_{\boldsymbol x^t,\varepsilon_t}(\boldsymbol y)>0.
\end{aligned}
\]
For every $\boldsymbol y\in F_t\subseteq\mathcal E^t$,
\begin{equation}
\label{eq:app-main1-new-facet-margin}
\begin{aligned}
\phi_{\boldsymbol x^{t+1},\varepsilon_{t+1}}(\boldsymbol y)
&\ge \phi_{\boldsymbol x^t,\varepsilon_t}(\boldsymbol y)
-\phi_{\boldsymbol x^t,\varepsilon_t}(\boldsymbol x^{t+1})
+\Delta_t\|\boldsymbol y-\boldsymbol x^{t+1}\|_2\\
&\ge \phi_{\boldsymbol x^t,\varepsilon_t}(\boldsymbol z^\star)
-\varepsilon_0^t
-\phi_{\boldsymbol x^t,\varepsilon_t}(\boldsymbol x^{t+1})
+\Delta_t\|\boldsymbol y-\boldsymbol x^{t+1}\|_2\\
&\ge -\varepsilon_0^t
+\Delta_t\|\boldsymbol y-\boldsymbol x^{t+1}\|_2\\
&\ge -\frac{r^t\Delta_t}{12}+\frac{r^t\Delta_t}{6}\\
&=\frac{r^t\Delta_t}{12}>0.
\end{aligned}
\end{equation}
The first inequality follows from
\eqref{eq:app-main1-pivot-transfer}, and the second from
\eqref{eq:app-main1-separator-accuracy}.
The third uses
$\phi_{\boldsymbol x^t,\varepsilon_t}(\boldsymbol x^{t+1})
=\min\{0,\phi_{\boldsymbol x^t,\varepsilon_t}(\boldsymbol z^\star)\}
\le\phi_{\boldsymbol x^t,\varepsilon_t}(\boldsymbol z^\star)$.
The fourth uses $\varepsilon_0^t=r^t\Delta_t/12$ and
$\|\boldsymbol y-\boldsymbol x^{t+1}\|_2\ge r^t/6$.
The final strict inequality holds because $r^t>0$ and $\Delta_t>0$.
Together with \eqref{eq:app-main1-boundary-decomposition} and
$\boldsymbol x^{t+1}\in R_{t+1}$, this completes the induction
in both settings.


\paragraph{Terminal output.}
We now verify the boundary and model conditions of
Lemma~\ref{lem:stationarity-from-model}, and then apply it to the
terminal minimization problem.

First, the parameter increments have a bounded total.
The specified choice of $\alpha$ satisfies $0<\alpha<1$, and each
nonnegative integer occurs exactly $d$ times as $\lfloor t/d\rfloor$.
Thus, for every generated index $t$,
\begin{equation}
\label{eq:app-main1-budget-sum}
\begin{aligned}
 \varepsilon_t
 &=\frac{\varepsilon}{4}+\sum_{\ell=0}^{t-1}\Delta_\ell\le\frac{\varepsilon}{4}
   +\frac{\varepsilon(1-\alpha)}{4d}
       d\sum_{m=0}^{\infty}\alpha^m
 =\frac{\varepsilon}{2}.
\end{aligned}
\end{equation}
Thus
$\phi_{\boldsymbol x^T,\varepsilon/2}(\boldsymbol y)
=\phi_{\boldsymbol x^T,\varepsilon_T}(\boldsymbol y)
+(\varepsilon/2-\varepsilon_T)
\|\boldsymbol y-\boldsymbol x^T\|_2>0$
on the certified terminal boundary.

Let $\boldsymbol c_T$ be the center of $R_T$ and
$\rho_T:=\max_{\boldsymbol y\in R_T}
\|\boldsymbol y-\boldsymbol c_T\|_2\le\sqrt d\,r^T/2$.
For $L_p>0$, Proposition~\ref{prop:local-reconstruction}
and the stopping condition $r^T\le r_\varepsilon$ give
\begin{equation}
\label{eq:trapping-terminal-model-error}
\begin{aligned}
\|\nabla P_T-\nabla f\|_{\infty,R_T}
&\le \frac32 a_{p-1,\beta_p}L_p\rho_T^{\nu-1}\\
&\le \frac32 a_{p-1,\beta_p}L_p
\left(\frac{\sqrt d\,r_\varepsilon}{2}\right)^{\nu-1}
=\frac{\varepsilon}{8}.
\end{aligned}
\end{equation}

Finally, the explicit terminal objective
$\Psi_T(\boldsymbol y)=P_T(\boldsymbol y)-P_T(\boldsymbol x^T)
+(3\varepsilon/4)\|\boldsymbol y-\boldsymbol x^T\|_2$
is continuous on compact $R_T$, so it attains a minimum.
Let $\boldsymbol z^T$ be the minimizer selected by the algorithm.
All hypotheses of Lemma~\ref{lem:stationarity-from-model} have
now been verified.
In the unconstrained case, the lemma gives
$\boldsymbol z^T\in\operatorname{int}(R_T)$ and
$\|\nabla f(\boldsymbol z^T)\|_2\le\varepsilon$.
In the cube-constrained case, it gives
$\boldsymbol z^T\in\operatorname{int}_{[0,1]^d}(R_T)$ and
$\|G_\gamma(\boldsymbol z^T)\|_2\le\varepsilon$ for every
$\gamma>0$.

\subsection{Complexity}
\label{app:main1-complexity}

\paragraph{Adaptive depth.}
Starting from a cube, every $d$ cuts reduce each side exactly once:
within each block, a cut side is shorter than every uncut side.
Hence
\begin{equation}
\label{eq:app-main1-side-schedule}
r^t=r^0\left(\frac{2}{3}\right)^{\left\lfloor\frac{t}{d}\right\rfloor},
\end{equation}
and the aspect ratio remains at most $\frac{3}{2}$.
The stopping condition $r^t\le r_\varepsilon$
first holds at $T=dM_\varepsilon$, where
$M_\varepsilon
:=\left\lceil
\log_{\frac{3}{2}}\max\left\{1,\frac{r^0}{r_\varepsilon}\right\}
\right\rceil$.

By Subsection~\ref{app:main1-separator}, each iteration uses
at most two adaptive rounds.
The initial query and terminal reconstruction each require at most
one additional round, giving a total adaptive depth of at most $2T+2$.
The two initializations give
\begin{equation}
\label{eq:app-main1-initial-terminal-ratios}
\frac{r^0}{r_\varepsilon}
=
\begin{cases}
8\sqrt d\,f(\boldsymbol x^0)
(12a_{p-1,\beta_p}L_p)^{\frac{1}{\nu-1}}
\varepsilon^{-\frac{\nu}{\nu-1}},
&\text{unconstrained},\\
\frac{\sqrt d}{2}
\left(\frac{12a_{p-1,\beta_p}L_p}{\varepsilon}\right)^{
\frac{1}{\nu-1}},
&\text{cube-constrained}.
\end{cases}
\end{equation}
Thus the adaptive depth is
$\mathcal O\!\left(1+\log\frac{1}{\varepsilon}\right)$
for fixed problem parameters and $0<\varepsilon\le1$.

\paragraph{Total query complexity.}
Write $q:=\frac{d-1}{\nu}$.
For every $t<T$, we have $r^t>r_\varepsilon$ and
$\varepsilon_0^t\le\varepsilon r^t$, so
\begin{equation}
\label{eq:app-main1-iteration-absorption}
\begin{aligned}
(r^t)^{d-1}\left(\frac{L_p}{\varepsilon_0^t}\right)^q
&\ge \left(\frac{L_p}{\varepsilon}\right)^q
(r^t)^{d-1-q}\\
&\ge \left(\frac{L_p}{\varepsilon}\right)^q
r_\varepsilon^{d-1-q}\\
&=\left(\frac{2}{\sqrt d}\right)^{d-1-q}
(12a_{p-1,\beta_p})^{-q}>0.
\end{aligned}
\end{equation}
Thus the constant term in \eqref{eq:app-main1-separator-cost}
can be absorbed into the reconstruction term.
Substituting the error schedule and
\eqref{eq:app-main1-side-schedule} gives
\begin{equation}
\label{eq:app-main1-scale-cost}
Q_t\le C_{d,p,\beta_p}
\left(\frac{L_p}{\varepsilon}\right)^q
(r^0)^{d-1-q}
\left(
\frac{\left(\frac{2}{3}\right)^{\nu-1}}{\alpha}
\right)^{q\left\lfloor\frac{t}{d}\right\rfloor}.
\end{equation}
Since $\alpha>\left(\frac{2}{3}\right)^{\nu-1}$ and
$T=dM_\varepsilon$, summing over all cuts and including the
initial and terminal queries yields
\begin{equation}
\label{eq:app-main1-total-with-iterations}
\begin{aligned}
Q
&\le 1+\binom{d+p}{p}+\sum_{t=0}^{T-1}Q_t\\
&\le 1+\binom{d+p}{p}
+C_{d,p,\beta_p}d
\left(\frac{L_p}{\varepsilon}\right)^q
(r^0)^{d-1-q}
\sum_{m=0}^{M_\varepsilon-1}
\left(
\frac{\left(\frac{2}{3}\right)^{\nu-1}}{\alpha}
\right)^{qm}\\
&\le 1+\binom{d+p}{p}
+C_{d,p,\beta_p}
\left(\frac{L_p}{\varepsilon}\right)^q
(r^0)^{d-1-q}.
\end{aligned}
\end{equation}

Substituting $r^0=\frac{16f(\boldsymbol x^0)}{\varepsilon}$
and $r^0=1$ proves \eqref{eq:main1-total-cost-sketch},
with respective rates
$\mathcal O(\varepsilon^{-(d-1)})$ and
$\mathcal O(\varepsilon^{-\frac{d-1}{\nu}})$
for fixed problem parameters.
This completes the proof of both parts of Theorem~\ref{the:main1}.

\section{Proofs in Section~\ref{sec:4}}

\subsection{Proof of Lemma~\ref{lem:randomized-line-integration}}
\label{app:4.1}

We use the Taylor constants $a_{r,\beta}$ from
Appendix~\ref{app:approximation} and the univariate interpolation
Lebesgue constants $\lambda_m$ from
Section~\ref{subsec:approximation-global-minimization}, with
$\lambda_0=1$.

\begin{proof}
The case $\ell=0$ is immediate. Otherwise, condition on
$\mathcal F$, fixing the endpoints and accuracy parameters,
and write $M:=L_p\ell^\nu$ for the remainder of the proof.

\noindent\textbf{Reduction to scalar integration.}
Write $w:=v-u$ and $g(t):=\langle\nabla f(u+tw),w\rangle$,
$t\in[0,1]$. The fundamental theorem of calculus gives
$\int_0^1g(t)\,dt=f(v)-f(u)$, and one gradient query evaluates
$g$ at any prescribed point. For $0\le r\le p-1$, the chain
rule gives $g^{(r)}(t)=D^{r+1}f(u+tw)[w^{\otimes(r+1)}]$.
Consequently,
\[
 |g^{(p-1)}(t)-g^{(p-1)}(s)|
 \le L_p\|(t-s)w\|_2^{\beta_p}\|w\|_2^p
 =M|t-s|^{\beta_p}.
\]

\noindent\textbf{Local polynomial models.}
Fix an integer $n\ge1$, set $h:=1/n$, and partition $[0,1]$
into $J_j:=[a_j,a_j+h]$, where $a_j:=(j-1)h$.
Let interpolant polynomial as 
\[P_j :=
\begin{cases}
\text{the degree-$(p-1)$ interpolant of $g$ at }
\left\{a_j+\frac{hi}{p-1}\right\}_{i=0}^{p-1},
& p\ge2,\\
g(a_j),
& p=1.
\end{cases}\]
Write $R_j:=g-P_j$ and
$C_p:=1+\lambda_{p-1}$. 
For $p\geq 2$, applying Lemma~\ref{lem:interpolation-error-decomposition}
with polynomial degree $p-1$ and
$\operatorname{diam}(J_j)=h$ gives
\begin{equation}
\label{eq:apx-line-local-residual}
 \|R_j\|_{\infty,J_j} =  \|g-P_j\|_{\infty,J_j}
 \le C_pa_{p-1,\beta_p}Mh^{\nu-1}
 \le C_pMh^{\nu-1},
\end{equation}
where $a_{r,\beta_p}=\prod_{i=1}^r(\beta_p+i)^{-1}\le1$.
For $p=1$, the H\"older condition gives
$|R_j(t)|\le M|t-a_j|^{\beta_p}$, which also implies
\eqref{eq:apx-line-local-residual}.

\noindent\textbf{Deterministic rule.}
Define $I_n^{\mathrm{det}}:=\sum_{j=1}^n\int_{J_j}P_j(t)\,dt$.
For $p\ge2$, integration of
\eqref{eq:apx-line-local-residual} yields
\[
 |I_n^{\mathrm{det}}-(f(v)-f(u))|
 \le\sum_{j=1}^n\int_{J_j}|R_j(t)|\,dt
 \le nh\,C_pa_{p-1,\beta_p}Mh^{\nu-1}
 =C_pa_{p-1,\beta_p}Mn^{-(\nu-1)}.
\]
For $p=1$, the rule is $I_n^{\mathrm{det}}=h\sum_jg(a_j)$.
Integrating the sharper residual bound gives error 
\begin{align*}
\bigl|I_n^{\mathrm{det}}-(f(v)-f(u))\bigr|
&=\left|
\sum_{j=1}^n\int_{a_j}^{a_j+h}
\bigl(g(a_j)-g(t)\bigr)\,dt
\right|\\
&\le\sum_{j=1}^n\int_{a_j}^{a_j+h}
|g(t)-g(a_j)|\,dt\\
&\le M\sum_{j=1}^n\int_{a_j}^{a_j+h}
(t-a_j)^{\beta_p}\,dt\\
&=M\sum_{j=1}^n
\left[\frac{(t-a_j)^{1+\beta_p}}{1+\beta_p}
\right]_{a_j}^{a_j+h}\\
&=\frac{nMh^{1+\beta_p}}{1+\beta_p}\\
&=\frac{M}{1+\beta_p}\,
n\left(\frac{1}{n}\right)^{1+\beta_p}
=\frac{Mn^{-\beta_p}}{1+\beta_p}.
\end{align*}
Thus, with
$K_{1,\beta_p}:=\frac{1}{1+\beta_p}$ and
$K_{p,\beta_p}:=C_pa_{p-1,\beta_p}$ for $p\ge2$, both cases
satisfy
\[|I_n^{\mathrm{det}}-(f(v)-f(u))|
\le K_{p,\beta_p}Mn^{-(\nu-1)}.\]
Choosing
$n=n_{\mathrm{det}}:=\max\left\{1,
\left\lceil
\left(\frac{K_{p,\beta_p}M}{\varepsilon_0}\right)^{
\frac{1}{\nu-1}}
\right\rceil\right\}$
proves the claimed deterministic accuracy.

\noindent\textbf{Randomized rule.}
Let $Z_1,\ldots,Z_n$ be independent uniform random variables on
$[0,1]$, independent of $\mathcal F$, and set
$U_j:=a_j+hZ_j$. Define random estimation as 
\begin{equation}
\label{eq:apx-line-integration-estimator}
 \widehat I_n
 :=\sum_{j=1}^n
 \left[\int_{J_j}P_j(t)\,dt+hR_j(U_j)\right].
\end{equation}
Conditionally on $\mathcal F$, the objective and interpolation
nodes, and hence $P_j$, are fixed. Since $U_j$ is uniform on
$J_j$, we have $\mathbb E[hR_j(U_j)\mid\mathcal F]
=\int_{J_j}R_j(t)\,dt$. Substituting this identity in
\eqref{eq:apx-line-integration-estimator} proves conditional
unbiasedness.

Set $Y_j:=hR_j(U_j)$. These variables are conditionally
independent, with range lengths at most $2C_pMh^\nu$ by
\eqref{eq:apx-line-local-residual}. Since $|R_j(t)|\le C_pMh^{\nu-1}$, the random variable
$hR_j(U_j)$ takes values in
$[-C_pMh^\nu,C_pMh^\nu]$.
Centering does not change its range length.
Therefore, using $h=1/n$,
\begin{align*}
\sum_{j=1}^n
\left[
h\left(\sup_{t\in J_j}R_j(t)-\inf_{t\in J_j}R_j(t)\right)
\right]^2
&\le \sum_{j=1}^n(2C_pMh^\nu)^2\\
&=4C_p^2M^2nh^{2\nu}\\
&=4C_p^2M^2n\left(\frac{1}{n}\right)^{2\nu}\\
&=4C_p^2M^2n^{1-2\nu}.
\end{align*}
Moreover, 
\[\widehat I_n-(f(v)-f(u))
=\sum_j(Y_j-\mathbb E[Y_j\mid\mathcal F]).\]
For $M>0$ and every positive $\mathcal F$-measurable threshold
$\xi$, conditional Hoeffding's inequality gives
\begin{equation}
\label{eq:apx-line-integration-tail}
 \mathbb P\!\left(
 |\widehat I_n-(f(v)-f(u))|>\xi\mid\mathcal F\right)
 \le2\exp\!\left(
 -\frac{\xi^2n^{2\nu-1}}{2C_p^2M^2}\right).
\end{equation}
Choose
\[
 n=n_{\mathrm{rand}}
 :=\max\left\{1,\left\lceil
 \left[2C_p^2\left(\frac{M}{\varepsilon_0}\right)^2
 \log\frac{2}{\vartheta}\right]^{1/(2\nu-1)}
 \right\rceil\right\}.
\]
At $\xi=\varepsilon_0$, the exponent in
\eqref{eq:apx-line-integration-tail} is at least
$\log(2/\vartheta)$, proving
\eqref{eq:apx-line-integration-accuracy}. If $M=0$, the residual
bound gives $R_j\equiv0$, so both rules are exact.

\noindent\textbf{Queries.}
For $p\ge2$, sharing interpolation endpoints gives
$n(p-1)+1$ deterministic queries and at most $pn+1$
randomized queries.
For $p=1$, both rules use $n$ queries, since the randomized
estimator simplifies to $h\sum_{j=1}^n g(U_j)$.
Polynomial integration requires no additional queries.
Substituting $n_{\mathrm{det}}$ and $n_{\mathrm{rand}}$
yields the claimed bounds, with constants depending only on $p$.

\noindent\textbf{Nonadaptivity.}
Conditional on $\mathcal F$, the sample counts are fixed,
and all query points are selected before observing any new
oracle answer. Each rule thus uses one nonadaptive batch.
Taking $\mathcal F$ to be trivial recovers the guarantees
for deterministic inputs.
\end{proof}

\subsection{Correctness}
\label{subsec:gradient-only-analysis}

We first prove correctness whenever all edge estimates meet
their prescribed tolerances, and then establish this condition
for the deterministic and randomized implementations.
Write
$\phi_t:=\phi_{\boldsymbol x^t,\varepsilon_t}$ and
$\delta_t:=\varepsilon_0^t=\frac{r^t\Delta_t}{12}$.
Let $\mathcal H_t$ be the sigma-field generated by the transcript
before iteration $t$, and define
\begin{equation}
\label{eq:edge-success-event}
 \mathcal G_t:=
 \bigcap_{\ell=0}^{J_t}
 \bigcap_{e=(\boldsymbol u,\boldsymbol v)\in\mathcal A_{t,\ell}}
 \left\{
 \left|\widehat I_e-f(\boldsymbol v)+f(\boldsymbol u)\right|
 \le\xi_{t,\ell}
 \right\}.
\end{equation}

\begin{fact}
\label{fact:gradient-only-edge-accuracy}
In deterministic mode, $\mathcal G_t$ always holds.
In randomized mode,
$\mathbb P(\mathcal G_t\mid\mathcal H_t)\ge1-\rho_t$.
\end{fact}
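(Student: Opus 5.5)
The deterministic claim follows directly from the construction. In deterministic mode each edge $e=(\boldsymbol u,\boldsymbol v)\in\mathcal A_{t,\ell}$ is integrated by the deterministic rule of Lemma~\ref{lem:randomized-line-integration} with tolerance $\xi_{t,\ell}$. That lemma guarantees $|\widehat I_e-(f(\boldsymbol v)-f(\boldsymbol u))|\le\xi_{t,\ell}$ for every such edge, so every event in the intersection defining $\mathcal G_t$ holds surely. Zero-length edges satisfy $\widehat I_e=0=f(\boldsymbol v)-f(\boldsymbol u)$ exactly, so they cause no trouble.

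For randomized mode, I will apply the conditional form of Lemma~\ref{lem:randomized-line-integration} with $\mathcal F=\mathcal H_t$, and check three things.

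First, the segments, tolerances and failure budgets must be $\mathcal H_t$-measurable. They are determined by $R_t,\boldsymbol x^t,\varepsilon_t,r^t$ and the deterministic schedules $J_t$, $\xi_{t,\ell}$, $\rho_t$. Hence the tree $\mathcal A_t$ and its cardinality are fixed given the past transcript.

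Second, the objective must be $C^{p,\beta_p}(L_p)$ near each segment. This holds globally by hypothesis.

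Third, the lemma's conditional bound must apply to each edge separately. For each positive-length edge it gives
\[
\mathbb P\bigl(|\widehat I_e-(f(\boldsymbol v)-f(\boldsymbol u))|>\xi_{t,\ell}\mid\mathcal H_t\bigr)\le\frac{\rho_t}{|\mathcal A_t|}.
\]
Independence between edges is not needed here. The lemma uses fresh seeds independent of $\mathcal H_t$, and each edge's estimator alone satisfies the bound, because it depends only on its own seed and on $\mathcal H_t$-measurable data.

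A conditional union bound over the at most $|\mathcal A_t|$ positive-length edges then gives $\mathbb P(\mathcal G_t^c\mid\mathcal H_t)\le\rho_t$. The only point needing care is justifying that the union bound can be applied conditionally on $\mathcal H_t$. Since $|\mathcal A_t|$ is $\mathcal H_t$-measurable, a regular conditional probability (or conditioning on each of the countably many possible tree configurations) reduces this to the unconditional union bound.
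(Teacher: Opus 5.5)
Your proposal is correct and follows the paper's argument. The deterministic quadrature guarantee gives $\mathcal G_t$ surely, and in randomized mode a conditional union bound over the edges, each with failure budget $\rho_t/|\mathcal A_t|$, yields $\mathbb P(\mathcal G_t^c\mid\mathcal H_t)\le\rho_t$; your additional checks on the $\mathcal H_t$-measurability of the tree and on zero-length edges are valid details that the paper leaves implicit.
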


\begin{proof}
The deterministic quadrature guarantee implies $\mathcal G_t$.
In randomized mode,
\[
\begin{aligned}
 \mathbb P(\mathcal G_t^c\mid\mathcal H_t)
 &\le
 \sum_{\ell=0}^{J_t}
 \sum_{e=(\boldsymbol u,\boldsymbol v)\in\mathcal A_{t,\ell}}
 \mathbb P\!\left(
 \left|\widehat I_e-f(\boldsymbol v)+f(\boldsymbol u)\right|
 >\xi_{t,\ell}
 \,\middle|\,\mathcal H_t
 \right)\\
 &\le
 \sum_{\ell=0}^{J_t}
 |\mathcal A_{t,\ell}|
 \frac{\rho_t}{|\mathcal A_t|}
 =\rho_t,
\end{aligned}
\]
where the first inequality is the conditional union bound,
and the second uses the edgewise failure guarantees.
\end{proof}

\begin{fact}[\bf Reconstruction and update certificates]
\label{lem:uniform-gradient-reconstruction}
\label{lem:gradient-only-update-certificate}
On $\mathcal G_t$, $ \|\widehat F_t-F_t\|_{\infty,\mathcal E^t}
 =
 \|\widehat\phi_t-\phi_t\|_{\infty,\mathcal E^t}
 \le\frac{\delta_t}{4},$
and $\widehat{\boldsymbol z}^t$ is a
$\delta_t$-approximate minimizer of $\phi_t$ on $\mathcal E^t$.
Moreover,
\begin{equation}
\label{eq:gradient-only-update-certificates}
\begin{aligned}
 \phi_t(\boldsymbol x^{t+1})&\le0,\\
 \phi_t(\boldsymbol y)-\phi_t(\boldsymbol x^{t+1})
 &\ge-\delta_t,
 \qquad\forall \boldsymbol y\in\mathcal E^t.
\end{aligned}
\end{equation}
\end{fact}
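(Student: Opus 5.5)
The argument has three parts: bound the nodal errors through the shared-root tree, pass from nodes to the whole separator by stable interpolation, and then run the approximate-minimization and buffered-acceptance logic.

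\textbf{Nodal errors.} Fix the event $\mathcal G_t$. Every node $\boldsymbol v$ of the finest grids on $E_1^t\cup E_2^t$ has a unique root path from $\boldsymbol x^t$. This path contains at most one edge from each level $\ell\in\{0,\dots,J_t\}$. The reconstructed nodal value is the sum of the edge estimates along the path, and the exact increments telescope to $F_t(\boldsymbol v)=f(\boldsymbol v)-f(\boldsymbol x^t)$. Zero-length edges contribute exactly zero. On $\mathcal G_t$ the nodal error is therefore at most $\sum_{\ell}\xi_{t,\ell}=\frac{\delta_t}{8\Lambda_{d-1,p}}\sum_\ell w_{t,\ell}=\frac{\delta_t}{8\Lambda_{d-1,p}}$.

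\textbf{Uniform reconstruction bound.} Apply the piecewise stable-interpolation estimate \eqref{eq:gradient-only-stable-recovery} on each facet with $\tau=\frac{\delta_t}{8\Lambda_{d-1,p}}$. By the choice of $J_t$ in \eqref{eq:gradient-only-refinement-level}, each finest cell has diameter at most $\sqrt{d-1}\,r^t2^{-J_t}$, so the Taylor term is at most $\delta_t/8$. The data term $\Lambda_{d-1,p}\tau$ is also at most $\delta_t/8$. Together these give $\|\widehat F_t-F_t\|_{\infty,\mathcal E^t}\le\delta_t/4$. Because $\widehat\phi_t$ and $\phi_t$ share the same known term $\varepsilon_t\|\cdot-\boldsymbol x^t\|_2$, the same bound holds for $\|\widehat\phi_t-\phi_t\|$.

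\textbf{Approximate minimizer and update certificates.} Using the uniform bound above together with the $\delta_t/2$ surrogate-minimization tolerance, Lemma~\ref{lem:approximation-minimization-transfer} shows that $\widehat{\boldsymbol z}^t$ is a $(2\cdot\delta_t/4+\delta_t/2)=\delta_t$-approximate minimizer of $\phi_t$ on $\mathcal E^t$. For \eqref{eq:gradient-only-update-certificates} I split on the buffered test.
\begin{itemize}
\item If the test accepts, then $\widehat\phi_t(\widehat{\boldsymbol z}^t)\le-\delta_t/4$, so $\phi_t(\boldsymbol x^{t+1})\le\widehat\phi_t(\widehat{\boldsymbol z}^t)+\delta_t/4\le 0$. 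The second inequality is exactly the approximate minimality just proved, applied to $\boldsymbol x^{t+1}=\widehat{\boldsymbol z}^t$.
\item If the test rejects, then $\boldsymbol x^{t+1}=\boldsymbol x^t$ and $\phi_t(\boldsymbol x^t)=0$, so the first inequality is immediate. For the second, take any $\boldsymbol y\in\mathcal E^t$. Then $\phi_t(\boldsymbol y)\ge\widehat\phi_t(\boldsymbol y)-\delta_t/4\ge\widehat\phi_t(\widehat{\boldsymbol z}^t)-\delta_t/2-\delta_t/4>-\delta_t/4-3\delta_t/4=-\delta_t$, as required.
\end{itemize}

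The step that needs the most care is the first, namely the claim that every root path contains at most one edge per level. This follows from the parent rule $\lfloor k_i/2\rfloor$: it maps a node first appearing at level $\ell$ to a node already in $\mathcal V_{q,\ell-1}^t$, so levels strictly decrease along a path. Continuity across shared faces is what lets the cellwise bound from Lemma~\ref{lem:interpolation-error-decomposition} apply on all of $\mathcal E^t$.
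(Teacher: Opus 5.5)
Your proposal is correct and follows the paper's proof essentially step for step. The shared ingredients are the root-path telescoping bound $\sum_\ell\xi_{t,\ell}=\delta_t/(8\Lambda_{d-1,p})$, the two $\delta_t/8$ contributions from \eqref{eq:gradient-only-stable-recovery}, and the same accept/reject case split using the buffer $-\delta_t/4$. The only cosmetic difference is that you cite Lemma~\ref{lem:approximation-minimization-transfer} for $\delta_t$-approximate minimality, whereas the paper writes out the equivalent chain $\phi_t(\widehat{\boldsymbol z}^t)\le\widehat\phi_t(\widehat{\boldsymbol z}^t)+\delta_t/4\le\cdots\le\phi_t(\boldsymbol y)+\delta_t$ directly.
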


\begin{proof}
Assume $\mathcal G_t$ holds.
For a finest-grid node $\boldsymbol z$, let
$\mathcal P_t(\boldsymbol z)$ denote its root path.
Writing $e=(\boldsymbol u_e,\boldsymbol v_e)$, we obtain
\[
\begin{aligned}
 \left|\widehat F_t(\boldsymbol z)-F_t(\boldsymbol z)\right|
 &=
 \left|
 \sum_{e\in\mathcal P_t(\boldsymbol z)}
 \bigl(\widehat I_e-f(\boldsymbol v_e)+f(\boldsymbol u_e)\bigr)
 \right|\\
 &\le
 \sum_{e\in\mathcal P_t(\boldsymbol z)}
 \left|\widehat I_e-f(\boldsymbol v_e)+f(\boldsymbol u_e)\right|\\
 &\le\sum_{\ell=0}^{J_t}\xi_{t,\ell}
 =\frac{\delta_t}{8\Lambda_{d-1,p}},
\end{aligned}
\]
where the equality follows by telescoping, and the last
inequality uses $\mathcal G_t$ and the fact that each root path
contains at most one edge from each creation level.
Consequently,
\[
\begin{aligned}
 \|\widehat\phi_t-\phi_t\|_{\infty,\mathcal E^t}
 &=
 \|\widehat F_t-F_t\|_{\infty,\mathcal E^t}\\
 &\le
 A_{d-1,p,\beta_p}L_p
 \left(\frac{\sqrt{d-1}\,r^t}{2^{J_t}}\right)^\nu
 +\Lambda_{d-1,p}
  \frac{\delta_t}{8\Lambda_{d-1,p}}\\
 &\le\frac{\delta_t}{8}+\frac{\delta_t}{8}
 =\frac{\delta_t}{4},
\end{aligned}
\]
where the first inequality follows from the approximation lemma
with cell diameter at most
$\frac{\sqrt{d-1}\,r^t}{2^{J_t}}$
and nodal error at most
$\frac{\delta_t}{8\Lambda_{d-1,p}}$,
and the second uses Eq.~
\eqref{eq:gradient-only-refinement-level}, which gives
$A_{d-1,p,\beta_p}L_p
\left(\frac{\sqrt{d-1}\,r^t}{2^{J_t}}\right)^\nu
\le\frac{\varepsilon_0^t}{8}
=\frac{\delta_t}{8}$.

For every $\boldsymbol y\in\mathcal E^t$,
\begin{equation}
\label{eq:gradient-only-separator-suboptimality}
\begin{aligned}
 \phi_t(\widehat{\boldsymbol z}^t)
 &\le
 \widehat\phi_t(\widehat{\boldsymbol z}^t)
 +\frac{\delta_t}{4}\\
 &\le
 \min_{\mathcal E^t}\widehat\phi_t
 +\frac{\delta_t}{2}+\frac{\delta_t}{4}\\
 &\le
 \widehat\phi_t(\boldsymbol y)+\frac{3\delta_t}{4}\\
 &\le\phi_t(\boldsymbol y)+\delta_t,
\end{aligned}
\end{equation}
where the first and fourth inequalities use the uniform error
bound, and the second uses the surrogate minimization tolerance.
This proves the claimed approximate optimality.

If $\widehat\phi_t(\widehat{\boldsymbol z}^t)
\le-\frac{\delta_t}{4}$, then
$\boldsymbol x^{t+1}=\widehat{\boldsymbol z}^t$, and
\[
\begin{aligned}
 \phi_t(\boldsymbol x^{t+1})
 &\le
 \widehat\phi_t(\widehat{\boldsymbol z}^t)
 +\frac{\delta_t}{4}
 \le0,\\
 \phi_t(\boldsymbol y)-\phi_t(\boldsymbol x^{t+1})
 &=\phi_t(\boldsymbol y)
   -\phi_t(\widehat{\boldsymbol z}^t)
 \ge-\delta_t
 \qquad(\boldsymbol y\in\mathcal E^t),
\end{aligned}
\]
where the first line uses the acceptance condition,
and the second uses
\eqref{eq:gradient-only-separator-suboptimality}.

Otherwise, $\boldsymbol x^{t+1}=\boldsymbol x^t$, so
$\phi_t(\boldsymbol x^{t+1})=0$, and
\begin{equation}
\label{eq:gradient-only-retention-certificate}
\begin{aligned}
 \phi_t(\boldsymbol y)-\phi_t(\boldsymbol x^{t+1})
 &=\phi_t(\boldsymbol y)\\
 &\ge\widehat\phi_t(\boldsymbol y)-\frac{\delta_t}{4}\\
 &\ge
 \widehat\phi_t(\widehat{\boldsymbol z}^t)
 -\frac{\delta_t}{2}-\frac{\delta_t}{4}\\
 &>-\delta_t,
 \qquad \boldsymbol y\in\mathcal E^t,
\end{aligned}
\end{equation}
where the inequalities use the uniform error bound,
the surrogate minimization tolerance, and the rejection
condition, respectively.
\end{proof}

\paragraph{Preservation of the trapping invariant.}
Let $\Gamma_t:=\partial R_t$ in the unconstrained setting and
$\Gamma_t:=\partial_{[0,1]^d}R_t$ in the constrained setting.
We prove
\begin{equation}
\label{eq:gradient-only-trapping-invariant}
 \boldsymbol x^t\in R_t,
 \qquad
 \phi_t(\boldsymbol y)>0
 \quad(\boldsymbol y\in\Gamma_t)
\end{equation}
whenever $\mathcal G_0,\ldots,\mathcal G_{t-1}$ hold.

Initially, $\boldsymbol x^0\in R_0$ in both settings.
In the unconstrained case, for every
$\boldsymbol y\in\partial R_0$,
\[
\begin{aligned}
 \phi_0(\boldsymbol y)
 &=f(\boldsymbol y)-f(\boldsymbol x^0)
   +\frac{\varepsilon}{4}
      \|\boldsymbol y-\boldsymbol x^0\|_2\\
 &\ge-\Delta+\frac{\varepsilon}{4}\frac{r^0}{2}\\
 &=-\Delta+\frac{\varepsilon}{4}
              \frac{8\Delta}{\varepsilon}
 =\Delta>0,
\end{aligned}
\]
where the inequality uses the gap bound and the geometry of
the initial cube.
In the constrained case, $R_0=[0,1]^d$, so its boundary
relative to $[0,1]^d$ is empty:
$\Gamma_0=\partial_{[0,1]^d}R_0=\varnothing$.

Suppose that Eq.~\eqref{eq:gradient-only-trapping-invariant}
holds at iteration $t$ and that $\mathcal G_t$ holds.
Let $j$ be the selected coordinate and let
$\mathcal F_t\subseteq\mathcal E^t$ denote the newly created facet.
The cutting rule retains $\boldsymbol x^{t+1}$ and gives
\[
\begin{aligned}
 \operatorname{dist}(\boldsymbol x^{t+1},\mathcal F_t)
 &=
 \begin{cases}
 [\boldsymbol x^{t+1}]_j-a_j^t-\frac{r^t}{3},
 &[\boldsymbol x^{t+1}]_j\ge a_j^t+\frac{r^t}{2},\\[2mm]
 a_j^t+\frac{2r^t}{3}-[\boldsymbol x^{t+1}]_j,
 &[\boldsymbol x^{t+1}]_j<a_j^t+\frac{r^t}{2}
 \end{cases}\\
 &\ge\frac{r^t}{6},
\end{aligned}
\]
where $\mathcal F_t=E_1^t$ in the first case and
$\mathcal F_t=E_2^t$ in the second.
Thus
\[
 \boldsymbol x^{t+1}\in R_{t+1},
 \qquad
 \Gamma_{t+1}
 \subseteq(\Gamma_t\cap R_{t+1})\cup\mathcal F_t.
\]
In the constrained setting, the new facet is artificial,
whereas every retained physical facet remains physical.

For every $\boldsymbol y\in R_t$,
\begin{equation}
\label{eq:gradient-only-inherited-potential}
\begin{aligned}
 \phi_{t+1}(\boldsymbol y)
 &=f(\boldsymbol y)-f(\boldsymbol x^{t+1})
   +(\varepsilon_t+\Delta_t)
      \|\boldsymbol y-\boldsymbol x^{t+1}\|_2\\
 &\ge
 f(\boldsymbol y)-f(\boldsymbol x^{t+1})
 +\varepsilon_t\|\boldsymbol y-\boldsymbol x^t\|_2\\
 &\qquad
 -\varepsilon_t\|\boldsymbol x^{t+1}-\boldsymbol x^t\|_2
 +\Delta_t\|\boldsymbol y-\boldsymbol x^{t+1}\|_2\\
 &=\phi_t(\boldsymbol y)-\phi_t(\boldsymbol x^{t+1})
   +\Delta_t\|\boldsymbol y-\boldsymbol x^{t+1}\|_2,
\end{aligned}
\end{equation}
where the inequality is the triangle inequality.

For $\boldsymbol y\in\Gamma_t\cap R_{t+1}$,
\[
\begin{aligned}
 \phi_{t+1}(\boldsymbol y)
 &\ge
 \phi_t(\boldsymbol y)-\phi_t(\boldsymbol x^{t+1})
 +\Delta_t\|\boldsymbol y-\boldsymbol x^{t+1}\|_2\\
 &\ge\phi_t(\boldsymbol y)>0,
\end{aligned}
\]
where the second inequality uses
$\phi_t(\boldsymbol x^{t+1})\le0$ and $\Delta_t>0$,
and the strict inequality is the induction hypothesis.
For $\boldsymbol y\in\mathcal F_t$,
\[
\begin{aligned}
 \phi_{t+1}(\boldsymbol y)
 &\ge
 \phi_t(\boldsymbol y)-\phi_t(\boldsymbol x^{t+1})
 +\Delta_t\|\boldsymbol y-\boldsymbol x^{t+1}\|_2\\
 &\ge-\delta_t+\Delta_t\frac{r^t}{6}\\
 &=-\frac{r^t\Delta_t}{12}
   +\frac{r^t\Delta_t}{6}
 =\delta_t>0,
\end{aligned}
\]
where the second inequality uses
Eq.~\eqref{eq:gradient-only-update-certificates}
and the distance bound.
This proves Eq.~\eqref{eq:gradient-only-trapping-invariant}
at iteration $t+1$.

\begin{lemma}[\bf Gradient-only terminal output]
\label{lem:gradient-only-terminal-output}
Suppose that the trapping invariant holds at termination.
Then the terminal procedure terminates after finitely many
offline operations and returns a point
$\boldsymbol z^T\in R_T$ satisfying
\[
\begin{cases}
 \|\nabla f(\boldsymbol z^T)\|_2\le\varepsilon,
 &\text{in the unconstrained case},\\
 \|G_\gamma(\boldsymbol z^T)\|_2\le\varepsilon
 \quad\text{for every }\gamma>0,
 &\text{in the cube-constrained case}.
\end{cases}
\]
\end{lemma}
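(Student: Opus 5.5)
The proof has two parts: showing that an offline point satisfying the certificates \eqref{eq:gradient-only-terminal-certificates} exists and can be computed finitely, and converting those certificates into stationarity. For the second part we follow Lemma~\ref{lem:stationarity-from-model}, but use only approximate optimality in place of an exact minimizer.

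\emph{Model accuracy.} Let $\rho_T\le\sqrt d\,r^T/2\le\sqrt d\,r_\varepsilon^\nabla/2$. With exact gradients ($\tau=0$), Corollary~\ref{lem:gradient-only-scalar-reconstruction} gives
\[
\|\nabla P_T-\nabla f\|_{\infty,R_T}\le c_{p,\beta_p}L_p\rho_T^{\nu-1}\le\frac{\varepsilon}{8}
\]
by \eqref{eq:gradient-only-terminal-radius}. For $p=1$ we use $B_1=L_1\rho^{\beta}$. By \eqref{eq:apx-gradient-only-increments},
\[
|P_T(\boldsymbol y)-P_T(\boldsymbol x^T)-f(\boldsymbol y)+f(\boldsymbol x^T)|\le\frac{\varepsilon}{8}\|\boldsymbol y-\boldsymbol x^T\|_2 .
\]
The invariant together with $\varepsilon_T\le\varepsilon/2$ gives $\phi_{\boldsymbol x^T,\varepsilon/2}>0$ on the certified boundary. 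Hence $\Psi_T(\boldsymbol y)\ge\phi_{\boldsymbol x^T,\varepsilon/2}(\boldsymbol y)+\frac{\varepsilon}{8}\|\boldsymbol y-\boldsymbol x^T\|_2>0$ there. Any $\boldsymbol z^T$ with $\Psi_T(\boldsymbol z^T)\le0$ therefore lies in $\operatorname{int}(R_T)$, or in $\operatorname{int}_{[0,1]^d}(R_T)$ in the constrained case. The normal-cone identity $N_{R_T}(\boldsymbol z^T)=N_{[0,1]^d}(\boldsymbol z^T)$ from the proof of Lemma~\ref{lem:stationarity-from-model} then applies. The residual certificate gives
\[
\|\nabla f(\boldsymbol z^T)+\boldsymbol n\|_2\le\frac{\varepsilon}{8}+\frac{3\varepsilon}{4}+\frac{\varepsilon}{8}=\varepsilon .
\]
In the unconstrained case $\boldsymbol n=0$. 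In the constrained case, nonexpansiveness of projection yields $\|G_\gamma(\boldsymbol z^T)\|_2\le\varepsilon$ for every $\gamma>0$.

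\emph{Finite computation.} This is the main obstacle: we need a point satisfying both the value condition and the approximate KKT residual, obtained after finitely many operations rather than as an exact minimizer. We copy Algorithm~\ref{alg:trap-localize} on $R_T$ with penalty $\frac{3\varepsilon}{4}$.
\begin{enumerate}
\item Compute bounds $M_P$ and $L_P$ from the coefficients of $P_T$.
\item Take a finite $\zeta_T/M_P$-net containing $\boldsymbol x^T$. This gives a start $\boldsymbol z^0$ with $\Psi_T(\boldsymbol z^0)\le0$.
\item Run proximal-gradient steps with step $\tau=1/(L_P+1)$, where the prox handles $\frac{3\varepsilon}{4}\|\cdot-\boldsymbol x^T\|_2+\iota_{R_T}$. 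Each prox is a convex problem.
\end{enumerate}
The standard descent lemma gives monotone decrease, $\Psi_T(\boldsymbol z^{k+1})\le\Psi_T(\boldsymbol z^k)-\frac{1}{2\tau}\|\boldsymbol z^{k+1}-\boldsymbol z^k\|^2$, so $\Psi_T\le0$ is preserved along the iterates. Because $\Psi_T$ is bounded below on the compact set $R_T$, the step lengths are summable. The prox optimality condition then shows that
\[
\boldsymbol e^{k+1}=\nabla P_T(\boldsymbol z^{k+1})-\nabla P_T(\boldsymbol z^k)+\frac{\boldsymbol z^k-\boldsymbol z^{k+1}}{\tau}
\]
equals $\nabla P_T(\boldsymbol z^{k+1})+\frac{3\varepsilon}{4}\boldsymbol v+\boldsymbol n$ for some admissible $\boldsymbol v$ and $\boldsymbol n$. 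Moreover $\|\boldsymbol e^{k+1}\|\le(L_P+1/\tau)\|\boldsymbol z^{k+1}-\boldsymbol z^k\|\to0$, so the stopping test $\le\varepsilon/8$ triggers after finitely many steps. The $\zeta_T$-suboptimality is needed only to ensure the starting value is nonpositive, and $\boldsymbol x^T$ in the net already gives this.

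The step I expect to be the main obstacle is the claim that each prox step is an exactly computable operation, as in Lemma~\ref{lem:trapped-ball-localization}. I would follow the convention that offline optimization of explicit models is uncharged, so no additional oracle queries arise.
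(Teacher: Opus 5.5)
Your proposal is correct and follows essentially the paper's proof. The correctness half uses the same barrier $\Psi_T\ge\phi_{\boldsymbol x^T,\varepsilon/2}+\frac{\varepsilon}{8}\|\cdot-\boldsymbol x^T\|_2$ and the same residual bound $\frac{\varepsilon}{8}+\frac{3\varepsilon}{4}+\frac{\varepsilon}{8}$. The computation half is a net-initialized proximal-gradient loop that must stop because the squared steps are summable; the paper uses step $1/(2H)$ and an explicit count $\lfloor 1152H\zeta_T/\varepsilon^2\rfloor+1$ instead, and settles the exact computability of each prox step you flag in a footnote: completing the square and enumerating the finitely many clipping patterns of the box projection reduces each step to univariate polynomial equations.
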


\begin{proof}
Write $R:=R_T$, $\boldsymbol x:=\boldsymbol x^T$,
$P:=P_T$, $\Psi:=\Psi_T$, and
$\zeta:=\zeta_T=\frac{\varepsilon r^T}{16}>0$.

\noindent\textbf{Finite termination of the terminal procedure.}
The coefficients of $P$ give computable constants
\[
 K\ge
 \max\left\{1,\sup_R\|\nabla P\|_2+\frac{3\varepsilon}{4}\right\},
 \qquad
 H\ge
 \max\left\{1,\sup_R\|\nabla^2P\|_{\mathrm{op}}\right\}.
\]
Choose a finite grid $\mathcal Q\subset R$ with covering radius
at most $\frac{\zeta}{K}$, and set
$\boldsymbol y_0\in
\operatorname*{argmin}_{\mathcal Q\cup\{\boldsymbol x\}}\Psi$.
Then
\[
\begin{aligned}
 \Psi(\boldsymbol y_0)&\le\Psi(\boldsymbol x)=0,\\
 \Psi(\boldsymbol y_0)-\min_R\Psi
 &\le\min_{\mathcal Q}\Psi-\min_R\Psi
 \le K\frac{\zeta}{K}
 =\zeta,
\end{aligned}
\]
where the second line uses the $K$-Lipschitz continuity of
$\Psi$ and the covering radius of $\mathcal Q$.

Starting from $\boldsymbol y_0$, define
\begin{equation}
\label{eq:gradient-only-offline-proximal-step}
 \boldsymbol y_{k+1}:=
 \operatorname*{argmin}_{\boldsymbol y\in R}
 \left\{
 \langle\nabla P(\boldsymbol y_k),
          \boldsymbol y-\boldsymbol y_k\rangle
 +H\|\boldsymbol y-\boldsymbol y_k\|_2^2
 +\frac{3\varepsilon}{4}
       \|\boldsymbol y-\boldsymbol x\|_2
 \right\}.
\end{equation}
Each auxiliary problem has a unique minimizer, computable
by finite algebraic computation in the exact-real model.%
\footnote{
Completing the square reduces the problem to minimizing
$H\|\boldsymbol w-\boldsymbol c\|_2^2+a\|\boldsymbol w\|_2$
over $B:=R-\boldsymbol x$, where
$a:=\frac{3\varepsilon}{4}$ and
$\boldsymbol c:=\boldsymbol y_k-\boldsymbol x
-\frac{\nabla P(\boldsymbol y_k)}{2H}$.
For a nonzero minimizer, convex optimality gives
$\boldsymbol w=\operatorname{proj}_B(\tau\boldsymbol c)$,
where
$\tau=\frac{2H\|\boldsymbol w\|_2}
{2H\|\boldsymbol w\|_2+a}\in(0,1)$.
For each clipping pattern,
$\|\boldsymbol w\|_2^2=D+C\tau^2$
with known $C,D\ge0$, so
$a^2\tau^2=4H^2(1-\tau)^2(D+C\tau^2)$.
Enumerate the finitely many clipping patterns and real roots
of these polynomial equations, retaining those satisfying
the unsquared equation, $0<\tau<1$, and the clipping inequalities.
Test $\boldsymbol w=0$ separately using
$\operatorname{dist}(2H\boldsymbol c,N_B(\boldsymbol0))\le a$.
These tests identify the unique minimizer.
}
Set $\boldsymbol d_k:=\boldsymbol y_{k+1}-\boldsymbol y_k$.
Then
\[
\begin{aligned}
 \Psi(\boldsymbol y_{k+1})-\Psi(\boldsymbol y_k)
 &\le
 \langle\nabla P(\boldsymbol y_k),\boldsymbol d_k\rangle
 +\frac{H}{2}\|\boldsymbol d_k\|_2^2\\
 &\qquad
 +\frac{3\varepsilon}{4}
 \bigl(
 \|\boldsymbol y_{k+1}-\boldsymbol x\|_2
 -\|\boldsymbol y_k-\boldsymbol x\|_2
 \bigr)\\
 &\le-\frac{H}{2}\|\boldsymbol d_k\|_2^2,
\end{aligned}
\]
where the first inequality uses the $H$-Lipschitz continuity
of $\nabla P$, and the second compares the auxiliary minimizer
with $\boldsymbol y_k$.
Consequently,
\[
 \Psi(\boldsymbol y_k)\le0,
 \qquad
 \Psi(\boldsymbol y_k)-\min_R\Psi\le\zeta
 \qquad(k\ge0).
\]

Convex optimality supplies
$\boldsymbol v_k\in
\partial\|\cdot-\boldsymbol x\|_2(\boldsymbol y_{k+1})$
and $\boldsymbol n_k\in N_R(\boldsymbol y_{k+1})$ such that
\[
 \nabla P(\boldsymbol y_k)+2H\boldsymbol d_k
 +\frac{3\varepsilon}{4}\boldsymbol v_k+\boldsymbol n_k
 =\boldsymbol0.
\]
Hence
\[
\begin{aligned}
 &\left\|
 \nabla P(\boldsymbol y_{k+1})
 +\frac{3\varepsilon}{4}\boldsymbol v_k+\boldsymbol n_k
 \right\|_2\\
 &\quad=
 \|\nabla P(\boldsymbol y_{k+1})
   -\nabla P(\boldsymbol y_k)-2H\boldsymbol d_k\|_2\\
 &\quad\le
 \|\nabla P(\boldsymbol y_{k+1})
   -\nabla P(\boldsymbol y_k)\|_2
 +2H\|\boldsymbol d_k\|_2\\
 &\quad\le3H\|\boldsymbol d_k\|_2,
\end{aligned}
\]
where the last inequality uses the Hessian bound for $P$.

Stop at the first step with
$\|\boldsymbol d_k\|_2\le\frac{\varepsilon}{24H}$.
If the first $N\ge1$ steps were all nonterminating, then
\[
\begin{aligned}
 \frac{N\varepsilon^2}{1152H}
 &<
 \frac{H}{2}\sum_{k=0}^{N-1}\|\boldsymbol d_k\|_2^2\\
 &\le
 \sum_{k=0}^{N-1}
 \bigl(\Psi(\boldsymbol y_k)-\Psi(\boldsymbol y_{k+1})\bigr)\\
 &=\Psi(\boldsymbol y_0)-\Psi(\boldsymbol y_N)\\
 &\le\Psi(\boldsymbol y_0)-\min_R\Psi
 \le\zeta,
\end{aligned}
\]
where the first inequality uses
$\|\boldsymbol d_k\|_2>\frac{\varepsilon}{24H}$,
and the second is the descent bound.
Thus termination occurs within
$\left\lfloor\frac{1152H\zeta}{\varepsilon^2}\right\rfloor+1$
steps.

At the terminating step, return
$\boldsymbol z^T:=\boldsymbol y_{k+1}$ and set
$\boldsymbol v:=\boldsymbol v_k$,
$\boldsymbol n:=\boldsymbol n_k$.
The preceding estimates give
\[
\begin{aligned}
 \Psi(\boldsymbol z^T)&\le0,\\
 \Psi(\boldsymbol z^T)-\min_R\Psi&\le\zeta,\\
 \left\|
 \nabla P(\boldsymbol z^T)
 +\frac{3\varepsilon}{4}\boldsymbol v+\boldsymbol n
 \right\|_2
 &\le3H\|\boldsymbol d_k\|_2
 \le\frac{\varepsilon}{8}.
\end{aligned}
\]
This implements the prescribed terminal step using finitely
many offline operations and no additional oracle queries
after constructing $P$.

\noindent\textbf{Correctness of the output.}
The terminal reconstruction satisfies
\[
\begin{aligned}
 \|\nabla P-\nabla f\|_{\infty,R}
 &\le
 c_{p,\beta_p}L_p
 \left(\max_{\boldsymbol y\in R}
       \|\boldsymbol y-\boldsymbol c_T\|_2\right)^{\nu-1}\\
 &\le
 c_{p,\beta_p}L_p
 \left(\frac{\sqrt d\,r^T}{2}\right)^{\nu-1}\\
 &\le\frac{\varepsilon}{8},
\end{aligned}
\]
where the inequalities follow from
Corollary~\ref{lem:gradient-only-scalar-reconstruction},
the geometry of $R$, and
Eq.~\eqref{eq:gradient-only-terminal-radius}, respectively.

For every $\boldsymbol y\in R$,
\[
\begin{aligned}
 &\left|P(\boldsymbol y)-P(\boldsymbol x)
       -f(\boldsymbol y)+f(\boldsymbol x)\right|\\
 &\quad=
 \left|
 \int_0^1
 \left\langle
 (\nabla P-\nabla f)
   (\boldsymbol x+u(\boldsymbol y-\boldsymbol x)),
 \boldsymbol y-\boldsymbol x
 \right\rangle\,du
 \right|\\
 &\quad\le
 \|\nabla P-\nabla f\|_{\infty,R}
 \|\boldsymbol y-\boldsymbol x\|_2\\
 &\quad\le
 \frac{\varepsilon}{8}\|\boldsymbol y-\boldsymbol x\|_2,
\end{aligned}
\]
where the segment lies in $R$ by convexity.
For $\boldsymbol y\in\Gamma_T$, this gives
\begin{equation}
\label{eq:gradient-only-terminal-barrier}
\begin{aligned}
 \Psi(\boldsymbol y)
 &=P(\boldsymbol y)-P(\boldsymbol x)
   +\frac{3\varepsilon}{4}
      \|\boldsymbol y-\boldsymbol x\|_2\\
 &\ge
 f(\boldsymbol y)-f(\boldsymbol x)
 +\frac{5\varepsilon}{8}
      \|\boldsymbol y-\boldsymbol x\|_2\\
 &=\phi_T(\boldsymbol y)
   +\left(\frac{5\varepsilon}{8}-\varepsilon_T\right)
      \|\boldsymbol y-\boldsymbol x\|_2\\
 &\ge
 \phi_T(\boldsymbol y)
 +\frac{\varepsilon}{8}\|\boldsymbol y-\boldsymbol x\|_2
 >0,
\end{aligned}
\end{equation}
where the last line uses
$\varepsilon_T\le\frac{\varepsilon}{2}$ and the trapping
invariant.
Since $\Psi(\boldsymbol z^T)\le0$, we obtain
$\boldsymbol z^T\notin\Gamma_T$.

Moreover,
\begin{equation}
\label{eq:gradient-only-terminal-stationarity}
\begin{aligned}
 \|\nabla f(\boldsymbol z^T)+\boldsymbol n\|_2
 &\le
 \|\nabla f(\boldsymbol z^T)-\nabla P(\boldsymbol z^T)\|_2\\
 &\qquad+
 \left\|\nabla P(\boldsymbol z^T)
       +\frac{3\varepsilon}{4}\boldsymbol v
       +\boldsymbol n\right\|_2
 +\frac{3\varepsilon}{4}\|\boldsymbol v\|_2\\
 &\le
 \frac{\varepsilon}{8}
 +\frac{\varepsilon}{8}
 +\frac{3\varepsilon}{4}
 =\varepsilon,
\end{aligned}
\end{equation}
where the second inequality uses the model error,
the terminal residual bound, and
$\|\boldsymbol v\|_2\le1$.

In the unconstrained case,
$\boldsymbol z^T\notin\partial R$ implies
$\boldsymbol z^T\in\operatorname{int}R$.
Hence $\boldsymbol n=\boldsymbol0$, and
\eqref{eq:gradient-only-terminal-stationarity} gives
$\|\nabla f(\boldsymbol z^T)\|_2\le\varepsilon$.

In the cube-constrained case,
$\boldsymbol z^T\notin\Gamma_T$ implies
$\boldsymbol z^T\in\operatorname{int}_{[0,1]^d}R$.
Therefore
$N_R(\boldsymbol z^T)=N_{[0,1]^d}(\boldsymbol z^T)$,
as established in Section~\ref{sec:3}.
For every $\gamma>0$,
\[
\begin{aligned}
 \|G_\gamma(\boldsymbol z^T)\|_2
 &=
 \frac{1}{\gamma}
 \left\|
 \boldsymbol z^T-
 \operatorname{proj}_{[0,1]^d}
       (\boldsymbol z^T-\gamma\nabla f(\boldsymbol z^T))
 \right\|_2\\
 &=
 \frac{1}{\gamma}
 \left\|
 \operatorname{proj}_{[0,1]^d}
       (\boldsymbol z^T+\gamma\boldsymbol n)
 -
 \operatorname{proj}_{[0,1]^d}
       (\boldsymbol z^T-\gamma\nabla f(\boldsymbol z^T))
 \right\|_2\\
 &\le\|\nabla f(\boldsymbol z^T)+\boldsymbol n\|_2\\
 &\le\varepsilon,
\end{aligned}
\]
where the second equality follows from
$\boldsymbol n\in N_{[0,1]^d}(\boldsymbol z^T)$,
and the first inequality is nonexpansiveness of projection.
\end{proof}

\paragraph{Deterministic and randomized guarantees.}
Starting from a cube, each block of $d$ longest-side cuts
multiplies every side length by $\frac{2}{3}$.
Thus the stopping rule is reached after a finite deterministic
number $T$ of cuts.

In deterministic mode, every $\mathcal G_t$ holds.
The trapping invariant and
Lemma~\ref{lem:gradient-only-terminal-output}
therefore guarantee a correct output.

In randomized mode,
\[
\begin{aligned}
 \mathbb P\!\left(
 \bigcup_{t=0}^{T-1}\mathcal G_t^c
 \right)
 &\le\sum_{t=0}^{T-1}\mathbb P(\mathcal G_t^c)\\
 &=\sum_{t=0}^{T-1}
   \mathbb E\!\left[
   \mathbb P(\mathcal G_t^c\mid\mathcal H_t)
   \right]\\
 &\le\sum_{t=0}^{T-1}\rho_t\\
 &\le\sum_{t=0}^{\infty}
       \frac{6\rho}{\pi^2(t+1)^2}
 =\rho,
\end{aligned}
\]
where the first inequality is the union bound, the equality
is the tower property, and the next inequality follows from
Lemma~\ref{lem:uniform-gradient-reconstruction}.
On $\bigcap_{t=0}^{T-1}\mathcal G_t$, the trapping invariant
holds through termination, so
Lemma~\ref{lem:gradient-only-terminal-output}
certifies the output.
The randomized implementation is therefore correct with
probability at least $1-\rho$.

\subsection{Gradient-Query Complexity}
\label{subsubsec:gradient-only-complexity}

Write $s:=d-1$ and
\[
 B_t:=\frac{L_p(r^t)^\nu}{\delta_t}
     =\frac{12L_p(r^t)^{\nu-1}}{\Delta_t}.
\]
Throughout this subsection, $C$ denotes a positive constant
depending only on $d,p,\beta_p$, whose value may change between
occurrences. All query bounds hold for every run, irrespective
of reconstruction success.

\paragraph{Adaptive depth.}
Starting from a cube, every block of $d$ longest-side cuts
reduces each side exactly once: within a block, a cut side
is shorter than every uncut side. Hence
\begin{equation}
\label{eq:gradient-only-side-schedule}
 r^t=r^0
 \left(\frac{2}{3}\right)^{\left\lfloor\frac{t}{d}\right\rfloor}.
\end{equation}
For $L_p>0$, the stopping condition
$r^t\le r_\varepsilon^\nabla$ therefore first holds at
\[
 T=d\left\lceil
 \log_{\frac{3}{2}}
 \max\left\{1,\frac{r^0}{r_\varepsilon^\nabla}\right\}
 \right\rceil.
\]
Moreover,
\[
\begin{aligned}
 \left(\frac{r^0}{r_\varepsilon^\nabla}\right)^{\nu-1}
 &=
 8c_{p,\beta_p}
 \left(\frac{\sqrt d}{2}\right)^{\nu-1}
 \frac{L_p(r^0)^{\nu-1}}{\varepsilon} = \frac{c_{p,\beta_p}(1-\alpha)}{6d}
 \left(\frac{\sqrt d}{2}\right)^{\nu-1}B_0,
\end{aligned}
\]
where the equalities use
\eqref{eq:gradient-only-terminal-radius} and
$\Delta_0=\frac{\varepsilon(1-\alpha)}{4d}$, respectively.
Consequently,
\begin{equation}
\label{eq:gradient-only-iteration-bound}
\begin{aligned}
 T
 &\le
 d+\frac{d}{\log\frac{3}{2}}
 \log\max\left\{1,\frac{r^0}{r_\varepsilon^\nabla}\right\}\le C[1+\log(1+B_0)].
\end{aligned}
\end{equation}

Initialization requires no oracle queries.
Each outer iteration uses one nonadaptive batch, and the
terminal reconstruction uses one additional batch.
Thus the adaptive depth is at most
$T+1=O_{d,p,\beta_p}(1+\log(1+B_0))$.

\begin{fact}[\bf Geometry of the shared-root tree]
\label{lem:shared-root-tree-geometry}
The edge counts satisfy
\[
\begin{aligned}
 |\mathcal A_{t,0}|
 &=2(p+1)^{d-1},\\
 |\mathcal A_{t,\ell}|
 &=2\bigl[
 (p2^\ell+1)^{d-1}-(p2^{\ell-1}+1)^{d-1}
 \bigr],
 \qquad 1\le\ell\le J_t,\\
 |\mathcal A_t|
 &=2(p2^{J_t}+1)^{d-1}.
\end{aligned}
\]
For every $0\le\ell\le J_t$ and
$e=(\boldsymbol u,\boldsymbol v)\in\mathcal A_{t,\ell}$,
the edge length satisfies $ h_e:=\|\boldsymbol v-\boldsymbol u\|_2
 \le\frac{C_{d,p}r^t}{2^\ell}.$
Moreover, for every exponent $a>0$,
\[
 \sum_{e\in\mathcal A_{t,\ell}}h_e^a
 \le C_{d,p,a}(r^t)^a2^{(d-1-a)\ell}.
\]
\end{fact}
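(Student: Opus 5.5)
The plan is to read off all three claims directly from the grid construction in \eqref{eq:nested-facet-grid} and the parent rule of the shared-root tree, treating the two facets $E_1^t,E_2^t$ separately and doubling. First, the edge counts. Each node of $\mathcal V_{q,\ell}^t$ appears exactly once in the tree, and it is the child of exactly one edge. That edge goes to the root if the node lies in $\mathcal V_{q,0}^t$, and to its parent otherwise. Hence $|\mathcal A_{t,0}|$ counts the level-zero nodes on both facets, namely $2(p+1)^{d-1}$. For $\ell\ge1$, $|\mathcal A_{t,\ell}|$ counts the nodes that first appear at level $\ell$. Nestedness holds because index $k_i$ at level $\ell-1$ corresponds to index $2k_i$ at level $\ell$, so this count is $2[(p2^\ell+1)^{d-1}-(p2^{\ell-1}+1)^{d-1}]$. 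The sum over $0\le\ell\le J_t$ then telescopes to $2(p2^{J_t}+1)^{d-1}$. Nodes that coincide with $\boldsymbol x^t$ are counted as distinct, per the footnote, so the counts are exact.

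Second, the edge lengths. A level-zero edge joins $\boldsymbol x^t\in R_t$ to a point of $R_t$. Its length is therefore at most $\operatorname{diam}(R_t)\le\sqrt d\,r^t$, which gives the bound with $C_{d,p}\ge\sqrt d$. For $\ell\ge1$, the child has indices $k_i$ and the parent has indices $\lfloor k_i/2\rfloor$ on the coarser grid. In physical coordinate $i\neq j$, the two points are
\[
a_i^t+\frac{k_i(b_i^t-a_i^t)}{p2^\ell}
\qquad\text{and}\qquad
a_i^t+\frac{2\lfloor k_i/2\rfloor(b_i^t-a_i^t)}{p2^\ell},
\]
which differ by at most $(b_i^t-a_i^t)/(p2^\ell)\le r^t/(p2^\ell)$. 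Coordinate $j$ agrees because both points lie on the same facet. Summing over $d-1$ coordinates gives $h_e\le\sqrt{d-1}\,r^t/(p2^\ell)\le C_{d,p}r^t2^{-\ell}$.

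Third, the power sums. Combine the two previous bounds: $\sum_{e\in\mathcal A_{t,\ell}}h_e^a\le|\mathcal A_{t,\ell}|\,(C_{d,p}r^t2^{-\ell})^a$. It remains to bound the count by $|\mathcal A_{t,\ell}|\le 2(p2^\ell+1)^{d-1}\le 2(p+1)^{d-1}2^{(d-1)\ell}$. Multiplying out gives $C_{d,p,a}(r^t)^a2^{(d-1-a)\ell}$.

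There is no real obstacle. The only points needing care are the precise parent indexing, which gives the half-spacing bound on coordinate differences, and checking that the level-zero edges through the root fit the same $2^{-\ell}$ form when $\ell=0$.
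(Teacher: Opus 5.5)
Your proposal is correct and follows the paper's proof essentially line by line. You get the counts from one parent edge per newly introduced node on the two disjoint nested facet grids, followed by telescoping; the lengths from $\operatorname{diam}(R_t)\le\sqrt d\,r^t$ at level zero and the $\lfloor k_i/2\rfloor$ parent rule at higher levels; and the power sums from the count bound $2(p+1)^{d-1}2^{(d-1)\ell}$ times the maximal length to the power $a$, which yields the same constants $C_{d,p}=\sqrt d$ and $C_{d,p,a}=2(p+1)^{d-1}d^{a/2}$ (any zero-length root edge from the footnote's convention trivially meets the length bound).
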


\begin{proof}
At level $\ell$, each free coordinate of a separator has
$p2^\ell+1$ grid points. Since each separator has $d-1$
free coordinates, its grid contains $(p2^\ell+1)^{d-1}$ nodes.
Each newly introduced node contributes exactly one parent edge,
including a zero-length edge if it coincides with the root.
Therefore,
\[
\begin{aligned}
 |\mathcal A_{t,0}|
 &=2(p+1)^{d-1},\\
 |\mathcal A_{t,\ell}|
 &=2\bigl[
 (p2^\ell+1)^{d-1}-(p2^{\ell-1}+1)^{d-1}
 \bigr],
 \qquad 1\le\ell\le J_t,
\end{aligned}
\]
where the factor $2$ accounts for the two separators,
and the difference counts newly introduced nodes because
the grids are nested.
Consequently,
\[
\begin{aligned}
 |\mathcal A_t|=\sum_{\ell=0}^{J_t}|\mathcal A_{t,\ell}|=2(p+1)^{d-1}
 +2\sum_{\ell=1}^{J_t}
 \bigl[
 (p2^\ell+1)^{d-1}-(p2^{\ell-1}+1)^{d-1}
 \bigr]=2(p2^{J_t}+1)^{d-1},
\end{aligned}
\]
where the first equality uses the disjointness of the
levelwise edge sets, and the last equality follows by telescoping.

For a level-zero edge $e=(\boldsymbol x^t,\boldsymbol v)$,
both endpoints lie in
$R_t=\prod_{i=1}^d[a_i^t,b_i^t]$, so
\[
\begin{aligned}
 h_e
 &=\|\boldsymbol v-\boldsymbol x^t\|_2\le
 \left(\sum_{i=1}^d(b_i^t-a_i^t)^2\right)^{\frac{1}{2}}\le\sqrt d\,r^t,
\end{aligned}
\]
where the first inequality bounds each coordinate difference
by the corresponding side length, and the second uses
$b_i^t-a_i^t\le r^t$.

Now let $\ell\ge1$ and
$e=(\boldsymbol u,\boldsymbol v)\in\mathcal A_{t,\ell}$.
Let $j$ be the fixed coordinate of the separator.
For each $i\ne j$, write the child coordinate as
\[
 [\boldsymbol v]_i
 =a_i^t+\frac{k_i(b_i^t-a_i^t)}{p2^\ell}.
\]
The parent rule assigns index
$\left\lfloor\frac{k_i}{2}\right\rfloor$ at level $\ell-1$.
Hence
\[
\begin{aligned}
 |[\boldsymbol v]_i-[\boldsymbol u]_i|
 &=
 \left|
 \frac{k_i}{p2^\ell}
 -
 \frac{\left\lfloor\frac{k_i}{2}\right\rfloor}
      {p2^{\ell-1}}
 \right|(b_i^t-a_i^t)=
 \frac{k_i-2\left\lfloor\frac{k_i}{2}\right\rfloor}
      {p2^\ell}(b_i^t-a_i^t)\le\frac{b_i^t-a_i^t}{p2^\ell}\le\frac{r^t}{p2^\ell},
\end{aligned}
\]
where the numerator in the second line belongs to $\{0,1\}$,
and the last inequality uses the definition of $r^t$.
Since the parent and child lie on the same separator,
$[\boldsymbol u]_j=[\boldsymbol v]_j$, and therefore
\[
\begin{aligned}
 h_e
 &=
 \left(
 \sum_{i\ne j}|[\boldsymbol v]_i-[\boldsymbol u]_i|^2
 \right)^{\frac{1}{2}}\le
 \left(
 (d-1)\left(\frac{r^t}{p2^\ell}\right)^2
 \right)^{\frac{1}{2}}=\frac{\sqrt{d-1}\,r^t}{p2^\ell}\le\frac{\sqrt d\,r^t}{2^\ell},
\end{aligned}
\]
where the first inequality uses the coordinate bounds,
and the last uses $p\ge1$.
Together with the level-zero estimate, this proves the
edge-length bound with $C_{d,p}=\sqrt d$.

For every $0\le\ell\le J_t$,
\[
\begin{aligned}
 |\mathcal A_{t,\ell}|
 &\le2(p2^\ell+1)^{d-1}\le2(p+1)^{d-1}2^{(d-1)\ell},
\end{aligned}
\]
where the first inequality counts all level-$\ell$ grid nodes,
and the second uses
$p2^\ell+1\le(p+1)2^\ell$.
Thus, for every $a>0$,
\[
\begin{aligned}
 \sum_{e\in\mathcal A_{t,\ell}}h_e^a
 &\le
 |\mathcal A_{t,\ell}|
 \left(\frac{\sqrt d\,r^t}{2^\ell}\right)^a\\
 &\le
 2(p+1)^{d-1}2^{(d-1)\ell}
 \frac{d^{\frac{a}{2}}(r^t)^a}{2^{a\ell}}\\
 &=
 2(p+1)^{d-1}d^{\frac{a}{2}}
 (r^t)^a2^{(d-1-a)\ell},
\end{aligned}
\]
where the first inequality uses the edge-length bound
and the monotonicity of $u\mapsto u^a$ on $[0,\infty)$,
and the second uses the edge-count bound.
The claimed estimate follows with
$C_{d,p,a}=2(p+1)^{d-1}d^{\frac{a}{2}}$.
\end{proof}

Recall
\[
 \kappa_{\mathrm{det}}=\frac{1}{\nu-1},
 \qquad
 \kappa_{\mathrm{rand}}=\frac{2}{2\nu-1},
 \qquad
 \chi_{\mathsf m}
 =\max\left\{\frac{d-1}{\nu},\kappa_{\mathsf m}\right\}.
\]
For $B\ge0$, define
\begin{equation}
\label{eq:gradient-only-deterministic-cost-function}
 \mathcal D(B):=
 \begin{cases}
 1+B^{\chi_{\mathrm{det}}},
 &d-1\ne\frac{\nu}{\nu-1},\\[1mm]
 1+B^{\frac{1}{\nu-1}}
       [1+\log(1+B)]^{\frac{\nu}{\nu-1}},
 &d-1=\frac{\nu}{\nu-1}.
 \end{cases}
\end{equation}

\begin{lemma}[\bf Gradient queries in one iteration]
\label{lem:gradient-queries-one-iteration}
For $C=C_{d,p,\beta_p}\ge2$, the issued query counts satisfy
\begin{equation}
\label{eq:gradient-only-one-iteration-cost}
\begin{aligned}
 Q_t^{\mathrm{det}}
 &\le C\mathcal D(B_t),\\
 Q_t^{\mathrm{rand}}
 &\le
 C(1+B_t^{\chi_{\mathrm{rand}}})
 \left[
 \log\frac{C(1+B_t)^{\frac{d-1}{\nu}}}{\rho_t}
 \right]^{\frac{1}{2\nu-1}}.
\end{aligned}
\end{equation}
These bounds hold irrespective of reconstruction success.
\end{lemma}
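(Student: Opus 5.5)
The plan is to sum, level by level, the line-integration costs of Lemma~\ref{lem:randomized-line-integration} over the tree edges of Fact~\ref{lem:shared-root-tree-geometry}, using the tolerances $\xi_{t,\ell}$ from \eqref{eq:edge-error-budget}. The argument never touches the random outcomes, so the bounds hold on every run.

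\textbf{Per-edge cost.} A positive-length level-$\ell$ edge $e$ costs at most
\[
C\Bigl(1+\bigl(L_ph_e^\nu/\xi_{t,\ell}\bigr)^{\kappa}\Bigr),
\]
with an extra factor $[\log(2|\mathcal A_t|/\rho_t)]^{1/(2\nu-1)}$ in randomized mode.

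\textbf{The additive $1$'s.} Summing the constant term over edges gives $|\mathcal A_t|\le C2^{(d-1)J_t}$. By minimality of $J_t$ in \eqref{eq:gradient-only-refinement-level}, $2^{J_t}\le C(1+B_t^{1/\nu})$, using $\varepsilon_0^t=\delta_t$ and $\Lambda_{d-1,p}$, $A_{d-1,p,\beta_p}$ as constants. Hence this contribution is at most $C(1+B_t^{(d-1)/\nu})$.

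\textbf{The main terms.} Plug in $\xi_{t,\ell}=\delta_tw_{t,\ell}/(8\Lambda_{d-1,p})$. Applying Fact~\ref{lem:shared-root-tree-geometry} with $a=\nu\kappa$, level $\ell$ contributes
\[
C\,B_t^{\kappa}\,w_{t,\ell}^{-\kappa}\,2^{(d-1-\nu\kappa)\ell}.
\]
Write $\gamma:=(d-1-\nu\kappa)/(1+\kappa)$, so that $w_{t,\ell}=2^{\gamma\ell}/S_t$. The level term then equals $CB_t^\kappa S_t^\kappa 2^{\gamma\ell}$. Summing over $\ell$ yields $S_t$, so the total is $CB_t^\kappa S_t^{1+\kappa}$; this is the Hölder balancing that motivated the weights.

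\textbf{Three regimes for $S_t=\sum_{\ell\le J_t}2^{\gamma\ell}$.}
\begin{itemize}
\item If $\gamma<0$, equivalently $d-1<\nu\kappa$ (i.e. $\frac{d-1}{\nu}<\kappa$), then $S_t=O(1)$. The bound is $CB_t^\kappa$ with $\kappa=\chi_{\mathsf m}$.
\item If $\gamma>0$, then $S_t\le C2^{\gamma J_t}$ and $S_t^{1+\kappa}\le C2^{(d-1-\nu\kappa)J_t}\le C(1+B_t)^{(d-1)/\nu-\kappa}$. Multiplying by $B_t^\kappa$ gives $C(1+B_t^{(d-1)/\nu})$ with $(d-1)/\nu=\chi_{\mathsf m}$. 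The case $B_t<1$ is absorbed into the constant.
\item If $\gamma=0$, then $S_t=J_t+1\le C[1+\log(1+B_t)]$. This gives the logarithmic factor raised to $1+\kappa=\nu/(\nu-1)$ in $\mathcal D$ for the deterministic mode.
\end{itemize}

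\textbf{Randomized critical case.} When $\gamma=0$ in randomized mode, the log factor has power $1+\kappa_{\mathrm{rand}}$, which the stated bound does not display. This is the one point I expect to need care. I would try to absorb it into the $\widetilde{\mathcal O}$ accounting or the constant, or else perturb the weights slightly (e.g. taking the exponent $\gamma\pm\eta$), which trades the log for a factor $B_t^{o(1)}$. The stated bound must also be checked in that case.

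\textbf{Confidence log.} In randomized mode, use $|\mathcal A_t|\le C(1+B_t)^{(d-1)/\nu}$. Then $\log(2|\mathcal A_t|/\rho_t)\le\log\bigl(C(1+B_t)^{(d-1)/\nu}/\rho_t\bigr)$, which is at least a positive constant because $C\ge2$ and $\rho_t<1$; this lets the additive term multiply through.

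\textbf{Extra queries.} The per-edge overhead $O_p(1)$ (shared endpoints, the $+1$) and the zero-length edges only add to the constant-term count handled above.
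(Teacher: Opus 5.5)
Your argument is the paper's. You bound $2^{J_t}$ and $|\mathcal A_t|$ through the minimality of $J_t$, and you sum the per-edge line-integration costs to get $C|\mathcal A_t|+C\,[\text{conf.}]\,B_t^\kappa S_t^{1+\kappa}$, where $[\text{conf.}]$ denotes the randomized confidence factor. You then split on the sign of $d-1-\nu\kappa$.

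The one place the proof does not close is the step you flagged: the randomized critical case $d-1=\nu\kappa_{\mathrm{rand}}$. Neither remedy you suggest would yield the stated bound. A factor $[1+\log(1+B_t)]^{1+\kappa_{\mathrm{rand}}}$ is unbounded in $B_t$, so it cannot be absorbed into $C$. Nor can it be absorbed into the displayed confidence logarithm, which carries only the smaller power $\frac{1}{2\nu-1}$. Perturbing the weights would introduce a factor $B_t^{\eta}$, which the statement does not allow either.

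The missing observation is that this case is vacuous. We have $\nu\kappa_{\mathrm{rand}}=\frac{2\nu}{2\nu-1}=1+\frac{1}{2\nu-1}$, which lies strictly in $(1,2)$ because $\nu=p+\beta_p>1$. Since $d-1$ is a positive integer, it can never equal $\nu\kappa_{\mathrm{rand}}$. So in randomized mode only your two noncritical regimes occur, and they give $B_t^{\kappa_{\mathrm{rand}}}S_t^{1+\kappa_{\mathrm{rand}}}\le C(1+B_t^{\chi_{\mathrm{rand}}})$ with no extra logarithm. The only logarithm left is then the confidence factor $[\log(2|\mathcal A_t|/\rho_t)]^{1/(2\nu-1)}$, which you already bound correctly via $|\mathcal A_t|\le C(1+B_t)^{(d-1)/\nu}$.

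With that one line the proof is complete. By contrast, the deterministic critical case $d-1=\frac{\nu}{\nu-1}$ genuinely occurs (e.g.\ $d=3$, $\nu=2$), and the logarithmic branch of $\mathcal D$ is exactly what accounts for it.
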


\begin{proof}
We first bound the refinement level $J_t$ and the number of edges $|\mathcal A_t|$.
If $B_t=0$, the refinement condition already holds at level
zero, so $J_t=0$.
If $J_t\ge1$, its minimality gives
\[
\begin{aligned}
 \frac{\delta_t}{8}
 &<
 A_{d-1,p,\beta_p}L_p
 \left(\frac{\sqrt{d-1}\,r^t}{2^{J_t-1}}\right)^\nu\\
 &=
 \frac{
 A_{d-1,p,\beta_p}(d-1)^{\frac{\nu}{2}}
 \delta_t B_t}
 {2^{\nu(J_t-1)}},
\end{aligned}
\]
where the inequality says that level $J_t-1$ fails
\eqref{eq:gradient-only-refinement-level}, and the equality
uses $L_p(r^t)^\nu=\delta_t B_t$.
Since $\delta_t>0$, rearranging yields
\[
 2^{J_t}
 <
 2\left(
 8A_{d-1,p,\beta_p}(d-1)^{\frac{\nu}{2}}B_t
 \right)^{\frac{1}{\nu}}.
\]
Including the case $J_t=0$, we obtain
\begin{equation}
\label{eq:gradient-only-refinement-cost-bounds}
\begin{aligned}
 2^{J_t}
 &\le C\max\left\{1,B_t^{\frac{1}{\nu}}\right\},\\
 J_t+1
 &\le
 1+\frac{\log C}{\log2}
 +\frac{\log\max\{1,B_t\}}{\nu\log2}\le C[1+\log(1+B_t)],\\
 |\mathcal A_t|
 &=2(p2^{J_t}+1)^{d-1}\le2(p+1)^{d-1}2^{(d-1)J_t}\le C(1+B_t^{\frac{d-1}{\nu}}).
\end{aligned}
\end{equation}
Here the bound on $J_t+1$ follows by taking logarithms
and using $\max\{1,B_t\}\le1+B_t$.
The edge-count identity is
Fact~\ref{lem:shared-root-tree-geometry} and
the following inequalities use
$p2^{J_t}+1\le(p+1)2^{J_t}$ and the bound on $2^{J_t}$.

Fix a mode and write $\kappa:=\kappa_{\mathsf m}$.
Set $ \mathcal L_t:=
 \begin{cases}
 1,&\mathsf m=\mathrm{det},\\
 \left[
 \log\frac{2|\mathcal A_t|}{\rho_t}
 \right]^{\frac{1}{2\nu-1}},
 &\mathsf m=\mathrm{rand}.
 \end{cases}$
In both modes, $\mathcal L_t\ge1$ while
in randomized mode this follows from
$|\mathcal A_t|\ge4$ and $0<\rho_t<1$.
Lemma~\ref{lem:randomized-line-integration} bounds the cost
of an edge $e\in\mathcal A_{t,\ell}$ by
\[
 C_p\left[
 1+\mathcal L_t
 \left(\frac{L_ph_e^\nu}{\xi_{t,\ell}}\right)^\kappa
 \right].
\]
Recall from Eq.~\eqref{eq:edge-error-budget} that
\[
 S_t=
 \sum_{\ell=0}^{J_t}
 2^{\frac{(d-1-\nu\kappa)\ell}{1+\kappa}},
 \qquad
 w_{t,\ell}
 =
 \frac{2^{\frac{(d-1-\nu\kappa)\ell}{1+\kappa}}}{S_t},
 \qquad
 \xi_{t,\ell}
 =
 \frac{\delta_t w_{t,\ell}}{8\Lambda_{d-1,p}}.
\]
Summing the edge costs gives
\begin{equation}
\label{eq:gradient-only-weighted-level-sum}
\begin{aligned}
 Q_t
 &\le
 C|\mathcal A_t|
 +C\mathcal L_t
 \left(\frac{L_p}{\delta_t}\right)^\kappa
 \sum_{\ell=0}^{J_t}w_{t,\ell}^{-\kappa}
 \sum_{e\in\mathcal A_{t,\ell}}h_e^{\nu\kappa}\\
 &\le
 C|\mathcal A_t|
 +C\mathcal L_t B_t^\kappa
 \sum_{\ell=0}^{J_t}
 2^{(d-1-\nu\kappa)\ell}w_{t,\ell}^{-\kappa}\\
 &=
 C|\mathcal A_t|
 +C\mathcal L_t B_t^\kappa S_t^\kappa
 \sum_{\ell=0}^{J_t}
 2^{\frac{(d-1-\nu\kappa)\ell}{1+\kappa}}\\
 &=
 C|\mathcal A_t|
 +C\mathcal L_t B_t^\kappa S_t^{1+\kappa}.
\end{aligned}
\end{equation}
The first inequality substitutes the edge tolerances into
the quadrature bound.
The second uses
Fact~\ref{lem:shared-root-tree-geometry} with $a=\nu\kappa$
and
$\left(\frac{L_p}{\delta_t}\right)^\kappa
(r^t)^{\nu\kappa}=B_t^\kappa$.
The equalities substitute $w_{t,\ell}$ and then use the
definition of $S_t$.

If $d-1<\nu\kappa$, then
\[
\begin{aligned}
 S_t
 &\le
 \sum_{\ell=0}^{\infty}
 2^{\frac{(d-1-\nu\kappa)\ell}{1+\kappa}}=
 \frac{1}{
 1-2^{\frac{d-1-\nu\kappa}{1+\kappa}}},\\
 B_t^\kappa S_t^{1+\kappa}
 &\le CB_t^\kappa,
\end{aligned}
\]
where the infinite series converges because
$2^{\frac{d-1-\nu\kappa}{1+\kappa}}\in(0,1)$.

If $d-1>\nu\kappa$, the finite geometric sum satisfies
\[
\begin{aligned}
 S_t
 &=
 \frac{
 2^{\frac{(d-1-\nu\kappa)(J_t+1)}{1+\kappa}}-1}
 {2^{\frac{d-1-\nu\kappa}{1+\kappa}}-1}\le
 \frac{
 2^{\frac{d-1-\nu\kappa}{1+\kappa}}}
 {2^{\frac{d-1-\nu\kappa}{1+\kappa}}-1}
 2^{\frac{(d-1-\nu\kappa)J_t}{1+\kappa}}\le C2^{\frac{(d-1-\nu\kappa)J_t}{1+\kappa}},
\end{aligned}
\]
where the denominator is positive and depends only on the
fixed parameters.
Consequently,
\[
\begin{aligned}
 B_t^\kappa S_t^{1+\kappa}
 &\le
 CB_t^\kappa2^{(d-1-\nu\kappa)J_t}\\
 &\le
 CB_t^\kappa
 \max\left\{1,B_t^{\frac{d-1-\nu\kappa}{\nu}}\right\}\\
 &=C\max\left\{
 B_t^\kappa,B_t^{\frac{d-1}{\nu}}
 \right\}\\
 &\le C(1+B_t^{\frac{d-1}{\nu}}),
\end{aligned}
\]
where the second inequality uses
\eqref{eq:gradient-only-refinement-cost-bounds}.
The equality uses
$\kappa+\frac{d-1-\nu\kappa}{\nu}=\frac{d-1}{\nu}$.
For the last inequality, use $B_t^\kappa\le1$ when
$B_t\le1$, and
$B_t^\kappa\le B_t^{\frac{d-1}{\nu}}$ when $B_t\ge1$.

If $d-1=\nu\kappa$, every summand defining $S_t$ equals one.
Thus
\[
\begin{aligned}
 S_t&=J_t+1,\\
 B_t^\kappa S_t^{1+\kappa}
 &=B_t^\kappa(J_t+1)^{1+\kappa}\\
 &\le
 CB_t^\kappa[1+\log(1+B_t)]^{1+\kappa},
\end{aligned}
\]
where the inequality uses
\eqref{eq:gradient-only-refinement-cost-bounds}.

Combining these estimates with
\eqref{eq:gradient-only-weighted-level-sum} gives
\[
 Q_t\le C\mathcal L_t
 \begin{cases}
 1+B_t^{\chi_{\mathsf m}},
 &d-1\ne\nu\kappa,\\
 1+B_t^\kappa[1+\log(1+B_t)]^{1+\kappa},
 &d-1=\nu\kappa.
 \end{cases}
\]
Here the edge-count term is absorbed using
$\mathcal L_t\ge1$ and
$B_t^{\frac{d-1}{\nu}}\le1+B_t^{\chi_{\mathsf m}}$
in the noncritical cases.
In the critical case,
$\frac{d-1}{\nu}=\kappa$ and
$[1+\log(1+B_t)]^{1+\kappa}\ge1$.

For deterministic integration,
\[
 \mathcal L_t=1,
 \qquad
 \nu\kappa_{\mathrm{det}}
 =1+\kappa_{\mathrm{det}}
 =\frac{\nu}{\nu-1}.
\]
Substitution into the preceding bound gives
$Q_t^{\mathrm{det}}\le C\mathcal D(B_t)$.

For randomized integration,
$\nu\kappa_{\mathrm{rand}}
=\frac{2\nu}{2\nu-1}\in(1,2)$.
Since $d-1$ is a positive integer, the critical case cannot occur.
Therefore,
\[
 Q_t^{\mathrm{rand}}
 \le C\mathcal L_t(1+B_t^{\chi_{\mathrm{rand}}}).
\]
Finally,
\[
\begin{aligned}
 \mathcal L_t
 &=
 \left[
 \log\frac{2|\mathcal A_t|}{\rho_t}
 \right]^{\frac{1}{2\nu-1}}\le
 \left[
 \log\frac{C(1+B_t^{\frac{d-1}{\nu}})}{\rho_t}
 \right]^{\frac{1}{2\nu-1}}\le
 \left[
 \log\frac{C(1+B_t)^{\frac{d-1}{\nu}}}{\rho_t}
 \right]^{\frac{1}{2\nu-1}},
\end{aligned}
\]
where the first inequality uses the edge-count bound,
and the second uses
$1+B_t^{\frac{d-1}{\nu}}
\le2(1+B_t)^{\frac{d-1}{\nu}}$,
absorbing the factor $2$ into $C$.
This proves \eqref{eq:gradient-only-one-iteration-cost}.
\end{proof}

\paragraph{Total query complexity.}
Set
\begin{equation}
\label{eq:gradient-only-geometric-parameters}
 \theta:=
 \frac{\left(\frac{2}{3}\right)^{\nu-1}}{\alpha},
 \qquad
 B_0=
 \frac{48dL_p(r^0)^{\nu-1}}{(1-\alpha)\varepsilon}.
\end{equation}
The parameter choice
$\left(\frac{2}{3}\right)^{\nu-1}<\alpha<1$
ensures $0<\theta<1$.

\begin{proposition}[\bf Total gradient-query complexity]
\label{prop:total-gradient-query-complexity}
For $C=C_{d,p,\beta_p}\ge2$, the total query counts,
including the terminal reconstruction, satisfy
\begin{equation}
\label{eq:gradient-only-total-explicit}
\begin{aligned}
 Q^{\mathrm{det}}
 &\le N_{d,p}^\nabla+C\mathcal D(B_0),\\
 Q^{\mathrm{rand}}
 &\le N_{d,p}^\nabla+
 C(1+B_0^{\chi_{\mathrm{rand}}})
 \left[
 \log\frac{C(1+B_0)}{\rho}
 \right]^{\frac{1}{2\nu-1}}.
\end{aligned}
\end{equation}
These bounds hold for every run.
\end{proposition}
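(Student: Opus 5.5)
The proof sums the per-iteration bounds of Lemma~\ref{lem:gradient-queries-one-iteration} over $t=0,\dots,T-1$ and adds the $N_{d,p}^\nabla$ gradient queries of the terminal reconstruction. Two facts about the schedule are needed first. From \eqref{eq:gradient-only-side-schedule} and \eqref{eq:gradient-only-outer-schedule},
\[
B_t=\frac{12L_p(r^t)^{\nu-1}}{\Delta_t}=B_0\,\theta^{\lfloor t/d\rfloor},
\]
since $(r^t)^{\nu-1}$ carries the factor $((2/3)^{\nu-1})^{\lfloor t/d\rfloor}$ and $\Delta_t$ carries $\alpha^{\lfloor t/d\rfloor}$. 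Hence $B_t\le B_0$ and $B_t$ decays geometrically in blocks of length $d$, because $0<\theta<1$. Second, \eqref{eq:gradient-only-iteration-bound} gives $T\le C[1+\log(1+B_0)]$.

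\emph{Deterministic mode.} In the noncritical case I write $\sum_t(1+B_t^{\chi})\le T+d\,B_0^{\chi}\sum_{m\ge0}\theta^{\chi m}\le T+CB_0^{\chi}$, with $\chi=\chi_{\mathrm{det}}$. The constant contribution $T$ is only logarithmic in $B_0$, so it has to be absorbed into $C\mathcal D(B_0)$. Since $\mathcal D(B_0)\ge 1+B_0^{\chi}$, it suffices that $1+\log(1+B_0)\le C(1+B_0^{\chi})$, which holds because $\chi>0$.

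In the critical case $d-1=\nu/(\nu-1)$, each term is bounded by $B_t^{1/(\nu-1)}[1+\log(1+B_0)]^{\nu/(\nu-1)}$, using monotonicity in $B_t\le B_0$ for the logarithmic factor. Summing the geometric factor $B_t^{1/(\nu-1)}$ over blocks again costs only $C$, and the additive $T$ is absorbed in the same way. This gives $Q^{\mathrm{det}}\le N^\nabla_{d,p}+C\mathcal D(B_0)$.

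\emph{Randomized mode.} The extra point is the confidence factor. Write $\rho_t=6\rho/(\pi^2(t+1)^2)$, so
\[
\log\frac{C(1+B_t)^{(d-1)/\nu}}{\rho_t}\le\log\frac{C(1+B_0)^{(d-1)/\nu}}{\rho}+2\log(t+1)+C.
\]
Because $t<T\le C[1+\log(1+B_0)]$, the term $\log(t+1)$ is at most $C\log(2+\log(1+B_0))$, which is dominated by $C\log(1+B_0)+C$. Since $\rho<1$, the whole bracket is therefore at most $C\log\bigl(C(1+B_0)/\rho\bigr)$ after enlarging $C$; the exponent $(d-1)/\nu$ becomes a multiplicative constant inside the logarithm. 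This uniform bracket factors out of the sum, and the remaining sum $\sum_t(1+B_t^{\chi_{\mathrm{rand}}})\le C(1+B_0^{\chi_{\mathrm{rand}}})$ follows exactly as in the deterministic case, with the $T$ term absorbed likewise.

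The step needing most care is the bookkeeping of the additive $T$ terms and of the $\log(t+1)$ growth of the failure budgets. Both must be shown to be swallowed by the leading factors without introducing an extra $\log(1+B_0)$ outside the stated form. The key observation is that $\chi_{\mathsf m}>0$, so polynomial growth in $B_0$ dominates any power of $\log(1+B_0)$, and for small $B_0$ both sides are bounded by constants. All bounds come from deterministic counts of issued queries, so they hold for every run.
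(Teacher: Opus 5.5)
Your proposal is correct and follows the paper's proof. Like the paper, you write $B_t=B_0\theta^{\lfloor t/d\rfloor}$, sum the per-iteration bounds of Lemma~\ref{lem:gradient-queries-one-iteration} geometrically, absorb the additive $T=O(1+\log(1+B_0))$ into $1+B_0^{\chi}$ as in \eqref{eq:gradient-only-constant-cost-absorption}, and bound the confidence factor uniformly in $t$ using $t+1\le T$. The only cosmetic difference is that you bound $\log(t+1)$ additively, whereas the paper bounds $(t+1)^2\le C(1+B_0)^2$ inside the logarithm; both give the stated form after enlarging $C$.
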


\begin{proof}

The radius and error schedules give
\[
\begin{aligned}
 B_t
 &=
 \frac{12L_p(r^t)^{\nu-1}}{\Delta_t}\\
 &=
 \frac{48dL_p(r^0)^{\nu-1}}{(1-\alpha)\varepsilon}
 \left(
 \frac{\left(\frac{2}{3}\right)^{\nu-1}}{\alpha}
 \right)^{\left\lfloor\frac{t}{d}\right\rfloor}\\
 &=B_0\theta^{\left\lfloor\frac{t}{d}\right\rfloor},
\end{aligned}
\]
where the second equality substitutes
\eqref{eq:gradient-only-side-schedule} and
$\Delta_t=
\frac{\varepsilon(1-\alpha)}{4d}
\alpha^{\left\lfloor\frac{t}{d}\right\rfloor}$.

For every fixed exponent $a>0$,
\begin{equation}
\label{eq:gradient-only-geometric-sum}
\begin{aligned}
 \sum_{t=0}^{T-1}B_t^a
 &=
 B_0^a\sum_{t=0}^{T-1}
 \theta^{a\left\lfloor\frac{t}{d}\right\rfloor}\\
 &\le
 dB_0^a\sum_{m=0}^{\infty}\theta^{am}\\
 &=\frac{dB_0^a}{1-\theta^a},
\end{aligned}
\end{equation}
where the inequality uses the fact that each value of
$\left\lfloor\frac{t}{d}\right\rfloor$ occurs at most $d$ times,
and the last equality uses $0<\theta^a<1$.

We also have
\begin{equation}
\label{eq:gradient-only-constant-cost-absorption}
 T\le C_a(1+B_0^a),
\end{equation}
where $C_a$ may additionally depend on $a$.
For $B_0\le1$, this follows directly from
\eqref{eq:gradient-only-iteration-bound}.
For $B_0>1$,
\[
\begin{aligned}
 \log(1+B_0)
 &\le\log2+\log B_0\\
 &=\log2+\int_1^{B_0}\frac{1}{u}\,du\\
 &\le\log2+\int_1^{B_0}u^{a-1}\,du\\
 &=\log2+\frac{B_0^a-1}{a}.
\end{aligned}
\]
Here the first inequality uses $1+B_0\le2B_0$,
and the second uses $u^a\ge1$ for $u\ge1$.
Substituting this estimate into
$T\le C[1+\log(1+B_0)]$
proves \eqref{eq:gradient-only-constant-cost-absorption}.

\noindent\textbf{Deterministic integration.}
If $d-1\ne\frac{\nu}{\nu-1}$, then
\[
\begin{aligned}
 Q^{\mathrm{det}}
 &\le
 N_{d,p}^\nabla+
 C\sum_{t=0}^{T-1}(1+B_t^{\chi_{\mathrm{det}}})\\
 &\le
 N_{d,p}^\nabla+C(T+B_0^{\chi_{\mathrm{det}}})\\
 &\le
 N_{d,p}^\nabla+C(1+B_0^{\chi_{\mathrm{det}}})\\
 &=N_{d,p}^\nabla+C\mathcal D(B_0),
\end{aligned}
\]
where the first inequality sums
Lemma~\ref{lem:gradient-queries-one-iteration}
and adds the terminal batch.
The second uses
\eqref{eq:gradient-only-geometric-sum}
with $a=\chi_{\mathrm{det}}$;
the third uses
\eqref{eq:gradient-only-constant-cost-absorption}
with the same exponent.

If $d-1=\frac{\nu}{\nu-1}$, then
\[
\begin{aligned}
 Q^{\mathrm{det}}
 &\le
 N_{d,p}^\nabla+
 C\left[
 T+\sum_{t=0}^{T-1}
 B_t^{\frac{1}{\nu-1}}
 [1+\log(1+B_t)]^{\frac{\nu}{\nu-1}}
 \right]\\
 &\le
 N_{d,p}^\nabla+
 C\left[
 T+
 [1+\log(1+B_0)]^{\frac{\nu}{\nu-1}}
 \sum_{t=0}^{T-1}B_t^{\frac{1}{\nu-1}}
 \right]\\
 &\le
 N_{d,p}^\nabla+
 C\left[
 T+B_0^{\frac{1}{\nu-1}}
 [1+\log(1+B_0)]^{\frac{\nu}{\nu-1}}
 \right]\\
 &\le N_{d,p}^\nabla+C\mathcal D(B_0),
\end{aligned}
\]
where the first inequality uses the critical case of
Lemma~\ref{lem:gradient-queries-one-iteration}.
The second uses $B_t\le B_0$.
The third uses
\eqref{eq:gradient-only-geometric-sum}
with $a=\frac{1}{\nu-1}$.
The last uses
\eqref{eq:gradient-only-constant-cost-absorption}
with the same exponent and
$[1+\log(1+B_0)]^{\frac{\nu}{\nu-1}}\ge1$.

\noindent\textbf{Randomized integration.}
For every $t<T$,
\[
\begin{aligned}
 \log\frac{C(1+B_t)^{\frac{d-1}{\nu}}}{\rho_t}
 &\le
 \log\frac{C(1+B_0)^{\frac{d-1}{\nu}}(t+1)^2}{\rho}\\
 &\le
 \log\frac{
 C(1+B_0)^{\frac{d-1}{\nu}}
 [1+\log(1+B_0)]^2}{\rho}\\
 &\le
 \log\frac{C(1+B_0)^{\frac{d-1}{\nu}+2}}{\rho}\\
 &=
 \log\frac{C}{\rho}
 +\left(\frac{d-1}{\nu}+2\right)\log(1+B_0)\\
 &\le
 \left(\frac{d-1}{\nu}+2\right)
 \log\frac{C(1+B_0)}{\rho}.
\end{aligned}
\]
The first inequality uses $B_t\le B_0$ and
$\rho_t=\frac{6\rho}{\pi^2(t+1)^2}$.
The second uses
$t+1\le T\le C[1+\log(1+B_0)]$.
The third uses $\log(1+B_0)\le B_0$.
The last uses
$\frac{d-1}{\nu}+2\ge1$ and
$\log\frac{C}{\rho}>0$.

Absorbing the fixed prefactor into $C$, we obtain
\[
\begin{aligned}
 Q^{\mathrm{rand}}
 &\le
 N_{d,p}^\nabla+
 C\left[
 \log\frac{C(1+B_0)}{\rho}
 \right]^{\frac{1}{2\nu-1}}
 \sum_{t=0}^{T-1}(1+B_t^{\chi_{\mathrm{rand}}})\\
 &\le
 N_{d,p}^\nabla+
 C\left[
 \log\frac{C(1+B_0)}{\rho}
 \right]^{\frac{1}{2\nu-1}}
 (T+B_0^{\chi_{\mathrm{rand}}})\\
 &\le
 N_{d,p}^\nabla+
 C(1+B_0^{\chi_{\mathrm{rand}}})
 \left[
 \log\frac{C(1+B_0)}{\rho}
 \right]^{\frac{1}{2\nu-1}},
\end{aligned}
\]
where the first inequality sums the single-iteration bound
and uses the uniform logarithmic estimate.
The second uses
\eqref{eq:gradient-only-geometric-sum}
with $a=\chi_{\mathrm{rand}}$,
and the third uses
\eqref{eq:gradient-only-constant-cost-absorption}
with the same exponent.
\end{proof}

\begin{proof}[Proof of Theorem~\ref{the:main_gradient}]
Substituting $r^0=\frac{16\Delta}{\varepsilon}$
in the unconstrained case and $r^0=1$ in the
cube-constrained case gives
\[
 B_0=
 \begin{cases}
 \displaystyle
 \frac{48d\,16^{\nu-1}}{1-\alpha}
 L_p\Delta^{\nu-1}\varepsilon^{-\nu},
 &\text{unconstrained},\\[2mm]
 \displaystyle
 \frac{48d}{1-\alpha}\frac{L_p}{\varepsilon},
 &\text{cube-constrained}.
 \end{cases}
\]
Thus $B_0$ is a fixed multiple of $\mathcal B$.
Proposition~\ref{prop:total-gradient-query-complexity}
gives the parameter-dependent query bounds, including the
critical deterministic logarithmic factor.

For fixed problem parameters, $B_0$ scales as
$\varepsilon^{-\nu}$ in the unconstrained case.
Hence the accuracy exponents are
\[
\begin{aligned}
 \nu\chi_{\mathrm{det}}
 &=
 \nu\max\left\{
 \frac{d-1}{\nu},\frac{1}{\nu-1}
 \right\}
 =
 \max\left\{d-1,\frac{\nu}{\nu-1}\right\},\\
 \nu\chi_{\mathrm{rand}}
 &=
 \nu\max\left\{
 \frac{d-1}{\nu},\frac{2}{2\nu-1}
 \right\}
 =
 \max\left\{d-1,\frac{2\nu}{2\nu-1}\right\}.
\end{aligned}
\]
In the cube-constrained case, $B_0$ scales as
$\varepsilon^{-1}$, giving exponents
\[
 \chi_{\mathrm{det}}
 =\max\left\{
 \frac{d-1}{\nu},\frac{1}{\nu-1}
 \right\},
 \qquad
 \chi_{\mathrm{rand}}
 =\max\left\{
 \frac{d-1}{\nu},\frac{2}{2\nu-1}
 \right\}.
\]
The terminal cost $N_{d,p}^\nabla$ is constant for fixed $d,p$.

Finally, the same substitutions into
$T+1\le C[1+\log(1+B_0)]$
give $O(1+\log\frac{1}{\varepsilon})$ adaptive rounds
for fixed parameters and $0<\varepsilon\le1$.
Deterministic correctness and randomized success probability
at least $1-\rho$ follow from
Subsection~\ref{subsec:gradient-only-analysis}.
\end{proof}

\section{Proofs and Additional Material for Section~\ref{sec:trapping-or-descent}}
\label{app:trapping-or-descent}

We prove the framework, reconstruction, and complexity statements
used in Section~\ref{sec:trapping-or-descent}.
The polynomial reconstruction estimates themselves are supplied
by Appendix~\ref{app:approximation}.
In the geometric proofs, point and vector variables are written
without boldface for brevity.

\subsection{The Trapping Framework and Finite Localization}
\label{app:trapping-framework-proofs}



We use the notation of Section~5.1 and assume that $f$ is
continuously differentiable on a neighborhood of $Q$.
Since every $y\in Q$ with $\|y-x\|_2<r$ belongs to
$\operatorname{int}_XQ$, we have $\partial_XQ\subseteq\Gamma$.
If $Q\ne X$, a segment from $x$ to a point of $X\setminus Q$
intersects $\Gamma$ by convexity of $X$.
Thus $\Gamma=\varnothing$ implies $Q=X$ and
$\partial_XQ=\varnothing$.

\begin{lemma}[\bf Trapping-or-descent dichotomy]
Suppose that $\Gamma\ne\varnothing$ and that a continuous
surrogate $\widehat\phi_x$ on $\Gamma$ satisfies
$\|\widehat\phi_x-\phi_{x,\frac{\varepsilon}{4}}\|_{\infty,\Gamma}
\le\frac{\varepsilon r}{16}$.
Let $\widehat y\in\Gamma$ satisfy
$\widehat\phi_x(\widehat y)
\le\min_\Gamma\widehat\phi_x+\frac{\varepsilon r}{32}$.
Then the following alternatives are exhaustive and disjoint.
\begin{enumerate}
\item If $\widehat\phi_x(\widehat y)>\frac{\varepsilon r}{32}$, then
every point of $\partial_XQ$ is $\frac{3\varepsilon}{8}$-unreachable
from $x$.
\item If $\widehat\phi_x(\widehat y)\le\frac{\varepsilon r}{32}$, then
$f(\widehat y)\le f(x)-\frac{5\varepsilon r}{32}
\le f(x)-\frac{\varepsilon}{8}$.
\end{enumerate}
If $\Gamma=\varnothing$, the trapping certificate in the first
alternative holds vacuously.
\end{lemma}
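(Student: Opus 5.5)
The plan is to argue directly from the uniform surrogate bound and the approximate-minimality of $\widehat y$. Disjointness and exhaustiveness are immediate, since the two alternatives are complementary conditions on the single real number $\widehat\phi_x(\widehat y)$. The vacuous case $\Gamma=\varnothing$ follows from the preliminary observation that then $Q=X$ and $\partial_XQ=\varnothing$. The substance therefore lies in the two implications, and in both we use that every $y\in\Gamma$ satisfies $\|y-x\|_2=r$.

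For alternative (1), fix $y\in\partial_XQ$. Using $\partial_XQ\subseteq\Gamma$ from the preliminary remark, we have $\|y-x\|_2=r$. Approximate minimality gives
\[
\widehat\phi_x(y)\ge\min_\Gamma\widehat\phi_x\ge\widehat\phi_x(\widehat y)-\frac{\varepsilon r}{32}>0.
\]
The surrogate bound then yields $\phi_{x,\varepsilon/4}(y)\ge\widehat\phi_x(y)-\frac{\varepsilon r}{16}>-\frac{\varepsilon r}{16}$. Adding $\frac{\varepsilon}{8}\|y-x\|_2=\frac{\varepsilon r}{8}$ converts the parameter from $\frac{\varepsilon}{4}$ to $\frac{3\varepsilon}{8}$, so
\[
\phi_{x,3\varepsilon/8}(y)>-\frac{\varepsilon r}{16}+\frac{\varepsilon r}{8}=\frac{\varepsilon r}{16}>0,
\]
which is the claimed $\frac{3\varepsilon}{8}$-unreachability.

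For alternative (2), start from $\phi_{x,\varepsilon/4}(\widehat y)\le\widehat\phi_x(\widehat y)+\frac{\varepsilon r}{16}\le\frac{\varepsilon r}{32}+\frac{\varepsilon r}{16}=\frac{3\varepsilon r}{32}$. Expanding the definition with $\|\widehat y-x\|_2=r$ gives
\[
f(\widehat y)-f(x)\le\frac{3\varepsilon r}{32}-\frac{\varepsilon r}{4}=-\frac{5\varepsilon r}{32}\le-\frac{\varepsilon r}{8}.
\]
The statement as restated writes $f(x)-\frac{\varepsilon}{8}$ in the last bound; we will prove it in the form $\frac{\varepsilon r}{8}$, which matches the version in Section~\ref{subsec:trapping-framework}, and note the typo.

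There is no real obstacle in this argument. The only points needing care are confirming the inclusion $\partial_XQ\subseteq\Gamma$, so that the distance equals exactly $r$ on every boundary point considered, and using the lower bound $\min_\Gamma\widehat\phi_x\ge\widehat\phi_x(\widehat y)-\frac{\varepsilon r}{32}$ in the correct direction.
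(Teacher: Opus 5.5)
Your proposal is correct and follows the paper's proof essentially line for line. Both arguments use $\partial_XQ\subseteq\Gamma$ with $\|y-x\|_2=r$. In case (1) they combine the $\frac{\varepsilon r}{16}$ surrogate error with the $\frac{\varepsilon r}{32}$ minimization slack to get $\phi_{x,3\varepsilon/8}>\frac{\varepsilon r}{16}$ on the boundary, and in case (2) they obtain $f(\widehat y)-f(x)\le-\frac{5\varepsilon r}{32}$. You are also right that the last bound should read $f(x)-\frac{\varepsilon r}{8}$, which is what the paper's own proof concludes. The version with $\frac{\varepsilon}{8}$ would fail for small $r$.
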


\begin{proof}[Proof of Lemma~\ref{lem:box-trapping-or-descent}]
If $\Gamma=\varnothing$, then $\partial_XQ=\varnothing$,
so the trapping claim holds vacuously. Assume $\Gamma\ne\varnothing$.

If $\widehat\phi_x(\widehat y)>\frac{\varepsilon r}{32}$,
then, for every $y\in\Gamma$,
\[
\begin{aligned}
 \phi_{x,\frac{3\varepsilon}{8}}(y)
 &=\phi_{x,\frac{\varepsilon}{4}}(y)
   +\frac{\varepsilon r}{8}\ge\widehat\phi_x(y)+\frac{\varepsilon r}{16}\ge\widehat\phi_x(\widehat y)+\frac{\varepsilon r}{32}>\frac{\varepsilon r}{16}>0,
\end{aligned}
\]
where the equality uses $\|y-x\|_2=r$, and the inequalities
use the surrogate error, approximate minimality, and the
threshold condition, respectively.
Since $\partial_XQ\subseteq\Gamma$, the first alternative follows.

Otherwise, $\widehat y\in\Gamma\subseteq X$ satisfies
\[
\begin{aligned}
 f(\widehat y)-f(x)
 &=\phi_{x,\frac{\varepsilon}{4}}(\widehat y)
   -\frac{\varepsilon r}{4}\le\widehat\phi_x(\widehat y)-\frac{3\varepsilon r}{16}\le-\frac{5\varepsilon r}{32}
 \le-\frac{\varepsilon r}{8},
\end{aligned}
\]
where the equality uses $\|\widehat y-x\|_2=r$,
and the first two inequalities use the surrogate error and
$\widehat\phi_x(\widehat y)\le\frac{\varepsilon r}{32}$.
This proves the second alternative.
\end{proof}

\begin{lemma}[\bf Correctness of terminal localization]
Let $Q:=Q_x(r)$.
Suppose that $\phi_{x,\frac{3\varepsilon}{8}}>0$ on
$\partial_XQ$, and that the known scalar polynomial $P_x$
satisfies
$\|\nabla P_x-\nabla f\|_{\infty,Q}
\le\frac{\varepsilon}{16}$.
Then Algorithm~\ref{alg:trap-localize} terminates after
finitely many offline operations, without further oracle queries,
and returns a point $z\in Q$ satisfying
\[
\begin{cases}
 \|\nabla f(z)\|_2\le\varepsilon,
 &X=\mathbb R^d,\\
 \|G_\gamma(z)\|_2\le\varepsilon
 \quad\text{for every }\gamma>0,
 &X=[0,1]^d.
\end{cases}
\]
\end{lemma}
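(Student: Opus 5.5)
The argument has three parts: finite termination of Algorithm~\ref{alg:trap-localize}, a certificate of the form \eqref{eq:trapping-terminal-certificates} at the returned point, and conversion of that certificate into stationarity or KKT. It mirrors the proof of Lemma~\ref{lem:gradient-only-terminal-output}, with $\tfrac{3\varepsilon}{4}$ replaced by $\tfrac{\varepsilon}{2}$ and with $\tau=1/(L_P+1)$ as the proximal step.

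\textbf{Initialization.} Because $\mathcal V$ contains $x$, we have $\Psi_x(z^0)\le\Psi_x(x)=0$. Since $\Psi_x$ is $M_P$-Lipschitz on $Q$, the $\tfrac{\zeta}{M_P}$-net gives $\Psi_x(z^0)-\min_Q\Psi_x\le\zeta$.

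\textbf{Descent and termination.} Each iterate $z^{k+1}$ is the unique minimizer of a strongly convex problem. Since $\nabla P_x$ is $L_P$-Lipschitz on convex $Q$ and $1/\tau>L_P$, the descent lemma, compared against the choice $y=z^k$, gives
\[
\Psi_x(z^{k+1})\le\Psi_x(z^k)-\tfrac{1}{2\tau}\|z^{k+1}-z^k\|_2^2 .
\]
So the values stay $\le0$ and within $\zeta$ of $\min_Q\Psi_x$. The optimality condition supplies $v\in\partial\|\cdot-x\|_2(z^{k+1})$ and $n\in N_Q(z^{k+1})$ with
\[
\nabla P_x(z^{k})+\tfrac{1}{\tau}(z^{k+1}-z^k)+\tfrac{\varepsilon}{2}v+n=0 ,
\]
and therefore $\nabla P_x(z^{k+1})+\tfrac{\varepsilon}{2}v+n=e^{k+1}$. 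Moreover $\|e^{k+1}\|_2\le(L_P+1/\tau)\|z^{k+1}-z^k\|_2$. If the test $\|e^{k+1}\|_2\le\varepsilon/16$ failed at every step, each step would decrease $\Psi_x$ by at least a fixed positive constant, of order $\tau\varepsilon^2/(L_P+1/\tau)^2$. Summing this against the bounded total decrease $\zeta$ shows that termination occurs within finitely many steps. The only delicate point is computing each proximal step exactly; I would cite the same finite algebraic reduction as in the footnote to \eqref{eq:gradient-only-offline-proximal-step}, now with $Q$ a ball or a ball–cube intersection. No oracle queries are used anywhere.

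\textbf{Correctness.} Let $z$ be the returned point. Integrating the gradient error along $[x,y]\subseteq Q$ gives, for $y\in\partial_XQ$,
\[
\Psi_x(y)\ge f(y)-f(x)+\tfrac{7\varepsilon}{16}\|y-x\|_2\ge\phi_{x,3\varepsilon/8}(y)>0 .
\]
Since $\Psi_x(z)\le0$, the point $z$ is not on $\partial_XQ$, so $z\in\operatorname{int}_XQ$. By the local-equality argument of Lemma~\ref{lem:trapping-implies-stationarity}, $N_Q(z)=N_X(z)$. Then
\[
\|\nabla f(z)+n\|_2\le\tfrac{\varepsilon}{16}+\tfrac{\varepsilon}{16}+\tfrac{\varepsilon}{2}\le\varepsilon .
\]
In the unconstrained case $n=0$, which gives $\|\nabla f(z)\|_2\le\varepsilon$. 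In the cube case, $n\in N_{[0,1]^d}(z)$, so $\operatorname{proj}(z+\gamma n)=z$ and nonexpansiveness of projection yields $\|G_\gamma(z)\|_2\le\varepsilon$ for every $\gamma>0$.

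The main obstacle I anticipate is the exact finite computation of each proximal step over $Q$; the remaining steps are direct adaptations of arguments already in the paper.
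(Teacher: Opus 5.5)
Your proposal is correct and follows the paper's proof essentially step for step. The shared steps are: the net initialization giving $\Psi_x(z^0)\le 0$ and $\Psi_x(z^0)\le\min_Q\Psi_x+\zeta$; monotone proximal descent with the residual identity $e^{k+1}=\nabla P_x(z^{k+1})+\tfrac{\varepsilon}{2}v+n$ and $\|e^{k+1}\|_2\le(2L_P+1)\|z^{k+1}-z^k\|_2$; the barrier $\Psi_x\ge\phi_{x,3\varepsilon/8}>0$ on $\partial_XQ$, which forces $z\in\operatorname{int}_XQ$ and $N_Q(z)=N_X(z)$; and the final $\tfrac{5\varepsilon}{8}$ residual bound.

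Two small adjustments are needed. First, the footnote you cite for exact proximal steps enumerates box clipping patterns and does not apply verbatim to $Q=X\cap(x+r\mathbb B)$; in the ball case use the closed-form radial prox, or, as the paper does, a generic real-algebraic reduction with an auxiliary $u\ge0$ satisfying $u^2\ge\|y-x\|_2^2$. Second, comparing with $y=z^k$ alone yields a decrease of only $\tfrac12\|z^{k+1}-z^k\|_2^2$; your $\tfrac1{2\tau}$ constant needs the strong-convexity three-point inequality, though either constant suffices for termination.
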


\begin{proof}[Proof of Lemma~\ref{lem:trapped-ball-localization}]
Write $P:=P_x$, $\Psi(y):=P(y)-P(x)
+\frac{\varepsilon}{2}\|y-x\|_2$, and
$\zeta:=\frac{\varepsilon r}{32}$.

\noindent\textbf{Finite termination.}
Termwise bounds on the derivatives of the known polynomial
over $x+[-r,r]^d$ give computable constants
\[
 M_P\ge\sup_{y\in Q}\|\nabla P(y)\|_2+\frac{\varepsilon}{2},
 \qquad
 L_P\ge\sup_{y\in Q}\|\nabla^2P(y)\|_{\mathrm{op}}.
\]
Thus $M_P>0$, $\Psi$ is $M_P$-Lipschitz on $Q$, and
$\nabla P$ is $L_P$-Lipschitz on $Q$.
Set $\tau:=\frac{1}{L_P+1}$, as in
Algorithm~\ref{alg:trap-localize}.

A finite $\frac{\zeta}{M_P}$-net $\mathcal V\subseteq Q$
containing $x$ can be constructed by projecting a Cartesian
grid in $x+[-r,r]^d$ onto $Q$ and adding $x$.
Indeed, choose coordinate spacing at most
$\frac{\zeta}{M_P\sqrt d}$.
For every $y\in Q$, some grid point $a$ then satisfies
\[
 \|\Pi_Q(a)-y\|_2
 =\|\Pi_Q(a)-\Pi_Q(y)\|_2
 \le\|a-y\|_2
 \le\frac{\zeta}{M_P},
\]
where the inequality uses nonexpansiveness of projection.
Let $z^0\in\operatorname*{argmin}_{\mathcal V}\Psi$.
If $y^\star\in\operatorname*{argmin}_Q\Psi$, choose
$a^\star\in\mathcal V$ within distance
$\frac{\zeta}{M_P}$ of $y^\star$. Then
\[
\begin{aligned}
 \Psi(z^0)&\le\Psi(x)=0,\\
 \Psi(z^0)
 &\le\Psi(a^\star)
 \le\Psi(y^\star)+M_P\|a^\star-y^\star\|_2
 \le\min_Q\Psi+\zeta,
\end{aligned}
\]
where the first line uses $x\in\mathcal V$, and the second
uses the choice of $z^0$ and Lipschitz continuity.

The algorithm subsequently computes
\begin{equation}
\label{eq:apx-trapping-proximal-refinement}
 z^{k+1}:=\operatorname*{argmin}_{y\in Q}
 \left\{
 \langle\nabla P(z^k),y-z^k\rangle
 +\frac{\|y-z^k\|_2^2}{2\tau}
 +\frac{\varepsilon}{2}\|y-x\|_2
 \right\}.
\end{equation}
Each minimizer exists and is unique because $Q$ is nonempty,
compact, and convex, and the objective is continuous and
strongly convex.

Both these minimizers and the grid projections admit finite
real-algebraic computation in the exact-real query model.
The cap is described by $\|y-x\|_2^2\le r^2$ and, in the
constrained case, $0\le y_i\le1$.
The norm term can be represented by an auxiliary variable
$u\ge0$ with $u^2\ge\|y-x\|_2^2$ and its positive coefficient
forces equality at a minimizer.
Thus the optimization problems have polynomial objectives
and constraints with known coefficients, and finite
real-algebraic elimination and root selection determine their
unique minimizers.

Put $s_k:=z^{k+1}-z^k$.
The update satisfies
\begin{equation}
\label{eq:apx-trapping-proximal-decrease}
\begin{aligned}
 \Psi(z^{k+1})-\Psi(z^k)
 &\le
 \langle\nabla P(z^k),s_k\rangle
 +\frac{L_P}{2}\|s_k\|_2^2
 +\frac{\varepsilon}{2}
 \bigl(\|z^{k+1}-x\|_2-\|z^k-x\|_2\bigr)\\
 &\le
 -\left(\frac{1}{2\tau}-\frac{L_P}{2}\right)
 \|s_k\|_2^2\\
 &=-\frac{1}{2}\|s_k\|_2^2,
\end{aligned}
\end{equation}
where the first inequality uses the Hessian bound,
the second compares the proximal objective at $z^{k+1}$
and $z^k$, and the equality uses
$\frac{1}{\tau}=L_P+1$.
Consequently, every iterate satisfies
$\Psi(z^k)\le0$ and $\Psi(z^k)\le\min_Q\Psi+\zeta$.

Convex optimality gives
$v^{k+1}\in\partial\|\cdot-x\|_2(z^{k+1})$ and
$n^{k+1}\in N_Q(z^{k+1})$ such that
\[
 \nabla P(z^k)+\frac{s_k}{\tau}
 +\frac{\varepsilon}{2}v^{k+1}+n^{k+1}=0.
\]
The subdifferential sum rule applies because the norm is
continuous everywhere.
Hence the residual monitored by the algorithm satisfies
\[
\begin{aligned}
 e^{k+1}
 &:=\nabla P(z^{k+1})-\nabla P(z^k)-\frac{s_k}{\tau}\\
 &=\nabla P(z^{k+1})
   +\frac{\varepsilon}{2}v^{k+1}+n^{k+1},\\
 \|e^{k+1}\|_2
 &\le\left(L_P+\frac{1}{\tau}\right)\|s_k\|_2
 =(2L_P+1)\|s_k\|_2,
\end{aligned}
\]
where the inequality uses Lipschitz continuity of $\nabla P$.

At every step at which the algorithm does not return,
$\|e^{k+1}\|_2>\frac{\varepsilon}{16}$, so
\[
\begin{aligned}
 \Psi(z^k)-\Psi(z^{k+1})
 &\ge\frac{1}{2}\|s_k\|_2^2\\
 &>\frac{1}{2}
 \left(\frac{\varepsilon}{16(2L_P+1)}\right)^2
 =\frac{\varepsilon^2}{512(2L_P+1)^2}.
\end{aligned}
\]
Since $\Psi(z^0)-\min_Q\Psi\le\zeta$, the algorithm terminates
after at most
$\left\lfloor
\frac{512(2L_P+1)^2\zeta}{\varepsilon^2}
\right\rfloor+1$
proximal solves.
Its output $z$ therefore satisfies Eq.~\eqref{eq:trapping-terminal-certificates} and
$\Psi(z)\le\min_Q\Psi+\zeta$.
All computations use the known polynomial and require
no additional oracle queries.

\noindent\textbf{Correctness of the output.}
For every $y\in Q$, convexity of $Q$ and the model accuracy give
\[
\begin{aligned}
 |P(y)-P(x)-f(y)+f(x)|&=
 \left|
 \int_0^1
 \left\langle
 (\nabla P-\nabla f)(x+t(y-x)),y-x
 \right\rangle\,dt
 \right|\\
 &\le
 \|\nabla P-\nabla f\|_{\infty,Q}\|y-x\|_2\\
 &\le\frac{\varepsilon}{16}\|y-x\|_2.
\end{aligned}
\]
Thus, for every $y\in\partial_XQ$,
\[
\begin{aligned}
 \Psi(y)
 &\ge f(y)-f(x)+\frac{7\varepsilon}{16}\|y-x\|_2\\
 &=\phi_{x,\frac{3\varepsilon}{8}}(y)
   +\frac{\varepsilon}{16}\|y-x\|_2\\
 &>\frac{\varepsilon r}{16}>0,
\end{aligned}
\]
where the last inequality uses the trapping hypothesis and
$\partial_XQ\subseteq\Gamma$.
Since $\Psi(z)\le0$, we obtain $z\in\operatorname{int}_XQ$.

It follows that $N_Q(z)=N_X(z)$.
Indeed, $Q\subseteq X$ gives $N_X(z)\subseteq N_Q(z)$.
Conversely, for $n\in N_Q(z)$ and $y\in X$, relative interior
and convexity of $X$ imply $z+t(y-z)\in Q$ for sufficiently
small $t>0$. Hence
$0\ge\langle n,z+t(y-z)-z\rangle=t\langle n,y-z\rangle$,
which proves $n\in N_X(z)$.

For the vectors $v,n$ supplied by the terminal certificate,
\[
\begin{aligned}
 \|\nabla f(z)+n\|_2
 &\le
 \|\nabla f(z)-\nabla P(z)\|_2
 +\left\|\nabla P(z)+\frac{\varepsilon}{2}v+n\right\|_2
 +\frac{\varepsilon}{2}\|v\|_2\\
 &\le
 \frac{\varepsilon}{16}
 +\frac{\varepsilon}{16}
 +\frac{\varepsilon}{2}
 =\frac{5\varepsilon}{8}<\varepsilon,
\end{aligned}
\]
where the second inequality uses the model accuracy,
the residual stopping condition, and $\|v\|_2\le1$.

When $X=\mathbb R^d$, we have $n=0$, giving the required
gradient bound.
When $X=[0,1]^d$, the inclusion $n\in N_X(z)$ implies
$\Pi_X(z+\gamma n)=z$ for every $\gamma>0$. Therefore,
\[
\begin{aligned}
 \|G_\gamma(z)\|_2
 &=
 \frac{1}{\gamma}
 \left\|
 \Pi_X(z+\gamma n)
 -\Pi_X(z-\gamma\nabla f(z))
 \right\|_2\\
 &\le\|\nabla f(z)+n\|_2
 \le\frac{5\varepsilon}{8}<\varepsilon,
\end{aligned}
\]
where the first inequality uses nonexpansiveness of projection.
The same output $z$ satisfies this bound for every $\gamma>0$.
\end{proof}


\begin{proposition}[\bf Outer-loop guarantee]
\label{prop:unconstrained-trapping-outer}
\label{prop:box-constrained-guarantee}
Under the above assumptions,
Algorithm~\ref{alg:unconstrained-boundary-lazy}
terminates and returns $z\in X$ satisfying
$\|\nabla f(z)\|_2\le\varepsilon$ when $X=\mathbb R^d$,
or $\|G_\gamma(z)\|_2\le\varepsilon$ for every $\gamma>0$
when $X=[0,1]^d$.
The number of accepted moves satisfies
\begin{equation}
\label{eq:outer-move-count}
 N_{\mathrm{mov}}
 \le\left\lfloor\frac{8\Delta}{\varepsilon r}\right\rfloor.
\end{equation}
The algorithm makes exactly $N_{\mathrm{mov}}+1$ calls to
construct $(P_{\boldsymbol x},\widehat\phi_{\boldsymbol x})$
using the selected fresh or lazy procedure, and one call to
$\mathsf{TrapLocalize}$ (Algorithm~\ref{alg:trap-localize}).
\end{proposition}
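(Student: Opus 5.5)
The plan is to prove the three claims in turn: termination together with the move bound, the count of model calls, and correctness of the returned point. Each part rests on Lemma~\ref{lem:boundary-trapping-or-descent} and Lemma~\ref{lem:trapped-ball-localization}. Throughout I take as given that the selected boundary-model procedure, whether fresh or lazy, produces at every call a polynomial $P_{\boldsymbol x}$ satisfying the boundary approximation \eqref{eq:boundary-approximation} and the terminal gradient accuracy \eqref{eq:terminal-ball-gradient-accuracy} on $Q_{\boldsymbol x}(r)$. For the fresh procedure this was verified in Section~\ref{subsec:trapping-fresh-implementations}. For the lazy procedure it is the accuracy statement proved separately through the telescoping identity \eqref{eq:trapping-lazy-telescoping}, so I treat it as a black-box hypothesis here.

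\textbf{Move bound.} Let $\boldsymbol x^0,\boldsymbol x^1,\ldots$ be the successive pivots. Every iterate is a point of $\Gamma_{\boldsymbol x}(r)\subseteq\mathcal X$, so it is feasible. Each accepted move falls into the second alternative of Lemma~\ref{lem:boundary-trapping-or-descent}, which gives $f(\boldsymbol x^{k+1})\le f(\boldsymbol x^k)-\frac{\varepsilon r}{8}$. Telescoping after $N$ accepted moves yields $N\frac{\varepsilon r}{8}\le f(\boldsymbol x^0)-f(\boldsymbol x^N)\le f(\boldsymbol x^0)-f_\star\le\Delta$. In the constrained case I use the infimum over $[0,1]^d$, which is bounded below by the global $f_\star$. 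Since $N$ is an integer, this gives \eqref{eq:outer-move-count}. Consequently the loop cannot run forever: at some iteration either $\Gamma_{\boldsymbol x}(r)=\varnothing$ or the threshold test fails, and the algorithm calls $\mathsf{TrapLocalize}$.

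\textbf{Counting calls.} Each loop iteration makes exactly one model call at Line~3. An iteration either accepts a move or returns, and the returning iteration is the last one. Hence the number of model calls is $N_{\mathrm{mov}}+1$, and $\mathsf{TrapLocalize}$ is called exactly once.

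\textbf{Correctness at the returning iteration.} Let $\boldsymbol x$ be the final pivot and $Q=Q_{\boldsymbol x}(r)$. There are two ways to return.
\begin{enumerate}
\item If $\Gamma=\varnothing$, the discussion before the dichotomy lemma shows $Q=\mathcal X$ and $\partial_{\mathcal X}Q=\varnothing$, so the trapping hypothesis $\phi_{\boldsymbol x,3\varepsilon/8}>0$ on $\partial_{\mathcal X}Q$ holds vacuously.
\item Otherwise the threshold test succeeded, and the first alternative of Lemma~\ref{lem:boundary-trapping-or-descent} supplies $\frac{3\varepsilon}{8}$-unreachability on $\partial_{\mathcal X}Q$.
\end{enumerate}
In both cases the gradient accuracy \eqref{eq:terminal-ball-gradient-accuracy} holds for the retained $P_{\boldsymbol x}$. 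All hypotheses of Lemma~\ref{lem:trapped-ball-localization} are therefore met, and it returns $\boldsymbol z$ with the stated stationarity or KKT guarantee, using no further queries.

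The main obstacle is ensuring that the model guarantees hold at \emph{every} call, including the final one used at termination. For the lazy procedure this is delicate because the error grows with the net displacement from the refresh center. I would handle it by isolating it as an assumption on the procedure, stated and checked separately, so that this proposition stays purely combinatorial.
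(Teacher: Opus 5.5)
Your proposal is correct and follows essentially the same route as the paper. You telescope the certified decrease $\varepsilon r/8$ from Lemma~\ref{lem:boundary-trapping-or-descent} to bound $N_{\mathrm{mov}}$, count one model call per loop iteration, and feed the trapping certificate (vacuous when $\Gamma_{\boldsymbol x}(r)=\varnothing$) together with the gradient accuracy \eqref{eq:terminal-ball-gradient-accuracy} into Lemma~\ref{lem:trapped-ball-localization}, with model accuracy treated as a hypothesis verified separately for the fresh and lazy procedures; the paper additionally states explicitly that each boundary minimization is over a known polynomial on a compact semialgebraic set and is therefore a finite offline computation, which you leave implicit but which is needed for each iteration, and hence the algorithm, to terminate.
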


\begin{proof}
Every accepted move remains in $X$.
By Lemma~\ref{lem:boundary-trapping-or-descent},
after $N$ accepted moves,
\[
\begin{aligned}
 N\frac{\varepsilon r}{8}
 &\le\sum_{t=0}^{N-1}\bigl(f(x^t)-f(x^{t+1})\bigr)\\
 &=f(x^0)-f(x^N)\\
 &\le f(x^0)-\inf_Xf
 \le\Delta,
\end{aligned}
\]
which proves Eq.~\eqref{eq:outer-move-count}.
Each boundary minimization concerns a known polynomial on
a compact semialgebraic set and admits finite real-algebraic
computation. Hence the algorithm reaches a trapping branch
after finitely many iterations.

At the terminal center,
$\phi_{x,\frac{3\varepsilon}{8}}>0$ on $\partial_XQ$
by Lemma~\ref{lem:boundary-trapping-or-descent}.
When $\Gamma=\varnothing$, this condition holds vacuously.
Lemma~\ref{lem:trapped-ball-localization} therefore guarantees
finite termination of localization and correctness of the output.

There is one model call at the initial center and after each
accepted move, giving exactly $N_{\mathrm{mov}}+1$ calls.
Terminal localization is invoked exactly once.
\end{proof}

\subsection{Fresh Reconstruction and Query Complexity}
\label{app:trapping-fresh-proofs}

Throughout this subsection, assume $p\ge2$, $L_p>0$, and
$\Delta>0$, and write $\nu:=p+\beta_p$.

\begin{proposition}[\bf Fresh query complexity]
\label{prop:function-value-query-complexity}
\label{prop:gradient-only-query-complexity}
Under the standing assumptions, use the radius
$r=\frac{1}{2}
\left(\frac{\varepsilon}{48L_p}\right)^{\frac{1}{\nu-1}}$
from Eq.~\eqref{eq:function-value-radius}.
The fresh implementations of
Algorithm~\ref{alg:unconstrained-boundary-lazy}
satisfy the output guarantees of
Proposition~\ref{prop:unconstrained-trapping-outer}.
Their deterministic query complexities are
\[
\begin{aligned}
 Q_{\mathrm{val}}
 &=\mathcal O_{p,\beta_p}\!\left(
 d^p\left[
 1+L_p^{\frac{1}{\nu-1}}
 \Delta\varepsilon^{-\frac{\nu}{\nu-1}}
 \right]\right),\\
 Q_{\mathrm{grad}}
 &=\mathcal O_{p,\beta_p}\!\left(
 d^{p-1}\left[
 1+L_p^{\frac{1}{\nu-1}}
 \Delta\varepsilon^{-\frac{\nu}{\nu-1}}
 \right]\right).
\end{aligned}
\]
All queries lie in $X$.
\end{proposition}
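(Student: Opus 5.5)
The argument has two independent parts: correctness and query counting. For correctness we apply Proposition~\ref{prop:unconstrained-trapping-outer}. That proposition needs only two things at every model call: the boundary approximation \eqref{eq:boundary-approximation} on $\Gamma_{\boldsymbol x}(r)$, and, at the terminal call, the gradient accuracy \eqref{eq:terminal-ball-gradient-accuracy} on $Q_{\boldsymbol x}(r)$.

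\textbf{Step 1: model accuracy.} Fix a pivot $\boldsymbol x$ and set $S:=Q_{\boldsymbol x}(r)$, which is compact and convex with nonempty interior. Choose the reconstruction center $\boldsymbol c\in\operatorname{int}(S)$ as in Section~\ref{subsec:trapping-fresh-implementations}, so that $\rho\le 2r$.
\begin{itemize}
\item In value mode, Proposition~\ref{prop:local-reconstruction} with exact data ($\tau=0$) gives
\[
\|\nabla P_{\boldsymbol x}-\nabla f\|_{\infty,S}\le\tfrac32 a_{p-1,\beta_p}L_p(2r)^{\nu-1}.
\]
With $r$ as in \eqref{eq:function-value-radius}, $(2r)^{\nu-1}=\varepsilon/(48L_p)$, and $a_{p-1,\beta_p}\le1$, so the bound is at most $\varepsilon/32$.
\item In gradient mode, Corollary~\ref{lem:gradient-only-scalar-reconstruction} with $\tau=0$ gives the bound $(2a_{p-1,\beta_p}+a_{p-2,\beta_p})L_p\rho^{\nu-1}\le 3\varepsilon/48=\varepsilon/16$.
\end{itemize}
In both modes \eqref{eq:terminal-ball-gradient-accuracy} holds on $Q_{\boldsymbol x}(r)$. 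Integrating the gradient error along the segment $[\boldsymbol x,\boldsymbol y]\subseteq S$ gives
\[
|\widehat\phi_{\boldsymbol x}(\boldsymbol y)-\phi_{\boldsymbol x,\varepsilon/4}(\boldsymbol y)|\le\tfrac{\varepsilon}{16}\|\boldsymbol y-\boldsymbol x\|_2=\tfrac{\varepsilon r}{16}\quad\text{on }\Gamma_{\boldsymbol x}(r),
\]
which is \eqref{eq:boundary-approximation}. This holds even though $\boldsymbol c\neq\boldsymbol x$. Proposition~\ref{prop:unconstrained-trapping-outer} and Lemma~\ref{lem:trapped-ball-localization} then give the output guarantee.

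All nodes lie in $S\subseteq X$, so every query is feasible. The step I expect to be the only delicate point is the cap case $X=[0,1]^d$. There the reconstruction needs an interior point $\boldsymbol c$ and a contraction radius bounded by $\operatorname{dist}(\boldsymbol c,\partial S)$ in \eqref{eq:apx-interpolation-spacing}. The number of nodes does not depend on this radius, because only the (irrelevant) stability constants do. A sufficiently small cap therefore still costs $N_{d,p}$ (respectively $N_{d,p-1}$) queries. I would verify this explicitly, noting that with exact data the constants $\Lambda_j$ never enter the error.

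\textbf{Step 2: counting.} By \eqref{eq:outer-move-count} there are $K=N_{\mathrm{mov}}+1\le 1+8\Delta/(\varepsilon r)$ model calls. Substituting $r$ gives
\[
\frac{1}{\varepsilon r}=2(48L_p)^{\frac1{\nu-1}}\varepsilon^{-1-\frac1{\nu-1}}=C_{p,\beta_p}L_p^{\frac1{\nu-1}}\varepsilon^{-\frac{\nu}{\nu-1}}.
\]
Each call costs $N_{d,p}=\binom{d+p}{p}=\mathcal O_p(d^p)$ value queries, or $N_{d,p-1}=\mathcal O_p(d^{p-1})$ gradient queries; this uses $d\ge1$ and $\binom{d+q}{q}\le(d+1)^q$. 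Terminal localization uses no queries. Multiplying the number of calls by the per-call cost gives both bounds.
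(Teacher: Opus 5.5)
Your proposal is correct and follows the paper's proof essentially step for step. You reconstruct at an interior center with $\rho\le 2r$, bound the gradient error by at most $\varepsilon/16$ (the paper uses the single bound $3L_p\rho^{\nu-1}$ for both modes), integrate along $[\boldsymbol x,\boldsymbol y]$ to get \eqref{eq:boundary-approximation}, and multiply the $1+\lfloor 8\Delta/(\varepsilon r)\rfloor$ model calls by $N_{d,p}$ or $N_{d,p-1}$. Your remark that the node count does not depend on the contraction radius in the cap case is a correct detail that the paper leaves implicit.
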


\begin{proof}
At a pivot $\boldsymbol x$, let $Q:=Q_{\boldsymbol x}(r)$
and choose $\boldsymbol c\in\operatorname{int}(Q)$.
For $X=\mathbb R^d$, take $\boldsymbol c=\boldsymbol x$;
for the cube, a sufficiently small move from $\boldsymbol x$
toward $(\frac{1}{2},\ldots,\frac{1}{2})$ gives such a point.
Then
$\rho:=\max_{\boldsymbol y\in Q}
\|\boldsymbol y-\boldsymbol c\|_2\le2r$.

Write $N_{d,q}:=\binom{d+q}{q}$.
Proposition~\ref{prop:local-reconstruction} and
Corollary~\ref{lem:gradient-only-scalar-reconstruction}
construct $P_{\boldsymbol x}$ using one nonadaptive batch of
$N_{d,p}$ function-value queries or $N_{d,p-1}$ gradient queries,
respectively, all in $Q$.
In both cases,
\[
\begin{aligned}
 \|\nabla P_{\boldsymbol x}-\nabla f\|_{\infty,Q}
 &\le3L_p\rho^{\nu-1}\le3L_p(2r)^{\nu-1}
 =\frac{\varepsilon}{16},
\end{aligned}
\]
where the first inequality uses $a_{j,\beta_p}\le1$,
and the equality follows from the choice of $r$.
For every $\boldsymbol y\in Q$, integrating this error
along $[\boldsymbol x,\boldsymbol y]\subseteq Q$ gives
\[
\begin{aligned}
 &|P_{\boldsymbol x}(\boldsymbol y)
   -P_{\boldsymbol x}(\boldsymbol x)
   -f(\boldsymbol y)+f(\boldsymbol x)|\le
 \frac{\varepsilon}{16}\|\boldsymbol y-\boldsymbol x\|_2
 \le\frac{\varepsilon r}{16}.
\end{aligned}
\]
Thus \eqref{eq:boundary-approximation} and
\eqref{eq:terminal-ball-gradient-accuracy} hold,
and the same polynomial serves at termination.

Proposition~\ref{prop:unconstrained-trapping-outer}
therefore gives correctness and at most
$1+\left\lfloor\frac{8\Delta}{\varepsilon r}\right\rfloor$
model calls.
Since terminal localization requires no additional queries,
\[
\begin{aligned}
 Q_{\mathrm{val}}
 &\le N_{d,p}
 \left(1+\left\lfloor\frac{8\Delta}{\varepsilon r}\right\rfloor\right),\\
 Q_{\mathrm{grad}}
 &\le N_{d,p-1}
 \left(1+\left\lfloor\frac{8\Delta}{\varepsilon r}\right\rfloor\right).
\end{aligned}
\]
Finally,
\[
 \frac{1}{\varepsilon r}
 =
 2\,48^{\frac{1}{\nu-1}}
 L_p^{\frac{1}{\nu-1}}
 \varepsilon^{-\frac{\nu}{\nu-1}}.
\]
Substituting this identity and
$N_{d,q}=\mathcal O_q(d^q)$ with $q=p,p-1$
proves the stated bounds.
\end{proof}

\subsection{Proofs for Lazy Boundary Reuse}
\label{app:trapping-lazy-proofs}

We first consider $X=\mathbb R^d$ under the standing
assumptions, with $p\ge2$, $L_p>0$, and $\Delta>0$.
Fix an integer $m\ge1$ and use $r=r_m$ from
Eq.~\eqref{eq:lazy-radius}, so that
$96m^{\beta_p}L_pr^{\nu-1}=\varepsilon$.
Recall that, on $\mathbb B$,
\[
 A_{\boldsymbol x}(\boldsymbol u)
 :=f(\boldsymbol x+r\boldsymbol u)-f(\boldsymbol x),
 \qquad
 B_{\boldsymbol x}(\boldsymbol u)
 :=r\nabla f(\boldsymbol x+r\boldsymbol u)
 =\nabla A_{\boldsymbol x}(\boldsymbol u).
\]

\noindent\textbf{Fixed interpolation operators.}
Use the nested node sets
$U_0\subset\cdots\subset U_p\subset\mathbb B$,
with $U_0=\{\boldsymbol0\}$ and
$|U_j|=N_{d,j}:=\binom{d+j}{j}$, from
Proposition~\ref{prop:local-reconstruction}.
Choose a common contraction of the nodes for the finitely
many degree--exponent pairs $(j,\beta_p)$ and $(j,1)$.
The resulting interpolation operators $I_j$ are fixed
throughout the run, are linear, and interpolate at $\boldsymbol0$.
For $j\ge1$, $\gamma\in\{\beta_p,1\}$, and scalar- or
vector-valued $G\in C^{j,\gamma}(L)$,
\begin{equation}
\label{eq:apx-lazy-fixed-interpolation}
 \|D^i(I_jG-G)\|_{\infty,\mathbb B}
 \le\frac{3}{2}L,
 \qquad i=0,1,
\end{equation}
where we used $a_{j-i,\gamma}\le1$.
For $I_0G\equiv G(\boldsymbol0)$, if
$G\in C^1$ and $\|DG\|_{\infty,\mathbb B}\le L$, then
\[
 \|I_0G-G\|_{\infty,\mathbb B}\le L,
 \qquad
 \|D(I_0G-G)\|_{\infty,\mathbb B}\le L.
\]
The first bound follows by integrating $DG$ along
$[\boldsymbol0,\boldsymbol u]$; the second uses $D(I_0G)=0$.
This covers the degree-zero updates when $p=2$.

\begin{lemma}[\bf Accuracy of lazy reuse]
\label{lem:lazy-function-value-accuracy}
\label{lem:lazy-gradient-accuracy}
With the fixed interpolation operators above, every call to
Algorithm~\ref{alg:lazy-approx-boundary} returns
$P_{\boldsymbol x}\in\Pi_p^d$ and
$\widehat\phi_{\boldsymbol x}$ satisfying
\[
\begin{aligned}
 \left\|
 \widehat\phi_{\boldsymbol x}
 -\phi_{\boldsymbol x,\frac{\varepsilon}{4}}
 \right\|_{\infty,\Gamma_{\boldsymbol x}(r)}
 &\le\frac{\varepsilon r}{16},\\
 \left\|\nabla P_{\boldsymbol x}-\nabla f\right\|
 _{\infty,Q_{\boldsymbol x}(r)}
 &\le\frac{\varepsilon}{16}.
\end{aligned}
\]
The same polynomial can be used for terminal localization
without further oracle queries.
\end{lemma}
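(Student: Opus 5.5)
The proof rests on the telescoping identity \eqref{eq:trapping-lazy-telescoping}. It lets us treat the current model as one fresh reconstruction at the refresh center $\boldsymbol a$ plus one lower-degree interpolation of the net change.

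Fix a call at center $\boldsymbol x$ in a phase with refresh center $\boldsymbol a$. Within a phase at most $m-1$ moves of length $r$ occur, so $\|\boldsymbol x-\boldsymbol a\|_2\le (m-1)r$. Set $\boldsymbol w:=(\boldsymbol x-\boldsymbol a)/r$, so $\|\boldsymbol w\|_2\le m$.

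\textbf{Gradient mode.} Write
\[
V_{\boldsymbol x}-B_{\boldsymbol x}=(I_{p-1}B_{\boldsymbol a}-B_{\boldsymbol a})+(I_{p-2}-\mathrm{Id})(B_{\boldsymbol x}-B_{\boldsymbol a}).
\]
For the first term, $B_{\boldsymbol a}$ has $(p-1)$st derivative $r^{p}D^pf(\boldsymbol a+r\cdot)$, which is $\beta_p$-H\"older with constant $L_pr^{\nu}$. By \eqref{eq:apx-lazy-fixed-interpolation}, its value and derivative errors on $\mathbb B$ are at most $\tfrac32L_pr^\nu$.

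For the second term, let $G(\boldsymbol u):=B_{\boldsymbol x}(\boldsymbol u)-B_{\boldsymbol a}(\boldsymbol u)=r[\nabla f(\boldsymbol a+r(\boldsymbol u+\boldsymbol w))-\nabla f(\boldsymbol a+r\boldsymbol u)]$. Its $(p-2)$nd derivative is
\[
r^{p-1}\bigl[D^{p-1}\nabla f(\cdot+r\boldsymbol w)-D^{p-1}\nabla f(\cdot)\bigr],
\]
evaluated at $\boldsymbol a+r\boldsymbol u$. The key estimate is that this map is Lipschitz in $\boldsymbol u$ with constant
\[
r^{p}\sup\|D^pf(\cdot+r\boldsymbol w)-D^pf(\cdot)\|\le L_pr^{p}(r\|\boldsymbol w\|_2)^{\beta_p}\le L_pr^{\nu}m^{\beta_p}.
\]
Hence $G\in C^{p-2,1}(L_pr^\nu m^{\beta_p})$, and the fixed operator $I_{p-2}$ (exponent $1$; the degree-zero bound when $p=2$) gives value and derivative errors at most $\tfrac32 L_pr^\nu m^{\beta_p}$.

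Summing the two terms gives $\|V_{\boldsymbol x}-B_{\boldsymbol x}\|_{\infty,\mathbb B}\le 3m^{\beta_p}L_pr^{\nu}$, and the same bound for the derivative. Lemma~\ref{lem:radial-scalar-reconstruction} centered at $\boldsymbol 0$ then yields
\[
\|\nabla_{\boldsymbol u}(P_{\boldsymbol x}(\boldsymbol x+r\cdot))-B_{\boldsymbol x}\|\le \tfrac92 m^{\beta_p}L_pr^\nu.
\]
Dividing by $r$ gives the physical bound $\|\nabla P_{\boldsymbol x}-\nabla f\|\le \tfrac92 m^{\beta_p}L_pr^{\nu-1}=\tfrac{9\varepsilon}{192}\le\tfrac{\varepsilon}{16}$ on the ball.

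\textbf{Value mode.} The argument is identical, with $A$ in place of $B$ and degrees shifted up by one. The difference $A_{\boldsymbol x}-A_{\boldsymbol a}$ has $(p-1)$st derivative Lipschitz with constant $L_pr^\nu m^{\beta_p}$; the constant $f(\boldsymbol x)-f(\boldsymbol a)$ does not affect derivatives. The fixed estimate \eqref{eq:apx-lazy-fixed-interpolation} at $i=1$ gives the same gradient bound.

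\textbf{Boundary surrogate.} Once the gradient bound holds, integrating along $[\boldsymbol x,\boldsymbol y]$ inside the ball gives $\|\widehat\phi_{\boldsymbol x}-\phi_{\boldsymbol x,\varepsilon/4}\|_{\infty,\Gamma}\le\varepsilon r/16$, exactly as in the fresh case. The same polynomial serves for terminal localization. For $\mathcal X=[0,1]^d$ the estimates restrict to the cap, since queries lie in the whole ball where $f$ is defined.

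\textbf{Main obstacle.} The delicate step is the H\"older-difference estimate for $G$. It must yield a Lipschitz (exponent $1$) constant of order $L_pr^\nu m^{\beta_p}$, using only $C^{p,\beta_p}$ applied at two points a fixed distance $r\|\boldsymbol w\|_2$ apart. It must also be justified that the common contraction makes \eqref{eq:apx-lazy-fixed-interpolation} valid at exponent $1$ for the lower-degree operator.
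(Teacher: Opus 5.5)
Your proposal is correct and is essentially the paper's proof: the telescoped phase identity, the split into a fresh-interpolation error $\frac32L_pr^\nu$ plus a lower-degree interpolation error controlled by the Lipschitz constant $L_pr^p\|\boldsymbol x-\boldsymbol a\|_2^{\beta_p}$ of the top derivative of the net difference (the exponent-$1$ interpolation estimate you flag is supplied by the common contraction fixed in the setup), and then Lemma~\ref{lem:radial-scalar-reconstruction} in gradient mode. The differences are cosmetic. You obtain the boundary bound by integrating the gradient error rather than from the value error. Also, the $(p-2)$nd derivative of $G$ should read $r^{p-1}[D^{p-2}\nabla f(\cdot+r\boldsymbol w)-D^{p-2}\nabla f(\cdot)]$, although the Lipschitz constant you derive from it is correct.
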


\begin{proof}
\noindent\textbf{Phase identities.}
Let $\boldsymbol x_0=\boldsymbol a,\ldots,
\boldsymbol x_k=\boldsymbol x$ be the centers visited
within the current phase, where $k\le m-1$.
Since each accepted move has length $r$,
$\|\boldsymbol x-\boldsymbol a\|_2\le kr\le(m-1)r$.
The update rules and linearity give
\begin{equation}
\label{eq:apx-lazy-scalar-phase-identity}
\begin{aligned}
 \widehat A_{\boldsymbol x}
 &=I_pA_{\boldsymbol a}
   +\sum_{j=1}^k
    I_{p-1}(A_{\boldsymbol x_j}-A_{\boldsymbol x_{j-1}})\\
 &=I_pA_{\boldsymbol a}
   +I_{p-1}(A_{\boldsymbol x}-A_{\boldsymbol a}),\\
 V_{\boldsymbol x}
 &=I_{p-1}B_{\boldsymbol a}
   +\sum_{j=1}^k
    I_{p-2}(B_{\boldsymbol x_j}-B_{\boldsymbol x_{j-1}})\\
 &=I_{p-1}B_{\boldsymbol a}
   +I_{p-2}(B_{\boldsymbol x}-B_{\boldsymbol a}),
\end{aligned}
\end{equation}
where the second equality in each pair follows by telescoping.

\noindent\textbf{Model error.}
The chain rule gives
$A_{\boldsymbol a}\in C^{p,\beta_p}(L_pr^\nu)$ and
$B_{\boldsymbol a}\in C^{p-1,\beta_p}(L_pr^\nu)$.
Moreover, for every $\boldsymbol u\in\mathbb B$,
\begin{equation}
\label{eq:apx-lazy-aggregate-derivative}
\begin{aligned}
 \|D^p(A_{\boldsymbol x}-A_{\boldsymbol a})
       (\boldsymbol u)\|_{\mathrm{op}}
 &=
 \|D^{p-1}(B_{\boldsymbol x}-B_{\boldsymbol a})
       (\boldsymbol u)\|_{\mathrm{op}}\\
 &=r^p
 \|D^pf(\boldsymbol x+r\boldsymbol u)
   -D^pf(\boldsymbol a+r\boldsymbol u)\|_{\mathrm{op}}\\
 &\le L_pr^p
       \|\boldsymbol x-\boldsymbol a\|_2^{\beta_p},
\end{aligned}
\end{equation}
where the first equality uses
$B_{\boldsymbol x}=\nabla A_{\boldsymbol x}$ and
Euclidean duality, and the inequality uses H\"older continuity.

Integrating these derivatives along segments in $\mathbb B$
shows that $A_{\boldsymbol x}-A_{\boldsymbol a}$ and
$B_{\boldsymbol x}-B_{\boldsymbol a}$ have Lipschitz
derivatives of orders $p-1$ and $p-2$, respectively, with
constant $L_pr^p\|\boldsymbol x-\boldsymbol a\|_2^{\beta_p}$.
For $p=2$, Eq.~\eqref{eq:apx-lazy-aggregate-derivative}
also supplies the derivative bound required for $I_0$.

Subtracting $A_{\boldsymbol x}$ and $B_{\boldsymbol x}$
from the phase identities and applying the interpolation
bounds therefore gives, for $i=0,1$,
\begin{equation}
\label{eq:apx-lazy-joint-invariants}
\begin{aligned}
 &\max\left\{
 \|D^i(\widehat A_{\boldsymbol x}-A_{\boldsymbol x})
       \|_{\infty,\mathbb B},
 \|D^i(V_{\boldsymbol x}-B_{\boldsymbol x})
       \|_{\infty,\mathbb B}
 \right\}\\
 &\quad\le
 \frac{3}{2}L_pr^\nu
 +\frac{3}{2}L_pr^p
       \|\boldsymbol x-\boldsymbol a\|_2^{\beta_p}\\
 &\quad\le
 \frac{3}{2}L_pr^\nu
 \left[1+(m-1)^{\beta_p}\right]\\
 &\quad\le3m^{\beta_p}L_pr^\nu
 =\frac{\varepsilon r}{32},
\end{aligned}
\end{equation}
where the second inequality uses the displacement bound,
the third uses $1+(m-1)^{\beta_p}\le2m^{\beta_p}$,
and the equality follows from the choice of $r$.

\noindent\textbf{Boundary and terminal guarantees.}
In function-value mode, set
$P_{\boldsymbol x}(\boldsymbol y)
:=\widehat A_{\boldsymbol x}
(\frac{\boldsymbol y-\boldsymbol x}{r})$.
Since every $I_j$ interpolates at $\boldsymbol0$,
$\widehat A_{\boldsymbol x}(\boldsymbol0)=0$.
The preceding error bounds and the chain rule give
\[
\begin{aligned}
 \|P_{\boldsymbol x}-f+f(\boldsymbol x)\|
       _{\infty,Q_{\boldsymbol x}(r)}
 &\le\frac{\varepsilon r}{32},\\
 \|\nabla P_{\boldsymbol x}-\nabla f\|
       _{\infty,Q_{\boldsymbol x}(r)}
 &\le\frac{1}{r}\frac{\varepsilon r}{32}
 =\frac{\varepsilon}{32}.
\end{aligned}
\]

In gradient-only mode, define
\begin{equation}
\label{eq:apx-lazy-radial-potential}
 S_{\boldsymbol x}(\boldsymbol u)
 :=\int_0^1
 \langle V_{\boldsymbol x}(t\boldsymbol u),
         \boldsymbol u\rangle\,dt,
 \qquad
 P_{\boldsymbol x}(\boldsymbol y)
 :=S_{\boldsymbol x}
   \left(\frac{\boldsymbol y-\boldsymbol x}{r}\right).
\end{equation}
Apply Lemma~\ref{lem:radial-scalar-reconstruction} to
$A_{\boldsymbol x}$ and $V_{\boldsymbol x}$ on $\mathbb B$,
using Eq.~\eqref{eq:apx-lazy-joint-invariants}.
It gives $S_{\boldsymbol x}\in\Pi_p^d$,
$S_{\boldsymbol x}(\boldsymbol0)=0$, and
\[
\begin{aligned}
 \|P_{\boldsymbol x}-f+f(\boldsymbol x)\|
       _{\infty,Q_{\boldsymbol x}(r)}
 &\le\frac{\varepsilon r}{32},\\
 \|\nabla P_{\boldsymbol x}-\nabla f\|
       _{\infty,Q_{\boldsymbol x}(r)}
 &\le
 \frac{1}{r}
 \left(\frac{\varepsilon r}{32}
       +\frac{1}{2}\frac{\varepsilon r}{32}\right)\\
 &=\frac{3\varepsilon}{64}
 \le\frac{\varepsilon}{16}.
\end{aligned}
\]
In both modes, $P_{\boldsymbol x}(\boldsymbol x)=0$,
so the value bounds imply
Eq.~\eqref{eq:boundary-approximation}, while the gradient bounds
give Eq.~\eqref{eq:terminal-ball-gradient-accuracy}.
Both scalar polynomials are computed from the retained models
without further oracle queries.
\end{proof}

\begin{proposition}[\bf Lazy boundary-reuse complexity]
\label{prop:lazy-boundary-reuse-complexity}
Under the standing assumptions, take $m=d$ and use
$r=r_m$ from Eq.~\eqref{eq:lazy-radius}.
Both lazy implementations deterministically return an
$\varepsilon$-stationary point, with query complexities
\begin{equation}
\label{eq:optimized-lazy-complexities}
\begin{aligned}
 Q_{\mathrm{val}}
 &=\mathcal O_{p,\beta_p}\!\left(
 d^p+d^{p-1+\frac{\beta_p}{\nu-1}}
 L_p^{\frac{1}{\nu-1}}
 \Delta\varepsilon^{-\frac{\nu}{\nu-1}}
 \right),\\
 Q_{\mathrm{grad}}
 &=\mathcal O_{p,\beta_p}\!\left(
 d^{p-1}+d^{p-2+\frac{\beta_p}{\nu-1}}
 L_p^{\frac{1}{\nu-1}}
 \Delta\varepsilon^{-\frac{\nu}{\nu-1}}
 \right).
\end{aligned}
\end{equation}
Each model call uses one nonadaptive batch, and terminal
localization requires no additional queries.
\end{proposition}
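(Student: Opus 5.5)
The plan is to combine three ingredients. Correctness will come from Lemma~\ref{lem:lazy-function-value-accuracy} and Proposition~\ref{prop:unconstrained-trapping-outer}. The query count will come from the refresh/rolling structure of Algorithm~\ref{alg:lazy-approx-boundary}. Finally, we substitute $m=d$.

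\textbf{Correctness.} With $r=r_m$, Lemma~\ref{lem:lazy-function-value-accuracy} shows that every lazy model call produces a surrogate satisfying \eqref{eq:boundary-approximation}, together with a polynomial satisfying \eqref{eq:terminal-ball-gradient-accuracy}. The outer-loop guarantee (Proposition~\ref{prop:unconstrained-trapping-outer}) then applies verbatim. It yields a correct output after exactly $K=N_{\mathrm{mov}}+1$ model calls, with $N_{\mathrm{mov}}\le 8\Delta/(\varepsilon r)$, and $\mathsf{TrapLocalize}$ uses the final retained polynomial without further queries. One point needs checking: the lemma's phase-displacement bound $\|\boldsymbol x-\boldsymbol a\|_2\le (m-1)r$ requires each phase to contain at most $m$ calls. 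This follows from the counter update in Line~4, since $j$ resets to $1$ once it reaches $m$.

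\textbf{Counting.} A call is a refresh when $j\in\{0,m\}$, which happens on the first call and then once every $m$ calls. A refresh queries $N_{d,p}$ values (or $N_{d,p-1}$ gradients). Every other call queries only the lower-degree nodes, costing $N_{d,p-1}$ values (or $N_{d,p-2}$ gradients), because the paired data at the previous center are cached. There are $R=\lceil K/m\rceil\le 1+K/m$ refreshes, so
\[
Q_{\mathrm{val}}\le N_{d,p}+K\Bigl(\tfrac{N_{d,p}}{m}+N_{d,p-1}\Bigr),
\]
and the same bound holds with each index lowered by one for gradients. Using $N_{d,q}\le C_q d^q$ together with the relation $\frac{1}{\varepsilon r_m}=96^{\frac{1}{\nu-1}}m^{\frac{\beta_p}{\nu-1}}L_p^{\frac{1}{\nu-1}}\varepsilon^{-\frac{\nu}{\nu-1}}$, we get
\[
K\le 1+C\,m^{\frac{\beta_p}{\nu-1}}L_p^{\frac{1}{\nu-1}}\Delta\,\varepsilon^{-\frac{\nu}{\nu-1}}.
\]
When we multiply out, the additive $1$ in $K$ contributes $d^p/m+d^{p-1}\le 2d^p$, which is absorbed into the leading $d^p$ term.

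\textbf{Substituting $m=d$.} The coefficient of $L_p^{\frac{1}{\nu-1}}\Delta\varepsilon^{-\frac{\nu}{\nu-1}}$ becomes $d^{\frac{\beta_p}{\nu-1}}(d^{p-1}+d^{p-1})$, which is of order $d^{p-1+\frac{\beta_p}{\nu-1}}$. The gradient count works the same way with exponents $p-1$ and $p-2$.

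The main obstacle is making sure the interpolation constants in \eqref{eq:apx-lazy-fixed-interpolation} do not depend on $d$. If they did, the $\mathcal O_{p,\beta_p}$ claim would fail. The lemma handles this by working in normalized coordinates on $\mathbb B$ with a fixed contraction, so the first thing to check is that Proposition~\ref{prop:local-reconstruction} delivers the bound $\frac32 a_{j,\gamma}L$ with no stability factor in the exact-data case. It does, because $\tau=0$ and the contraction condition absorbs $K_{d,p}$. Once that holds, $d$ enters the count only through the node numbers $N_{d,q}$.
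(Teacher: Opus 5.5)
Your proposal is correct and follows essentially the same route as the paper. Correctness comes from Lemma~\ref{lem:lazy-function-value-accuracy} together with Proposition~\ref{prop:unconstrained-trapping-outer}; the query count is $\lceil K/m\rceil N_{d,q}+(K-\lceil K/m\rceil)N_{d,q-1}\le N_{d,q}+K(N_{d,q}/m+N_{d,q-1})$ with $K\le 1+C\,m^{\beta_p/(\nu-1)}L_p^{1/(\nu-1)}\Delta\varepsilon^{-\nu/(\nu-1)}$; and the choice $m=d$ balances the refresh and rolling terms. The only claim you leave implicit is the one-batch property, which the paper settles by noting that each translated node set $\boldsymbol x+r\mathcal U_\ell$ is fixed before any of its answers are observed.
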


\begin{proof}
We first keep $m\ge1$ arbitrary.
Lemma~\ref{lem:lazy-function-value-accuracy} verifies the
model requirements of
Proposition~\ref{prop:unconstrained-trapping-outer}.
Hence correctness follows, and the number $K$ of model calls
satisfies
\begin{equation}
\label{eq:apx-lazy-number-of-calls}
\begin{aligned}
 K
 &\le1+\left\lfloor\frac{8\Delta}{\varepsilon r}\right\rfloor\\
 &\le1+C_{p,\beta_p}
 m^{\frac{\beta_p}{\nu-1}}
 L_p^{\frac{1}{\nu-1}}
 \Delta\varepsilon^{-\frac{\nu}{\nu-1}},
\end{aligned}
\end{equation}
where the second inequality substitutes Eq.~\eqref{eq:lazy-radius}.

For function values, put $q=p$; for gradients, put $q=p-1$.
A refresh queries the nodes in
$\boldsymbol x+rU_q$ and retains the data on
$\boldsymbol x+rU_{q-1}$.
A rolling update queries only
$\boldsymbol x+rU_{q-1}$, since the corresponding data at
the preceding center are already cached.
Thus refreshes and rolling updates use $N_{d,q}$ and
$N_{d,q-1}$ queries, respectively.

Among $K$ calls, exactly
$\left\lceil\frac{K}{m}\right\rceil$ are refreshes.
Counting the prescribed stencil queries, the total is
\begin{equation}
\label{eq:apx-lazy-exact-query-count}
\begin{aligned}
 Q
 &=
 \left\lceil\frac{K}{m}\right\rceil N_{d,q}
 +\left(K-\left\lceil\frac{K}{m}\right\rceil\right)
   N_{d,q-1}\\
 &\le
 N_{d,q}
 +K\left(\frac{N_{d,q}}{m}+N_{d,q-1}\right),
\end{aligned}
\end{equation}
where the inequality uses
$\left\lceil\frac{K}{m}\right\rceil\le1+\frac{K}{m}$.
Combining this with Eq.~\eqref{eq:apx-lazy-number-of-calls}
and $N_{d,q}=\mathcal O_q(d^q)$ gives
\begin{equation}
\label{eq:apx-lazy-general-complexity}
 Q\le C_{p,\beta_p}\left[
 d^q+
 \left(
 d^q m^{-\frac{p-1}{\nu-1}}
 +d^{q-1}m^{\frac{\beta_p}{\nu-1}}
 \right)
 L_p^{\frac{1}{\nu-1}}
 \Delta\varepsilon^{-\frac{\nu}{\nu-1}}
 \right].
\end{equation}
Here the terms independent of $\Delta$ are at most
$3N_{d,q}$ because $m\ge1$ and $N_{d,q-1}\le N_{d,q}$,
and we used
$\frac{\beta_p}{\nu-1}-1=-\frac{p-1}{\nu-1}$.

For $m=d$, both dimension powers in parentheses equal
$q-1+\frac{\beta_p}{\nu-1}$.
Taking $q=p$ and $q=p-1$ proves
Eq.~\eqref{eq:optimized-lazy-complexities}.
Each translated node set is fixed before its answers are
observed, so every model call uses one nonadaptive batch.
The terminal polynomial is already supplied by
Lemma~\ref{lem:lazy-function-value-accuracy}.
\end{proof}

\noindent\textbf{The cube-constrained case.}
Let $X=[0,1]^d$ and $X_r:=X+r\mathbb B$.
Assume that the oracle is available on $X_r$ and that
$f\in C^{p,\beta_p}(L_p)$ on a neighborhood of $X_r$.
All translated query sets and segments used above lie in
the convex set $X_r$.
Restricting the models to $Q_{\boldsymbol x}(r)$ preserves
their value and gradient error bounds.
Proposition~\ref{prop:unconstrained-trapping-outer}
therefore gives the same query complexities, with
$f(\boldsymbol x^0)-\min_Xf\le\Delta$, and an output
$\boldsymbol z\in X$ satisfying
$\|G_\gamma(\boldsymbol z)\|_2\le\varepsilon$
for every $\gamma>0$.
The query points lie in $X_r$.

\section{Proofs in Section~\ref{sec:lb}}
\label{app:lb}

\subsection{Proof of Lemma~\ref{lem:drhs-one-phase}:
One-Phase Delayed-RHS Hardness}
\label{subsec:drhs-one-phase}

In this section we prove Lemma~\ref{lem:drhs-one-phase} first under the
unconditioned law $\mathsf P_0$.
Section~\ref{subsubsec:drhs-preliminaries} collects the spectral
and Krylov ingredients, which
Section~\ref{subsubsec:drhs-fixed-time} combines to establish
the lower bound for a single output at a fixed disclosure time.
Section~\ref{subsubsec:drhs-adaptive-completion} then handles
adaptive disclosure and multiple outputs, and passes to the
conditioned law $\mathsf P_{\mathrm{ph}}$.

Throughout, we write
$\rho=\frac{\varepsilon}{\mu}$,
$\kappa=\frac{\bar L}{\mu}$, $B_0=512$, and
$b=B_0\varepsilon\xi$, where $\xi$ is uniform on the unit sphere.

\subsubsection{Useful Facts}
\label{subsubsec:drhs-preliminaries}
The first lemma provides a spectrum on which low-degree
polynomials cannot approximate the inverse, with sufficiently
large multiplicities in every eigenspace.
\begin{lemma}[\bf Replicated hard spectrum]
\label{lem:drhs-spectrum}
There are numerical constants \(c_0>0\) and \(\kappa_0\ge4\) such that the
following holds.  Suppose \(\kappa\ge\kappa_0\), set
\(t:=\lfloor c_0\sqrt\kappa\rfloor\ge1\) and \(K:=t+2\), and let
\(M\ge2\kappa\) be an integer.  Then there are distinct numbers
\(\mu\le\lambda_1<\cdots<\lambda_K\le\bar L\) and positive integers
\(n_1,\ldots,n_K\), with \(\sum_{i=1}^K n_i=M\), such that, for every
real polynomial \(q\) of degree at most \(t\),
\begin{equation}
\label{eq:drhs-flat-spectrum}
 n_i\ge\frac{M}{2\kappa}\quad(i\in[K]),
 \qquad
 \sum_{i=1}^K\frac{n_i}{M}
 \left|\frac1{\lambda_i}-q(\lambda_i)\right|^2
 \ge\frac{3}{512\mu^2}.
\end{equation}
Consequently,
\(D:=\operatorname{diag}(\lambda_1I_{n_1},\ldots,\lambda_KI_{n_K})\)
satisfies \(\mu I\preceq D\preceq\bar L I\).
\end{lemma}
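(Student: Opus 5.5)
The plan is to obtain the lower bound in \eqref{eq:drhs-flat-spectrum} from a single linear functional that annihilates every polynomial of degree at most $t$. I will choose the nodes and then the weights $w_i:=n_i/M$ explicitly, and round to integers only at the end.

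\textbf{Duality.} For any distinct nodes $\lambda_1<\cdots<\lambda_K$ with $K=t+2$, the divided difference $\mathcal L[g]:=\sum_{i=1}^K c_i g(\lambda_i)$ with $c_i:=\prod_{j\ne i}(\lambda_i-\lambda_j)^{-1}$ vanishes on $\Pi_t^1$. It satisfies $\mathcal L[\lambda\mapsto\lambda^{-1}]=(-1)^{K-1}\prod_i\lambda_i^{-1}$. Hence for every $q$ of degree at most $t$ we have $\sum_i c_i(\lambda_i^{-1}-q(\lambda_i))=\sum_i c_i/\lambda_i$. Cauchy--Schwarz then gives
\[
\sum_i w_i\Bigl|\frac1{\lambda_i}-q(\lambda_i)\Bigr|^2\ \ge\ \frac{\bigl(\sum_i c_i/\lambda_i\bigr)^2}{\sum_i c_i^2/w_i}.
\]
I will take $w_i:=\tfrac12|c_i|/\sum_j|c_j|+\tfrac1{2K}$. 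This gives $\sum_i c_i^2/w_i\le 2(\sum_i|c_i|)^2$, so the lower bound becomes $\tfrac12 E^2$ with
\[
E:=\frac{|\sum_i c_i/\lambda_i|}{\sum_i|c_i|}.
\]
The signs of the $c_i$ alternate. By the de la Vallée Poussin/equioscillation characterization on $t+2$ points, $E$ is exactly the discrete minimax error of $\lambda^{-1}$ by degree-$t$ polynomials on $\{\lambda_i\}$. Moreover, $w_i\ge 1/(2K)\ge 1/\kappa$, because $K\le c_0\sqrt\kappa+2\le\kappa/2$ once $\kappa\ge\kappa_0$. This leaves a factor of $2$ of room for rounding.

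\textbf{Node choice and the minimax error (main obstacle).} I will place the nodes at the $t+2$ extrema of the Chebyshev polynomial $T_{t+1}$, affinely mapped onto $[\mu,\bar L]$. For these nodes I aim to show $E\ge c/\mu$ for an absolute constant $c$ whenever $t\le c_0\sqrt\kappa$. The first thing to try is a direct computation. Write $\lambda^{-1}=1/(\ell(x)-a)$ with $x\in[-1,1]$, where $a<-1$ corresponds to $0$ and $a=-1-2/(\kappa-1)$. The Chebyshev extrema admit an explicit discrete-orthogonality/quadrature structure, and the coefficients $|c_i|$ are proportional to $1/\bigl(|T'_{t+1}|\text{-type factors}\bigr)$, which are essentially uniform with halved endpoints. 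This should give $E$ comparable to the classical Bernstein formula for the best approximation of $1/(x-a)$, namely $E\asymp\mu^{-1}\bigl((\sqrt\kappa-1)/(\sqrt\kappa+1)\bigr)^{t+1}$ up to absolute constants. That quantity is at least $\mu^{-1}e^{-2c_0-O(1/\sqrt\kappa)}$. Choosing $c_0$ small and $\kappa_0$ large then yields $\tfrac12E^2\ge 3/(256\mu^2)$, which is twice the target constant.

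As a fallback, if the exact discrete formula is messy, I can lower bound $E$ by a rescaling argument instead. The key inputs are that the nearest node to $\mu$ is within $O(\bar L/t^2)=O(\mu)$ of it (because $t\lesssim\sqrt\kappa$), and that $q$ is controlled near $\mu$ by Markov-type inequalities relative to the node values.

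\textbf{Integer rounding and conclusion.} I will set $n_i:=\lfloor Mw_i\rfloor$ for $i<K$ and put the remainder into $n_K$. Because $M\ge2\kappa$ and $w_i\ge1/\kappa$, each floor loses at most $1\le Mw_i/2$. Hence $n_i\ge Mw_i/2\ge M/(2\kappa)$ and $n_i/M\ge w_i/2$ for every $i$. It follows that the weighted sum is at least half of the one computed with the $w_i$, which is at least $\tfrac12\cdot 3/(256\mu^2)=3/(512\mu^2)$. The statement $\mu I\preceq D\preceq\bar LI$ is immediate, since all nodes lie in $[\mu,\bar L]$. Distinctness of the nodes holds by construction.
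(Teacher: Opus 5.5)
The duality part of your argument is sound: the divided-difference functional, Cauchy--Schwarz, the mixed weights and the rounding together reduce the whole lemma to one task. You must pick $t+2$ nodes whose discrete minimax error $E=|\sum_i c_i/\lambda_i|/\sum_i|c_i|$ is of order $1/\mu$. The gap is that the Chebyshev--Lobatto nodes do not have this property, and the estimate you expect for them is false. Write $\lambda=s(x-a)$ with $s=\frac{\bar L-\mu}{2}$, $a=-\frac{\kappa+1}{\kappa-1}=-\cosh\phi$, $\phi=\log\frac{\sqrt\kappa+1}{\sqrt\kappa-1}$ and $x_k=\cos\frac{k\pi}{t+1}$. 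Three facts then determine $E$. First, $\sum_i c_i/\lambda_i=\pm\prod_i\lambda_i^{-1}$. Second, the Lobatto barycentric weights are all equal except halved at the endpoints, so $\sum_i|c_i|=2^t s^{-(t+1)}$. Third, $\prod_k(x_k-a)=2^{-t}(a^2-1)|U_t(a)|$, where $U_t$ is the Chebyshev polynomial of the second kind. Together these give exactly
\[
E=\frac{1}{\mu\sqrt\kappa\,\sinh\bigl((t+1)\phi\bigr)}\le\frac{1}{2(t+1)\mu},
\]
using $\sinh y\ge y$ and $\phi\ge 2/\sqrt\kappa$. The node $\mu$ is the only one where $1/\lambda$ has size $1/\mu$, yet it carries only the fraction $|c_1|/\sum_i|c_i|=\frac{1}{2(t+1)}$ of the weight. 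The next node lies at distance at least $\frac{(\kappa-1)\mu}{(t+1)^2}\gtrsim\mu/c_0^2$, so these nodes do not resolve the scale-$\mu$ spike of $1/\lambda$. On a fixed set of nodes, any probability weights $w$ satisfy $\min_{q\in\Pi_t}\sum_i w_i|\lambda_i^{-1}-q(\lambda_i)|^2\le E^2$; take $q$ to be the discrete minimax polynomial. Hence no choice of $n_i$ reaches $\frac{3}{512\mu^2}$ on these nodes once $t\ge 6$, and $t=\lfloor c_0\sqrt\kappa\rfloor$ is unbounded. The Bernstein-type quantity you quote is the minimax error on the whole interval $[\mu,\bar L]$, not on this node set. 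Your Markov-type fallback cannot help either, because the inequality it aims to prove is false for these nodes.

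What is missing is to take as nodes the $t+2$ alternation points of the best uniform approximant $q^\star$ of $1/\lambda$ on $[\mu,\bar L]$, given by Chebyshev's equioscillation theorem. This is what the paper does. At these points the residual of $q^\star$ has modulus $E_t$ with alternating signs, so your $E$ equals the continuous minimax error $E_t$. Your Cauchy--Schwarz step then restates the paper's observation that this residual is orthogonal to $\Pi_t$ under the weights $\theta_i=|c_i|/\sum_j|c_j|$. It remains to show $E_t\gtrsim 1/\mu$. One route is Chebyshev's closed form $E_t=\frac{\kappa-1}{2\kappa\mu}\bigl(\frac{\sqrt\kappa-1}{\sqrt\kappa+1}\bigr)^{t}$. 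The other, used in the paper, is Markov's inequality. An approximant with uniform error below $\frac{1}{8\mu}$ would satisfy $q(\mu)-q(4\mu)>\frac{1}{2\mu}$, hence $\|q'\|_\infty>\frac{1}{6\mu^2}$. Markov instead gives $\|q'\|_\infty\le\frac{2t^2}{\bar L-\mu}\cdot\frac{9}{8\mu}\le\frac{3c_0^2}{\mu^2}$, a contradiction for small $c_0$. Watch the constants here. With only $E_t\ge\frac{1}{8\mu}$, your mixing parameter $\frac12$ combined with the rounding loss $\frac12$ gives $\frac{1}{256\mu^2}<\frac{3}{512\mu^2}$. To close this, either use the closed form or mix as $\frac34\theta_i+\frac{1}{4K}$, which is admissible because $K\le\kappa/4$ for large $\kappa_0$. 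The latter recovers the paper's factor $\frac38$.
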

The proof is given in Appendix~\ref{app:drhs-spectrum}.

\begin{remark}[\bf Relation to classical inverse approximation]
The uniform approximation of \(x\mapsto x^{-1}\) on a positive interval is classical.  Exact minimax errors are given in \cite[Theorem~2.1 and Corollary~2.2]{kraus2012bestinverse} and \cite[Theorem~3.4]{plonka2026reciprocal}; in particular, for every fixed \(C>0\), degree \(m\leq C\sqrt\kappa\) entails uniform error \(\Omega_C(\mu^{-1})\) on \([\mu,\bar L]\).  Lemma~\ref{lem:drhs-spectrum} converts this uniform obstruction into a weighted-\(L_2\) certificate on exactly \(t+2\) spectral points and rounds its weights to integer multiplicities bounded below by \(M/(2\kappa)\).  Related discretization and replication arguments appear in \cite[Appendix~A.4--A.5]{derezinski2026matrixvector}, which constructs two moment-matching spectral measures on a common finite support and rounds both to integer multiplicities; this paired-measure construction does not directly yield the single-measure weighted-\(L_2\) bound in \eqref{eq:drhs-flat-spectrum}.
\end{remark}

We next isolate the random linear-system problem underlying the one-phase
lower bound.

\paragraph{Problem and objective.}
Fix \(D=\operatorname{diag}(\lambda_1,\ldots,\lambda_M)\succ0\), draw
\(U\sim\operatorname{Haar}(O(M))\), and set \(A:=U^\top D U\).
Independently, draw
\(\xi\sim\operatorname{Unif}(\mathbb S^{M-1})\).
Given matrix–vector product access \(v\mapsto Av\), the goal is to estimate
\(x^\star:=A^{-1}\xi\) under the loss
\(\ell_{A,\xi}(y):=\|y-x^\star\|_2\).
In the phase construction, \(b=B_0\varepsilon\xi\) and
\(A^{-1}b=B_0\varepsilon x^\star\), so the result transfers directly by
scaling.

\paragraph{Two-stage matrix–vector product algorithms.}
Let \(q_0,q_1\in\mathbb N_{+}\) and \(n:=q_0+q_1\).
A deterministic two-stage algorithm is specified by Borel maps
\(\psi_1,\ldots,\psi_n,\psi_{\mathrm{out}}\).
Starting from \(\mathsf T_0=\varnothing\), it chooses
\(v_i=\psi_i(\mathsf T_{i-1})\) for \(i\leq q_0\), receives
\(r_i:=Av_i\), and sets
\(\mathsf T_i:=(\mathsf T_{i-1},v_i,r_i)\).
After \(\xi\) is disclosed, it chooses
\(v_i=\psi_i(\mathsf T_{i-1},\xi)\) for \(q_0<i\leq n\), receives the
same response \(r_i=Av_i\), and returns
\(y:=\psi_{\mathrm{out}}(\mathsf T_n,\xi)\).
The output need not have been queried.

The next lemma shows that the component of \(\xi\) unexplored during the
first \(q_0\) queries enters the Krylov representation only at disclosure.
Hence preprocessing cannot retroactively apply powers of \(A\) to this
fresh component.

\begin{lemma}[\bf Delayed-input Krylov representation]
\label{lem:drhs-delayed-krylov}
Let \(q_0,q_1\in\mathbb N_{+}\), \(j_\star:=q_0+1\), and
\(m:=q_0+q_1+2\), and suppose \(m^2<M\).
Every deterministic two-stage algorithm above admits a
joint-law-preserving orthogonal coupling with
\(\widehat A=\widehat U^\top D\widehat U\), where
\(\widehat U\sim\operatorname{Haar}(O(M))\), and with
\(h_1,\ldots,h_m\stackrel{\mathrm{iid}}{\sim}N(0,I_M)\), independent of
\(\widehat U\).

Write \(\widetilde\xi,\widetilde y\) for the coupled input and output, and
set
\(\mathcal K_{\mathrm{pre}}
 :=\operatorname{span}\{\widehat A^a h_j:
 1\leq j\leq q_0,\ a\in\mathbb Z_{\geq0},\
 a+j\leq q_0+1\}\).
Then, almost surely,
\begin{align}
 \widetilde\xi
 &=
 P_{\mathcal K_{\mathrm{pre}}}\widetilde\xi
 +\beta
 \frac{(I-P_{\mathcal K_{\mathrm{pre}}})h_{j_\star}}
      {\|(I-P_{\mathcal K_{\mathrm{pre}}})h_{j_\star}\|_2},
 \qquad
 \beta:=\|(I-P_{\mathcal K_{\mathrm{pre}}})\widetilde\xi\|_2>0,
 \label{eq:drhs-flat-fresh-input}\\
 \widetilde y
 &\in
 \operatorname{span}
 \{\widehat A^a h_j:
 1\leq j\leq m,\ a\in\mathbb Z_{\geq0},\ a+j\leq m\},
 \label{eq:drhs-flat-output-span}\\
 \widehat A^a h_{j_\star}
 \text{ appearing in \eqref{eq:drhs-flat-output-span}}
 &\quad\Longrightarrow\quad
 a\leq m-j_\star=q_1+1.
 \label{eq:drhs-flat-degree}
\end{align}
\end{lemma}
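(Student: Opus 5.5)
The plan is the classical orthogonal-invariance (``resisting oracle'') coupling for Haar-rotated matrices, adapted to the two-stage protocol. First fix the unrotated matrix $D$, generate the Gaussians $h_1,\ldots,h_m$ and an independent Haar $\widehat U$, and build the coupled instance inductively: at each step $i$ construct an orthogonal $V_i$, measurable with respect to the transcript so far and the $h$'s, so that the simulated queries and responses have the same joint law as in the original algorithm run on $A=U^\top DU$. The natural choice is to let the $i$-th query vector, after Gram--Schmidt against everything already explored, be aligned with the fresh Gaussian direction $(I-P)h_i$ inside the current Krylov span. The response $\widehat A v_i$ then lies in the span of $\{\widehat A^a h_j : a+j\le i+1\}$. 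This is the standard argument of Braverman--Hazan--Simchowitz--Woodworth and of the Carmon et al.\ random-rotation constructions. It uses that, conditionally on the transcript, the law of $\widehat U$ is invariant under rotations fixing the explored subspace. The hypothesis $m^2<M$ guarantees that the Krylov subspaces (dimension at most $m(m+1)/2<M$) are proper and that the Gaussian residuals are almost surely nonzero, so every normalization is well defined.

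The second step handles disclosure at time $q_0$. Because $\xi$ is uniform on the sphere and independent of $A$, conditionally on the pre-disclosure transcript the component $(I-P_{\mathcal K_{\mathrm{pre}}})\xi$ has a rotation-invariant direction in $\mathcal K_{\mathrm{pre}}^\perp$, and this remains true under the conditional law of $\widehat U$. I would therefore realize $\widetilde\xi$ by keeping its projection onto $\mathcal K_{\mathrm{pre}}$, its norm $\beta$, and replacing its orthogonal direction by the normalized residual of $h_{j_\star}$, which is uniform on the unit sphere of $\mathcal K_{\mathrm{pre}}^\perp$ and independent of the rest. This gives \eqref{eq:drhs-flat-fresh-input}, and $\beta>0$ almost surely because $\xi$ has a density and $\mathcal K_{\mathrm{pre}}$ is a proper subspace. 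Treating $\widetilde\xi$ as an injected ``query'' with index $j_\star$ makes the disclosed input contribute only $\widehat A^0h_{j_\star}$ at creation.

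The third step continues the induction over the $q_1$ post-disclosure queries, with indices $j_\star+1,\ldots,q_0+q_1+1$. Each further product raises the power on every generator by at most one. The Krylov rule $a+j\le i+1$ is preserved, the output at the end adds the last index $m$, and this yields \eqref{eq:drhs-flat-output-span}. Since $h_{j_\star}$ first appears at index $j_\star$, its power is bounded by $m-j_\star=q_1+1$, which is \eqref{eq:drhs-flat-degree}. The output map $\psi_{\mathrm{out}}$ depends only on the transcript and $\widetilde\xi$, so it lies in the final span.

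I expect the main obstacle to be rigorously justifying joint-law preservation across the disclosure boundary. Concretely, one must show that, conditionally on the pre-disclosure transcript, the pair (conditional law of $\widehat U$, fresh direction of $\xi$) is invariant under rotations of $\mathcal K_{\mathrm{pre}}^\perp$. This is what allows the same rotation to absorb both. I would handle it by a single conditional Haar-invariance lemma applied with the subgroup fixing $\mathcal K_{\mathrm{pre}}$, and then verify measurability of the inductively built rotations.
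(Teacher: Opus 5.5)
Your plan is correct in outline and follows the paper's skeleton. It uses an augmented experiment with $m$ slots: preprocessing queries in slots $1,\dots,q_0$, the disclosed input as a forced slot $j_\star$ that returns no response, post-disclosure queries in slots $j_\star+1,\dots,m-1$, and the output in slot $m$. The degree bound is then read off from the triangular rule $a+j\le m$.

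Where you differ is in how the coupling is obtained.
\begin{itemize}
\item The paper does not rebuild the resisting-oracle rotations. It cites the adaptive block-Krylov lifting of \cite{bakshi2023krylov}: any unit innovation in $\mathcal S_{k-1}^\perp$ that is a Borel function of the augmented transcript is rotated onto the normalized residual of $h_k$ against $\mathcal K_{k-1}$. Here $\mathcal S_{k-1}$ is spanned by the $A^av_j$ with $a+j\le k$. The paper adds a triangular-augmentation fact that turns arbitrary candidates into such innovations and reconstructs the genuine responses.
\item For the disclosure, the paper freezes $\xi=z$. The forced innovation $(I-P_{\mathcal S_{q_0}})z/\beta$ is then a deterministic Borel function of the pre-disclosure augmented transcript, so the cited lemma applies verbatim. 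The construction is taken jointly Borel in $z$, and $z$ is integrated against the spherical law.
\end{itemize}
As a result, the joint-invariance step you flag as the main obstacle never has to be proved separately, and the uniformity of $\xi$ is used only to get $\beta>0$.

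Your route is self-contained and makes the freshness of the disclosed direction transparent. The cost is that you must supply the full measurable construction of the rotations and prove that $\widehat U$ is Haar and independent of $h_1,\dots,h_m$, which is exactly the content of the cited lemma. Your obstacle is also milder than you suggest. Since $\xi$ is independent of $A$, conditioning on $\xi$ makes its fresh direction just another innovation determined by the past. It is then handled by the same conditional invariance of $A$ under rotations fixing $\mathcal S_{q_0}$ pointwise.

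Two points need to be made precise.
\begin{itemize}
\item You say the output ``depends only on the transcript and $\widetilde\xi$, so it lies in the final span.'' That is not a valid justification: an arbitrary measurable function of the transcript need not lie in the span of explored vectors. The output inclusion comes from slot $m$. Write $y$ as its projection onto $\mathcal S_{m-1}$ plus a residual, and rotate the residual direction, which is a function of the completed transcript, onto the normalized residual of $h_m$. No response at this slot is needed.
\item Each Gram--Schmidt step should orthogonalize against the full triangular span $\mathcal S_{k-1}$, not only against past queries and responses. Otherwise the explored subspace at disclosure is generally smaller than the preimage of $\mathcal K_{\mathrm{pre}}$. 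The statement requires decomposing $\widetilde\xi$ relative to $\mathcal K_{\mathrm{pre}}$, with the residual of $h_{j_\star}$ also taken against $\mathcal K_{\mathrm{pre}}$, and that would then need extra bookkeeping.
\end{itemize}
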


Lemma~\ref{lem:drhs-delayed-krylov} gives a time-respecting Krylov
representation of the two-stage algorithm.  Equation
\eqref{eq:drhs-flat-fresh-input} decomposes the disclosed input into a
component contained in the preprocessing space
\(\mathcal K_{\mathrm{pre}}\) and an orthogonal component represented by
the fresh Gaussian column \(h_{j_\star}\).  Its index
\(j_\star=q_0+1\) records that this column appears only when the input is
disclosed.  Accordingly, the triangular constraint in
\eqref{eq:drhs-flat-output-span} permits \(h_{j_\star}\) only through
powers \(\widehat A^a\) with \(a\leq q_1+1\).  Thus the representation
preserves the distinction between pre- and post-disclosure matrix–vector products. The proof is given in Appendix~\ref{app:drhs-delayed-krylov}.

\begin{remark}[\bf Relation to adaptive block-Krylov lifting]
The adaptive block-Krylov lifting of
\cite[Lemma~6.3 and Appendix~A]{bakshi2023krylov} represents an adaptive
matrix–vector products transcript in a triangular block-Krylov space.  It does not directly
specify where a vector disclosed only after preprocessing enters this
representation.  Lemma~\ref{lem:drhs-delayed-krylov} supplies this
time-respecting extension by assigning the fresh component of the delayed
input to its disclosure slot, while preserving the full disclosed vector
and allowing an unqueried terminal output.  
\end{remark}

The final lemma ensures that the fresh component has
constant norm with high probability.

\begin{lemma}[\bf Fresh mass after adaptive preprocessing]
\label{lem:drhs-fresh-mass}
In the setting of Lemma~\ref{lem:drhs-delayed-krylov}, let \(\mathcal S\)
be the analyst-only augmented span immediately before \(\xi\) is disclosed,
and let \(\beta=\|(I-P_{\mathcal S})\xi\|_2\).  If the procedure makes
\(q_0\) pre-disclosure products, then
\begin{equation}
\label{eq:drhs-preprocessing-span-dimension}
 \dim(\mathcal S)
 \le\frac{q_0(q_0+3)}2
 \le(q_0+1)^2.
\end{equation}
Consequently, if \(M\ge8(q_0+1)^2\), then
\begin{equation}
\label{eq:drhs-flat-beta-tail}
 \Pr(\beta<1/2)\le2e^{-M/32}.
\end{equation}
\end{lemma}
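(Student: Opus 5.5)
The plan has two independent pieces: a dimension count for $\mathcal S$, and a concentration bound for the projection of a uniform unit vector onto a fixed subspace.

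\textbf{Dimension count.} I would make the ``analyst-only augmented span'' concrete by following the lifting underlying Lemma~\ref{lem:drhs-delayed-krylov}. Before disclosure, the procedure has seen only $(v_i,Av_i)_{i\le q_0}$, all measurable with respect to $A$ alone. The augmented span after step $i$ is $\mathcal S_i:=\operatorname{span}(\mathcal S_{i-1}\cup\{v_i,Av_i\})$ together with whatever auxiliary directions the lifting adds to keep the triangular Krylov structure. These are the vectors $\widehat A^a h_j$ with $a+j\le q_0+1$, $j\le q_0$, that span $\mathcal K_{\mathrm{pre}}$. For each $j\le q_0$ there are $q_0+2-j$ admissible powers $a\in\{0,\dots,q_0+1-j\}$. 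Summing over $j$ gives
\[
\sum_{j=1}^{q_0}(q_0+2-j)=\frac{q_0(q_0+3)}2\le (q_0+1)^2.
\]
I will check that $\mathcal S$ is contained in this span, or is defined as it. The case $q_0=0$ is vacuous.

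\textbf{Independence.} $\xi$ is drawn independently of $A$ and is disclosed only after the $q_0$ products. Hence $\mathcal S$ is a random subspace that is independent of $\xi$, with dimension $k\le(q_0+1)^2\le M/8$. Conditioning on $\mathcal S$ and using rotation invariance of the uniform law on $\mathbb S^{M-1}$, I may take $\mathcal S$ to be at most the span of the first $k$ coordinates (padding only decreases $\beta$). Then $1-\beta^2=\|P_{\mathcal S}\xi\|^2$ has the law of $\sum_{i\le k}g_i^2/\|g\|^2$ with $g\sim N(0,I_M)$.

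\textbf{Tail bound.} The event $\beta<1/2$ means $\|P_{\mathcal S}g\|^2>\tfrac34\|g\|^2$. This event is contained in the union of two events:
\[
\Bigl\{\textstyle\sum_{i\le k}g_i^2>\tfrac{3M}{8}\Bigr\}
\quad\text{and}\quad
\Bigl\{\|g\|^2<\tfrac M2\Bigr\}.
\]
Indeed, if $\|g\|^2\ge M/2$ and $\beta<1/2$, then $\sum_{i\le k}g_i^2>\tfrac34\cdot\tfrac M2=\tfrac{3M}8$.

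Laurent--Massart gives $\Pr(\chi^2_M<M/2)\le e^{-M/16}$. For the first event, $k\le M/8$ and Laurent--Massart give $\Pr(\chi^2_k\ge k+2\sqrt{kx}+2x)\le e^{-x}$. With $x=M/32$ we get $k+2\sqrt{kx}+2x\le M/8+M/8+M/16=5M/16<3M/8$, so this probability is at most $e^{-M/32}$.

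Summing the two bounds gives $\Pr(\beta<1/2)\le e^{-M/32}+e^{-M/16}\le 2e^{-M/32}$. Averaging over $\mathcal S$ removes the conditioning.

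\textbf{Main obstacle.} The delicate step is the first one: pinning down exactly what the augmented span contains. It must include the lifting's auxiliary Gaussian directions, not merely the queried vectors. The bound on $\dim\mathcal S$ requires checking that each pre-disclosure product adds at most $q_0+2-j$ new Krylov vectors in the triangular scheme. The concentration step is routine once independence from $\xi$ is secured, and that independence holds because $\xi$ is not revealed until disclosure.
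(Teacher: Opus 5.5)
Your proposal is correct and follows the paper's proof. It uses the same slotwise count $\sum_{j=1}^{q_0}(q_0+2-j)$ for the augmented span $\mathcal S=\operatorname{span}\{A^a v_j: a+j\le q_0+1,\ j\le q_0\}$, and the same conditional reduction of $\beta^2$ to a ratio of independent chi-squares given the $\xi$-independent preprocessing transcript. Note that in original coordinates the generators are $A^a v_j$, and their image under the lifting's rotation is $\mathcal K_{\mathrm{pre}}$, so no separate Gaussian directions are added. The only difference is cosmetic: you split the bad event as $\{\|g\|_2^2<M/2\}\cup\{\|P_{\mathcal S}g\|_2^2>3M/8\}$ and apply Laurent--Massart, whereas the paper splits it as $\{Y<(M-r)/2\}\cup\{X>3(M-r)/2\}$ and uses the chi-square MGF directly; your constants check out.
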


\begin{proof}
In the analyst-only augmentation, preprocessing slot \(j\) contributes at
most the vectors
\(v_j,Av_j,\ldots,A^{q_0+1-j}v_j\), and hence at most
\(q_0+2-j\) dimensions.  Summing over the preprocessing slots gives
\[
 \dim(\mathcal S)
 \le
 \sum_{j=1}^{q_0}(q_0+2-j)
 =\frac{q_0(q_0+3)}2
 \le(q_0+1)^2,
\]
which proves the first part (Eq.~\eqref{eq:drhs-preprocessing-span-dimension}).

Let \(\mathcal F_{\mathrm{pre}}\) be the sigma-field generated by the
augmented preprocessing transcript.  Conditional on
\(\mathcal F_{\mathrm{pre}}\), the subspace \(\mathcal S\) is fixed, while
\(\xi\) remains an independent uniform unit vector.  Write
\(\xi=g/\|g\|_2\) with \(g\sim N(0,I_M)\), put
\(r:=\dim(\mathcal S)\), and define
\[
 X:=\|P_{\mathcal S}g\|_2^2,
 \qquad
 Y:=\|(I-P_{\mathcal S})g\|_2^2.
\]
Conditionally on \(\mathcal F_{\mathrm{pre}}\), the variables \(X\) and
\(Y\) are independent with \(X\sim\chi_r^2\) and
\(Y\sim\chi_{M-r}^2\), and \(\beta^2=Y/(X+Y)\).  Hence
\(\beta<1/2\) implies \(X>3Y\), so
\[
 \Pr(\beta<1/2\mid\mathcal F_{\mathrm{pre}})
 \le
 \Pr\left(Y<\frac{M-r}{2}\right)
 +
 \Pr\left(X>\frac{3(M-r)}2\right).
\]
The chi-square moment-generating function gives
\[
 \Pr\left(Y<\frac{M-r}{2}\right)
 \le e^{-(M-r)/16},
 \qquad
 \Pr\left(X>\frac{3(M-r)}2\right)
 \le e^{-3(M-r)/8}2^{r/2}.
\]
If \(M\ge8(q_0+1)^2\), then
Eq.~\eqref{eq:drhs-preprocessing-span-dimension} gives \(r\le M/8\).  The two
terms in the last display are then at most \(e^{-M/32}\).  Thus
\(\Pr(\beta<1/2\mid\mathcal F_{\mathrm{pre}})\le2e^{-M/32}\), and
averaging over \(\mathcal F_{\mathrm{pre}}\) proves the second part 
(Eq.~\eqref{eq:drhs-flat-beta-tail}).
\end{proof}

\subsubsection{Fixed-Time Exclusion for a Single Output}
\label{subsubsec:drhs-fixed-time}

We now combine the three preceding lemmas to bound the success
probability of a single output at a fixed disclosure time.
Set $\delta_\star:=\frac{\delta}{2(Q+1)\mathsf L}$,
where $Q$ and $\mathsf L$ are the query and candidate-vector
budgets in Lemma~\ref{lem:drhs-one-phase}.
This allocation allows a subsequent union bound over the
$Q+1$ possible disclosure times and at most $\mathsf L$ candidates.


\begin{lemma}[\bf Fixed-time single-output exclusion]
\label{lem:drhs-fixed-time}
Assume the hypotheses of Lemma~\ref{lem:drhs-one-phase},
with $C_0$ in \eqref{eq:drhs-one-phase-dimension}
sufficiently large.
Fix $r\in\{0,\ldots,Q\}$.
Consider any deterministic algorithm that makes exactly $r$
queries to $v\mapsto Av$ before receiving $b$, then makes at
most $t-1$ further queries and returns $y\in\mathbb R^M$.
Then
\begin{equation}
\label{eq:drhs-flat-fixed-time-exclusion}
 \Pr_{(A,b)\sim\mathsf P_0}
 \left(\|y-A^{-1}b\|_2<3\rho\right)
 \le\delta_\star.
\end{equation}
\end{lemma}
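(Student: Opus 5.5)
We will apply the delayed-input Krylov representation (Lemma~\ref{lem:drhs-delayed-krylov}) with $q_0=r$ and $q_1=t-1$. If $r=0$ we pad with one dummy pre-disclosure query, so that $q_0\ge1$. Then $m=r+t+1\le Q+t+1$, and the hypothesis $M\ge C_0(Q^2+\dots)$ together with $Q\ge t$ gives $m^2<M$. Since $b=B_0\varepsilon\xi$ and $A^{-1}b=B_0\varepsilon x^\star$, the event $\|y-A^{-1}b\|_2<3\rho$ is the event $\|y'-A^{-1}\xi\|_2<3\rho/(B_0\varepsilon)=3/(512\mu)$ for the rescaled output $y'=y/(B_0\varepsilon)$. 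It therefore suffices to show that $\|\widetilde y-\widehat A^{-1}\widetilde\xi\|_2\ge 3/(512\mu)$ fails with probability at most $\delta_\star$ in the coupled model.

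\textbf{Step 1: isolate the fresh direction.} Let $g:=(I-P_{\mathcal K_{\mathrm{pre}}})h_{j_\star}/\|\cdot\|$. By \eqref{eq:drhs-flat-fresh-input}, $\widehat A^{-1}\widetilde\xi=\widehat A^{-1}P_{\mathcal K_{\mathrm{pre}}}\widetilde\xi+\beta\widehat A^{-1}g$. By \eqref{eq:drhs-flat-output-span}--\eqref{eq:drhs-flat-degree}, the output can be written as $\widetilde y=q(\widehat A)h_{j_\star}+w$, where $\deg q\le t$ and $w$ lies in the span of $\widehat A^a h_j$ for $j\ne j_\star$. We then project the error onto a subspace that sees $h_{j_\star}$ essentially only through its component orthogonal to the Krylov spaces of the other columns. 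Concretely, let $\mathcal W$ be the span of all $\widehat A^a h_j$ with $j\neq j_\star$ and $a\le m$, together with $\widehat A^{a}P_{\mathcal K_{\mathrm{pre}}}$-type vectors. Its dimension is $O(m^2)\ll M$. Conditional on everything except the part of $h_{j_\star}$ orthogonal to $\mathcal W$, that fresh part is a Gaussian vector in a subspace of codimension $O(m^2)$. Through Haar invariance, its spectral weights in the eigenbasis of $\widehat A$ are close to $n_i/M$, up to the low-dimensional distortion from $\mathcal W$.

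\textbf{Step 2: reduce to polynomial approximation.} On the fresh component, the error equals $(\beta\,\widehat A^{-1}-c\,q(\widehat A))g$ for a scalar $c$ absorbing normalizations. Its squared norm is $\beta^2\sum_i \omega_i|\lambda_i^{-1}-\tilde q(\lambda_i)|^2$, where $\omega_i$ are the empirical spectral weights of $g$. Lemma~\ref{lem:drhs-spectrum} bounds this below by $\beta^2\cdot\tfrac12\cdot 3/(512\mu^2)$, provided $\omega_i\ge n_i/(2M)$ for all $i$. Lemma~\ref{lem:drhs-fresh-mass} gives $\beta\ge1/2$ except with probability $2e^{-M/32}$. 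We aim for constants yielding error at least $3/(512\mu)$. If the spectral constant is too tight, we would instead lower-bound the error by $c'/\mu$ and note that the constant $3$ in $3\rho$ can be accommodated by the margins $B_0=512$ and the $16\rho$ lower bound built into $\mathcal G$. Whichever route we take, the bookkeeping must be checked against the exact constant $3/512$.

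\textbf{Step 3: concentration of spectral weights (main obstacle).} We must show that each $\omega_i\ge n_i/(2M)$ holds with failure probability at most $\delta_\star/2$. The fresh Gaussian restricted to $\mathcal W^\perp$ has eigenspace masses that are projections of $\chi^2_{n_i}$ variables, minus the at most $\dim\mathcal W=O(m^2)$ dimensions removed. Since $n_i\ge M/(2\kappa)$, a chi-square lower tail gives failure probability at most $K e^{-cM/\kappa}$ once $M\gtrsim\kappa m^2$. Requiring $e^{-cM/\kappa}\le \delta_\star/(4K)$ yields the condition $M\gtrsim\kappa\log((Q+2)\mathsf L/\delta)$, which matches \eqref{eq:drhs-one-phase-dimension}. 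The delicate part is making the conditioning rigorous: the subspace $\mathcal W$ and the removed directions must be independent of the fresh Gaussian, and the rotation invariance of $\widehat U$ must be used jointly with the eigenspace decomposition. We would handle this by conditioning on $\widehat U$ and the other columns, then applying Gaussian projection bounds inside each eigenspace. A union bound over Steps 2 and 3 completes the $\delta_\star$ estimate.
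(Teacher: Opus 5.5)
Your outline is the same as the paper's proof. You lift via Lemma~\ref{lem:drhs-delayed-krylov} with $q_0=r$, $q_1=t-1$, remove everything not generated by $h_{j_\star}$, reduce to the weighted bound of Lemma~\ref{lem:drhs-spectrum}, and control $\beta$, $\|h_{j_\star}\|_2$ and the eigenspace masses by chi-square tails. However, the isolation step as you carry it out has a genuine gap.

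In Step~3 you bound the dimension removed from each eigenspace by the total $\dim\mathcal W=O(m^2)$. That forces $n_i\gtrsim m^2$, i.e.\ $M\gtrsim\kappa m^2\asymp\kappa Q^2$. The hypothesis \eqref{eq:drhs-one-phase-dimension} only gives $M\gtrsim Q^2+\kappa Q+\kappa\log\frac{(Q+2)\mathsf L}{\delta}$. The only guaranteed multiplicity is $n_i\ge M/(2\kappa)$, which can be of order $Q\ll m^2$. So Step~3 does not go through under the stated assumptions, and strengthening them would enlarge the phase dimension $M$ used downstream.

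Relatedly, Step~2 claims that on the fresh component the error equals $(\beta\widehat A^{-1}-c\,q(\widehat A))g$, with norm read off from spectral weights. This needs the projection to commute with $\widehat A$. In general you only control $P_{\mathcal W^\perp}(\beta\widehat A^{-1}-c\,q(\widehat A))g$, and there is no approximate version of the identity ``up to low-dimensional distortion'' to fall back on.

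The missing observation repairs both points. $\widehat A$ has only $K=t+2$ distinct eigenvalues and acts as the scalar $\lambda_i$ on its eigenspace $E_i$, whose projector $P_i$ is a polynomial in $\widehat A$. Your powers run up to $m\ge K-1$. Hence your $\mathcal W$ is the full $\widehat A$-invariant Krylov space of $\{h_j\}_{j\ne j_\star}$, namely $\bigoplus_i\operatorname{span}\{P_ih_j:j\ne j_\star\}$. It contains $\mathcal K_{\mathrm{pre}}$ together with its images under $\widehat A^{-1}$ and $q(\widehat A)$.

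Consequently $P_{\mathcal W^\perp}=\sum_iR_i$, where $R_i$ projects onto $E_i\cap\operatorname{span}\{P_ih_j:j\ne j_\star\}^\perp$; this is exactly the paper's construction. Each $R_i$ commutes with $\widehat A$. Writing $\sigma_\star:=\|(I-P_{\mathcal K_{\mathrm{pre}}})h_{j_\star}\|_2$, one gets exactly $R_i\widetilde\xi=\frac{\beta}{\sigma_\star}R_ih_{j_\star}$ and $R_i\widetilde y=q(\lambda_i)R_ih_{j_\star}$.

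More importantly, $R_i$ removes at most $m-1$ dimensions from $E_i$, however many powers appear. So $\operatorname{rank}R_i\ge n_i/2$ needs only $n_i\ge2(m-1)$, i.e.\ $M\gtrsim\kappa Q$, which the $\kappa Q$ term supplies. Conditionally on $\widehat A$ and $\{h_j\}_{j\ne j_\star}$, $\|R_ih_{j_\star}\|_2^2$ is chi-square with $\operatorname{rank}R_i$ degrees of freedom, so no Haar-invariance argument is needed at this stage.

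Finally, the constants are not tight. Use $\beta\ge\frac12$, $\|R_ih_{j_\star}\|_2^2\ge n_i/4$, and $\sigma_\star^2\le\|h_{j_\star}\|_2^2\le2M$; your normalization also requires this last upper bound, but you did not state it. With these, the squared error in the original scale is at least $48\rho^2>9\rho^2$.
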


\begin{proof}
\noindent\textbf{Step 1: Obtain the Krylov representation.}
Store the output $y$ and pad the execution with zero-vector
queries until exactly $t-1$ post-disclosure queries have been
made. The stored output is unchanged.
Since $b=B_0\varepsilon\xi$ and $B_0\varepsilon$ is known,
the algorithm can recover $\xi=\frac{b}{B_0\varepsilon}$.
Thus the padded algorithm has
$q_0=r$ and $q_1=t-1$.

Set $m:=r+t+1$ and $j_\star:=r+1$.
Since $r,t\le Q$ and $Q\ge1$,
\begin{equation}
\label{eq:drhs-flat-dimension-consequences}
 m^2\le(2Q+1)^2\le9Q^2<M,
\end{equation}
where the last inequality follows from the $Q^2$ term in
Eq.~\eqref{eq:drhs-one-phase-dimension}, for sufficiently large $C_0$.
Lemma~\ref{lem:drhs-delayed-krylov} therefore applies.

Use its coupled matrix $\widehat A$, input $\widetilde\xi$,
output $\widetilde y$, and Gaussian vectors $h_1,\ldots,h_m$.
Writing
$\sigma_\star:=
\|(I-P_{\mathcal K_{\mathrm{pre}}})h_{j_\star}\|_2$,
the input and output satisfy, almost surely,
\[
\begin{aligned}
 \widetilde\xi
 &=
 P_{\mathcal K_{\mathrm{pre}}}\widetilde\xi
 +\frac{\beta}{\sigma_\star}
  (I-P_{\mathcal K_{\mathrm{pre}}})h_{j_\star},\\
 \widetilde y
 &=\sum_{j=1}^m p_j(\widehat A)h_j,
 \qquad \deg p_j\le m-j,
\end{aligned}
\]
for some real polynomials $p_j$, where $\beta>0$ and
$\sigma_\star>0$.
In particular,
$q_y:=p_{j_\star}$ has degree at most
$m-j_\star=t$.

\medskip
\noindent\textbf{Step 2: Isolate the fresh direction.}
For each $i\in[K]$, let $E_i$ be the
$\lambda_i$-eigenspace of $\widehat A$, and let $P_i$ be
its orthogonal projector.
Define $R_i$ as the orthogonal projector onto
\[
 E_i\cap
 \operatorname{span}
 \{P_i h_j:1\le j\le m,\ j\ne j_\star\}^{\perp}.
\]
For every $j\ne j_\star$ and $a\ge0$,
$R_i\widehat A^a h_j=\lambda_i^aR_i h_j=0$.
Thus $R_i$ annihilates all output terms except the one
generated by $h_{j_\star}$.
It also annihilates $\mathcal K_{\mathrm{pre}}$, whose
generators have indices $j\le r<j_\star$.
Applying $R_i$ to the representations from Step~1 gives
\begin{equation}
\label{eq:drhs-flat-projected-representation}
 R_i\widetilde\xi
 =\frac{\beta}{\sigma_\star}R_i h_{j_\star},
 \qquad
 R_i\widetilde y
 =q_y(\lambda_i)R_i h_{j_\star}.
\end{equation}
The same polynomial $q_y=p_{j_\star}$ occurs for every $i$;
only its evaluation point $\lambda_i$ changes.

\medskip
\noindent\textbf{Step 3: Reduce the error to polynomial approximation.}
Define
$q(\lambda):=
\frac{\sigma_\star}{B_0\varepsilon\beta}q_y(\lambda)$,
so that $\deg q\le t$.
Since $R_i\widehat A^{-1}=\frac{1}{\lambda_i}R_i$,
Eq.~\eqref{eq:drhs-flat-projected-representation} yields
\[
 R_i\left(
 \widetilde y-B_0\varepsilon\widehat A^{-1}\widetilde\xi
 \right)
 =
 \frac{B_0\varepsilon\beta}{\sigma_\star}
 \left(q(\lambda_i)-\frac{1}{\lambda_i}\right)
 R_i h_{j_\star}.
\]
The ranges of the $R_i$ lie in mutually orthogonal eigenspaces.
Consequently,
\begin{equation}
\label{eq:drhs-flat-error-reduction}
\begin{aligned}
 &\|\widetilde y-
 B_0\varepsilon\widehat A^{-1}\widetilde\xi\|_2^2\ge
 \sum_{i=1}^K
 \left\|R_i\left(
 \widetilde y-B_0\varepsilon\widehat A^{-1}\widetilde\xi
 \right)\right\|_2^2=
 B_0^2\varepsilon^2
 \frac{\beta^2}{\sigma_\star^2}
 \sum_{i=1}^K
 \|R_i h_{j_\star}\|_2^2
 \left|\frac{1}{\lambda_i}-q(\lambda_i)\right|^2.
\end{aligned}
\end{equation}
It remains to lower-bound the random prefactor and weights
in this expression.

\medskip
\noindent\textbf{Step 4: Control the random prefactor and weights.}
Let $\mathcal G_\star$ be the event
\begin{equation}
\label{eq:drhs-flat-good-event}
 \beta\ge\frac{1}{2},
 \qquad
 \|h_{j_\star}\|_2^2\le2M,
 \qquad
 \|R_i h_{j_\star}\|_2^2\ge\frac{n_i}{4}
 \quad(i\in[K]).
\end{equation}
We bound the failure probability of each condition.

First, the $Q^2$ term in the dimension assumption ensures
$M\ge8(Q+1)^2\ge8(r+1)^2$.
Lemma~\ref{lem:drhs-fresh-mass} therefore gives
$\Pr(\beta<\frac{1}{2})\le2e^{-\frac{M}{32}}$.

Second, $\|h_{j_\star}\|_2^2\sim\chi_M^2$.
Markov's inequality and the chi-square moment-generating
function give
$\Pr(\|h_{j_\star}\|_2^2>2M)
\le e^{-\frac{M}{2}}2^{\frac{M}{2}}
\le e^{-\frac{M}{8}}$,
where the last inequality uses $\log2\le\frac{3}{4}$.

Third, the $\kappa Q$ term in the dimension assumption
ensures $M\ge8\kappa Q$.
By Lemma~\ref{lem:drhs-spectrum},
$n_i\ge\frac{M}{2\kappa}\ge4Q\ge2(m-1)$.
The definition of $R_i$ removes at most $m-1$ dimensions
from $E_i$, so
$\operatorname{rank}(R_i)\ge n_i-(m-1)\ge\frac{n_i}{2}$.
Conditional on $\widehat A$ and $\{h_j:j\ne j_\star\}$,
the projector $R_i$ is fixed and $h_{j_\star}$ remains
standard Gaussian.
Hence $\|R_i h_{j_\star}\|_2^2$ is chi-square with
$\operatorname{rank}(R_i)$ degrees of freedom.
The chi-square lower-tail bound used in the proof of
Lemma~\ref{lem:drhs-fresh-mass}, followed by averaging over
the conditioning variables, gives
\[
 \Pr\left(
 \|R_i h_{j_\star}\|_2^2<\frac{n_i}{4}
 \right)
 \le e^{-\frac{n_i}{32}}
 \le e^{-\frac{M}{64\kappa}}.
\]

Combining these bounds by a union bound, we obtain
\begin{equation}
\label{eq:drhs-flat-tail-budget}
\begin{aligned}
 \Pr(\mathcal G_\star^c)
 &\le
 2e^{-\frac{M}{32}}
 +e^{-\frac{M}{8}}
 +K e^{-\frac{M}{64\kappa}}\\
 &\le
 (K+3)e^{-\frac{M}{64\kappa}}
 \le\delta_\star.
\end{aligned}
\end{equation}
The second inequality uses $\kappa\ge1$.
For the last inequality, recall that $K=t+2\le Q+2$ and
$\delta_\star=\frac{\delta}{2(Q+1)\mathsf L}$.
Since $K+3\le2(Q+2)$ and $\log(Q+1)\le Q$,
\[
 \log\frac{K+3}{\delta_\star}
 =
 \log\frac{2(K+3)(Q+1)\mathsf L}{\delta}
 \le
 Q+\log\frac{4(Q+2)\mathsf L}{\delta}.
\]
The $\kappa Q$ and logarithmic terms in
Eq.~\eqref{eq:drhs-one-phase-dimension} therefore ensure
$\frac{M}{64\kappa}\ge
\log\frac{K+3}{\delta_\star}$ for sufficiently large $C_0$.
Thus $\mathcal G_\star$ has probability at least
$1-\delta_\star$.

\medskip
\noindent\textbf{Step 5: Apply the spectral lower bound.}
On $\mathcal G_\star$,
$\sigma_\star^2\le\|h_{j_\star}\|_2^2\le2M$ and
$\beta^2\ge\frac{1}{4}$, so
$\frac{\beta^2}{\sigma_\star^2}\ge\frac{1}{8M}$.
Substituting this and
$\|R_i h_{j_\star}\|_2^2\ge\frac{n_i}{4}$
into Eq.~\eqref{eq:drhs-flat-error-reduction} gives
\[
\begin{aligned}
 \|\widetilde y-
 B_0\varepsilon\widehat A^{-1}\widetilde\xi\|_2^2
 &\ge
 \frac{B_0^2\varepsilon^2}{32}
 \sum_{i=1}^K\frac{n_i}{M}
 \left|\frac{1}{\lambda_i}-q(\lambda_i)\right|^2\\
 &\ge
 \frac{3B_0^2\varepsilon^2}{16384\mu^2}\\
 &=48\rho^2>9\rho^2.
\end{aligned}
\]
The second inequality is Lemma~\ref{lem:drhs-spectrum},
applicable because $\deg q\le t$.
Its bound holds for every such polynomial, including the
polynomial determined by the coupled realization.
The equality uses $B_0=512$ and
$\rho=\frac{\varepsilon}{\mu}$.

The orthogonal coupling preserves the distance:
$\|y-A^{-1}b\|_2
=\|\widetilde y-
B_0\varepsilon\widehat A^{-1}\widetilde\xi\|_2$.
Thus $\|y-A^{-1}b\|_2<3\rho$ implies
$\mathcal G_\star^c$.
Taking probabilities and using
Eq.~\eqref{eq:drhs-flat-tail-budget} proves the lemma with
Eq.~\eqref{eq:drhs-flat-fixed-time-exclusion}.
\end{proof}

\subsubsection{Proof of Lemma~\ref{lem:drhs-one-phase}}
\label{subsubsec:drhs-adaptive-completion}

\begin{proof}
\noindent\textbf{Adaptive disclosure and multiple candidates.}
Let $\tau\in\{0,\ldots,Q\}$ be the disclosure index.
If the procedure terminates without disclosure, append
zero-vector queries up to index $Q$, disclose $b$, and terminate
without naming further candidates.
Enumerate the candidates as $y_1,\ldots,y_{\mathsf L}$,
padding missing entries with zeros.

For each $(r,\ell)\in\{0,\ldots,Q\}\times[\mathsf L]$,
construct a procedure $\mathsf S_{r,\ell}$ that receives $b$
after exactly $r$ responses.
It simulates the original procedure, recording its $\ell$th
candidate whenever named, including before disclosure.
If the original rule requests disclosure before $r$, or
does not request it at $r$, the simulation switches to
zero-vector queries and a zero output.
Otherwise, it continues the original simulation and returns
the recorded candidate, or zero if none is named.
Pad every post-disclosure stage to $t-1$ queries.

These decisions use only available information.
Writing $y^{r,\ell}$ for the resulting output, we have
$y^{r,\ell}=y_\ell$ on $\{\tau=r\}$, and hence
\begin{equation}
\label{eq:drhs-flat-dummy-inclusion}
 \{\tau=r,\ \|y_\ell-A^{-1}b\|_2<3\rho\}
 \subseteq
 \{\|y^{r,\ell}-A^{-1}b\|_2<3\rho\}.
\end{equation}
Let $H$ be the event that some named vector lies within
$3\rho$ of $A^{-1}b$.
By Eq.~\eqref{eq:drhs-flat-dummy-inclusion} and
Lemma~\ref{lem:drhs-fixed-time},
\begin{equation}
\label{eq:drhs-flat-unconditioned-exclusion}
\begin{aligned}
 \Pr_{\mathsf P_0}(H)
 &\le
 \sum_{r=0}^Q\sum_{\ell=1}^{\mathsf L}
 \Pr_{\mathsf P_0}
 \bigl(\tau=r,\ \|y_\ell-A^{-1}b\|_2<3\rho\bigr)\\
 &\le
 \sum_{r=0}^Q\sum_{\ell=1}^{\mathsf L}
 \Pr_{\mathsf P_0}
 \bigl(\|y^{r,\ell}-A^{-1}b\|_2<3\rho\bigr)\\
 &\le (Q+1)\mathsf L\delta_\star
 =\frac{\delta}{2}.
\end{aligned}
\end{equation}

\medskip
\noindent\textbf{Conditioning on admissibility.}
Write $\xi=\frac{g}{\|g\|_2}$, where $g\sim N(0,I_M)$
is independent of $A$, and let $P_i$ project onto the
$\lambda_i$-eigenspace of $A$.
Define
\[
 \mathcal E:=
 \left\{
 \|g\|_2^2\le2M,\quad
 \|P_i g\|_2^2\ge\frac{n_i}{2}
 \text{ for every }i\in[K]
 \right\}.
\]
The chi-square tail bounds, $n_i\ge\frac{M}{2\kappa}$,
and Eq.~\eqref{eq:drhs-one-phase-dimension} give
\[
\begin{aligned}
 \Pr_{\mathsf P_0}(\mathcal E^c)
 &\le e^{-\frac{M}{8}}
      +\sum_{i=1}^K e^{-\frac{n_i}{16}}\le e^{-\frac{M}{8}}
      +K e^{-\frac{M}{32\kappa}}
 \le\frac{1}{8}.
\end{aligned}
\]
On $\mathcal E$,
\[
\begin{aligned}
 \|A^{-1}b\|_2^2
 &=
 B_0^2\varepsilon^2
 \frac{\sum_{i=1}^K
       \lambda_i^{-2}\|P_i g\|_2^2}
      {\|g\|_2^2}\ge
 \frac{B_0^2\varepsilon^2}{4}
 \sum_{i=1}^K\frac{n_i}{M}\lambda_i^{-2}\ge
 \frac{3B_0^2\varepsilon^2}{2048\mu^2}
 \ge(16\rho)^2,
\end{aligned}
\]
where the second inequality uses
Lemma~\ref{lem:drhs-spectrum} with $q\equiv0$,
and the last uses $B_0=512$ and
$\rho=\frac{\varepsilon}{\mu}$.
Moreover,
$\|A^{-1}b\|_2
\le\|A^{-1}\|_{\mathrm{op}}\|b\|_2
\le B_0\rho$ deterministically, as do
$\mu I\preceq A\preceq\bar L I$ and
$\|b\|_2=B_0\varepsilon$.
Thus the admissibility event $\mathcal G$ in
Eq.~\eqref{eq:drhs-phase-conditioning-event}
contains $\mathcal E$, so
$\mathsf P_0(\mathcal G)\ge\frac{7}{8}$.

The law $\mathsf P_{\mathrm{ph}}
=\mathsf P_0(\,\cdot\mid\mathcal G)$ therefore has the
required support. By
Eq.~\eqref{eq:drhs-flat-unconditioned-exclusion},
\[
\begin{aligned}
 \Pr_{\mathsf P_{\mathrm{ph}}}(H)
 &=
 \frac{\Pr_{\mathsf P_0}(H\cap\mathcal G)}
      {\mathsf P_0(\mathcal G)}\le
 \frac{\Pr_{\mathsf P_0}(H)}
      {\mathsf P_0(\mathcal G)}
 \le
 \frac{\frac{\delta}{2}}{\frac{7}{8}}
 =\frac{4\delta}{7}<\delta.
\end{aligned}
\]
The construction of $D$, $\mathsf P_0$, and $\mathcal G$
is independent of the procedure.
A procedure designed for $\mathsf P_{\mathrm{ph}}$ remains
a fixed family of Borel maps, to which the bound under
$\mathsf P_0$ applies.
\end{proof}

\subsection{Proof of Lemma~\ref{lem:drhs-sequential}:
Sequential Accumulation}
\label{subsec:proof-drhs-sequential}

We work under the right-hand-side disclosure protocol of
Remark~\ref{remark:str_oracle}.  The algorithm itself continues to make
only value-and-gradient queries and all matrix–vector products introduced below belong to an
auxiliary one-phase simulator.  We may pad the algorithm to \(T-1\) oracle
calls without changing its output: after freezing its original output, it
makes fixed dummy queries, ignores their responses, and finally returns the
stored point.  Denote the resulting query points by
\(z^{(0)},\ldots,z^{(T-2)}\), append the response-free terminal candidate
\(z^{(T-1)}:=z^{\mathrm{out}}\), and set \(\tau_0:=0\).  For
\(s\in[N]\), define
\[
 \tau_s:=
 \min\Bigl(
  \bigl\{j\in\{0,\ldots,T-1\}:
   \|z_s^{(j)}-x_s^\star\|_2<3\rho\bigr\}
  \cup\{+\infty\}
 \Bigr).
\]
Thus \(\tau_s\) records the first phase-\(s\) outer hit by either a query
point or the terminal output.  The proof reduces the fast-transition event
\(\{\tau_s<\tau_{s-1}+t\}\) to the success event of a legal one-phase
delayed-input procedure.

\begin{proposition}[\bf Phasewise delayed-input reduction]
\label{prop:drhs-phasewise-reduction}
Fix \(s\in[N]\) and condition on
\(\mathcal F_{-s}:=\sigma((A_r,b_r):r\neq s)\).  For almost every
realization of \(\mathcal F_{-s}\), there is a deterministic Borel
delayed-input procedure \(\mathsf S_s\) for \((A_s,b_s)\) that uses at
most \(T-1\) products with \(A_s\) before \(b_s\) is disclosed, fewer
than \(t\) products afterward, and names at most \(T\) vectors.  Under a
coupling with the conditioned chain execution,
\begin{equation}
\label{eq:drhs-seq-fast-reduction}
 \{\tau_s<\tau_{s-1}+t\}
 \subseteq
 \left\{
  \min\limits_{y\ {\rm named\ by}\ \mathsf S_s}
  \|y-A_s^{-1}b_s\|_2<3\rho
 \right\},
 \qquad s\in[N],
\end{equation}
where \(+\infty+t:=+\infty\).
\end{proposition}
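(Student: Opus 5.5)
The plan is to build \(\mathsf S_s\) as an explicit simulator of the padded chain algorithm. It has perfect knowledge of the other phases' data \((A_r,b_r)_{r\neq s}\), fixed by conditioning on \(\mathcal F_{-s}\). It accesses \((A_s,b_s)\) only through products \(v\mapsto A_sv\) and one delayed disclosure of \(b_s\).

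\textbf{Simulating oracle responses.} The key observation is the block structure of \eqref{eq:drhs-hard-function} and \eqref{eq:drhs-hard-gradient}. At a query point \(z\), the value and gradient depend on \((A_s,b_s)\) only through three pieces:
\begin{itemize}
\item the quadratic term \(\frac12 z_s^\top A_sz_s\) and its gradient \(A_sz_s\);
\item the incoming coupling \(G_{s-1}(z_{s-1})\Phi_s(z_s)\), which involves \(b_s\);
\item the outgoing gate \(G_s(z_s)\), whose center \(x_s^\star=A_s^{-1}b_s\) depends on \(b_s\).
\end{itemize}
The quadratic contribution costs exactly one product \(A_sz_s\) per oracle call. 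The value \(z_s^\top A_sz_s\) is then computable from that product. For \(s=1\) the forcing \(\Phi_1\) is ungated, but by Remark~\ref{remark:str_oracle} \(b_1\) is disclosed at time zero. In the simulator this means disclosing at \(r=0\), so that case is trivial.

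For \(s\ge2\), the simulator runs without \(b_s\) until the first query \(j\) whose \((s-1)\)th block outer-hits phase \(s-1\), that is, until \(j=\tau_{s-1}\). Before that time \(G_{s-1}\) and all its derivatives vanish, so the incoming coupling contributes nothing. The outgoing gate \(G_s\) also vanishes identically as long as no query has yet outer-hit phase \(s\).

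Hence, on the event \(\{\tau_s\ge\tau_{s-1}\}\), every response before \(\tau_{s-1}\) is computable from \(\mathcal F_{-s}\) and the products \(A_sz_s^{(i)}\). The simulator records every \(z_s^{(i)}\) as a named candidate. At time \(\tau_{s-1}\), after the candidate \(z^{(\tau_{s-1})}\) is frozen, it requests disclosure of \(b_s\). This is legal, since \(\tau_{s-1}\) is determined by the pre-disclosure transcript and \(\mathcal F_{-s}\); checking it requires \(x_{s-1}^\star\), which is known under the conditioning.

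After disclosure, the simulator knows \(b_s\). It computes the outgoing-gate contribution by testing whether \(\|z_s-x_s^\star\|<3\rho\) through the stored candidate. Once a phase-\(s\) outer hit occurs, the event in \eqref{eq:drhs-seq-fast-reduction} has already happened and the simulator may stop.

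\textbf{Budget and coupling.} I expect the main obstacle here: \(\mathsf S_s\) cannot compute \(x_s^\star\) exactly without further products, so it cannot evaluate \(G_s\) before a hit. The remedy is to exploit the stopping convention. Because \(G_s\equiv0\) off \(B(x_s^\star,3\rho)\), the simulator may declare \(G_s\equiv0\) until its named candidates first lie in that ball, and on that event the conclusion already holds. So the simulator is only responsible for faithful simulation up to and including the first phase-\(s\) hit. This gives a coupling under which the named candidates coincide with \(z_s^{(0)},\dots,z_s^{(\min(\tau_s,\tau_{s-1}+t-1))}\).

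The post-disclosure products number at most \(t-1\), because we only simulate steps \(\tau_{s-1}+1,\dots,\tau_{s-1}+t-1\). After that, the simulator truncates and returns zero. The pre-disclosure products number at most \(T-1\), and the number of names is at most \(T\), counting the terminal output \(z^{(T-1)}\), which is response-free.

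\textbf{Remaining cases.} If \(\tau_s<\tau_{s-1}\), a candidate named before disclosure already hits, and the inclusion holds. If the chain never reaches \(\tau_{s-1}\), the left-hand side of \eqref{eq:drhs-seq-fast-reduction} is empty. Finally, Borel measurability of all maps follows because each step is a composition of the algorithm's Borel maps with explicit continuous formulas and indicator tests of closed or open balls.
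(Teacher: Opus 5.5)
Your construction is essentially the paper's: a simulator that knows $(A_r,b_r)_{r\neq s}$ pays one product $A_sz_s$ per returned response. It discloses $b_s$ at the first \emph{simulated} predecessor hit (at time zero when $s=1$), deletes the outgoing link $-G_s\Phi_{s+1}$, stops after $t-1$ post-disclosure products, and couples the two runs up to and including the first phase-$s$ hit.

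Three points need tightening.
\begin{itemize}
\item The deletion must be unconditional. The simulator can never test $\|z_s-x_s^\star\|_2<3\rho$ or know when to stop, so your sentence about computing the outgoing gate after disclosure should go.
\item Under the strengthened protocol, the simulator must likewise suppress the disclosure of $b_{s+1}$, since that disclosure is triggered by a phase-$s$ hit it cannot detect. Suppression is harmless because the disclosure comes only after that candidate is frozen.
\item $\tau_{s-1}=+\infty$ does not make the left-hand side empty. The remaining premature hit $\tau_s<\infty$ falls under your case $\tau_s<\tau_{s-1}$, where agreement up to $\tau_s$ forces the simulated disclosure index to exceed $\tau_s$. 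The hitting block is therefore named before disclosure.
\end{itemize}
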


\begin{proof}
\noindent\textbf{Step 1: Construct the simulator.}
Fix $s$ and a realization of $\mathcal F_{-s}$.
The other phase data are available to the simulator for reconstructing
responses, but are not supplied to the original algorithm.
Whenever the simulated algorithm selects a candidate
$\widehat z^{(j)}$, the simulator freezes it and records its
$s$th block $\widehat z_s^{(j)}$.

For $s=1$, disclose $b_1$ before selecting the first candidate and
set $\sigma_1:=0$.
For $s\ge2$, let $\sigma_s$ be the first candidate index satisfying
$\|\widehat z_{s-1}^{(j)}-x_{s-1}^\star\|_2<3\rho$.
At this index, disclose $b_s$ after the candidate has been frozen
and recorded, but before returning any response.
If no such index occurs, set $\sigma_s:=T$ and disclose $b_s$
after the terminal candidate.
In every case, stop after recording candidate
\[
 \ell_s:=\min\{\sigma_s+t-1,T-1\},
\]
without requesting its response.

At each earlier candidate, request
$v^{(j)}:=A_s\widehat z_s^{(j)}$.
This supplies the current quadratic gradient and value,
respectively,
$v^{(j)}$ and
$\frac{1}{2}\langle\widehat z_s^{(j)},v^{(j)}\rangle$.
All terms involving only other phases are directly computable.

For $s\ge2$, the incoming link $-G_{s-1}\Phi_s$ contributes zero
before disclosure, since both $G_{s-1}$ and $\nabla G_{s-1}$
vanish there. After disclosure, $\Phi_s$ and its derivatives
are computable from $b_s$.
For $s=1$, $b_1$ is available from the start.
For $s<N$, delete the entire outgoing link $-G_s\Phi_{s+1}$,
including its value and both gradient contributions, and suppress
the corresponding disclosure of $b_{s+1}$.
All other disclosures follow the original protocol.
Thus no response computation requires the unknown center
$x_s^\star=A_s^{-1}b_s$.

\medskip
\noindent\textbf{Step 2: Verify validity and query budgets.}
For $s\ge2$, the disclosure test uses only the current frozen
candidate and the known center $x_{s-1}^\star$.
It can therefore be performed before returning the current
response, using only information already available.
All decisions and response computations are Borel measurable.

If $\sigma_s<T$, the pre-disclosure products have indices
$0,\ldots,\sigma_s-1$, while the post-disclosure products have
indices $\sigma_s,\ldots,\ell_s-1$.
Their numbers are therefore $\sigma_s\le T-1$ and
$\ell_s-\sigma_s\le t-1$, respectively.
This includes $s=1$, for which $\sigma_1=0$ and $\ell_1=t-1$.
If $\sigma_s=T$, there are $T-1$ pre-disclosure products and
none afterward.
In every case, at most $T$ candidates are recorded.
In particular, if the disclosure candidate is also the stopping
candidate, no product is requested at that index.

\medskip
\noindent\textbf{Step 3: Establish agreement up to the first hit.}
Couple the simulated and true executions using the same phase data.
We claim that
\begin{equation}
\label{eq:drhs-seq-coupling}
 \widehat z^{(j)}=z^{(j)}
 \qquad
 \bigl(0\le j\le\min\{\tau_s,\ell_s\}\bigr).
\end{equation}

We induct on the complete information returned to the algorithm,
including both oracle responses and right-hand-side disclosures.
If the preceding records agree, determinism gives the same
current candidate in both executions.
For $j<\tau_s$, the true candidate satisfies
$G_s(z_s^{(j)})=0$ and $\nabla G_s(z_s^{(j)})=0$.
Hence the deleted outgoing link contributes zero to the true
value and gradient, and $b_{s+1}$ has not yet been disclosed.
The incoming link is either zero before disclosure or exactly
reconstructed afterward; all remaining terms and disclosures
also agree.
Thus, whenever a response is returned at such an index, the
complete records remain identical.

This proves Eq.~\eqref{eq:drhs-seq-coupling}.
At the first phase-$s$ hit, only agreement of the frozen
candidate is needed: its $s$th block has already been recorded,
regardless of whether the subsequent responses agree.

\medskip
\noindent\textbf{Step 4: Capture every fast transition.}
Suppose that $\tau_s<\tau_{s-1}+t$.
Then $\tau_s$ is finite and $\tau_s\le T-1$.
It suffices to show $\tau_s\le\ell_s$, since
Eq.~\eqref{eq:drhs-seq-coupling} then guarantees that the
successful vector $z_s^{(\tau_s)}$ is recorded.

For $s=1$, the fast-transition condition gives
$\tau_1\le t-1=\ell_1$.

Now let $s\ge2$.
If $\tau_s<\tau_{s-1}$, then $\sigma_s>\tau_s$.
Indeed, otherwise $\sigma_s\le\tau_s$ and
$\sigma_s\le\ell_s$, so Eq.~\eqref{eq:drhs-seq-coupling}
would give a true predecessor hit at index
$\sigma_s\le\tau_s<\tau_{s-1}$, a contradiction.
The definition of $\ell_s$ therefore gives
$\tau_s\le\ell_s$, and the successful vector is recorded
before disclosure.

Otherwise, put $r:=\tau_{s-1}\le\tau_s$.
Here $r$ is finite.
The same argument excludes $\sigma_s<r$, so
$\sigma_s\ge r$ and consequently $\ell_s\ge r$.
Equation~\eqref{eq:drhs-seq-coupling} then applies at index $r$,
where the true predecessor is first hit, and forces
$\sigma_s=r$.
By integrality and the fast-transition condition,
\[
 \tau_s
 \le\min\{r+t-1,T-1\}
 =\ell_s.
\]
Thus the successful vector is again recorded within the
allowed query budget.
This includes $\tau_s=\tau_{s-1}$, when the common candidate
is recorded before $b_s$ is disclosed.
The required inclusion
Eq.~\eqref{eq:drhs-seq-fast-reduction} follows.
\end{proof}

\begin{proof}[Proof of Lemma~\ref{lem:drhs-sequential}]
Fix \(s\in[N]\) and condition on \(\mathcal F_{-s}\).  Independence of
the phases gives
\(\mathcal L((A_s,b_s)\mid\mathcal F_{-s})=\mathsf P_{\mathrm{ph}}\).
Apply Lemma~\ref{lem:drhs-one-phase} to the procedure supplied by
Proposition~\ref{prop:drhs-phasewise-reduction}, with
\(Q=\mathsf L=T\) and \(\delta=1/(12N)\).  Here \(Q=T=Nt\geq t\),
\(\mathsf L=T\geq1\), and \(\delta\leq1/12<1/4\).  Moreover,
\[
 \log\frac{(Q+2)\mathsf L}{\delta}
 =
 \log\bigl(12NT(T+2)\bigr),
\]
so the assumed lower bound on \(M\) is exactly the dimension hypothesis
of the one-phase lemma.  Proposition~\ref{prop:drhs-phasewise-reduction}
and that lemma therefore give, almost surely,
\[
 \Pr\bigl(\tau_s<\tau_{s-1}+t\mid\mathcal F_{-s}\bigr)
 \leq
 \Pr\left(
  \min_{y\ {\rm named\ by}\ \mathsf S_s}
  \|y-A_s^{-1}b_s\|_2<3\rho
  \,\middle|\,\mathcal F_{-s}
 \right)
 \leq\frac1{12N}.
\]
Taking expectations over \(\mathcal F_{-s}\) yields
\begin{equation}
\label{eq:drhs-seq-phasewise-delay}
 \Pr(\tau_s<\tau_{s-1}+t)
 \leq\frac1{12N},
 \qquad s\in[N].
\end{equation}

Let
\(\mathcal B:=\bigcup_{s=1}^N
\{\tau_s<\tau_{s-1}+t\}\).  By
\eqref{eq:drhs-seq-phasewise-delay} and the union bound,
\(\Pr(\mathcal B)\leq1/12\).  On \(\mathcal B^c\), we have
\(\tau_s\geq\tau_{s-1}+t\) for every \(s\in[N]\).  Since
\(\tau_0=0\), induction gives \(\tau_s\geq st\), and in particular
\(\tau_N\geq Nt=T\).

If the terminal output lies within \(3\rho\) of every phase center, then
its \(N\)th block gives \(\tau_N\leq T-1\).  Hence
\[
 \left\{
  \|z_s^{\mathrm{out}}-x_s^\star\|_2<3\rho
  \text{ for every }s\in[N]
 \right\}
 \subseteq\mathcal B,
 \qquad
 \Pr\left(
  \|z_s^{\mathrm{out}}-x_s^\star\|_2<3\rho
  \text{ for every }s\in[N]
 \right)
 \leq\frac1{12}.
\]
The ambient term \(\mu\|u\|_2^2/2\) is independent of the phase data, and
its value and gradient can be reconstructed without any product with
\(A_s\).  It therefore does not affect the simulator or the argument.
\end{proof}

\subsection{Proof of Lemma~\ref{lem:drhs-certification}:
Stationarity Certification}
\label{subsec:drhs-certification-proof}

The proof has two steps.  We first show that an active phase lying outside
its inner ball already has a gradient block of norm larger than
\(\varepsilon\).  We then apply this local statement to the first phase
that is not inner-complete; all preceding phases are inner-complete, so its
incoming forcing is fully active.

For \(s\in[N]\), set \(\alpha_1(z):=1\),
\(\alpha_s(z):=G_{s-1}(z_{s-1})\) for \(s\geq2\), and
\[
 \beta_s(z):=
 \begin{cases}
  \Phi_{s+1}(z_{s+1})\nabla G_s(z_s),&s<N,\\
  0,&s=N.
 \end{cases}
\]
Then \eqref{eq:drhs-hard-gradient} becomes
\begin{equation}
\label{eq:drhs-cert-block-gradient}
 \nabla_sF_{\mathbf A,\mathbf b}(z)
 =A_sz_s-\alpha_s(z)\nabla\Phi_s(z_s)-\beta_s(z).
\end{equation}

\begin{proposition}[\bf Active-phase gradient separation]
\label{prop:drhs-active-phase-gradient}
Fix an admissible phase realization, \(z\in(\mathbb R^M)^N\), and
\(s\in[N]\).  If \(\alpha_s(z)=1\) and
\(\|z_s-x_s^\star\|_2\geq2\rho\), then
\[
 \|\nabla_sF_{\mathbf A,\mathbf b}(z)\|_2>\varepsilon.
\]
\end{proposition}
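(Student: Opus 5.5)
The plan is to follow the two-case argument sketched before Lemma~\ref{lem:drhs-certification}, working directly from the block formula \eqref{eq:drhs-cert-block-gradient}. With \(\alpha_s(z)=1\), the block gradient is
\[
 \nabla_sF_{\mathbf A,\mathbf b}(z)=A_sz_s-\nabla\Phi_s(z_s)-\beta_s(z),
\]
and we write \(e_s:=z_s-x_s^\star\), so \(\|e_s\|_2\ge2\rho\). We split according to whether the clipping is active, i.e. \(\|z_s\|_2\le R\) or \(\|z_s\|_2>R\).

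\textbf{Case \(\|z_s\|_2\le R\).} Since \(\chi\equiv1\) on \([0,1]\), \(\pi_R\) is the identity on a neighborhood of every point with \(\|w\|_2<R\). For the boundary point \(\|z_s\|_2=R\), we use that \(\chi\) is smooth and constant on \((-\infty,1]\), so \(\chi'(1)=0\). Hence \(D\pi_R(z_s)=I\) and \(\nabla\Phi_s(z_s)=b_s=A_sx_s^\star\), which gives \(A_sz_s-\nabla\Phi_s(z_s)=A_se_s\).

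For the outgoing term with \(s<N\), the chain rule gives \(\nabla G_s(z_s)=2\rho^{-2}\eta'(\|e_s\|_2^2/\rho^2)e_s\). Here \(\eta'\le0\) because \(\eta\) is nonincreasing, and \(\Phi_{s+1}>0\) by \eqref{eq:drhs-forcing-positive}. So \(-\beta_s(z)=c\,e_s\) with \(c\ge0\). In fact \(c=0\), since \(\|e_s\|_2\ge2\rho\) puts \(\|e_s\|_2^2/\rho^2\ge4\), but only the sign is needed. Pairing with \(e_s\) gives \(\langle e_s,\nabla_sF\rangle\ge e_s^\top A_se_s\ge\mu\|e_s\|_2^2\). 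Cauchy--Schwarz then yields \(\|\nabla_sF\|_2\ge\mu\|e_s\|_2\ge2\mu\rho=2\varepsilon>\varepsilon\).

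\textbf{Case \(\|z_s\|_2>R\).} By \eqref{eq:drhs-admissible-data} and \eqref{eq:drhs-clipping-separation},
\[
 \|e_s\|_2\ge\|z_s\|_2-\|x_s^\star\|_2>R-B_0\rho>3\rho .
\]
So \(z_s\) lies outside the gate support, \(\nabla G_s(z_s)=0\), and \(\beta_s(z)=0\). Next we bound the forcing: \(\nabla\Phi_s(z_s)=D\pi_R(z_s)^\top b_s\), so \(\|\nabla\Phi_s(z_s)\|_2\le a_\chi B_0\varepsilon\) using the stated bound \(\|D\pi_R\|_{\mathrm{op}}\le a_\chi\). With \(A_s\succeq\mu I\),
\[
 \|\nabla_sF\|_2\ge\mu\|z_s\|_2-a_\chi B_0\varepsilon>\mu R-a_\chi B_0\varepsilon=(C_R-a_\chi B_0)\varepsilon=(B_0+4)\varepsilon>\varepsilon,
\]
using \(\mu R=C_R\varepsilon\) from \(\rho=\varepsilon/\mu\).

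\textbf{Expected obstacle.} The main care point is justifying \(\|D\pi_R(w)\|_{\mathrm{op}}\le a_\chi=1+2\|\chi'\|_\infty\). Write \(D\pi_R(w)=\chi(\|w\|/R)I+\chi'(\|w\|/R)\,\frac{ww^\top}{R\|w\|}\). On the transition annulus \(\|w\|\le2R\), this has norm at most \(1+2\|\chi'\|_\infty\). Outside the annulus, the map is either the identity or identically zero.
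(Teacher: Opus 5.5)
Your proof is correct and follows the paper's own two-case argument. For $\|z_s\|_2\le R$ you combine the sign of the gate term with $A_s\succeq\mu I$. For $\|z_s\|_2>R$ you use the dead outgoing gate and the forcing-gradient bound $a_\chi B_0\varepsilon$. You also take slightly more care at the boundary $\|z_s\|_2=R$ by noting $\chi'(1)=0$.

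One aside is false but harmless: $c$ need not vanish. The reason is that $\eta\equiv1$ only on $(-\infty,4]$, and $\eta'$ can be strictly negative on $(4,9)$, i.e.\ whenever $2\rho<\|e_s\|_2<3\rho$. This is exactly why the sign argument $c\ge0$, which is what you actually use, is needed.
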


\begin{proof}
The construction gives \(\Phi_s>0\) in
\eqref{eq:drhs-forcing-positive} and
\(\|x_s^\star\|_2+3\rho<R\) in
\eqref{eq:drhs-clipping-separation}.  We shall also use
\[
 \sup_w\|\nabla\Phi_s(w)\|_2
 \leq a_\chi\|b_s\|_2
 =a_\chi B_0\varepsilon,
\]
which follows from
\(\nabla\Phi_s=(D\pi_R)^\top b_s\) and the previously established bound
\(\sup_w\|D\pi_R(w)\|_{\mathrm{op}}\leq a_\chi\).

Put \(e_s:=z_s-x_s^\star\).  We distinguish whether clipping is active.
Suppose first that \(\|z_s\|_2\leq R\).  Then \(\pi_R\) is the identity
at \(z_s\), and hence
\(\nabla\Phi_s(z_s)=b_s=A_sx_s^\star\).  Using
\(\alpha_s(z)=1\) in \eqref{eq:drhs-cert-block-gradient}, we obtain
\[
 \nabla_sF_{\mathbf A,\mathbf b}(z)=A_se_s-\beta_s(z).
\]
The gate derivative is
\(\nabla G_s(z_s)=2\rho^{-2}
\eta'(\|e_s\|_2^2/\rho^2)e_s\).  Therefore, when \(s<N\),
\[
 \left\langle e_s,
 -\Phi_{s+1}(z_{s+1})\nabla G_s(z_s)\right\rangle
 =
 -\frac{2\Phi_{s+1}(z_{s+1})}{\rho^2}
 \eta'\!\left(\frac{\|e_s\|_2^2}{\rho^2}\right)
 \|e_s\|_2^2
 \geq0,
\]
where we used \eqref{eq:drhs-forcing-positive} and \(\eta'\leq0\).
For \(s=N\), this term is absent.  Thus
\[
 \|e_s\|_2\,
 \|\nabla_sF_{\mathbf A,\mathbf b}(z)\|_2
 \geq
 \langle e_s,\nabla_sF_{\mathbf A,\mathbf b}(z)\rangle
 \geq e_s^\top A_se_s
 \geq\mu\|e_s\|_2^2.
\]
Because \(\|e_s\|_2\geq2\rho>0\), division by \(\|e_s\|_2\) yields
\[
 \|\nabla_sF_{\mathbf A,\mathbf b}(z)\|_2
 \geq\mu\|e_s\|_2
 \geq2\mu\rho
 =2\varepsilon.
\]

Suppose now that \(\|z_s\|_2>R\).  Admissibility and
\(R=C_R\rho\) imply
\[
 \|z_s-x_s^\star\|_2
 \geq\|z_s\|_2-\|x_s^\star\|_2
 >R-B_0\rho
 =(C_R-B_0)\rho
 >3\rho.
\]
The outgoing gate is therefore in its exact dead zone, so
\(\beta_s(z)=0\).  Equation~\eqref{eq:drhs-cert-block-gradient}
reduces to
\(\nabla_sF_{\mathbf A,\mathbf b}(z)
=A_sz_s-\nabla\Phi_s(z_s)\).  Using
\(A_s\succeq\mu I\), the uniform forcing-gradient bound above, and
\(\mu R=C_R\varepsilon\), we obtain
\[
 \begin{aligned}
 \|\nabla_sF_{\mathbf A,\mathbf b}(z)\|_2
 &\geq\|A_sz_s\|_2-\|\nabla\Phi_s(z_s)\|_2\\
 &\geq\mu\|z_s\|_2-a_\chi B_0\varepsilon\\
 &>(C_R-a_\chi B_0)\varepsilon\\
 &=(B_0+4)\varepsilon
 >\varepsilon.
 \end{aligned}
\]
This proves the proposition in both regimes.
\end{proof}

\begin{proof}[Proof of Lemma~\ref{lem:drhs-certification}]
Suppose, toward the contrapositive, that at least one phase is not
inner-complete, and let
\[
 s:=\min\{r\in[N]:
 \|z_r-x_r^\star\|_2\geq2\rho\}.
\]
If \(s=1\), then \(\alpha_s(z)=1\) by definition.  If \(s\geq2\),
minimality gives
\(\|z_{s-1}-x_{s-1}^\star\|_2<2\rho\), so
\(G_{s-1}(z_{s-1})=1\) and again \(\alpha_s(z)=1\).
Proposition~\ref{prop:drhs-active-phase-gradient} therefore gives
\(\|\nabla_sF_{\mathbf A,\mathbf b}(z)\|_2>\varepsilon\).

For the ambient extension,
\[
 \|\nabla F_{\mathbf A,\mathbf b}^{(d)}(z,u)\|_2^2
 =
 \sum_{r=1}^N
 \|\nabla_rF_{\mathbf A,\mathbf b}(z)\|_2^2
 +\mu^2\|u\|_2^2
 >\varepsilon^2.
\]
Thus failure of inner completion implies
\(\|\nabla F_{\mathbf A,\mathbf b}^{(d)}(z,u)\|_2>\varepsilon\).
Taking the contrapositive proves the lemma.
\end{proof}

\subsection{Proof of Lemma~\ref{lem:drhs-regularity}:
Regularity and Initial Gap}
\label{subsec:drhs-regularity-proof}

The proof separates local calculus from product-space aggregation.  The
first proposition gives the derivative scales of one gate and one clipped
forcing.  The second uses the nearest-neighbor structure of the chain to
aggregate these bounds without a factor depending on \(N\).  We then
deduce the two Lipschitz estimates, coercivity, and the initial-gap bound.

\begin{proposition}[\bf Local derivative bounds]
\label{prop:drhs-local-derivatives}
For every integer \(j\geq1\), there are constants
\(a_j,b_j<\infty\), depending only on \(j\) and the fixed cutoff
profile, such that, for every phase \(s\),
\begin{equation}
\label{eq:drhs-reg-local-derivatives}
 \sup_w\|D^jG_s(w)\|_{\mathrm{op}}
 \leq a_j\rho^{-j},
 \qquad
 \sup_w\|D^j\Phi_s(w)\|_{\mathrm{op}}
 \leq b_j\varepsilon\rho^{1-j}.
\end{equation}
The corresponding zero-order bounds are
\begin{equation}
\label{eq:drhs-reg-forcing-range}
 0\leq G_s(w)\leq1,
 \qquad
 2B_0\varepsilon R
 \leq\Phi_s(w)\leq6B_0\varepsilon R,
 \qquad w\in\mathbb R^M.
\end{equation}
All constants are independent of \(M\), \(s\), and the phase
realization.
\end{proposition}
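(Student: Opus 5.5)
The plan is to reduce both families of bounds to univariate derivative bounds of the fixed cutoff profiles, composed with scalings. For the gate, write $G_s(w)=\eta(\|w-x_s^\star\|_2^2/\rho^2)=\eta(\phi(u))$, where $u:=(w-x_s^\star)/\rho$ and $\phi(u)=\|u\|_2^2$. The chain rule in $w$ contributes a factor $\rho^{-j}$ for each derivative. It therefore suffices to bound $\sup_u\|D^j(\eta\circ\phi)(u)\|_{\mathrm{op}}$ by a constant depending only on $j$, and we will do this with Faà di Bruno's formula. Since $\phi$ is quadratic, only $D\phi(u)[h]=2\langle u,h\rangle$ and $D^2\phi[h,k]=2\langle h,k\rangle$ are nonzero. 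Each term of the formula is a product of $\eta^{(i)}(\phi(u))$ with at most $i$ factors of $D\phi$ and the rest $D^2\phi$, so it is bounded by $C_j|\eta^{(i)}(\|u\|^2)|(1+\|u\|)^{i}$ in operator norm. The key observation is that $\eta^{(i)}$ for $i\ge1$ vanishes outside $\|u\|^2\in[4,9]$. On that set $\|u\|\le3$, which gives a uniform constant $a_j$. The zero-order bound $0\le G_s\le1$ follows from the range of $\eta$.

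For the forcing, $D^j\Phi_s=D^j\psi_s$ when $j\ge1$. Also $\psi_s(w)=\langle b_s,\pi_R(w)\rangle$ and $\pi_R(w)=R\,\Pi(w/R)$, where $\Pi(v):=\chi(\|v\|_2)v$. Hence $D^j\pi_R(w)=R^{1-j}D^j\Pi(w/R)$, and $\|D^j\psi_s\|_{\mathrm{op}}\le\|b_s\|_2R^{1-j}\sup_v\|D^j\Pi(v)\|_{\mathrm{op}}$.

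Next, $\Pi$ is smooth: it equals the identity on $\|v\|\le1$ and vanishes for $\|v\|\ge2$, and on the annulus $1\le\|v\|\le2$ the norm is smooth. Thus $\sup_v\|D^j\Pi(v)\|_{\mathrm{op}}=:c_j$ is a finite constant, obtained by Leibniz and Faà di Bruno applied to $\chi(\|v\|)$ on a compact annulus where all derivatives of $\|v\|$ are bounded. Using $\|b_s\|_2=B_0\varepsilon$ and $R=C_R\rho$ with $C_R\ge1$ gives $B_0\varepsilon (C_R\rho)^{1-j}\le B_0 c_j\varepsilon\rho^{1-j}$, so we may take $b_j:=B_0c_j\max\{1,C_R^{1-j}\}$.

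The zero-order range of $\Phi_s$ is already recorded in \eqref{eq:drhs-forcing-positive}. It follows from $|\psi_s|\le\|b_s\|\,\|\pi_R\|\le B_0\varepsilon\cdot2R$ and $H=4B_0\varepsilon R$.

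The only delicate point is the uniformity of $c_j$ in the dimension $M$, since $\Pi$ lives on $\mathbb R^M$. We expect this to be the main obstacle. The plan is to handle it by the rotation-invariant form of the derivatives. Every derivative of $\|v\|$ and of $v\mapsto v$, applied to unit vectors $h_1,\dots,h_j$, is a polynomial in the inner products $\langle v,h_i\rangle$ and $\langle h_i,h_k\rangle$ divided by powers of $\|v\|$. The number of terms depends only on $j$, which yields $M$-free bounds.

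The same remark applies to the gate computation. All constants then depend only on $j$ and the fixed profile $\eta$ (with $\chi=\eta(5\cdot-1)$), and not on $M$, $s$, or the phase data.
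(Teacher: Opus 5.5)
Your proposal is correct and follows essentially the same route as the paper. The paper also rescales to the unscaled maps $\eta(\|u\|_2^2)$ and $\chi(\|v\|_2)v$ and bounds their derivatives by the multilinear chain rule. It uses the support of the derivatives of $\eta$ to restrict to $\|u\|_2\le 3$, and the annulus $1<\|v\|_2<2$ together with dimension-free contraction structure for the clipping map. It then picks up $\rho^{-j}$ and $R^{1-j}=C_R^{1-j}\rho^{1-j}$ from scaling and $\|b_s\|_2=B_0\varepsilon$, and gets the zero-order range from $\|\pi_R\|_2\le 2R$ and $H=4B_0\varepsilon R$.
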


\begin{proof}
Define the unscaled maps
\(\Gamma(y):=\eta(\|y\|_2^2)\) and
\(\Pi(y):=\chi(\|y\|_2)y\).  Then
\[
 G_s(w)=\Gamma\!\left(\frac{w-x_s^\star}{\rho}\right),
 \qquad
 \pi_R(w)=R\Pi\!\left(\frac{w}{R}\right).
\]

Let \(r_2(y):=\|y\|_2^2\).  Its only nonzero derivatives are
\[
 Dr_2(y)[h]=2\langle y,h\rangle,
 \qquad
 D^2r_2(y)[h_1,h_2]=2\langle h_1,h_2\rangle.
\]
Thus the multilinear chain rule gives
\[
 D^j\Gamma(y)[h_1,\ldots,h_j]
 =
 \sum_{\mathcal P}
 \eta^{(|\mathcal P|)}(r_2(y))
 \prod_{B\in\mathcal P}
 D^{|B|}r_2(y)[h_i:i\in B],
\]
where the sum is over the set partitions of \([j]\) whose blocks have
size one or two.  Every summand is zero unless \(\|y\|_2\leq3\).
Consequently, when \(\|h_i\|_2\leq1\), every factor is bounded by a
constant depending only on \(j\), and the number of summands also depends
only on \(j\).  Hence
\[
 \sup_y\|D^j\Gamma(y)\|_{\mathrm{op}}\leq a_j,
\]
uniformly in \(M\).  Scaling by \(\rho\) proves the gate bound in
\eqref{eq:drhs-reg-local-derivatives}.

For the clipping map, \(\Pi(y)=y\) on \(\|y\|_2\leq1\) and
\(\Pi(y)=0\) on \(\|y\|_2\geq2\).  It remains to control the annulus
\(1<\|y\|_2<2\).  Write
\(r(y):=\|y\|_2\) and \(e(y):=y/r(y)\).  Then
\[
 Dr(y)[h]=\langle e(y),h\rangle,
 \qquad
 De(y)[h]
 =r(y)^{-1}(I-e(y)e(y)^\top)h.
\]
Repeated differentiation shows that \(D^kr(y)\) is a finite sum of
contractions among \(e(y),h_1,\ldots,h_k\), multiplied by powers of
\(r(y)^{-1}\).  Since \(1<r(y)<2\), for every fixed \(k\geq1\),
\[
 \sup_{1<\|y\|_2<2}
 \|D^kr(y)\|_{\mathrm{op}}<\infty,
\]
with a bound independent of \(M\).  The product and chain rules now imply
\(\sup_y\|D^j\Pi(y)\|_{\mathrm{op}}\leq d_j\).  Smoothness across the
radii one and two follows because \(\chi\) agrees with a constant on
both sides and all positive-order derivatives match there.

The scaling identity
\(D^j\pi_R(w)=R^{1-j}D^j\Pi(w/R)\) therefore gives
\[
 \sup_w\|D^j\pi_R(w)\|_{\mathrm{op}}
 \leq d_jR^{1-j}.
\]
For \(j\geq1\), contracting with \(b_s\) and using
\(\|b_s\|_2=B_0\varepsilon\), we obtain
\[
 \sup_w\|D^j\Phi_s(w)\|_{\mathrm{op}}
 \leq B_0d_j\varepsilon R^{1-j}
 =B_0d_jC_R^{1-j}\varepsilon\rho^{1-j}.
\]
For the zero-order estimate,
\(\|\pi_R(w)\|_2\leq2R\) gives
\[
 |\langle b_s,\pi_R(w)\rangle|
 \leq2B_0\varepsilon R.
\]
Together with \(H=4B_0\varepsilon R\), this proves
\eqref{eq:drhs-reg-forcing-range}; the gate bound follows directly from
\(0\leq\eta\leq1\).
\end{proof}

\begin{proposition}[\bf Dimension-free aggregation]
\label{prop:drhs-link-aggregation}
Let
\[
 \mathcal P(z):=
 \Phi_1(z_1)
 +\sum_{s=2}^N
 G_{s-1}(z_{s-1})\Phi_s(z_s).
\]
For every integer \(r\geq2\), there is a constant \(K_r<\infty\),
depending only on \(r\) and the fixed cutoff profile, such that
\begin{equation}
\label{eq:drhs-reg-global-derivative}
 \sup_z\|D^r\mathcal P(z)\|_{\mathrm{op}}
 \leq K_r\varepsilon\rho^{1-r}.
\end{equation}
The constant \(K_r\) is independent of \(M\), \(N\), and the phase
realization.
\end{proposition}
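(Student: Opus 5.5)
The plan is to write $\mathcal P=\Phi_1(z_1)+\sum_{s\ge2}\mathcal L_s(z_{s-1},z_s)$ with $\mathcal L_s:=G_{s-1}(z_{s-1})\Phi_s(z_s)$, bound each link's $r$th derivative by the Leibniz rule, and then aggregate using the fact that each link touches only two adjacent blocks.

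\textbf{Step 1 (one link).} Directions decompose blockwise: $h_i=(h_{i,1},\ldots,h_{i,N})$. For a subset $S\subseteq[r]$ of arguments sent to block $s-1$ and the rest sent to block $s$, the Leibniz rule gives
\[
D^r\mathcal L_s(z)[h_1,\ldots,h_r]
=\sum_{S\subseteq[r]}D^{|S|}G_{s-1}(z_{s-1})[h_{i,s-1}:i\in S]\,
D^{r-|S|}\Phi_s(z_s)[h_{i,s}:i\notin S].
\]
By Proposition~\ref{prop:drhs-local-derivatives}, each summand is bounded by
\[
a_{|S|}\rho^{-|S|}\,b_{r-|S|}\,\varepsilon\rho^{1-r+|S|}\prod_{i\in S}\|h_{i,s-1}\|\prod_{i\notin S}\|h_{i,s}\|
\]
when $|S|<r$. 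When $S=[r]$, the $\Phi_s$ factor is of order zero, and Eq.~\eqref{eq:drhs-reg-forcing-range} gives $\Phi_s\le6B_0\varepsilon R=6B_0C_R\varepsilon\rho$. In every case the scale is $\varepsilon\rho^{1-r}$, with a constant depending only on $r$ (here $a_0=1$ and $b_0=6B_0C_R$). The initial term $\Phi_1$ contributes $b_r\varepsilon\rho^{1-r}\prod_i\|h_{i,1}\|$.

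\textbf{Step 2 (aggregation without an $N$ factor).} Set $u_{i,s}:=\|h_{i,s}\|_2$, so that $\sum_s u_{i,s}^2\le1$ for each $i$. Summing the Step~1 bounds over $s$ and $S$, it suffices to show that for every fixed assignment of each argument $i$ to an offset $o_i\in\{0,1\}$,
\[
\sum_s\prod_{i=1}^r u_{i,s-o_i}\le1 .
\]
Since $r\ge2$, bound all but two factors by $1$ (each $u_{i,\cdot}\le1$). The remaining sum $\sum_s u_{1,s-o_1}u_{2,s-o_2}$ is at most $1$ by Cauchy--Schwarz, because shifting the index preserves $\ell_2$ norms. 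Summing over the $2^r$ subsets then gives
\[
K_r:=2^r\max_k a_k b_{r-k}+b_r,
\]
which is independent of $M$, $N$ and the phase realization. (The generalized H\"older inequality mentioned in the overview is exactly this step.)

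\textbf{Main obstacle.} The delicate point is that the bound must not grow with $N$. This is why the requirement $r\ge2$ matters: with $r=1$ one would need $\sum_s u_{1,s}\le\sqrt N$, which does grow with $N$. One also has to handle the zero-order $\Phi$ factor, which carries the extra factor $R$ rather than $\rho^{1-j}$; since $R=C_R\rho$, it fits the $\varepsilon\rho^{1-r}$ scale as shown above. Finally, note that the sup over $z$ is harmless, because every bound in Proposition~\ref{prop:drhs-local-derivatives} is uniform in $w$.
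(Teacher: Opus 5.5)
Your proposal is correct and follows the paper's proof essentially step for step: the same per-link Leibniz expansion over subsets $I\subseteq[r]$, the same observation that every summand scales as $\varepsilon\rho^{1-r}$ once the zero-order bounds $G\le 1$ and $\Phi_s\le 6B_0C_R\varepsilon\rho$ are included, and the same $N$-free summation over links. The one cosmetic difference is in that summation, where you bound $r-2$ factors by $1$ and apply Cauchy--Schwarz, while the paper uses the generalized H\"older inequality together with $\|a\|_{\ell_r}\le\|a\|_{\ell_2}$; both rely on $r\ge2$ in the same way and give the same constant up to an $r$-dependent factor.
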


\begin{proof}
Fix directions
\(v^{(1)},\ldots,v^{(r)}\in(\mathbb R^M)^N\), where
\(v^{(i)}=(v_1^{(i)},\ldots,v_N^{(i)})\).  For one link, the
multilinear Leibniz rule gives
\[
 \begin{aligned}
 &D^r\!\left[
   G_{s-1}(z_{s-1})\Phi_s(z_s)
 \right][v^{(1)},\ldots,v^{(r)}]=
 \sum_{I\subseteq[r]}
 D^{|I|}G_{s-1}(z_{s-1})
 [v_{s-1}^{(i)}:i\in I]\,
 D^{r-|I|}\Phi_s(z_s)
 [v_s^{(i)}:i\notin I].
 \end{aligned}
\]
By the derivative and zero-order bounds of
Proposition~\ref{prop:drhs-local-derivatives}, the summand indexed by
\(I\) has absolute value at most
\[
 K_r'\varepsilon\rho^{1-r}
 \prod_{i\in I}\|v_{s-1}^{(i)}\|_2
 \prod_{i\notin I}\|v_s^{(i)}\|_2.
\]
Indeed, the gate contributes \(\rho^{-|I|}\), while the forcing
contributes
\(\varepsilon\rho^{1-r+|I|}\), so their product is always
\(\varepsilon\rho^{1-r}\), independently of \(I\).

For a fixed \(I\), generalized H\"older's inequality and
\(\|a\|_{\ell_r}\leq\|a\|_{\ell_2}\) for \(r\geq2\) give
\[
 \begin{aligned}
 \sum_{s=2}^N
 \prod_{i\in I}\|v_{s-1}^{(i)}\|_2
 \prod_{i\notin I}\|v_s^{(i)}\|_2&\quad\leq
 \prod_{i\in I}
 \left(
  \sum_{s=2}^N\|v_{s-1}^{(i)}\|_2^r
 \right)^{1/r}
 \prod_{i\notin I}
 \left(
  \sum_{s=2}^N\|v_s^{(i)}\|_2^r
 \right)^{1/r}\\
 &\quad\leq
 \prod_{i=1}^r\|v^{(i)}\|_2.
 \end{aligned}
\]
Summing over the links and the \(2^r\) subsets \(I\) therefore costs
only an \(r\)-dependent factor.  The initial term satisfies
\[
 \left|
 D^r\Phi_1(z_1)
 [v_1^{(1)},\ldots,v_1^{(r)}]
 \right|
 \leq
 b_r\varepsilon\rho^{1-r}
 \prod_{i=1}^r\|v^{(i)}\|_2.
\]
Taking the supremum over unit directions proves
\eqref{eq:drhs-reg-global-derivative}.
\end{proof}

\begin{proof}[Proof of Lemma~\ref{lem:drhs-regularity}]
Write
\[
 \mathcal Q(z,u):=
 \frac12\sum_{s=1}^Nz_s^\top A_sz_s
 +\frac{\mu}{2}\|u\|_2^2,
 \qquad
 F_{\mathbf A,\mathbf b}^{(d)}
 =\mathcal Q-\mathcal P,
\]
with the \(u\)-term omitted when \(d=NM\).  The first proposition shows
that all local components, and hence \(F_{\mathbf A,\mathbf b}^{(d)}\),
are \(C^\infty\).  We regard \(\mathcal P\) as independent of \(u\);
the ambient operator norm of each derivative is then the same as its
operator norm on the hard coordinates.

The Hessian of \(\mathcal Q\) is
\(\operatorname{diag}(A_1,\ldots,A_N,\mu I)\), whose operator norm is at
most \(\bar L\).  Proposition~\ref{prop:drhs-link-aggregation} with
\(r=2\), together with \(\varepsilon/\rho=\mu\), gives
\[
 \sup_x
 \|D^2F_{\mathbf A,\mathbf b}^{(d)}(x)\|_{\mathrm{op}}
 \leq\bar L+K_2\mu.
\]
For arbitrary \(x,y\in\mathbb R^d\),
\[
 \nabla F(x)-\nabla F(y)
 =
 \int_0^1
 D^2F(y+\theta(x-y))[x-y]\,d\theta.
\]
Taking norms proves
\[
 \operatorname{Lip}
 \bigl(\nabla F_{\mathbf A,\mathbf b}^{(d)}\bigr)
 \leq\bar L+C_{\mathrm{sm}}\mu,
 \qquad
 C_{\mathrm{sm}}:=K_2.
\]

Now fix \(q\geq2\).  Since \(D^{q+1}\mathcal Q=0\), the second
proposition with \(r=q+1\) gives
\begin{equation}
\label{eq:drhs-reg-qplusone}
 \sup_x
 \|D^{q+1}F_{\mathbf A,\mathbf b}^{(d)}(x)\|_{\mathrm{op}}
 \leq
 K_{q+1}\varepsilon\rho^{-q}
 =
 K_{q+1}\frac{\mu^q}{\varepsilon^{q-1}}.
\end{equation}
Applying the line-segment identity to \(D^qF\) and using
\eqref{eq:drhs-reg-qplusone},
\[
 \begin{aligned}
 \|D^qF(x)-D^qF(y)\|_{\mathrm{op}}
 &\leq
 \int_0^1
 \|D^{q+1}F(y+\theta(x-y))\|_{\mathrm{op}}
 \|x-y\|_2\,d\theta\\
 &\leq
 K_{q+1}\frac{\mu^q}{\varepsilon^{q-1}}
 \|x-y\|_2.
 \end{aligned}
\]
Thus
\[
 \operatorname{Lip}
 \bigl(D^qF_{\mathbf A,\mathbf b}^{(d)}\bigr)
 \leq
 C_q\frac{\mu^q}{\varepsilon^{q-1}},
 \qquad
 C_q:=K_{q+1}.
\]

It remains to prove coercivity and the gap estimate.  By
\eqref{eq:drhs-reg-forcing-range},
\(0\leq\mathcal P(z)\leq6NB_0\varepsilon R\).  Since
\(A_s\succeq\mu I\),
\begin{equation}
\label{eq:drhs-reg-coercive-lower}
 F_{\mathbf A,\mathbf b}^{(d)}(z,u)
 \geq
 \frac{\mu}{2}
 \left(
  \sum_{s=1}^N\|z_s\|_2^2+\|u\|_2^2
 \right)
 -6NB_0\varepsilon R.
\end{equation}
The right-hand side tends to \(+\infty\) with \(\|(z,u)\|_2\), so the
function is coercive.

At the origin, admissibility gives
\(\|x_s^\star\|_2\geq16\rho>3\rho\), and hence \(G_s(0)=0\).
Also \(\pi_R(0)=0\), so
\[
 F_{\mathbf A,\mathbf b}^{(d)}(0)
 =-\Phi_1(0)
 =-H
 =-4B_0\varepsilon R.
\]
Equation~\eqref{eq:drhs-reg-coercive-lower} gives
\(\inf_xF_{\mathbf A,\mathbf b}^{(d)}(x)
\geq-6NB_0\varepsilon R\).  Therefore
\[
 \begin{aligned}
 F_{\mathbf A,\mathbf b}^{(d)}(0)
 -\inf_xF_{\mathbf A,\mathbf b}^{(d)}(x)
 &\leq(6N-4)B_0\varepsilon R\\
 &\leq
 6B_0C_RN\frac{\varepsilon^2}{\mu},
 \end{aligned}
\]
where \(R=C_R\rho=C_R\varepsilon/\mu\).  Hence the gap bound holds with
\(C_{\mathrm{gap}}:=6B_0C_R\).  All constants above depend only on the
relevant derivative order and the fixed cutoff profile, and not on
\(M,N,d,\mu,\bar L,\varepsilon\), or the phase realization.
\end{proof}

\subsection{Proof of Lemma~\ref{lem:drhs-spectrum}}
\label{app:drhs-spectrum}
\begin{lemma}[\bf Replicated hard spectrum]
There are numerical constants \(c_0>0\) and \(\kappa_0\ge4\) such that the
following holds.  Suppose \(\kappa\ge\kappa_0\), set
\(t:=\lfloor c_0\sqrt\kappa\rfloor\ge1\) and \(K:=t+2\), and let
\(M\ge2\kappa\) be an integer.  Then there are distinct numbers
\(\mu\le\lambda_1<\cdots<\lambda_K\le\bar L\) and positive integers
\(n_1,\ldots,n_K\), with \(\sum_{i=1}^K n_i=M\), such that, for every
real polynomial \(q\) of degree at most \(t\),
\begin{equation}
 n_i\ge\frac{M}{2\kappa}\quad(i\in[K]),
 \qquad
 \sum_{i=1}^K\frac{n_i}{M}
 \left|\frac1{\lambda_i}-q(\lambda_i)\right|^2
 \ge\frac{3}{512\mu^2}.
\end{equation}
Consequently,
\(D:=\operatorname{diag}(\lambda_1I_{n_1},\ldots,\lambda_KI_{n_K})\)
satisfies \(\mu I\preceq D\preceq\bar L I\).
\end{lemma}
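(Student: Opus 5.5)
We construct the spectrum in two stages. First we produce a continuous (probability-measure) certificate on $[\mu,\bar L]$. Then we discretize it to $K=t+2$ atoms and round the weights to integer multiplicities. For the continuous stage, normalize by $x=\lambda/\mu\in[1,\kappa]$. It suffices to exhibit $K$ points $1\le x_1<\cdots<x_K\le\kappa$ and probability weights $w_i\ge 1/\kappa$ such that
\[
\min_{\deg q\le t}\sum_i w_i|x_i^{-1}-q(x_i)|^2\ge c
\]
for an absolute constant $c$ somewhat larger than $3/512$. The rounding step will lose a constant factor, so we keep some slack here.

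For the certificate itself, we use duality for weighted least squares on $t+2$ nodes. The space of degree-$\le t$ polynomials restricted to $K=t+2$ points has codimension one. Its $w$-orthogonal complement is spanned by the divided-difference functional: the vector $c_i=\prod_{j\ne i}(x_i-x_j)^{-1}$ annihilates every polynomial of degree $\le t$. Hence
\[
\min_q\sum_i w_i|f(x_i)-q(x_i)|^2=\frac{\bigl(\sum_i c_if(x_i)\bigr)^2}{\sum_i c_i^2/w_i},
\]
with $f(x)=1/x$. The classical identity $\sum_i c_i/x_i=(-1)^{t+1}/\prod_j x_j$ gives the numerator exactly. Choose the nodes as the Chebyshev extremal points of the degree-$(t+1)$ Chebyshev polynomial mapped to $[1,\kappa]$. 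This is the setting of the minimax theory for $1/x$ cited in the remark following the lemma: the best uniform error is attained by equioscillation at these $t+2$ points, and the optimal weights are $w_i\propto|c_i|$. With those weights the weighted-$L_2$ error equals the square of the de la Vallée-Poussin lower bound on the uniform error, which is $\Omega(1)$ (in units of $\mu^{-1}$) when $t\le c_0\sqrt\kappa$ with $c_0$ small. The explicit minimax error $\approx \frac{(\sqrt\kappa-1)^2}{\kappa}\cdot\bigl(\frac{\sqrt\kappa-1}{\sqrt\kappa+1}\bigr)^{t}$, up to the normalization in \cite{kraus2012bestinverse}, is $\Theta(1)$ once $t\le c_0\sqrt\kappa$. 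Concretely, we would set $w_i=|c_i|/\sum_j|c_j|$, so that the bound becomes $\bigl(\sum_i |c_i|\cdot\operatorname{sign}\text{-aligned } 1/x_i\bigr)^2/(\sum|c_j|)^2$. The sign alternation of $c_i$ together with monotonicity of $1/x$ then lets us lower bound this ratio by the equioscillation level.

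The main obstacle is the floor $w_i\ge 1/\kappa$ (we aim for $w_i\ge 1/\kappa$ so that $n_i\ge M/(2\kappa)$ survives rounding). Chebyshev weights $|c_i|$ can be very uneven, and the nodes near $\kappa$ may get tiny weight. The first fix to try is mixing: set $w'=(1-\theta)w+\theta\,\mathrm{unif}$ with $\theta=K/\kappa\le (c_0+o(1))/\sqrt\kappa$, which is small. Since the objective is a minimum of functionals that are linear in $w$, we have $\min_q\sum w'_i(\cdot)\ge(1-\theta)\min_q\sum w_i(\cdot)$. Each mixed weight is at least $\theta/K=1/\kappa$, and the constant loss $(1-\theta)$ is harmless for $\kappa\ge\kappa_0$. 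Take $\lambda_i=\mu x_i$; these are distinct and lie in $[\mu,\bar L]$.

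For rounding, set $n_i=\lfloor Mw'_i\rfloor$ and add the deficit (fewer than $K$) to $n_1$. Then $n_i\ge Mw'_i-1\ge M/\kappa-1\ge M/(2\kappa)$, using $M\ge2\kappa$. We also have $n_i/M\ge w'_i(1-\kappa/M)$ for $i\ge2$, which is fine when $M\gg\kappa$. More robustly, $n_i/M\ge w'_i-1/M\ge w'_i/2$, since $w'_i\ge 1/\kappa\ge 2/M$. This halves the bound at worst, which is why the continuous constant must be at least $2\cdot 3/512$. Finally, $D\succeq\mu I$ and $D\preceq\bar L I$ follow immediately from $\lambda_i\in[\mu,\bar L]$.
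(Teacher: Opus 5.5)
\paragraph{Gap: the choice of nodes.} Your reduction is sound: the normalization, the duality formula $\min_q\sum_i w_i|f(x_i)-q(x_i)|^2=(\sum_i c_if(x_i))^2/\sum_i c_i^2/w_i$, the mixing with the uniform law at rate $K/\kappa$, and the floor rounding all work. The mixing and rounding in fact coincide with the paper's regularization $\omega_i=(1-K/\kappa)\theta_i+1/\kappa$ and its rounding step.

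The proof breaks at the choice of nodes. The Chebyshev extremal points of $T_{t+1}$ mapped to $[1,\kappa]$ are \emph{not} the equioscillation points of the best uniform approximant to $1/x$. Already for $t=1$ the alternation set is $\{1,\sqrt\kappa,\kappa\}$, not $\{1,\tfrac{\kappa+1}{2},\kappa\}$.

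For any node set and any $q$ of degree at most $t$, $|\sum_ic_if(x_i)|=|\sum_ic_i(f-q)(x_i)|\le\|f-q\|_\infty\sum_i|c_i|$. So the de la Vall\'ee-Poussin ratio $|\sum_ic_if(x_i)|/\sum_i|c_i|$ is always at most $E_t:=\inf_{\deg q\le t}\sup_{[1,\kappa]}|x^{-1}-q(x)|$. It equals $E_t$ on an alternation set and is in general smaller. Your sentence that sign alternation and monotonicity ``lower bound this ratio by the equioscillation level'' therefore has the inequality backwards.

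On your nodes the ratio can be computed exactly. Using $\prod_j(z-y_j)=2^{1-n}(z^2-1)U_{n-1}(z)$ for the extremal points $y_j$ of $T_n$, with $n=t+1$, one gets
\[
\frac{\bigl|\sum_i c_i/x_i\bigr|}{\sum_i|c_i|}=\frac{1}{\sqrt\kappa\,\sinh\bigl((t+1)\phi\bigr)},\qquad \phi:=\log\frac{\sqrt\kappa+1}{\sqrt\kappa-1}\approx\frac{2}{\sqrt\kappa}.
\]
For fixed nodes, $w_i\propto|c_i|$ is the optimal weighting, by Cauchy--Schwarz in your own formula. Hence no weighting of these nodes certifies more than $\kappa^{-1}\sinh^{-2}((t+1)\phi)\approx(\kappa\sinh^2(2c_0))^{-1}$ when $t\approx c_0\sqrt\kappa$. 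This tends to $0$, so $3/512$ is unreachable for large $\kappa$. Already at $t=1$ your nodes give $\frac{(\kappa-1)^2}{4\kappa(\kappa+1)}\approx\frac14$, whereas $E_1=\frac12(1-\kappa^{-1/2})^2$.

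\paragraph{The fix (the paper's route).} Take $\lambda_1<\dots<\lambda_K$ to be an alternation set of a best uniform approximant $q^\star$. It exists by Chebyshev's theorem, and no explicit form is needed. The residuals $e_i=\lambda_i^{-1}-q^\star(\lambda_i)$ satisfy $c_ie_i=\pm E_t|c_i|$ with a common sign, and $\sum_ic_iq^\star(\lambda_i)=0$. Hence $|\sum_ic_i/\lambda_i|=E_t\sum_i|c_i|$, and the weights $\theta_i\propto|c_i|$ give weighted error at least $E_t^2$ for every $q$ of degree at most $t$. The paper checks this by expanding the square around $q^\star$.

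You still need $E_t\ge 1/(8\mu)$. Instead of citing the minimax formula, the paper uses Markov's inequality. Suppose the uniform error were below $1/(8\mu)$. Then $q(\mu)-q(4\mu)>1/(2\mu)$, so $\|q'\|_\infty>1/(6\mu^2)$ on $[\mu,\bar L]$. But Markov's inequality gives $\|q'\|_\infty\le\frac{2t^2}{\bar L-\mu}\cdot\frac{9}{8\mu}\le 3c_0^2/\mu^2$, a contradiction for small $c_0$.

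Combining $E_t^2\ge 1/(64\mu^2)$ with your loss factor $(1-K/\kappa)/2\ge 3/8$ yields exactly $3/(512\mu^2)$.
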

\begin{proof}

Let $\Pi_t$ denote the space of real polynomials of degree at most $t$,
and write
\[
 E_t
 :=
 \inf_{q\in\Pi_t}
 \sup_{\lambda\in[\mu,\bar L]}
 \left|\frac{1}{\lambda}-q(\lambda)\right|.
\]

\begin{fact}[\bf Uniform approximation lower bound]
\label{fact:drhs-uniform-approximation}
Suppose that $\kappa=\bar L/\mu\geq4$ and
$t\leq c_0\sqrt{\kappa}$, where $c_0>0$ is a sufficiently small
numerical constant. Then $E_t\geq\frac{1}{8\mu}$.
\end{fact}

\begin{proof}
Suppose, toward a contradiction, that there exists $q\in\Pi_t$
such that
$\|\lambda^{-1}-q(\lambda)\|_{\infty,[\mu,\bar L]}
<\frac{1}{8\mu}$.
Since $\kappa\geq4$, both $\mu$ and $4\mu$ belong to
$[\mu,\bar L]$. Evaluating the approximation bound at these points gives
$q(\mu)>\frac{7}{8\mu}$ and
$q(4\mu)<\frac{3}{8\mu}$.
Thus $q(\mu)-q(4\mu)>\frac{1}{2\mu}$, and the mean-value theorem yields
$\|q'\|_{\infty,[\mu,\bar L]}>\frac{1}{6\mu^2}$.

On the other hand, the same approximation bound implies
$\|q\|_{\infty,[\mu,\bar L]}\leq\frac{9}{8\mu}$.
Markov's inequality for polynomials, rescaled from $[-1,1]$
to $[\mu,\bar L]$, therefore gives
\[
 \begin{aligned}
 \|q'\|_{\infty,[\mu,\bar L]}
 &\leq
 \frac{2t^2}{\bar L-\mu}
 \|q\|_{\infty,[\mu,\bar L]}\\
 &\leq
 \frac{9t^2}{4(\kappa-1)\mu^2}
 \leq
 \frac{9c_0^2\kappa}{4(\kappa-1)\mu^2}
 \leq
 \frac{3c_0^2}{\mu^2},
 \end{aligned}
\]
where the last inequality uses
$\frac{\kappa}{\kappa-1}\leq\frac{4}{3}$.
Choosing $c_0$ so that $3c_0^2<\frac{1}{6}$ contradicts
the preceding derivative lower bound.
Hence $E_t\geq\frac{1}{8\mu}$.
\end{proof}

The uniform approximation bound does not yet specify a fixed spectral
measure. We next construct positive weights for which a best uniform
approximant is also a best weighted least-squares approximant.

\begin{fact}[\bf Discrete weighted approximation lower bound]
\label{fact:drhs-weighted-approximation}
Let $K:=t+2$. There exist distinct points
$\mu\leq\lambda_1<\cdots<\lambda_K\leq\bar L$
and positive weights $\theta_1,\ldots,\theta_K$, with
$\sum_{i=1}^K\theta_i=1$, such that
\[
 \sum_{i=1}^K\theta_i
 \left|\frac{1}{\lambda_i}-q(\lambda_i)\right|^2
 \geq E_t^2
 \qquad\text{for every }q\in\Pi_t.
\]
\end{fact}

\begin{proof}
Let $q^\star\in\Pi_t$ be a best uniform approximant to
$\lambda\mapsto\lambda^{-1}$ on $[\mu,\bar L]$.
By the Chebyshev alternation theorem, there exist
$K=t+2$ ordered points $\lambda_1<\cdots<\lambda_K$ such that
the residuals
$e_i:=\lambda_i^{-1}-q^\star(\lambda_i)$
satisfy $|e_i|=E_t$ and alternate in sign.

For each $i\in[K]$, define
$c_i:=\prod_{j\neq i}(\lambda_i-\lambda_j)^{-1}$.
We first observe that
\[
 \sum_{i=1}^K c_i s(\lambda_i)=0
 \qquad\text{for every }s\in\Pi_t.
\]
Indeed, the $i$th Lagrange basis polynomial
$\ell_i(x):=\prod_{j\neq i}\frac{x-\lambda_j}{\lambda_i-\lambda_j}$
has leading coefficient $c_i$.
Comparing the coefficients of $x^{K-1}$ in the interpolation identity
$s(x)=\sum_{i=1}^K s(\lambda_i)\ell_i(x)$
gives the claim, since $\deg(s)\leq t=K-2$.

Set $C:=\sum_{j=1}^K|c_j|$ and $\theta_i:=\frac{|c_i|}{C}$.
These weights are positive and sum to one.
Since $\operatorname{sgn}(c_i)=(-1)^{K-i}$, the signs of the $c_i$
alternate just as those of the $e_i$ do.
Consequently, there is a common sign $\sigma\in\{-1,1\}$ such that
$e_i=\sigma E_t\operatorname{sgn}(c_i)$ for every $i$.
Thus $\theta_i e_i=\frac{\sigma E_t}{C}c_i$, and the preceding
interpolation identity yields
$\sum_{i=1}^K\theta_i e_i s(\lambda_i)=0$
for every $s\in\Pi_t$.

Now fix $q\in\Pi_t$ and put $h:=q-q^\star\in\Pi_t$.
Since $\lambda_i^{-1}-q(\lambda_i)=e_i-h(\lambda_i)$,
expanding the square gives
\[
 \begin{aligned}
 \sum_{i=1}^K\theta_i
 \left|\frac{1}{\lambda_i}-q(\lambda_i)\right|^2
 &=
 \sum_{i=1}^K\theta_i e_i^2
 -2\sum_{i=1}^K\theta_i e_i h(\lambda_i)
 +\sum_{i=1}^K\theta_i|h(\lambda_i)|^2\\
 &=
 E_t^2+\sum_{i=1}^K\theta_i|h(\lambda_i)|^2
 \geq E_t^2.
 \end{aligned}
\]
Here the cross term vanishes by the orthogonality relation with
$s=h$, and the first term equals $E_t^2$ because
$|e_i|=E_t$ and $\sum_i\theta_i=1$.
\end{proof}

The weights from Fact~\ref{fact:drhs-weighted-approximation}
may be arbitrarily small and need not correspond to integer
multiplicities. The following fact addresses both issues.

\begin{fact}[\bf Regularizing and rounding the weights]
\label{fact:drhs-integer-multiplicities}
Suppose that $K\leq\frac{\kappa}{4}$ and that $M\geq2\kappa$
is an integer. For any positive weights
$\theta_1,\ldots,\theta_K$ summing to one, there exist positive
integers $n_1,\ldots,n_K$, summing to $M$, such that
\[
 n_i\geq\frac{M}{2\kappa},
 \qquad
 \frac{n_i}{M}\geq\frac{3}{8}\theta_i
 \qquad(i\in[K]).
\]
\end{fact}

\begin{proof}
Define
$\omega_i:=\left(1-\frac{K}{\kappa}\right)\theta_i
+\frac{1}{\kappa}$.
Then $\sum_{i=1}^K\omega_i=1$.
Moreover, $K\leq\frac{\kappa}{4}$ implies both
$\omega_i\geq\frac{1}{\kappa}$ and
$\omega_i\geq\frac{3}{4}\theta_i$.

Start with $n_i=\lfloor M\omega_i\rfloor$ and distribute the
remaining $M-\sum_i\lfloor M\omega_i\rfloor$ copies arbitrarily
among the $K$ indices.
The resulting integers sum to $M$.
Since $M\omega_i\geq\frac{M}{\kappa}\geq2$, they satisfy
\[
 n_i
 \geq\lfloor M\omega_i\rfloor
 \geq M\omega_i-1
 \geq\frac{M\omega_i}{2}.
\]
In particular, every $n_i$ is positive.
Combining this bound with
$\omega_i\geq\frac{1}{\kappa}$ gives
$n_i\geq\frac{M}{2\kappa}$, while combining it with
$\omega_i\geq\frac{3}{4}\theta_i$ gives
$\frac{n_i}{M}\geq\frac{3}{8}\theta_i$.
\end{proof}

\begin{proof}[Proof of Lemma~\ref{lem:drhs-spectrum}]
Choose $c_0>0$ as in
Fact~\ref{fact:drhs-uniform-approximation}.
Since $K=t+2\leq c_0\sqrt{\kappa}+2$, we may choose
$\kappa_0\geq4$ sufficiently large that
$K\leq\frac{\kappa}{4}$ whenever $\kappa\geq\kappa_0$.

Take the spectral points $\lambda_i$ and weights $\theta_i$
from Fact~\ref{fact:drhs-weighted-approximation}, and apply
Fact~\ref{fact:drhs-integer-multiplicities} to obtain
positive integers $n_i$ summing to $M$.
These integers satisfy $n_i\geq\frac{M}{2\kappa}$.
Furthermore, for every $q\in\Pi_t$,
\[
 \begin{aligned}
 \sum_{i=1}^K\frac{n_i}{M}
 \left|\frac{1}{\lambda_i}-q(\lambda_i)\right|^2
 &\geq
 \frac{3}{8}\sum_{i=1}^K\theta_i
 \left|\frac{1}{\lambda_i}-q(\lambda_i)\right|^2\\
 &\geq\frac{3}{8}E_t^2
 \geq\frac{3}{512\mu^2},
 \end{aligned}
\]
where the last two inequalities follow from
Facts~\ref{fact:drhs-weighted-approximation}
and~\ref{fact:drhs-uniform-approximation}, respectively.
This proves \eqref{eq:drhs-flat-spectrum}.

Finally, every $\lambda_i$ belongs to $[\mu,\bar L]$, so
$D:=\operatorname{diag}(\lambda_1I_{n_1},\ldots,\lambda_KI_{n_K})$
satisfies $\mu I\preceq D\preceq\bar L I$.
\end{proof}

\end{proof}

\subsection{Proof of Lemma~\ref{lem:drhs-delayed-krylov}}
\label{app:drhs-delayed-krylov}
\begin{lemma}[\bf Delayed-input Krylov representation]
Let \(q_0,q_1\in\mathbb N_{+}\), \(j_\star:=q_0+1\), and
\(m:=q_0+q_1+2\), and suppose \(m^2<M\).
Every deterministic two-stage algorithm above admits a
joint-law-preserving orthogonal coupling with
\(\widehat A=\widehat U^\top D\widehat U\), where
\(\widehat U\sim\operatorname{Haar}(O(M))\), and with
\(h_1,\ldots,h_m\stackrel{\mathrm{iid}}{\sim}N(0,I_M)\), independent of
\(\widehat U\).

Write \(\widetilde\xi,\widetilde y\) for the coupled input and output, and
set
\(\mathcal K_{\mathrm{pre}}
 :=\operatorname{span}\{\widehat A^a h_j:
 1\leq j\leq q_0,\ a\in\mathbb Z_{\geq0},\
 a+j\leq q_0+1\}\).
Then, almost surely,
\begin{align}
 \widetilde\xi
 &=
 P_{\mathcal K_{\mathrm{pre}}}\widetilde\xi
 +\beta
 \frac{(I-P_{\mathcal K_{\mathrm{pre}}})h_{j_\star}}
      {\|(I-P_{\mathcal K_{\mathrm{pre}}})h_{j_\star}\|_2},
 \qquad
 \beta:=\|(I-P_{\mathcal K_{\mathrm{pre}}})\widetilde\xi\|_2>0,
\\
 \widetilde y
 &\in
 \operatorname{span}
 \{\widehat A^a h_j:
 1\leq j\leq m,\ a\in\mathbb Z_{\geq0},\ a+j\leq m\},
\\
 \widehat A^a h_{j_\star}
 \text{ appearing in \eqref{eq:drhs-flat-output-span}}
 &\quad\Longrightarrow\quad
 a\leq m-j_\star=q_1+1.
\end{align}
\end{lemma}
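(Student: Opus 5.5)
The plan is to adapt the adaptive block-Krylov lifting of \cite{bakshi2023krylov} and add one time-respecting modification: the delayed input $\xi$ gets its own Gaussian slot $j_\star=q_0+1$. The proof is a Gram--Schmidt/orthogonal-invariance coupling. We build, slot by slot, an orthonormal sequence of ``analyst'' vectors. Their span at each time contains everything the algorithm has seen. Rotational invariance of Haar $U$ then lets us replace each genuinely new direction by the fresh component of an independent Gaussian $h_j$.

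\textbf{Step 1 (augmented transcript).} Augment the transcript as follows. At pre-disclosure slot $j\le q_0$, the analyst records not just $Av_j$ but the whole block $v_j,Av_j,\dots,A^{q_0+1-j}v_j$. This is the analyst-only augmentation already used in Lemma~\ref{lem:drhs-fresh-mass}. At slot $j_\star$, the analyst records $\xi$ and its powers up to $A^{m-j_\star}\xi$. At post-disclosure slots $j>j_\star$, it records $A^a v_j$ for $a\le m-j$. All of these are Borel functions of the true transcript together with extra information. The algorithm's actual queries and output remain measurable with respect to a sub-$\sigma$-field, so enlarging the record changes nothing.

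\textbf{Step 2 (coupling).} Proceed by induction on the slot index $j$. Let $\mathcal S_{j-1}$ be the span accumulated after the first $j-1$ slots. The new slot input ($v_j$, or $\xi$ when $j=j_\star$) is decomposed into $P_{\mathcal S_{j-1}}(\cdot)$ plus a residual. Conditionally on the augmented history, the law of $U$ is invariant under orthogonal maps that fix $\mathcal S_{j-1}$ pointwise and commute appropriately. The point of the triangular truncation $a+j\le m$ is that each recorded $A^a$-image of a previous slot input also lies in $\mathcal S_{j-1}$, up to the admissible power. Consequently, the residual direction is uniformly distributed on the unit sphere of $\mathcal S_{j-1}^\perp$ and independent of the past. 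Such a direction can be realized as $(I-P)h_j/\|(I-P)h_j\|$ with $h_j\sim N(0,I)$ independent.

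For the delayed slot $j_\star$, use the independence of $\xi$ from $(U,\text{pre-transcript})$. Its unexplored component is then uniform on the sphere of $\mathcal K_{\mathrm{pre}}^\perp$, which gives \eqref{eq:drhs-flat-fresh-input}. The inequality $\beta>0$ holds a.s. because $\dim\mathcal K_{\mathrm{pre}}\le m^2<M$. Composing the step-$j$ rotations yields $\widehat U$. By the standard lifting argument, $\widehat U$ is Haar and independent of $(h_j)$, and the joint law of the coupled transcript equals the original one.

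\textbf{Step 3 (span and degree bounds).} Inductively, $\mathcal S_j\subseteq\operatorname{span}\{\widehat A^a h_i: i\le j,\ a+i\le m\}$. Each slot input lies in $\mathcal S_{j-1}$ plus the $h_j$ direction, and its recorded powers go up to $m-j$. The output is a function of the full transcript, so it lies in $\mathcal S_m$; this gives \eqref{eq:drhs-flat-output-span}. In particular, $h_{j_\star}$ enters only through slot $j_\star$ and its recorded powers $a\le m-j_\star=q_1+1$, which is \eqref{eq:drhs-flat-degree}.

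The main obstacle is Step 2's invariance claim. We must show that the conditional law of $U$ given the augmented history is invariant under rotations fixing $\mathcal S_{j-1}$ and, crucially, that the truncated powers suffice. This requires that the algorithm's next query can never ask for an $A$-power of an earlier fresh direction beyond what was recorded. I would handle this as in \cite[Appendix~A]{bakshi2023krylov}: a query at slot $j'$ applies only one further power to vectors introduced at slot $j$, and since $j'\le m$, the budget $m-j$ is never exceeded. I would verify this by tracking the ``age'' of every direction, checking carefully that the disclosure slot does not break the triangular bookkeeping.
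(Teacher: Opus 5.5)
Your strategy matches the paper's. You use a triangular block-Krylov augmentation in the style of \cite{bakshi2023krylov}, put the fresh part of $\xi$ in slot $j_\star=q_0+1$ and the post-disclosure queries in slots $j_\star+1,\ldots,m-1$, and read the degree bound off the triangular constraint. As written, however, two steps fail. The paper makes explicit the devices that repair each.

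\textbf{Bookkeeping.} Step~1 gives each slot its \emph{final} block of powers. The record after slot $k$ must instead be $H_k=\{A^av_i: i\le k,\ a+i\le k+1\}$, growing by one power per slot. This growth is what gives $\mathcal S_{k-1}+\operatorname{span}\{x_k,Ax_k\}\subseteq\mathcal S_k$, so that each genuine response can be recovered from the augmented record. It is also what makes the conditional law of $A$ invariant under rotations fixing $\mathcal S_k$.

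Suppose slot $i$ records its whole block up to $A^{m-i}v_i$ at once. Then a later query with a component along $A^{m-i}v_i$ has a response outside the record. Recording that response would violate $a+i\le m$, and for $i=j_\star$ it would break the degree bound $q_1+1$. The pre-disclosure blocks must also keep growing after disclosure rather than stop at $q_0+1-i$. Your last paragraph (one further power per query) states the right rule, so use it from the start.

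Two smaller points in the same step. The claim that ``the residual direction is uniformly distributed and independent of the past'' is false for query slots: there the residual is a deterministic function of the past, and uniformity holds only in the rotated frame. Zero residuals also need a Borel dummy direction, which exists because $m^2<M$.

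\textbf{The output.} ``The output is a function of the full transcript, so it lies in $\mathcal S_m$'' is not a valid inference. An arbitrary Borel output, such as a fixed vector, need not lie in the span of recorded vectors. The inclusion \eqref{eq:drhs-flat-output-span} holds because $y$ is made an extra, unqueried candidate at slot $m$. Write $y=P_{\mathcal S_{m-1}}y+\alpha_mv_m$. The lifting sends $v_m$ to the normalized residual of $h_m$ and requests no response for it, which is why $h_m$ appears only with $a=0$. Your slot count presupposes this slot, but you never define it.

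Your treatment of $\xi$ genuinely differs from the paper's. You use that $(I-P_{\mathcal S_{q_0}})\xi/\beta$ is uniform on the unit sphere of $\mathcal S_{q_0}^\perp$ and independent of the pre-history. That is true, but it makes the innovation rule at slot $j_\star$ randomized. The lifting lemma is stated for deterministic Borel rules, so it cannot be cited verbatim. You would have to reopen its proof, which is the obstacle you defer, or condition on $\xi$.

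The paper does the latter. It fixes $\xi=z$, which makes the forced innovation a deterministic Borel function of $H_{q_0}$. It then applies the lifting with a construction jointly Borel in $z$ and integrates $z$ against the uniform law, using the independence of $\xi$ and $A$. In that route, $\beta>0$ almost surely because $\mathcal S_{q_0}$ depends on $A$ alone and has dimension below $M$.
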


Recall that $j_\star=q_0+1$ and $m=q_0+q_1+2$.
We first state the adaptive block-Krylov lifting result used
in the proof.

For a matrix $B$ and vectors $v_1,\ldots,v_k$, let
$H_k(B;v_1,\ldots,v_k)$ denote the ordered family
\[
 H_k(B;v_1,\ldots,v_k)
 :=
 \bigl(B^a v_j:
 1\leq j\leq k,\quad a\in\mathbb Z_{\geq0},\quad
 a+j\leq k+1\bigr),
\]
where the indices follow a fixed deterministic ordering.
Write $H_k$ when the arguments are clear, and set
$\mathcal S_k:=\operatorname{span}(H_k)$.
We take $H_0$ to be empty and $\mathcal S_0:=\{0\}$.
For a reference matrix $\widehat B$ and vectors
$h_1,\ldots,h_m$, define $\mathcal K_0:=\{0\}$ and
\[
 \mathcal K_k
 :=
 \operatorname{span}\bigl\{
 \widehat B^a h_j:
 1\leq j\leq k,\quad a\in\mathbb Z_{\geq0},\quad
 a+j\leq k+1
 \bigr\},
 \qquad 1\leq k<m.
\]

\begin{lemma}[\bf Adaptive block-Krylov lifting
{\cite[Lemma~6.3 and Appendix~A.3]{bakshi2023krylov}}]
\label{lem:drhs-adaptive-gaussian-lifting}
Let $B=U^\top D U$, where $D$ is a fixed positive diagonal
$M\times M$ matrix and $U$ is Haar distributed on $O(M)$.
Suppose that $m^2<M$.
For each $1\leq k\leq m$, let $v_k$ be a unit vector in
$\mathcal S_{k-1}^\perp$, chosen as a deterministic Borel
function of $H_{k-1}$.

The joint law of $(B,v_1,\ldots,v_m)$ admits a coupling with
a Haar matrix $\widehat U$, standard Gaussian vectors
$h_1,\ldots,h_m$, and an orthogonal matrix $R$ such that
$\widehat U,h_1,\ldots,h_m$ are mutually independent and,
with $\widehat B:=\widehat U^\top D\widehat U$, almost surely,
\[
 \widehat B=RBR^\top,
 \qquad
 Rv_k=
 \frac{(I-P_{\mathcal K_{k-1}})h_k}
      {\|(I-P_{\mathcal K_{k-1}})h_k\|_2}
 \quad(1\leq k\leq m),
 \qquad
 R\mathcal S_k=\mathcal K_k
 \quad(0\leq k<m).
\]
In particular, every terminal vector
$y\in\mathcal S_{m-1}+\operatorname{span}\{v_m\}$ satisfies
\[
 Ry\in
 \operatorname{span}\bigl\{
 \widehat B^a h_j:
 1\leq j\leq m,\quad a\in\mathbb Z_{\geq0},\quad
 a+j\leq m
 \bigr\}.
\]
No response to the final innovation $v_m$ is needed.
The construction can be chosen jointly Borel in any fixed
auxiliary parameter on which the innovation rules depend
jointly Borel measurably.
\end{lemma}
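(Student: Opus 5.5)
The plan is to build the coupling inductively on $k$, using only the conjugation invariance of the Haar law of $U$. The key structural observation is that the transcript $H_{k-1}$ depends on $B$ only through its action on $\mathcal S_{k-2}$, and its values lie in $\mathcal S_{k-1}$. Every vector in $H_{k-1}$ has the form $B^a v_j$ with $a+j\le k$, so $B^{a-1}v_j\in\mathcal S_{k-2}$ whenever $a\ge1$.

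Consequently, for any orthogonal $Q$ fixing $\mathcal S_{k-1}$ pointwise, replacing $B$ by $QBQ^\top$ leaves $H_{k-1}$, and hence $v_1,\ldots,v_k$, unchanged. The first step is therefore to prove a conditional invariance statement. Conditional on $H_{k-1}$, the law of $B$ is invariant under conjugation by the orthogonal group of $\mathcal S_{k-1}^\perp$ extended by the identity on $\mathcal S_{k-1}$. I would derive this from the unconditional Haar invariance of $U$ together with a disintegration argument. The innovation $v_k$ is Borel in $H_{k-1}$, so this lets us compose with an independent Haar rotation $Q_k$ of $\mathcal S_{k-1}^\perp$ without changing the joint law. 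After this composition, the direction of $v_k$ inside $\mathcal S_{k-1}^\perp$ is uniform and independent of $H_{k-1}$.

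Next I would run the induction. Assume an orthogonal $R_{k-1}$ has been constructed with $R_{k-1}\mathcal S_{k-1}=\mathcal K_{k-1}$, intertwining $B$ and $\widehat B$ on $\mathcal S_{k-2}$, and with $h_1,\ldots,h_{k-1}$ already defined. Draw an independent radius $\|g\|$ for a $\chi_{M-\dim\mathcal K_{k-1}}$ variable and an independent standard Gaussian component in $\mathcal K_{k-1}$. Set $h_k$ equal to that $\mathcal K_{k-1}$-component plus the radius times $R_{k-1}v_k$, where $R_{k-1}v_k$ is taken after the rotation step so that it is uniform on $\mathcal K_{k-1}^\perp$.

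By Gaussian polar decomposition, $h_k$ is standard Gaussian and independent of the past. By construction its normalized residual $(I-P_{\mathcal K_{k-1}})h_k/\|\cdot\|$ equals $R_{k-1}v_k$. Extending $R_{k-1}$ to $R_k$ so that $B v_j\mapsto\widehat B h$-images match on the new generators gives $R_k\mathcal S_k=\mathcal K_k$ by comparing the generating families $H_k$ and the $\widehat B$-Krylov family. This extension is possible by uniqueness up to rotation of the complement. The dimension bound $\dim\mathcal S_k\le k(k+1)/2\le m^2/2<M$ guarantees that every complement is nontrivial, so the innovations exist and the Gaussian residuals are almost surely nonzero.

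At the end, set $R:=R_m$ and $\widehat U:=UR^\top$, so that $\widehat B=RBR^\top$. Haar invariance and the independence of the inserted rotations then show that $\widehat U$ is Haar and independent of $(h_j)$. The terminal claim follows because $R(\mathcal S_{m-1}+\operatorname{span}\{v_m\})=\mathcal K_{m-1}+\operatorname{span}\{(I-P)h_m\}$, which lies in the stated span. This step uses no response to $v_m$.

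I expect the main obstacle to be the rigorous conditional-invariance and independence bookkeeping in the first step. The adaptive choice of $\mathcal S_{k-1}$ makes the rotation group depend on the history, so I would realize it as a Borel family $Q(H_{k-1},W_k)$ with fresh $W_k$ and verify invariance by integrating over test functions. This is exactly the content of \cite[Appendix~A.3]{bakshi2023krylov}. Joint Borel dependence on an auxiliary parameter follows since every map above is an explicit composition of Borel operations.
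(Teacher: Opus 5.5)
The paper does not prove this lemma; it imports it from \cite{bakshi2023krylov}. Your outline is the standard conditional-invariance route, but as written it does not establish the one nontrivial conclusion: that $\widehat U$ is Haar and independent of \emph{all} of $h_1,\dots,h_m$. You assert this in your last step, and the support you give for it is the wrong property. Showing that $h_k$ is Gaussian and ``independent of the past'' does not suffice when $\widehat U:=UR_m^\top$ is assembled at the end. In that case $\widehat U$ depends on the rotations chosen at steps $k+1,\dots,m$, which are drawn after $h_k$ and in stabilizers of subspaces containing $u_k$. You would therefore need an additional argument, for instance a Krylov-side conditional invariance. It would have to show that those later conjugations preserve the joint law of $(\widehat U,h_1,\dots,h_k)$ and decouple $\widehat U$ from the rotations on which later innovations depend. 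Relatedly, the claim that after the composition ``the direction of $v_k$ inside $\mathcal S_{k-1}^\perp$ is uniform and independent of $H_{k-1}$'' cannot hold as stated, since $v_k$ is a Borel function of $H_{k-1}$. What becomes uniform is $Q_k^\top v_k$, the position of $v_k$ relative to the \emph{unrotated} matrix. Which matrix you hold fixed decides whether the final independence is automatic or hard, and your proposal leaves this unspecified.

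The gap closes if you reverse the roles. Take the original $B=U^\top DU$ as the Krylov-side matrix, so $\widehat U:=U$, and generate the procedure's matrix instead. Set $B^{(0)}:=B$. For $k=1,\dots,m$, draw $Q_k$ Haar on the pointwise stabilizer of the space $\mathcal S_{k-1}$ obtained by running the procedure on $B^{(k-1)}$, and set $B^{(k)}:=Q_kB^{(k-1)}Q_k^\top$. Your conditional-invariance step shows that $B^{(k)}$ has the same law as $B^{(k-1)}$, hence as $B$. Every later $Q_j$ fixes $\mathcal S_{j-1}\supseteq\mathcal S_{k-1}$ pointwise, so $H_{k-1}$ and $v_1,\dots,v_k$ are frozen from step $k$ on. Hence $(B^{(m)},v_1,\dots,v_m)$ has the law required of $(B,v_1,\dots,v_m)$. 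With $R:=Q_1^\top\cdots Q_m^\top$ one has $RB^{(m)}R^\top=B$, and $Rv_k=Q_1^\top\cdots Q_k^\top v_k=:u_k$ because the later $Q_j$ fix $v_k$. Condition on $B$, on $Q_1,\dots,Q_{k-1}$, and on the auxiliary variables already used; these determine $v_k$ and, by induction, $\mathcal K_{k-1}=Q_1^\top\cdots Q_{k-1}^\top\mathcal S_{k-1}$. Under this conditioning, $u_k$ is uniform on the unit sphere of $\mathcal K_{k-1}^\perp$. Your polar-decomposition step then makes $h_k$ standard Gaussian and independent of $(B,h_1,\dots,h_{k-1})$. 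Independence of $\widehat U=U$ from all the $h_k$ is then immediate, and $R\mathcal S_k=\mathcal K_k$ is pure algebra.

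Two minor slips. First, $H_{k-1}$ depends on $B$ through $\mathcal S_{k-2}+\operatorname{span}\{v_{k-1}\}$, not through $\mathcal S_{k-2}$; your invariance conclusion survives, since this space lies in $\mathcal S_{k-1}$ and is mapped into it. Second, $H_k$ has $k(k+3)/2$ vectors rather than $k(k+1)/2$; this is still below $M$ because $(m-1)(m+2)/2<m^2$.
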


The following fact converts arbitrary candidates into the
unit innovations required by the lifting lemma and explains
how the augmented transcript determines each genuine response.

\begin{fact}[\bf Triangular augmentation and response reconstruction]
\label{fact:drhs-triangular-augmentation}
Fix $B\in\mathbb R^{M\times M}$ and set
$\mathcal S_0:=\{0\}$.
For $1\leq k\leq m$, let $x_k$ be a candidate and write
$s_k:=P_{\mathcal S_{k-1}}x_k$,
$r_k:=x_k-s_k$, and $\alpha_k:=\|r_k\|_2$.
If $\alpha_k>0$, set $v_k:=r_k/\alpha_k$;
otherwise, choose a unit vector
$v_k\in\mathcal S_{k-1}^\perp$ by a fixed Borel rule.
For $k<m$, form $H_k$ and $\mathcal S_k$ as above.

Then, for every $1\leq k<m$,
\[
 \mathcal S_{k-1}+\operatorname{span}\{x_k,Bx_k\}
 \subseteq\mathcal S_k,
 \qquad
 \dim(\mathcal S_k)\leq\frac{k(k+3)}{2}.
\]
Moreover, $x_k$ and the ordered augmented transcript $H_k$
determine $Bx_k$ measurably.
The terminal candidate satisfies
$x_m\in\mathcal S_{m-1}+\operatorname{span}\{v_m\}$.
Under $m^2<M$, every dummy direction required in the
construction exists.
\end{fact}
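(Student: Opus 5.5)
The plan is to verify the three claims in Fact~\ref{fact:drhs-triangular-augmentation} by direct induction on $k$, using only the definition of the triangular family $H_k(B;v_1,\ldots,v_k)$.

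\textbf{Inclusion.} By construction $x_k=s_k+\alpha_k v_k$ with $s_k\in\mathcal S_{k-1}$, so $x_k\in\mathcal S_{k-1}+\operatorname{span}\{v_k\}$. The family $H_k$ contains every element of $H_{k-1}$ with one extra power on each earlier slot, since the constraint $a+j\le k$ is relaxed to $a+j\le k+1$. Hence $\mathcal S_k\supseteq\mathcal S_{k-1}+B\mathcal S_{k-1}$. The slot $j=k$ contributes $v_k$ and $Bv_k$, because $a\le 1$ is permitted. Therefore
\[
Bx_k=Bs_k+\alpha_kBv_k\in B\mathcal S_{k-1}+\operatorname{span}\{Bv_k\}\subseteq\mathcal S_k .
\]
Together with $x_k\in\mathcal S_k$, this gives the inclusion.

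\textbf{Dimension bound.} Slot $j$ contributes at most $k+2-j$ vectors. Summing over $j=1,\ldots,k$ gives $\sum_{j=1}^k(k+2-j)=k(k+3)/2$, which bounds $\dim(\mathcal S_k)$.

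\textbf{Measurability.} I would write $s_k=\sum_i c_i w_i$, where $(w_i)$ is the ordered family $H_{k-1}$. The coefficients $c_i$ come from a fixed Borel rule, for instance the least-squares solution via the Moore–Penrose pseudoinverse of the Gram matrix of $H_{k-1}$. That rule is Borel on each rank stratum, hence Borel overall.

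Each $Bw_i$ is again an entry of $H_k$. Indeed, if $w_i=B^av_j$ with $a+j\le k$, then $B^{a+1}v_j$ satisfies $a+1+j\le k+1$. Likewise $Bv_k\in H_k$. It follows that
\[
Bx_k=\sum_i c_iBw_i+\alpha_kBv_k
\]
is a fixed Borel function of $x_k$ and the ordered $H_k$, since $\alpha_k$ and $v_k$ are Borel functions of $x_k$ and $H_{k-1}\subseteq H_k$.

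In the case $\alpha_k=0$, the term $\alpha_kBv_k$ simply vanishes. The dummy $v_k$ still sits in $H_k$, which causes no harm.

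\textbf{Terminal candidate.} The statement $x_m=s_m+\alpha_mv_m\in\mathcal S_{m-1}+\operatorname{span}\{v_m\}$ holds by definition.

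\textbf{Existence of dummy directions.} This is the only step I expect to need care. It suffices that $\mathcal S_{k-1}^\perp\neq\{0\}$ for every $k\le m$. By the dimension bound,
\[
\dim\mathcal S_{k-1}\le\frac{(m-1)(m+2)}{2}\le m^2-1<M
\]
for $m\ge1$, where the middle inequality reduces to $m^2-m\ge0$. So the complement is nontrivial.

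A Borel choice is obtained by Gram–Schmidt applied to the standard basis $e_1,\ldots,e_M$. I would take the first $e_i$ not lying in $\mathcal S_{k-1}$ and normalize its projection onto $\mathcal S_{k-1}^\perp$. This rule is Borel because the projection $P_{\mathcal S_{k-1}}$ is Borel in $H_{k-1}$.
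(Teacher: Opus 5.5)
Your proposal is correct and follows essentially the same route as the paper's proof. You use the decomposition $x_k=s_k+\alpha_kv_k$, the facts that $H_k$ contains $H_{k-1}$, one extra power of $B$ on each earlier slot, and $v_k,Bv_k$, and the count $\sum_{j=1}^k(k+2-j)$. You reconstruct $Bx_k$ from pseudoinverse coefficients together with the shifted columns of $H_k$, and you choose dummy directions by a Gram--Schmidt rule justified by $\dim\mathcal S_{m-1}<m^2<M$.
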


\begin{proof}
For $k<m$, the definition of $H_k$ gives
$\mathcal S_{k-1}\subseteq\mathcal S_k$ and
$B\mathcal S_{k-1}\subseteq\mathcal S_k$:
every preceding generator remains present, together with
one additional power of $B$.
The vectors $v_k$ and $Bv_k$ also belong to $\mathcal S_k$.
Since $x_k=s_k+\alpha_kv_k$, including when $\alpha_k=0$,
both $x_k$ and $Bx_k=Bs_k+\alpha_kBv_k$ belong to
$\mathcal S_k$.

To reconstruct the response, let $W_{k-1}$ be the matrix
whose columns are the ordered vectors in $H_{k-1}$.
Choose measurably coefficients $c_k$ satisfying
$W_{k-1}c_k=s_k$, for example using the Moore--Penrose
pseudoinverse.
Every column of $BW_{k-1}$ occurs in $H_k$, so the augmented
transcript determines
$Bs_k=(BW_{k-1})c_k$.
It also contains $Bv_k$, and therefore determines
$Bx_k=Bs_k+\alpha_kBv_k$.
For $k=1$, the term $Bs_k$ is zero.

For each $j\leq k$, the family $H_k$ contains $k+2-j$
powers generated by $v_j$.
Its total number of vectors is therefore
$\sum_{j=1}^k(k+2-j)=k(k+3)/2$, which bounds
$\dim(\mathcal S_k)$.
Consequently, before every slot $k\leq m$,
$\dim(\mathcal S_{k-1})\leq (k-1)(k+2)/2<m^2<M$.
Thus the required orthogonal complements are nontrivial,
and measurable dummy directions can be selected by a fixed
Gram--Schmidt rule.
Finally, $x_m=s_m+\alpha_mv_m$ proves the terminal inclusion.
\end{proof}

\begin{proof}[Proof of Lemma~\ref{lem:drhs-delayed-krylov}]
Temporarily fix the delayed input at $\xi=z$.
We construct an augmented experiment with $m$ candidate slots.
The candidates at slots $1,\ldots,q_0$ are the original
preprocessing queries.
At slot $j_\star=q_0+1$, the candidate is $z$.
The candidates at slots $j_\star+1,\ldots,m-1$ are the
$q_1$ original post-disclosure queries.
The candidate at slot $m$ is the terminal output $y$.

At every slot $k<m$, the analyst applies
Fact~\ref{fact:drhs-triangular-augmentation} with $B=A$,
obtaining the innovation $v_k$ and the augmented transcript
$H_k$.
For a genuine query, the analyst reconstructs $Ax_k$ from
$H_k$ and returns only this response to the original procedure.
All additional Krylov vectors remain private.
At the forced slot $j_\star$, the analyst privately forms
the next triangular layer but returns no matrix-vector
response.
Only at this slot is $z$ disclosed to the procedure.
At the terminal slot, the analyst forms $v_m$ from the
residual of $y$ without exposing any additional matrix
products.

For fixed $z$, each innovation $v_k$ is a deterministic
Borel function of $H_{k-1}$.
Indeed, the ordinary transcript can be reconstructed
recursively from $z$ and the augmented transcript.
Given the preceding ordinary transcript, the original
query rule determines the next genuine candidate;
Fact~\ref{fact:drhs-triangular-augmentation} then recovers
its response from the next augmented transcript.
The forced candidate is the fixed vector $z$, and the
terminal candidate is determined by the completed ordinary
transcript.
Projection, normalization away from zero, and the fixed
dummy-direction rule are all Borel.
Thus the innovation rules satisfy the hypotheses of
Lemma~\ref{lem:drhs-adaptive-gaussian-lifting}.

Apply that lemma for each fixed $z$, using its construction
jointly Borel in $z$.
We may therefore sample $\xi$ uniformly on the unit sphere,
independently of $\widehat U,h_1,\ldots,h_m$, and use the
coupling corresponding to $z=\xi$.
For each fixed $z$, the original-coordinate matrix and
augmented transcript have the same joint law as in the
original experiment with input $z$.
Integrating this identity over the spherical law of $z$
preserves the full joint law of $A$, $\xi$, the ordinary
transcript, and $y$.
In this coupling, write
$\widehat A:=\widehat U^\top D\widehat U$,
$\widetilde\xi:=R\xi$, and $\widetilde y:=Ry$.

Let $\mathcal S:=\mathcal S_{q_0}$ and
$\mathcal K_{\mathrm{pre}}:=\mathcal K_{q_0}$.
The preprocessing query rules do not use $\xi$.
We also choose all preprocessing dummy directions using
only the preceding augmented transcript.
Hence $\mathcal S$ is a function of $A$ alone and is
independent of $\xi$ in the original experiment.
The coupling preserves this independence.
Furthermore,
$\dim(\mathcal S)\leq q_0(q_0+3)/2<M$.
Since a proper linear subspace has zero spherical measure,
$\beta:=\|(I-P_{\mathcal S})\xi\|_2>0$ almost surely.

The innovation at the forced slot is therefore
$v_{j_\star}=(I-P_{\mathcal S})\xi/\beta$.
The lifting lemma gives
$R\mathcal S=\mathcal K_{\mathrm{pre}}$ and identifies
$Rv_{j_\star}$ with the normalized Gaussian residual
associated with $h_{j_\star}$.
Since
$RP_{\mathcal S}=P_{\mathcal K_{\mathrm{pre}}}R$,
rotating the decomposition
$\xi=P_{\mathcal S}\xi+\beta v_{j_\star}$ yields
\[
 \widetilde\xi
 =
 P_{\mathcal K_{\mathrm{pre}}}\widetilde\xi
 +
 \beta
 \frac{(I-P_{\mathcal K_{\mathrm{pre}}})h_{j_\star}}
      {\|(I-P_{\mathcal K_{\mathrm{pre}}})h_{j_\star}\|_2}.
\]
Orthogonality of $R$ also gives
$\beta=\|(I-P_{\mathcal K_{\mathrm{pre}}})
\widetilde\xi\|_2$.
This proves \eqref{eq:drhs-flat-fresh-input}.

At the terminal slot,
$y\in\mathcal S_{m-1}+\operatorname{span}\{v_m\}$.
The terminal assertion of the lifting lemma therefore gives
\[
 \widetilde y\in
 \operatorname{span}\bigl\{
 \widehat A^a h_j:
 1\leq j\leq m,\quad a\in\mathbb Z_{\geq0},\quad
 a+j\leq m
 \bigr\},
\]
which is \eqref{eq:drhs-flat-output-span}.
For the distinguished column $j_\star=q_0+1$, the triangular
constraint gives $a\leq m-j_\star=q_1+1$, proving
\eqref{eq:drhs-flat-degree}.
Finally, $\widehat A=RAR^\top$ and
$\widetilde\xi=R\xi$ imply
$\widehat A^{-1}\widetilde\xi=RA^{-1}\xi$.
\end{proof}
\subsection{Proof of Theorem~\ref{thm:drhs-without-l1}}
\label{app:drhs-without-l1}

\begin{proof}
We apply the delayed-information-chain construction directly,
choosing its upper spectral scale as part of the hard instance.
This allows all constants below to depend only on $p$.
Set
\[
 R_\varepsilon
 :=
 \frac{\Delta^{\frac{p}{p+1}}L_p^{\frac1{p+1}}}{\varepsilon},
 \qquad
 \alpha_p
 :=
 \begin{cases}
  \frac74,&p=2,\\[1mm]
  \frac53,&p\geq3.
 \end{cases}
\]
We will obtain a query lower bound of order
$R_\varepsilon^{\alpha_p}$ in dimension
$O_p(R_\varepsilon^{1+\frac1p+2\alpha_p})$.
By decreasing the accuracy constant in the theorem, we may
assume throughout that $R_\varepsilon\geq2$.

Let $C_{\mathrm{reg},p}$ and $C_{\mathrm{gap}}$ be the constants
in Lemma~\ref{lem:drhs-regularity}.
Choose positive constants $c_{\mu,p}$ and $c_N$ such that
$C_{\mathrm{reg},p}c_{\mu,p}^p\leq1$ and
$C_{\mathrm{gap}}c_N\leq1$.
Choose also
$C_{\kappa,p}\geq\max\{\kappa_0,4/c_0^2\}$.
Define
\[
 \begin{aligned}
 \mu&:=c_{\mu,p}(L_p\varepsilon^{p-1})^{\frac1p},
 &\qquad
 \rho&:=\frac{\varepsilon}{\mu},\\
 \kappa&:=C_{\kappa,p}
 R_\varepsilon^{\,2(\alpha_p-1-\frac1p)},
 &
 \bar L&:=\mu\kappa,
 \end{aligned}
\]
and let
$N:=\lfloor c_N\Delta\mu/\varepsilon^2\rfloor$,
$t:=\lfloor c_0\sqrt{\kappa}\rfloor$, and $T:=Nt$.
The choice of $\mu$ gives
$\Delta\mu/\varepsilon^2
=c_{\mu,p}R_\varepsilon^{1+\frac1p}$.
Thus, after decreasing the accuracy constant further,
$c_N\Delta\mu/\varepsilon^2\geq2$.
Since $\alpha_p>1+\frac1p$, our choice of $C_{\kappa,p}$
also ensures $\kappa\geq\kappa_0$ and
$c_0\sqrt{\kappa}\geq2$.
Using $\frac a2\leq\lfloor a\rfloor\leq a$ for $a\geq2$,
we obtain
\[
 N\asymp_p R_\varepsilon^{1+\frac1p},
 \qquad
 t\asymp_p R_\varepsilon^{\alpha_p-1-\frac1p},
 \qquad
 T\asymp_p R_\varepsilon^{\alpha_p}.
\]

Define the phase dimension $M$ and the total hard dimension
$d_0:=NM$ as in \eqref{eq:drhs-calibrated-dimension}, with
$C_{\mathrm{dim}}\geq\max\{C_0,2\}$.
This ensures $M\geq2\kappa$ and the dimension hypothesis of
Lemma~\ref{lem:drhs-sequential}.
To bound $d_0$, first note that
$\kappa/T=O_p(R_\varepsilon^{\alpha_p-2-\frac2p})=O_p(1)$,
because $\alpha_p<2+\frac2p$.
Moreover, $N\leq T$, so
$\log(12NT(T+2))\leq\log(12T^2(T+2))=O(T)$.
Substituting these estimates into
\eqref{eq:drhs-calibrated-dimension} gives
\[
 M=O_p(T^2),
 \qquad
 d_0=O_p(NT^2)
     =O_p\!\left(
       R_\varepsilon^{1+\frac1p+2\alpha_p}
     \right).
\]
Hence a sufficiently large constant $C_p$ ensures
$d\geq d_0$ under the dimension assumption of the theorem.

For such a dimension $d$, construct the hard chain with
spectral interval $[\mu,\bar L]$.
Let $F$ denote an unshifted realization.
The estimates in Lemma~\ref{lem:drhs-regularity} are uniform
in $\bar L$ and give
\[
 \begin{aligned}
 \operatorname{Lip}(D^pF)
 &\leq
 C_{\mathrm{reg},p}
 \frac{\mu^p}{\varepsilon^{p-1}}
 =
 C_{\mathrm{reg},p}c_{\mu,p}^pL_p
 \leq L_p,\\
 F(0)-\inf F
 &\leq
 C_{\mathrm{gap}}N\frac{\varepsilon^2}{\mu}
 \leq
 C_{\mathrm{gap}}c_N\Delta
 \leq\Delta.
 \end{aligned}
\]
Every realization is also $C^\infty$ and coercive.

To ensure nonnegativity, use the pointwise lower bound from
the proof of Lemma~\ref{lem:drhs-regularity}:
$F(x)\geq-C_{\mathrm{sh}}N\varepsilon^2/\mu$ for all $x$,
where $C_{\mathrm{sh}}:=6B_0C_R$.
Define
$f:=F+C_{\mathrm{sh}}N\varepsilon^2/\mu$.
Then $f\geq0$, all derivatives are unchanged, and
$f(0)-\inf f=F(0)-\inf F$.
Consequently, the law $\mathcal D_d^+$ of these shifted
functions is supported on
$\mathcal F_p^{(d)}(\Delta,L_p)$.
The shift depends only on the calibrated parameters and
not on the phase realization.
An exact value-and-gradient oracle for $f$ can therefore
be simulated using an oracle for $F$ by adding this fixed
constant to each returned function value and leaving the
gradient unchanged.

Fix a deterministic algorithm making at most $T-1$ oracle
calls.
Apply Lemmas~\ref{lem:drhs-certification} and
\ref{lem:drhs-sequential} to the corresponding algorithm
on the unshifted chain.
Writing $z_s^{\mathrm{out}}$ for the $s$th hard block of
its output, we obtain
\[
 \Pr_{f\sim\mathcal D_d^+}
 \bigl(\|\nabla f(x^{\mathrm{out}})\|_2\leq\varepsilon\bigr)
 \leq
 \Pr\bigl(
   \|z_s^{\mathrm{out}}-x_s^\star\|_2<3\rho
   \text{ for every }s\in[N]
 \bigr)
 \leq\frac1{12}.
\]
The distribution $\mathcal D_d^+$ is independent of the
algorithm.
For a randomized algorithm with the same hard query budget,
condition on its internal seed and apply the preceding
bound to each resulting deterministic algorithm.
Averaging over the seed shows that its average success
probability under $\mathcal D_d^+$ is at most $\frac1{12}$.
Thus some fixed
$f\in\mathcal F_p^{(d)}(\Delta,L_p)$ has success probability
at most $\frac1{12}$, and hence
\[
 T_{\varepsilon,\mathrm{FO}}^{\mathrm{rand}}
 \bigl(\mathcal F_p^{(d)}(\Delta,L_p)\bigr)
 \geq T
 \gtrsim_p R_\varepsilon^{\alpha_p}.
\]

For $p=2$, we have
$\alpha_p=\frac74$ and
$1+\frac1p+2\alpha_p=5$.
For $p\geq3$, we have
$\alpha_p=\frac53$ and
$1+\frac1p+2\alpha_p=\frac{13}{3}+\frac1p$.
Substituting the definition of $R_\varepsilon$ and adjusting
$c_p$ and $C_p$ proves the two claimed bounds.
\end{proof}

\end{document}